\documentclass[11pt]{amsart}

\usepackage{geometry}
\geometry{a4paper,top=3.2cm,bottom=3.2cm,left=2.5cm,right=2.5cm}
\setcounter{tocdepth}{1}

\hyphenpenalty=5000
\tolerance=1000

\usepackage{todonotes}
\usepackage{amsfonts, amssymb, amscd, amsmath}
\numberwithin{equation}{section}

\usepackage[symbol]{footmisc}

\usepackage{bm}
\usepackage{verbatim}
\usepackage{mathrsfs}
\usepackage{graphicx}
\usepackage{tikz-cd}
\usepackage{subcaption}
\usepackage{listings}
\usepackage{subfiles}
\usepackage[toc,page]{appendix}
\usepackage{mathtools}
\usepackage{comment}
\usepackage{enumerate}
\usepackage{enumitem}
\usepackage[all]{xy}

\usepackage{graphicx}
\graphicspath{{images/}}

\usepackage{appendix}
\usepackage{hyperref}
\hypersetup{
    colorlinks=true,
    citecolor=red,
    linkcolor=blue,
    filecolor=magenta,      
    urlcolor=red,
}
\lstset{
  basicstyle=\ttfamily,
  columns=fullflexible,
  frame=single,
  breaklines=true,
  postbreak=\mbox{\textcolor{red}{$\hookrightarrow$}\space},
}

\newcommand{\Mm}{{\bf{M}}}

\newcommand{\bM}{{\bf M}}

\newcommand{\Pp}{\mathbb{P}}
\newcommand{\Qq}{\mathbb{Q}}
\newcommand{\QQ}{\mathbb{Q}}
\newcommand{\Rr}{\mathbb{R}}
\newcommand{\RR}{\mathbb{R}}
\newcommand{\Zz}{\mathbb{Z}}
\newcommand{\ZZ}{\mathbb{Z}}

\newcommand{\Ivol}{\operatorname{Ivol}}

\newcommand{\vol}{\operatorname{vol}}
\newcommand{\Center}{\operatorname{center}}

\newcommand{\Exc}{\operatorname{Exc}}

\newcommand{\mld}{{\rm{mld}}}

\newcommand{\lc}{\mathrm{lc}}

\newcommand{\lct}{\operatorname{lct}}

\newcommand{\Supp}{\operatorname{Supp}}

\newcommand{\codim}{\operatorname{codim}}
\newcommand{\mult}{\operatorname{mult}}

\newcommand{\lf}{\lfloor}
\newcommand{\rf}{\rfloor}

\newcommand{\Oo}{\mathcal{O}}

\newcommand{\Ii}{\Gamma}

\newtheorem{thm}{Theorem}[section]
\newtheorem{conj}[thm]{Conjecture}
\newtheorem{cor}[thm]{Corollary}
\newtheorem{lem}[thm]{Lemma}
\newtheorem{prop}[thm]{Proposition}

\newtheorem{claim}[thm]{Claim}

\theoremstyle{definition}
\newtheorem{defn}[thm]{Definition}

\theoremstyle{definition}
\newtheorem{rem}[thm]{Remark}
\newtheorem{rmk}[thm]{Remark}

\newtheorem{ex}[thm]{Example}
\newtheorem{nota}[thm]{Notation}

\theoremstyle{definition}


\newcommand{\gE}{\mathfrak{E}}

\newcommand{\gP}{\mathfrak{P}}

\newcommand{\FF}{\mathbb{F}}

\newcommand{\PP}{\mathbb{P}}

\newcommand{\sC}{\mathcal{C}}
\newcommand{\sE}{\mathcal{E}}

\newcommand{\sO}{\mathcal{O}}

\newcommand{\tC}{\widetilde{C}}

\newcommand{\tF}{\widetilde{F}}

\newcommand{\tX}{\widetilde{X}}

\newcommand{\tZ}{\widetilde{Z}}

\newcommand{\tGamma}{\widetilde{\Gamma}}

\newcommand{\rb}{\mathrm{b}}

\newcommand{\bfB}{\mathbf{B}}

\newcommand{\I}{\mathrm{I}}
\newcommand{\II}{\mathrm{II}}
\newcommand{\III}{\mathrm{III}}
\newcommand{\IV}{\mathrm{IV}}
\newcommand{\klt}{\mathrm{klt}}
\newcommand{\pr}{\mathrm{pr}}
\newcommand{\red}{\mathrm{red}}
\newcommand{\sm}{\mathrm{sm}}

\begin{document}

\title{On the Iitaka volumes of log canonical surfaces and threefolds}

\author{Guodu Chen, Jingjun Han, and Wenfei Liu}

\address{School of Mathematical Sciences, Shanghai Jiao Tong University, Shanghai, 200240, China}
\email{chenguodu@sjtu.edu.cn}

\address{Shanghai Center for Mathematical Sciences, Fudan University, Shanghai, 200438, China}
\email{hanjingjun@fudan.edu.cn}

\address{School of Mathematical Sciences, Xiamen University, Siming South Road 422, Xiamen, Fujian 361005, China}
\email{wliu@xmu.edu.cn}

\subjclass[2020]{14J27,14B05,14E30}
\date{\today}

\begin{abstract}
Given positive integers $d\geq\kappa$, and a subset $\Gamma\subset [0,1]$, let $\Ivol_\lc^\Gamma(d,\kappa)$ denote the set of Iitaka volumes of $d$-dimensional projective log canonical pairs $(X, B)$ such that the Iitaka--Kodaira dimension $\kappa(K_X+B)=\kappa$ and the coefficients of $B$ come from $\Gamma$. In this paper, we show that, if $\Gamma$ satisfies the descending chain condition, then so does $\Ivol_\lc^\Gamma(d,\kappa)$ for $d\leq 3$. In case $d\leq 3$ and $\kappa=1$, $\Gamma$ and $\Ivol_\lc^\Gamma(d,\kappa)$ are shown to share more topological properties, such as closedness in $\RR$ and local finiteness of accumulation complexity. In higher dimensions,  we show that the set of Iitaka volumes for $d$-dimensional klt pairs with Iitaka dimension $\geq d-2$ satisfies the DCC, partially confirming a conjecture of Zhan Li.

We give a more detailed description of the sets of Iitaka volumes for the following classes of projective log canonical surfaces: (1) smooth properly elliptic surfaces, (2) projective log canonical surfaces with coefficients from $\{0\}$ or $\{0,1\}$. In particular, the minima as well as the minimal accumulation points are found in these cases.
\end{abstract}

\maketitle

\pagestyle{myheadings}\markboth{\hfill G. Chen, J. Han, and W. Liu \hfill}{\hfill On the Iitaka volumes of log canonical surfaces and higher dimensional varieties\hfill}

\tableofcontents

\section{Introduction}
We work over the field of complex numbers $\mathbb C$.

The log canonical divisor $K_X+B$ plays a pivotal role in birational geometry and in the classification theory of projective pairs $(X, B)$, say with at most log canonical singularities. One can extract two basic invariants of the pair from the asymptotic behaviour of the pluricanonical systems $|m(K_X+B)|$, as $m\rightarrow \infty$. Namely, the \emph{Iitaka--Kodaira dimension}
\[
\kappa(K_X+B)=
\begin{cases}
\limsup\limits_{m\rightarrow \infty}  \frac{\log \dim H^0(X, \sO_X(m(K_X+B)))}{\log m} & \text{ if $|m(K_X+B)|\neq\emptyset$ for some $m\in \Zz_{>0}$,} \\
-\infty & \text{otherwise,}
\end{cases}
\]
and, in case $\kappa:=\kappa(K_X+B)\geq 0$, the \emph{Iitaka volume}
\[
\vol_\kappa(K_X+B):=\limsup_{m \rightarrow \infty} \frac{ h^0\left(X, \mathcal{O}_X(m (K_X+B))\right)}{m^{\kappa}/\kappa!}.
\]
In this paper, we are interested in the distribution of Iitaka volumes. 

More specifically, for positive integers $d\geq \kappa$, and a subset $\Gamma\subset [0,1]$, we introduce the following set of pairs
\[
\gP_\lc^\Gamma(d,\kappa) =  \{(X,B) \mid (X, B) \text{ is projective lc, } \dim X=d,\, \kappa(K_X+B)=\kappa,\, B\in \Gamma\}
\]
where the condition $B\in \Gamma$ means that the coefficients of $B$ lie in $\Gamma$. The corresponding set of Iitaka volumes is then $$\Ivol_\lc^\Gamma(d,\kappa)=\left\{\vol_\kappa\left(K_X+B\right) \mid (X, B)  \in \gP_\lc^\Gamma(d,\kappa) \right\}.$$
Similarly, one may define $\gP_\klt^\Gamma(d,\kappa)$, $\gP_\sm(d,\kappa)$ with restrictions on the allowed singularities, and then the corresponding sets of Iitaka volumes $\Ivol_\klt^\Gamma(d,\kappa)$, $\Ivol_\sm(d,\kappa)$; see Notation~\ref{nota: Ivol}.

In case $\kappa=d$, $\vol_d (K_X+B)$ is the usual volume and we may just denote it by $\vol(K_X+B)$. It is a very deep fact that, if $\Gamma$ satisfies the descending chain condition (DCC), then $\Ivol_\lc^\Gamma(d,d)$ also satisfies the DCC (\cite{Ale94, HMX14}). This is essential in establishing various boundedness results for projective log canonical pairs and, in turn, the projectivity of the moduli spaces of stable pairs with a given volume (\cite{HMX18, Kol23}).

For $0<\kappa<d$, one expects similar phenomena occur. The following conjecture is just an extension of \cite[Conjecture 1.6]{Li20} from klt pairs to lc pairs.
\begin{conj}[DCC of Iitaka volumes]\label{conj:ivoldcc}
Let $\kappa <d$ be positive integers and $\Gamma \subset[0,1]$ a DCC set. Then the set of Iitaka volumes $\Ivol_\lc^\Gamma(d,\kappa)$ is a DCC set.
\end{conj}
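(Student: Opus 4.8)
The plan is to trade the $d$-dimensional Iitaka volume for an honest volume on the $\kappa$-dimensional base of the Iitaka fibration by means of the canonical bundle formula, and then to invoke a DCC statement for volumes of generalized log canonical pairs.

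First I would reduce to a good model. Given $(X,B)\in\gP_\lc^\Gamma(d,\kappa)$, after passing to a dlt modification (enlarging $\Gamma$ by $\{1\}$ if needed) and running a $(K_X+B)$-MMP, one should reach a model on which $K_X+B$ is semiample; this uses the existence of good minimal models, which is unconditional for $d\leq 3$ and available in the cases relevant to the partial results of the paper. None of these steps changes $\vol_\kappa(K_X+B)$. Let $f\colon X\to Z$ be the resulting Iitaka fibration, so that $\dim Z=\kappa$ and $K_X+B\sim_\QQ f^*A$ for some ample $\QQ$-divisor $A$ on $Z$.

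Next I would apply the canonical bundle formula for lc-trivial fibrations (Kodaira, Ambro, Fujino--Mori): $K_X+B\sim_\QQ f^*(K_Z+B_Z+M_Z)$, where $B_Z$ is the discriminant part and $M_Z$ the moduli part, the latter descending to a nef $\QQ$-divisor on a high birational model of $Z$; running one more MMP on $Z$ we may assume $K_Z+B_Z+M_Z$ ample, whence $\vol_\kappa(K_X+B)=\vol(K_Z+B_Z+M_Z)$. The task thus becomes a DCC statement for volumes of the generalized lc pairs $(Z,B_Z+M_Z)$ of dimension $\kappa$. For this I need two inputs: (i) the coefficients of $B_Z$ lie in a DCC set depending only on $d$, $\kappa$, $\Gamma$ --- which holds because such coefficients have the form $1-\lct$ of pairs of dimension $d-\kappa$ with coefficients in $\Gamma$, together with the ACC for log canonical thresholds; and (ii) there is a positive integer $I=I(d,\kappa,\Gamma)$ with $IM_Z$ integral and $\rb$-Cartier, so that the moduli parts range over a bounded, hence DCC-type, family of nef $\rb$-divisors. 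Granting (i) and (ii), $(Z,B_Z,M_Z)$ is a generalized lc pair of dimension $\kappa<d$ with DCC boundary coefficients and fixed moduli denominator, and the conjecture follows from the DCC for volumes of generalized log canonical pairs, i.e. the generalized-pair extension of the DCC for volumes of \cite{HMX14}.

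The crux --- and the reason one can only establish the conjecture in low dimensions or under an Iitaka-dimension restriction --- is input (ii): bounding the denominators of the moduli part is the effective adjunction conjecture of Prokhorov--Shokurov, which is open in general but known when the general fibre of $f$ is one-dimensional (for instance for properly elliptic fibrations) and in a number of two-dimensional-fibre situations, which is exactly the range covered by $d\leq 3$ and by the klt case with $\kappa\geq d-2$. A further simplification appears when $\kappa=1$: then $Z$ is a curve and $\vol(K_Z+B_Z+M_Z)=2g(Z)-2+\deg B_Z+\deg M_Z$, so one is reduced to understanding the DCC --- and, for the finer topological statements, the closedness and accumulation behaviour --- of the single real number $\deg M_Z$; this is the source of the extra rigidity in the $\kappa=1$ results. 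A secondary, more routine ingredient is the good minimal model input of the first step, again unconditional in dimension $\leq 3$.
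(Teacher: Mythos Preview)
Your proposal is correct and follows essentially the same route as the paper's conditional reduction in Lemma~\ref{lem:ivolaccfollowfrom2conj}: pass to a good minimal model, apply the canonical bundle formula, control the discriminant coefficients by ACC for log canonical thresholds, bound the Cartier index of the moduli part, and then invoke the DCC for volumes of generalized pairs (the paper cites \cite[Theorem~1.3]{Bir21} for this last step).

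The one substantive difference is in how you package input~(ii). You attribute the bounded moduli denominator to the effective adjunction conjecture of Prokhorov--Shokurov; the paper instead derives it from the Existence of Complements Conjecture (Conjecture~\ref{conj: boundedness and existence of n complement nft}) via \cite[Proposition~3.1]{CHL24}. These are closely related inputs (cf.~\cite{HLX23}), but the complements route is what actually yields the unconditional result for $d\leq 3$ via \cite[Theorem~2.14]{CHL24}, whereas effective adjunction in the needed generality is not independently known in that range. Two minor points: the dlt modification and the extra MMP on $Z$ are unnecessary (the ample model $Z$ already makes $K_Z+B_Z+\bM_Z$ ample), and since $\Gamma\subset[0,1]$ may contain irrationals, the linear equivalences should be $\sim_\RR$ rather than $\sim_\QQ$.
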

Conjecture \ref{conj:ivoldcc} is known to hold for log canonical surfaces, as well as log canonical threefolds with Iitaka--Kodaira dimension $2$ (\cite[Theorem~8.1]{PS09}). Zhan Li showed that $\Ivol_\klt^\Gamma(3,\kappa)$ is a DCC set if $\Gamma$ is a DCC subset of $[0,1]\cap \QQ$ (\cite[Corollary~4.4]{Li20}). Birkar showed the DCC for the set of Iitaka volumes of lc-trivial fibrations with an ample $\ZZ$-divisor of fixed volume on the general fibers (\cite[Theorem~1.7]{Bir21}). Based on the ideas of previous authors, we prove that Conjecture \ref{conj:ivoldcc} follows from two other important conjectures in birational geometry (see Lemma \ref{lem:ivolaccfollowfrom2conj}).  In particular, we verify it in low dimensions.
\begin{thm}[= Corollary~\ref{cor: conj holds d=3}]\label{thm: main1}
Conjecture \ref{conj:ivoldcc} holds when $d\le3$.
\end{thm}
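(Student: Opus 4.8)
The plan is to reduce, via the Iitaka fibration and the canonical bundle formula, to a DCC statement for the volumes of generalised polarised pairs on a base of dimension at most $2$, and then to appeal to Lemma~\ref{lem:ivolaccfollowfrom2conj}, whose two auxiliary hypotheses are theorems in that range. For $d\le 3$ the conjecture concerns exactly the pairs $(d,\kappa)\in\{(2,1),(3,1),(3,2)\}$.

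Let $(X,B)\in\gP_\lc^\Gamma(d,\kappa)$. Replacing $(X,B)$ by a $\QQ$-factorial dlt model and then, since $\dim X\le 3$ and $\kappa(K_X+B)\ge 0$, running the log MMP and using log abundance for lc pairs of dimension $\le 3$, I may assume $K_X+B$ is semiample; none of these steps alters $\vol_\kappa(K_X+B)$. Let $f\colon X\to Z$ be the associated Iitaka fibration, a contraction onto a normal projective variety $Z$ with $\dim Z=\kappa\le 2$. Its general fibre $(F,B|_F)$ is lc with $(K_X+B)|_F\sim_\QQ 0$, so $f$ is an lc-trivial fibration, and the canonical bundle formula (Ambro; Fujino--Gongyo in the lc case) provides an effective discriminant $\QQ$-divisor $B_Z$ and a b-nef moduli $\QQ$-b-divisor $\overline M$ with
\[
K_X+B\sim_\QQ f^{*}\!\left(K_Z+B_Z+M_Z\right),\qquad M_Z:=\overline M_Z,
\]
whence $\vol_\kappa(K_X+B)=\vol\!\left(K_Z+B_Z+M_Z\right)$, a big $\QQ$-divisor since $\kappa=\dim Z$. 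It therefore suffices to show that, as $(X,B)$ ranges over $\gP_\lc^\Gamma(d,\kappa)$, the resulting set of volumes satisfies the DCC.

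We are now in the situation of Lemma~\ref{lem:ivolaccfollowfrom2conj}, and both of the conjectures it uses hold here. First, the coefficients of $B_Z$ are log canonical thresholds of the $d$-dimensional lc pair $(X,B)$ against fibre divisors; since $\Gamma$ satisfies the DCC, the ACC for log canonical thresholds (Hacon--McKernan--Xu) places them in a DCC set $\gD=\gD(d,\Gamma)$ that moreover has a gap, $\gD\cap(0,\varepsilon)=\emptyset$ for some fixed $\varepsilon>0$. Second, the general fibre has dimension $d-\kappa\le 2$, a range in which the moduli b-divisor has bounded index---there is $b=b(d,\Gamma)$ with $b\overline M$ b-Cartier, and $b\overline M$ is b-semiample when the relative dimension is $\le 1$---by the canonical bundle formula of Kodaira, Fujino--Mori and Prokhorov--Shokurov. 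Thus $(Z,B_Z+M_Z)$ is a generalised polarised pair of dimension $\kappa\le 2$ with coefficients in a fixed DCC set and bounded moduli index, and $K_Z+B_Z+M_Z$ is big. For such pairs the set of volumes is a DCC set: when $\dim Z=1$ it equals $2g-2+\sum_P(1-t_P)+\deg M_Z>0$ with $2g-2,\ \deg M_Z\in\tfrac1b\ZZ_{\ge 0}$ and each nonzero $1-t_P\ge\varepsilon$, so a bound on the volume bounds both the genus $g$ and the number of fractional points of $B_Z$, after which the DCC follows from the DCC of $\gD$; when $\dim Z=2$ one runs a generalised MMP to a minimal model and concludes via intersection numbers on the resulting surface, as in the established DCC results for generalised surface pairs. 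Feeding this into Lemma~\ref{lem:ivolaccfollowfrom2conj} shows that $\Ivol_\lc^\Gamma(d,\kappa)$ is a DCC set for $d\le 3$.

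The one genuinely deep ingredient is the control of the moduli part $M_Z$---effective b-semiampleness, or at least boundedness of its index, for families of lc pairs of dimension up to $2$; this is exactly what is unavailable in higher dimensions and is the reason the argument stops at $d\le 3$. A lesser, purely technical obstacle is running the canonical bundle formula in the strictly log canonical setting, which the lc-trivial fibration machinery handles, if necessary after a crepant modification isolating the non-klt locus.
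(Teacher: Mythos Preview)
Your argument is correct and follows the paper's route: both ultimately invoke Lemma~\ref{lem:ivolaccfollowfrom2conj}, whose hypotheses are theorems in dimension $\le 3$; the paper's proof is literally that one sentence.

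One clarification worth making: the two hypotheses of Lemma~\ref{lem:ivolaccfollowfrom2conj} are the Existence of Complements Conjecture and the Good Minimal Model Conjecture, not the DCC for the coefficients of $B_Z$ or the bounded b-Cartier index of $\bM$---those are consequences established inside the lemma's proof, so your paragraph beginning ``First \dots\ Second \dots'' is re-deriving the lemma rather than checking its hypotheses. In particular, the bounded index of $\bM$ when the relative dimension is $2$ (the case $(d,\kappa)=(3,1)$) is obtained in the paper from the Existence of Complements in dimension $\le 3$ (Lemma~\ref{lem: cbfindex}, via \cite[Theorem~2.14]{CHL24}), not directly from Kodaira, Fujino--Mori, or Prokhorov--Shokurov; and the DCC for volumes of the resulting generalized pairs on $Z$ is quoted from \cite[Theorem~1.3]{Bir21}, so your ad hoc surface argument is superfluous once you cite the lemma.
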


In higher dimensions, we prove the DCC of a subset of $\Ivol_\lc^\Gamma(d,\kappa)$.
\begin{thm}\label{thm:main2}
Let $\kappa\leq d$ be positive integers with $\kappa\geq d-2$, and $\Gamma \subset[0,1]$ a DCC set. Then the following subset of $\Ivol_\lc^\Gamma(d,\kappa)$: 
\[
\left\{
 \vol_\kappa\left(K_X+B\right)  \mid
(X,B) \in \gP_\lc^\Gamma(d,\kappa), 
\text {either $(X,B)$ is klt or $K_X+B$ is semi-ample}
\right\}
\]
is a DCC set.
\end{thm}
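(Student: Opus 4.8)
\emph{Overview.} The plan is to push $K_X+B$ down the Iitaka fibration via the canonical bundle formula, converting the statement into the DCC for volumes of generalized log canonical pairs of the \emph{fixed} dimension $\kappa$. The hypothesis $\kappa\geq d-2$ enters exactly once, but decisively: it forces the relative dimension $d-\kappa$ of the Iitaka fibration to be at most $2$, the only range in which the canonical bundle formula — in particular its moduli part — is under control. If $\kappa=d$ the Iitaka fibration is birational and $\vol_\kappa(K_X+B)=\vol(K_X+B)$ is the usual volume, so the statement is \cite{Ale94, HMX14}; from now on assume $\kappa<d$.

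\emph{Step 1: descending to the base of the Iitaka fibration.} First I would replace $K_X+B$ by a big generalized log canonical class on a $\kappa$-dimensional variety. If $K_X+B$ is semi-ample, let $f\colon X\to Z$ be the fibration onto its ample model: then $\dim Z=\kappa$, $K_X+B\sim_{\QQ} f^{*}A$ with $A$ ample on $Z$, and the general fibre $(F,B_F)$ of $f$ is a log canonical log Calabi--Yau pair of dimension $d-\kappa\leq 2$ with coefficients in $\Gamma$. If instead $(X,B)$ is klt, take a birational model of the Iitaka fibration $f\colon X'\to Z$ in the sense of Fujino--Mori, with $\dim Z=\kappa$ and general fibre again a klt log Calabi--Yau pair of dimension $d-\kappa\leq 2$. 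In either case the canonical bundle formula for lc-trivial fibrations (Ambro, Fujino--Gongyo) produces a generalized pair $(Z,B_Z+\bM_Z)$, with $\bM_Z$ a nef $\rb$-divisor and $B_Z$ the discriminant, such that $K_X+B\sim_{\QQ} f^{*}(K_Z+B_Z+M_Z)$ (resp.\ $K_{X'}+B'\sim_{\QQ}f^{*}(K_Z+B_Z+M_Z)$ modulo an $f$-exceptional effective divisor in the klt case), whence $\vol_\kappa(K_X+B)=\vol(K_Z+B_Z+M_Z)$; this last class is ample in the semi-ample case since it is $\sim_{\QQ}A$, and big in the klt case since $\kappa(K_Z+B_Z+M_Z)=\dim Z=\kappa$. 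This step is where the dichotomy in the statement is used: for a general lc pair whose log canonical class is neither semi-ample nor, on a crepant model, klt, we do not know how to realize the Iitaka fibration compatibly with the canonical bundle formula without assuming abundance.

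\emph{Step 2: bounding the data on $Z$, and concluding.} The coefficients of $B_Z$ have the form $1-t$, where $t$ is a log canonical threshold computed on a fibre sitting inside the $d$-dimensional $X$; by the ACC for log canonical thresholds \cite{HMX14} they therefore lie in a DCC set $\Gamma'=\Gamma'(d,\Gamma)\subset[0,1]$. For the moduli $\rb$-divisor $\bM_Z$ I would invoke the canonical bundle formula in relative dimension $\leq 2$: $\bM_Z$ is $\rb$-semi-ample (Kodaira in relative dimension $1$; Prokhorov--Shokurov, Fujino, Filipazzi in relative dimension $2$), and, using the boundedness of the moduli part for fibrations with fibres of dimension $\leq 2$ and coefficients in the DCC set $\Gamma$, there is an integer $m=m(d-\kappa,\Gamma)>0$, independent of $(X,B)$, with $m\bM_Z$ being $\rb$-Cartier. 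Passing to a sufficiently high model $Z'\to Z$ on which $\bM_Z$ descends, one obtains a generalized log canonical pair $(Z',B_{Z'}+\bM_{Z'})$ of dimension $\kappa$, with $\vol(K_{Z'}+B_{Z'}+M_{Z'})=\vol(K_Z+B_Z+M_Z)$, with $B_{Z'}$ having coefficients in a DCC set (enlarge $\Gamma'$ by the finitely many new exceptional log discrepancies, still a DCC set by the ACC for log discrepancies), and with $M_{Z'}=\tfrac1m N$ for a nef Cartier divisor $N$. Finally I would apply the DCC for volumes of generalized log canonical pairs of a fixed dimension whose boundary coefficients lie in a fixed DCC set and whose nef part has Cartier index dividing a fixed integer — the generalized-pair analogue of \cite{HMX14}, see also \cite{Bir21} — to conclude that the set $\{\vol(K_{Z'}+B_{Z'}+M_{Z'})\}$, and hence the set in the statement, satisfies the DCC; these volumes are positive since $K_{Z'}+B_{Z'}+M_{Z'}$ is big.

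\emph{Main obstacle.} The hard part is the uniform control of the moduli part in Step 2 — its $\rb$-semi-ampleness and, above all, the existence of the index bound $m(d-\kappa,\Gamma)$ — which is precisely what confines us to $\kappa\geq d-2$, since the canonical bundle formula and the relevant boundedness of the moduli part are only unconditional when the fibres have dimension at most $2$. When $d\leq 3$ these inputs hold in the full range $1\leq\kappa\leq d$, which is how Theorem~\ref{thm: main1} is obtained; for $d\geq 4$ with $\kappa=d-3$ they are at present conjectural, which is why the statement is restricted as it is. The remaining points — the bookkeeping in Step 1 identifying $\vol_\kappa(K_X+B)$ with $\vol(K_Z+B_Z+M_Z)$, especially in the klt case, and the crepant-birational invariance of volumes of generalized pairs in Step 2 — are routine given the standard theory of Iitaka fibrations and of generalized pairs.
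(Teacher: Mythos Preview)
Your overall strategy matches the paper's: push $K_X+B$ down via the canonical bundle formula, use relative dimension $\leq2$ to bound the $\rb$-Cartier index of the moduli part (the paper's Lemma~\ref{lem: cbfindex}), and conclude by Birkar's DCC for volumes of big generalized lc pairs \cite[Theorem 1.3]{Bir21}. Your semi-ample case is essentially the paper's argument.

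The gap is in your klt case. You assert that the general fiber of the Iitaka fibration is ``a klt log Calabi--Yau pair'', but the Iitaka fibration only gives $\kappa(K_F+B_F)=0$, not $K_F+B_F\sim_\QQ 0$; the canonical bundle formula you invoke in Step~2 (and in particular the index bound on $\bM$) requires an lc-trivial fibration $K_X+B\sim_{\RR,Z}0$, which the raw Iitaka fibration does not provide. Your phrase ``modulo an $f$-exceptional effective divisor'' does not close this: an effective vertical divisor can certainly affect $h^0$ and hence the Iitaka volume, so the identity $\vol_\kappa(K_X+B)=\vol(K_Z+B_Z+M_Z)$ is unjustified. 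The paper handles this by first proving (Proposition~\ref{thm:gmm}, following \cite{GW22,HS21,Fil23}) that a projective klt pair with $\kappa(K_X+B)\geq d-3$ admits a good minimal model, thereby reducing the klt case to the semi-ample case outright. That reduction is the missing step in your Step~1, and dismissing it as ``routine'' is not accurate---it uses abundance for the low-dimensional fibers together with \cite[Lemma~2.13]{HH20} in a nontrivial way.

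Two minor points on Step~2. First, the detour to a higher model $Z'$ is unnecessary: Birkar's theorem applies directly to the generalized pair $(Z,B_Z+\bM)$. Second, your justification ``ACC for log discrepancies'' for enlarging $\Gamma'$ invokes a conjecture, not a theorem; the correct input is the ACC for log canonical thresholds \cite{HMX14}, which already gives $B_Z\in\tGamma$ for a fixed DCC set $\tGamma$ without any need to pass to $Z'$.
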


In fact, more topological properties are shared by the coefficient set $\Gamma$ and the set of Iitaka volumes $\Ivol_\lc^\Gamma(d,\kappa)$. Filipazzi \cite[Theorem~1.2]{Fil20} showed that the usual volume set $\Ivol_\lc^\Gamma(d,d)$ is closed if $\Gamma\subset[0,1]$ is a closed DCC subset and if $1\in \Gamma$. His result can be generalized to the setting of Iitaka volumes, at least in low dimensions. 
\begin{thm}[= Corollary~\ref{cor:fac} + Theorem~\ref{thm: closed}]\label{thm: main3}
Let $\Gamma\subset [0,1]$ be a DCC subset, $M$ a positive real number, and $d\in \{2,3\}$. 
\begin{enumerate}
    \item If $\Gamma$ has finite accumulation complexity, then so does $\Ivol_\lc^\Gamma(d,1)_{\leq M}$.
    \item If $\Gamma$ is closed in $\RR$, then $\Ivol_\lc^\Gamma(d,1)$ is also closed. 
\end{enumerate}
\end{thm}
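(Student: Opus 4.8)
The plan is to reduce both statements to the numerical geometry of the base curve of the Iitaka fibration via the canonical bundle formula. Given $(X,B)\in\gP_\lc^\Gamma(d,1)$, after replacing $X$ by a suitable log resolution I may assume the Iitaka fibration is a morphism $f\colon X\to C$ onto a smooth projective curve with $K_X+B\sim_\QQ f^*A$ for an ample $\QQ$-divisor $A$ on $C$, and then $\vol_1(K_X+B)=\deg_C A$. Applying the canonical bundle formula for the lc-trivial fibration $f$ (Fujino--Mori, Ambro) produces a discriminant $\QQ$-divisor $B_C\ge 0$ and a nef moduli $\QQ$-divisor $M_C$ on $C$ with $A\sim_\QQ K_C+B_C+M_C$, so that
\[
\Ivol_\lc^\Gamma(d,1)=\bigl\{\deg_C\!\bigl(K_C+B_C+M_C\bigr)\ \big|\ (X,B)\in\gP_\lc^\Gamma(d,1)\bigr\}.
\]
The general fibre $(F,B_F)$ of $f$ is a $(d-1)$-dimensional projective lc pair with $K_F+B_F\sim_\QQ 0$ and coefficients of $B_F$ in $\Gamma$; the hypothesis $d\le 3$ means $\dim F\le 2$.

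Next I would control the three contributions in $\deg_C(K_C+B_C+M_C)=(2g(C)-2)+\deg_C M_C+\deg_C B_C$ separately. The term $2g(C)-2$ ranges over the closed discrete set $2\ZZ_{\ge-1}$. For the moduli part: since the fibres $(F,B_F)$ form a bounded family of $(d-1)$-dimensional lc log Calabi--Yau pairs with coefficients in the DCC set $\Gamma$ (classical for curves; for surfaces this is exactly the input underlying \cite[Theorem~8.1]{PS09}), there is an integer $N=N(d,\Gamma)$ with $NM_C$ integral, hence $\deg_C M_C\in\tfrac1N\ZZ_{\ge0}$, again closed and discrete. For the discriminant: each coefficient of $B_C$ has the form $1-t_P$ where $t_P$ is an lc threshold of $f^*P$ with respect to $(X,B)$, so by the ACC for lc thresholds and the standard analysis of discriminants of lc-trivial fibrations (cf. \cite{PS09,Bir21}) all these coefficients lie in a set $D(\Gamma)\subset[0,1]$ obtained from $\Gamma$ by finitely many operations (bounded-denominator rational affine combinations, finite unions, closure) which preserve the DCC and, since $\Gamma\subset[0,1]$, also closedness and finiteness of accumulation complexity; moreover only finitely many points $P$ contribute.

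Now I assemble. Bigness of $K_C+B_C+M_C$ forces $\deg_C(K_C+B_C+M_C)>0$, so if this degree is bounded above by $M'$ then $g(C)$ and $\deg_C M_C$ are bounded, and — because the positive elements of the DCC set $D(\Gamma)$ have a positive infimum $\delta$ — the number $k$ of points in $\Supp B_C$ is at most $(M'+2)/\delta$. Hence every element $\le M'$ of $\Ivol_\lc^\Gamma(d,1)$ has the form $(2g-2)+\mu+\sum_{i=1}^{k}c_i$ with $g\in\ZZ_{\ge0}$, $\mu\in\tfrac1N\ZZ_{\ge0}$, $k$ bounded and $c_i\in D(\Gamma)$. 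For part~(1), finiteness of accumulation complexity is preserved under translation by the discrete set $\{(2g-2)+\mu\}$ and under a $k$-fold Minkowski sum of $D(\Gamma)$ with itself (the relevant combinatorial lemma being what makes this a corollary), so the conclusion for $\Ivol_\lc^\Gamma(d,1)_{\le M}$ follows from the corresponding property for $D(\Gamma)$. For part~(2), given a convergent sequence in $\Ivol_\lc^\Gamma(d,1)$ I pass to a subsequence with $g$, $k$, $\mu$ fixed and $c_{n,i}\to c_i$; since $D(\Gamma)\cap[0,M']$ is compact, $c_i\in D(\Gamma)$ and the limit again has the shape $(2g-2)+\mu+\sum c_i$.

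The main obstacle is the final \emph{realization} step needed for closedness: one must show that the set of numerical data $(g,\mu,c_1,\dots,c_k)$ that actually arises from pairs in $\gP_\lc^\Gamma(d,1)$ is closed, not merely that the resulting degrees land in a closed set. I expect to handle this by a limiting argument in the spirit of Filipazzi's proof of \cite[Theorem~1.2]{Fil20} for $\kappa=d$: using boundedness of lc-trivial fibrations over the bounded base curves (available in relative dimension $\le 2$, in the spirit of \cite{Bir21}), pass to a limit at the level of the generalized polarized pair $(C,B_C+M_C)$ and produce, after running an MMP in the degenerating family, an honest lc pair with coefficients in $\Gamma$ realizing the limiting data. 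This is exactly the point where the restriction $d\le 3$ — equivalently, fibres of dimension at most two — is essential.
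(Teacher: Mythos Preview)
Your architecture for part~(1) is essentially the paper's: pass to a model where $K_X+B$ is semi-ample (a good minimal model, not a log resolution --- on a resolution $K_X+B$ is generally not pulled back from the base), apply the canonical bundle formula over the curve, bound the b-Cartier index of the moduli part using complements in relative dimension $\le 2$ (Lemma~\ref{lem: cbfindex}), and observe that the discriminant coefficients lie in a DCC set whose accumulation complexity is controlled by that of $\Gamma$ (Lemma~\ref{lem: cbf (d,1)}).

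The gap is in part~(2), at the realization step you correctly flag. Your description of $D(\Gamma)$ as built from $\Gamma$ by ``bounded-denominator'' operations is not right: the discriminant coefficients lie in $\bigcup_{m\ge 1}\bigl(\tfrac{1}{m}\Gamma+\tGamma_0\bigr)$ with $\tGamma_0$ hyperstandard, an \emph{infinite} union over unbounded fiber multiplicities $m$, and closedness of $\Gamma$ does not transfer directly to this set. Your proposed fix via boundedness of lc-trivial fibrations and MMP in degenerating families does not explain why the limiting boundary ends up with coefficients in $\Gamma$. The paper avoids families entirely. It first runs a further MMP (Lemma~\ref{lem: model nef Bv}) to arrange $B^v\sim_{\RR,Z}0$, so that $B^v=\sum_j c_j f^*z_j$ with $c_j=\gamma_j/m_j$, $\gamma_j\in\Gamma$, $m_j$ a fiber multiplicity, and the discriminant splits as $B_Z=B_Z'+\sum_j c_j z_j$ where $B_Z'\in\tGamma_0$ is the hyperstandard discriminant of $(X,B^h)$. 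Along a convergent sequence one fixes $(g,\deg\bM_Z,\#\Supp B_Z)$ and studies each coefficient $b_{ij}=b'_{ij}+\gamma_{ij}/m_{ij}$: if the limit $b_j<1$ then $m_{ij}$ is bounded and the hyperstandard $b'_{ij}$ stabilizes, so $\gamma_j:=\lim\gamma_{ij}\in\Gamma$ by closedness of $\Gamma$; if $b_j=1$ one uses the lct of the fiber instead. Then one defines $(X_\infty,B_\infty)$ on the \emph{fixed} variety $X_1$ by replacing only the vertical part of $B_1$ with the limiting coefficients; this is lc with the limiting volume but its vertical coefficients need not lie in $\Gamma$. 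Finally Lemma~\ref{lem: model right coefficient} --- an MMP/extraction argument on this single $X_1$ --- produces a crepant model whose vertical boundary over each $z_{1j}$ is a single irreducible component with coefficient $\gamma_j\in\Gamma$. No boundedness of fibrations is needed.
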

Here, for a subset $A\subset \RR$, if there is a minimal $n\in\ZZ_{\geq 0}$ such that its $(n+1)$-th derived set $A^{(n+1)}$ is empty, then we say \emph{$A$ has accumulation complexity $n$}; otherwise, we say \emph{$A$ has infinite accumulation complexity}.

In the proofs of Theorems~\ref{thm: main1} and \ref{thm:main2} in Section~\ref{sec: DCC}, one needs the existence of a good minimal model $(X, B)\dashrightarrow (Y, B_Y)$,  which are fully available in dimensions $\leq 3$ and still conditional in higher dimensions, in order to set up a Iitaka fibration $f\colon Y\rightarrow Z$ of the pair $(Y, B_Y)$. Via the canonical bundle formula $K_Y+B_Y\sim_{\RR, Z} f^*(K_Z+B_Z+\bM_Z)$, we have 
\[
\vol_k(K_X+B) = \vol_k(K_Y+B_Y) = \vol(K_Z+B_Z+\bM_Z).
\]
In order to obtain control over $\vol(K_Z+B_Z+\bM_Z)$, the key point is to bound uniformly the $\rb$-Cartier index of the moduli part $\bM$ (see Lemma~\ref{lem: cbfindex}). Currently, this is contingent on the Existence of Complements Conjecture (Conjecture \ref{conj: boundedness and existence of n complement nft}), which is known to hold only in dimension $\leq 3$ (\cite[Theorem 2.14]{CHL24}). For the proof of Theorem~\ref{thm: main3}, we need to run a further MMP to make the vertical part $B^v$ of the boundary $\RR$-divisor $B$ relatively trivial over $Z$ (Lemma~\ref{lem: model nef Bv}), so that we have a better control of the accumulation behaviour of the coefficients of the discriminant part $B_Z$ (Lemma~\ref{lem: cbf (d,1)}).

Invariant Iitaka volumes (Definition \ref{defn:inviiitdim}) behave differently compared to Iitaka volumes (see Example \ref{ex: why invairant iitaka dimension}). Despite these differences, it is still conjectured that the DCC property holds for invariant Iitaka volumes (Conjecture \ref{conj:invivoldcc}) which can be confirmed in low dimensions (Theorem \ref{thm:invivoldcc3}). A brief discussion on this topic will be provided in subsection \ref{sec:invivol}.

Most of the remaining part of the paper, Section 4, is devoted to a more detailed description of the sets of Iitaka volumes of several classes of log canonical surfaces. For smooth elliptic surfaces with Kodaira dimension 1, the set $\Ivol_\sm(2,1)$ of their Iitaka volumes can be completely determined as follows.
\begin{thm}[= Theorem~\ref{thm: sm surf Ivol}]\label{thm: main4}
Using Notation~\ref{nota: Ivol}, we have 
\[
\Ivol_\sm(2,1)= \Ivol(\sE_{1,0})\cup \Ivol(\sE_{0,0}),
\]
where $\Ivol(\sE_{1,0})$ and $ \Ivol(\sE_{0,0})$ are specified as follows:
\[
\Ivol(\sE_{1,0}) = \left\{-1+\sum_{1\leq i\leq r}\left(1-\frac{1}{m_i}\right) \,\bigg|\, r\in \Zz_{>0},\, m_i\in \ZZ_{\geq 2}\right\}_{>0}
\]
and 
\[
\Ivol(\sE_{0,0})=\left\{-2 +\sum_{1\leq i\leq r} \left(1- \frac{1}{|\sigma_i|}\right) \,\,\Bigg|\,\,
\begin{aligned}
& \sigma_i\text{ are torsion elements of}\\
& \text{an elliptic curve such that $\sum_i\sigma_i=0$}
\end{aligned}
\right\}_{>0}.
\]
\end{thm}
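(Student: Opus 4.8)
The plan is to analyze the two families of elliptic surfaces separately via the classical canonical bundle formula of Kodaira for an elliptic fibration $f\colon S\to C$, namely
\[
K_S \sim_{\QQ} f^*\Bigl(K_C + \sum_i \bigl(1-\tfrac{1}{m_i}\bigr)P_i\Bigr) + (\text{contribution of }\chi(\sO_S)\text{ as a divisor class on }C),
\]
where the $m_i$ are the multiplicities of the multiple fibers over points $P_i\in C$. Since $\kappa(S)=1$, passing to the relatively minimal model does not change $\kappa$ or the Iitaka volume, so we may assume $f$ is relatively minimal; then $\vol_1(K_S)=\deg\bigl(K_C+\sum_i(1-1/m_i)P_i + \bM_C\bigr)$, where $\bM_C$ has degree $\chi(\sO_S)=\deg R^1f_*\sO_S^\vee$ (a non-negative integer for relatively minimal elliptic surfaces). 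The first step is therefore to package $\Ivol_\sm(2,1)$ as the set of positive numbers of the form $2g-2 + \sum_i(1-1/m_i) + \chi(\sO_S)$ over all admissible configurations, where $g=g(C)$.

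The second step is to reduce to $g\in\{0,1\}$ and to the extreme values of $\chi$. If $g\ge 2$ we already have $\deg K_C\ge 2$, and one checks that any such volume can be re-realized (possibly with extra multiple fibers, which only add positive terms $1-1/m$) by a configuration with $g=0$; similarly if $\chi(\sO_S)\ge 2$, the base-change / multiple-fiber bookkeeping lets us trade $\chi$ for multiple fibers. The upshot is that the whole set is the union of the $g=1,\chi=0$ stratum and the $g=0,\chi=1$ stratum — the only two cases where the ``background'' term $2g-2+\chi$ equals $-1$; wait, let me recompute: $g=1,\chi=0$ gives background $0$, and $g=0,\chi=1$ gives background $-1$. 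Matching against the statement, $\Ivol(\sE_{1,0})$ should be the stratum with background $-1$, i.e.\ $g=0$ and $\chi(\sO_S)=1$ (a properly elliptic surface over $\PP^1$ with $\chi=1$, necessarily with multiple fibers to make $\kappa=1$); so $\Ivol(\sE_{1,0})=\{-1+\sum_{1\le i\le r}(1-1/m_i)\}_{>0}$ with $r\ge 1$, $m_i\ge 2$, exactly as stated, provided one verifies that every such numerical configuration is realized by an honest smooth elliptic surface and conversely. The realization direction uses the standard construction of elliptic surfaces over $\PP^1$ with prescribed multiple fibers via logarithmic transformations applied to a product $E\times\PP^1$ or a rational elliptic surface; the necessity direction (that $\chi=1$ forces this shape, up to adding multiple fibers) uses Noether's formula and the classification of relatively minimal elliptic surfaces.

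The third step handles $\sE_{0,0}$: elliptic surfaces with $g(C)=1$ and $\chi(\sO_S)=0$. Here $\chi(\sO_S)=0$ forces $f$ to be (after base change) a quotient of $E'\times E$ by a finite group acting by translations on one factor, and the multiple fibers have multiplicities $m_i=|\sigma_i|$ where $\sigma_i$ are torsion points of an elliptic curve, subject to $\sum_i\sigma_i=0$ in that elliptic curve — this is precisely the monodromy/obstruction condition for such an isotrivial fibration to exist, and is where the ``$\sum_i\sigma_i=0$'' constraint enters. The background term is $2g-2+\chi=0$, but the statement records it as $-2+\sum(1-1/|\sigma_i|)$ with the understanding that the $-2$ absorbs... — more precisely, for these surfaces $K_S\sim_\QQ f^*(\sum_i(1-1/m_i)P_i)$ since $K_C\sim 0$ and $\chi=0$, yet to have $\kappa=1$ one needs $\deg\sum_i(1-1/m_i)P_i>0$; the ``$-2$'' in the stated formula must come from a different normalization of which curve the $\sigma_i$ live on, and matching it precisely is a bookkeeping step I would carry out carefully. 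The \textbf{main obstacle} is exactly this last point: proving that the realizability of a multiple-fiber configuration on an isotrivial elliptic surface over an elliptic base is governed by the single condition $\sum_i\sigma_i=0$, and that conversely every smooth properly elliptic surface with $\chi=0$ arises this way — this requires invoking the structure theory of elliptic surfaces with $\chi(\sO_S)=0$ (they are all obtained from $E'\times E$ by quotienting, hence the monodromy around the multiple fibers lies in the translation subgroup, and the product-of-monodromies-is-trivial relation becomes $\sum\sigma_i=0$) together with an explicit logarithmic-transformation construction realizing any prescribed torsion data satisfying that relation. The positivity decoration $\{\cdots\}_{>0}$ throughout just reflects that we only keep configurations with $\kappa=1$, i.e.\ strictly positive Iitaka volume, and is automatic once the degree computations are in place.
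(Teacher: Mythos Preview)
Your overall strategy --- Kodaira's canonical bundle formula, stratification by $(\chi(\sO_S), g(Z))$, logarithmic transformations for realizability, and the quasi-bundle structure when $\chi=0$ --- is the same as the paper's. However, you have misread the indexing convention: in the paper $\sE_{\chi,g}$ is indexed with $\chi(\sO_S)$ first and $g(Z)$ second, so $\sE_{0,0}$ means $\chi(\sO_S)=0$ \emph{and} $g(Z)=0$, i.e.\ the base is $\PP^1$, not an elliptic curve. This immediately resolves your puzzlement about the $-2$: it is simply $\deg K_{\PP^1}=2g(Z)-2=-2$, with no renormalization needed.

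This misidentification creates a real gap in your treatment of $\sE_{0,0}$. With base $\PP^1$ and $\chi=0$, the surface is a quotient $(\tZ\times E)/G$ with $G$ acting on the elliptic curve $E$, and the essential step --- which you assert but do not justify --- is that $G$ acts on $E$ by \emph{translations} rather than by arbitrary automorphisms. The paper proves this by computing $q(X)$ two ways: on one hand $q(X)=p_g(X)+1-\chi(\sO_X)\ge 1$ since $\kappa(X)=1$ forces $p_g(X)\ge 0$, and on the other $q(X)\le g(Z)+g(E/G)\le 0+1$; hence $q(X)=g(E/G)=1$, so $E/G$ is still elliptic and $G$ must act by translations. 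This is exactly what makes the local monodromy generators $\sigma_i$ torsion elements of an elliptic curve, with $\sum_i\sigma_i=0$ coming from the standard relation in $\pi_1(\PP^1\setminus\{\text{branch points}\})$. Your sketch places this whole analysis over a genus-one base, where $2g-2=0$ and neither the $-2$ nor the sum-zero constraint appear in the form stated; the paper handles the strata with $\chi=0$ and $g\ge 1$ separately (Proposition~\ref{prop: vol sm g>0}), showing they are already contained in $\Ivol(\sE_{1,0})$ and hence contribute nothing new to the union.
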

In the proof of Theorem~\ref{thm: main4}, one direction of the inclusions is essentially by the canonical bundle formula of Kodaira for elliptic fibrations. For the other direction of inclusions, we need to construct elliptic fibrations with prescribed invariants, using techniques such as base change, logarithmic transformations, and $*$-transfers of elliptic fibers.

As a corollary of Theorem~\ref{thm: main4}, we can draw the following conclusions about the geometry of the set $\Ivol_\sm(2,1)$.
\begin{cor}[= Theorem~\ref{thm: min Ivol sm} + Corollary~\ref{cor: sm acc points}] The following holds for $\Ivol_\sm(2,1)$.
    \begin{enumerate}
    \item $\min\Ivol_\sm(2,1) =\frac{1}{6}$.
      \item The minimal accumulation point of $\Ivol_\sm(2,1)$ is $\frac{1}{2}$.
      \item For a given $M\geq\frac{1}{2}$, the accumulation complexity of $\Ivol_\sm(2,1)_{\leq M}$ is $\lfloor M \rfloor + 1$.
      \item The accumulation complexity of $\Ivol_\sm(2,1)$ is $\infty$.
      \item $\Ivol_\sm(2,1)$ is closed in $\RR$.
  \end{enumerate} 
\end{cor}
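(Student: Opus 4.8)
The plan is to deduce all five assertions from the explicit description $\Ivol_\sm(2,1) = \Ivol(\sE_{1,0}) \cup \Ivol(\sE_{0,0})$ supplied by Theorem~\ref{thm: main4}, treating the two pieces separately and then recombining via the elementary facts that, for a finite union, $(A\cup B)^{(k)} = A^{(k)}\cup B^{(k)}$, so that $\min(A\cup B)=\min\{\min A,\min B\}$, that the accumulation complexity of $(A\cup B)_{\le M}$ is the larger of the two complexities, and that a finite union of closed sets is closed. It is convenient to set $S_0:=\{0\}$ and, for $r\ge1$, $S_r:=\{\sum_{i=1}^r(1-\tfrac1{m_i}): m_i\in\ZZ_{\ge2}\}\subset[\tfrac r2,r)$, the $r$-fold Minkowski sum of $\{1-\tfrac1m: m\ge2\}$. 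Then $\Ivol(\sE_{1,0})=\big(\bigcup_{r\ge1}(-1+S_r)\big)_{>0}$, and $\Ivol(\sE_{0,0})$ is the positive part of the same union with $-1$ replaced by $-2$ and with the tuples $(m_1,\dots,m_r)$ restricted to multisets of orders of torsion points $\sigma_i$ on an elliptic curve satisfying $\sum_i\sigma_i=0$.

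For the first (and easier) piece I would begin by computing the derived sets of $S_r$. Since $1-\tfrac1m\to1$ and these are the only first-level accumulation points, induction on $r$ yields $S_r'=\bigcup_{1\le k\le r}(k+S_{r-k})$, whence $S_r$ has accumulation complexity $r$, $S_r^{(k)}\ne\emptyset$ iff $k\le r$ with $\min S_r^{(k)}=\tfrac{r+k}2$, and $S_r^{(r)}=\{r\}$; one also records the identity $S_j+1\subseteq S_{j+2}$, valid because $1=2\cdot\tfrac12$. Now $-1+S_r$ meets $(0,\infty)$ only for $r\ge2$; for $r=2$ the positive part is $\{1-\tfrac1{m_1}-\tfrac1{m_2}:\tfrac1{m_1}+\tfrac1{m_2}<1\}$, with minimum $\tfrac16$ attained at $\{m_1,m_2\}=\{2,3\}$, while for $r\ge3$ every element is $\ge\tfrac r2-1\ge\tfrac12$; hence $\min\Ivol(\sE_{1,0})=\tfrac16$. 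Because the union $\bigcup_r(-1+S_r)$ is locally finite on bounded intervals and $(0,\infty)$ is open (and one checks that the positive part has no accumulation point at $0$), $\Ivol(\sE_{1,0})^{(k)}=\bigcup_r(-1+S_r^{(k)})\cap(0,\infty)$; therefore $\Ivol(\sE_{1,0})^{(k)}_{\le M}\ne\emptyset$ exactly when some $r\ge\max(2,k)$ satisfies $\tfrac{r+k}2-1\le M$, i.e.\ when $k\le M+1$ (using $M\ge\tfrac12$), so the accumulation complexity of $\Ivol(\sE_{1,0})_{\le M}$ is $\lfloor M\rfloor+1$; since $-1+S_k^{(k)}=\{k-1\}$ is a nonempty positive subset of $\Ivol(\sE_{1,0})^{(k)}$ for every $k\ge2$, the full accumulation complexity is $\infty$. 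The case $k=1$ shows the minimal accumulation point is $\tfrac12$ (witnessed by $\tfrac12-\tfrac1m\to\tfrac12$ with $m\ge3$). For closedness, $\Ivol(\sE_{1,0})'\subseteq\bigcup_{k\ge1,\,j\ge0}(k-1+S_j)$, and $k-1+S_j\subseteq-1+S_{j+2k}$ because $k+S_j=(\sum_{i=1}^{2k}\tfrac12)+S_j\subseteq S_{j+2k}$; hence $\Ivol(\sE_{1,0})$ is closed.

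The second piece is the same analysis constrained by realizability. The relation $\sum_i\sigma_i=0$ forces each $n_i:=|\sigma_i|$ to divide $\lcm\{n_j:j\ne i\}$; in particular, for $r=2$ this forces $n_1=n_2$, so $r=2$ contributes only nonpositive values, and no single index can tend to $\infty$ while the rest stay bounded. Running through the cases $r\le4$ with these divisibility constraints (for $r\ge5$ every element is already $\ge-2+\tfrac r2\ge\tfrac12$) one finds that every element of $\Ivol(\sE_{0,0})$ is $\ge\tfrac16$, the extremal configuration being $(n_1,n_2,n_3)=(2,6,6)$, which itself lies in $\Ivol(\sE_{0,0})$. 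An accumulation point forces at least two indices to $\infty$, hence at least two summands to tend to $1$, so for $r\ge3$ the limit is at least $-2+2+\tfrac12=\tfrac12$; and $\tfrac12$ is indeed attained, as $\lim(-2+\tfrac12+2(1-\tfrac1N))$ coming from $(2,N,N)$ with $4\mid N$. The same kind of argument — together with the observation that appending a pair of equal $2$-torsion points replaces a value $v$ by $v+1$ — shows that the accumulation complexity of $\Ivol(\sE_{0,0})_{\le M}$ does not exceed $\lfloor M\rfloor+1$ and that $\Ivol(\sE_{0,0})$ is closed.

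Putting the two pieces together gives $\min\Ivol_\sm(2,1)=\tfrac16$ (part (1)), minimal accumulation point $\tfrac12$, already attained within $\Ivol(\sE_{1,0})$ (part (2)), accumulation complexity of $\Ivol_\sm(2,1)_{\le M}$ equal to $\lfloor M\rfloor+1$ since this is the larger of the two piecewise complexities and is attained (part (3)), full accumulation complexity $\infty$ (part (4)), and closedness as a finite union of closed sets (part (5)). The $\Ivol(\sE_{1,0})$ computations are a clean exercise on derived sets of Minkowski sums; I expect the main obstacle to be the $\Ivol(\sE_{0,0})$ side — proving the lower bound $\tfrac16$ via the divisibility constraints on the orders, and, more seriously, controlling which sub-configurations of torsion points with vanishing sum extend to admissible configurations, on which both the accumulation-complexity bound and the closedness of this piece depend.
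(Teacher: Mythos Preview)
Your treatment of the $\Ivol(\sE_{1,0})$ piece and of part (1) is essentially the paper's argument: the minimum $\tfrac16$ is found the same way, and your derived-set computation for $\bigcup_r(-1+S_r)$ is exactly the content of Theorem~\ref{thm: Ivol sm derived} restricted to $\Ivol(\sE_{1,0})$. The divisibility observation $n_i\mid\lcm\{n_j:j\ne i\}$ and the ``at least two indices must go to $\infty$'' lemma for $\Ivol(\sE_{0,0})$ are also the same ingredients the paper uses.

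Where you and the paper diverge is in how to dispose of $\Ivol(\sE_{0,0})$ for parts (2)--(5). You try to analyze $\Ivol(\sE_{0,0})$ in parallel: bound its accumulation complexity directly and show it is closed, so that the union of two closed sets is closed. As you yourself flag, this runs into the realizability problem: an accumulation point of $\Ivol(\sE_{0,0})$ has the form $(r-s-2)+\sum_{j\le s}(1-\tfrac{1}{m_j})$ with $r-s\ge2$, but the surviving orders $m_1,\dots,m_s$ need not arise from torsion points summing to zero, so it is not clear that such a limit lies in $\Ivol(\sE_{0,0})$. Your ``append a pair of equal $2$-torsion points'' trick shifts values by $+1$ but does not by itself repair this. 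Consequently your closedness argument ``finite union of closed sets'' has a real gap.

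The paper sidesteps this entirely with one clean inclusion: it proves $\Ivol(\sE_{0,0})'\subseteq\Ivol(\sE_{1,0})'$. Indeed, the limit $(r-s-2)+\sum_{j\le s}(1-\tfrac{1}{m_j})$ with $r-s\ge2$ lies in $\{\sum_i(1-\tfrac{1}{m_i})\}_{>0}=\Ivol(\sE_{1,0})'$ once one absorbs the integer $r-s-2$ as $2(r-s-2)$ copies of $\tfrac12$. This single observation gives $\Ivol_\sm(2,1)^{(n)}=\Ivol(\sE_{1,0})^{(n)}$ for all $n\ge1$, so parts (2)--(4) follow from your $\sE_{1,0}$ analysis alone; and for (5) one only needs $\Ivol(\sE_{0,0})'\subseteq\Ivol(\sE_{1,0})\subseteq\Ivol_\sm(2,1)$, not that $\Ivol(\sE_{0,0})$ is closed. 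This is the missing idea that dissolves the obstacle you identified.
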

Parallel results are proved if one fixes the geometric genus; see Theorem~\ref{thm: Ivol sm prescribe pg}, Corollaries~\ref{cor: min Ivol sm prescribe pg} and \ref{cor: Ivol ccc sm prescribe pg}.

For singular surfaces, we give a recipe in Theorem~\ref{thm: lc surf Ivol}, reducing the computation of the Iitaka volume of a log canonical surface $(X, B)$ to that of a crepant smooth model $(S, B_S)$. We can then proceed according to the geometry of $(S, B_S)$, which is more transparent than that of $(X, B)$. However, the coefficients of $B_S$ do not necessarily come from the set $\Gamma$ we start with. In the cases $\Gamma=\{0\}$ or $\{0,1\}$, it is possible to describe the possible coefficients of $B_S$ in a satisfactory way, and in turn one may find the minima of $\Ivol_\lc^\Gamma(2,1)$ and $\Ivol_\lc^\Gamma(2,1)'$ in these cases.
\begin{thm}[= Theorem~\ref{thm: min lc Ivol 0} + Theorem~\ref{thm: min lc Ivol 1}]
We have
\[
\min\Ivol_\lc^{\{0\}}(2,1) =\min \Ivol_\lc^{\{0,1\}}(2,1) = \frac{1}{671},\quad \min\Ivol_\lc^{\{0\}}(2,1)' =\min \Ivol_\lc^{\{0,1\}}(2,1)' = \frac{1}{66}.
\]
\end{thm}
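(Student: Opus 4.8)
The plan is to combine the reduction recipe (Theorem~\ref{thm: lc surf Ivol}) with the canonical bundle formula for properly elliptic surfaces, and then run a finite optimization over the discrete data that appear. First I would fix a pair $(X,B)\in\gP_\lc^{\Gamma}(2,1)$ with $\Gamma=\{0\}$ or $\{0,1\}$, pass to a crepant smooth model $(S,B_S)$ as in Theorem~\ref{thm: lc surf Ivol}, and then run a relative minimal model program to reach a relatively minimal elliptic fibration $f\colon S'\to C$ with $\kappa(K_{S'}+B_{S'})=1$; this preserves the Iitaka volume. By the canonical bundle formula $K_{S'}+B_{S'}\sim_{\QQ} f^*(K_C+B_C+\bM_C)$, we get $\vol_1(K_X+B)=\deg(K_C+B_C+\bM_C)$. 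The contributions to $B_C+\bM_C$ are of three types: (i) the moduli part $\bM_C$, which by Kodaira's formula has degree $\tfrac{1}{12}\deg(\text{the }j\text{-invariant divisor})$, hence lies in $\tfrac{1}{12}\ZZ_{\geq0}$; (ii) the discriminant contributions from multiple fibers, each of the form $1-\tfrac{1}{m_i}$ with $m_i\geq 2$ an integer; (iii) the horizontal/vertical pieces of $B_S$ pushed to $C$, whose coefficients — after the analysis of which coefficients can appear on $B_S$ when $\Gamma\in\{\{0\},\{0,1\}\}$ carried out in the proof of Theorem~\ref{thm: lc surf Ivol} — lie in an explicit DCC set.

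The second step is to make the minimization problem finite. Writing $g$ for the genus of $C$, we have
\[
\deg(K_C+B_C+\bM_C)=2g-2+\deg B_C+\deg\bM_C,
\]
and positivity of the Iitaka volume forces this to be $>0$. Since each summand $1-\tfrac1{m_i}\geq\tfrac12$ and $\deg\bM_C\in\tfrac1{12}\ZZ_{\geq0}$, only finitely many combinations of $(g,\#\{\text{multiple fibers}\},\deg\bM_C,\text{coefficients from }B_S)$ can give a value below any fixed bound, say below $1$. One then checks that the minimum is attained with $g=0$, no boundary from $B_S$ surviving on $C$ (this is where $\Gamma=\{0\}$ vs.\ $\{0,1\}$ matters: for $\{0,1\}$ the horizontal components of $B$ with coefficient $1$ contribute, via the different, fractions that are bounded below by explicit quantities, changing the optimum from $\tfrac1{671}$ for the unmarked case $\Ivol^{\{0\}}$/$\Ivol^{\{0,1\}}$ to $\tfrac1{66}$ for the primed versions $\Ivol'$ which, per Notation~\ref{nota: Ivol}, impose the extra marking condition), and the minimum of
\[
-2+\sum_{i=1}^{r}\Bigl(1-\tfrac1{m_i}\Bigr)+\tfrac{k}{12}
\]
over $r\in\ZZ_{\geq0}$, integers $m_i\geq2$, $k\in\ZZ_{\geq0}$, subject to the expression being positive, equals $\tfrac1{671}$. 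Concretely $-2+(1-\tfrac12)+(1-\tfrac13)+(1-\tfrac17)+(1-\tfrac1{43})+\tfrac{?}{12}$-type Sylvester-sequence configurations produce denominators of the form $2\cdot3\cdot7\cdot43+1$ and $2\cdot3\cdot11+1$; I would verify $671=2\cdot3\cdot7\cdot43/\cdots$ — rather, $671=11\cdot 61$, arising as the exact denominator of the optimal rational combination — by a direct finite search, and similarly $66=2\cdot3\cdot11$ for the primed case.

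The third step is to exhibit pairs realizing these minima, which is needed because Theorem~\ref{thm: lc surf Ivol} is a reduction, not an equivalence: I would construct an explicit log canonical surface $(X,B)$ — a suitable elliptic surface over $\PP^1$ with the prescribed multiple fibers and $j$-invariant degree, contracted down to the appropriate singular model — whose crepant resolution realizes the optimal configuration; the constructions from the proof of Theorem~\ref{thm: main4} (base changes, logarithmic transformations, $*$-transfers) supply exactly these. The main obstacle I anticipate is book-keeping the coefficient analysis for $\Gamma=\{0,1\}$: when $B$ has components of coefficient $1$, the adjunction/different computation on the crepant model $(S,B_S)$ can introduce new coefficients of the form $1-\tfrac1m+\tfrac{a}{m}$ on the horizontal part, and one must verify that after pushing forward along $f$ these, combined with the moduli and multiple-fiber contributions, cannot dip below $\tfrac1{66}$ — ruling out a smaller value requires showing no exotic small configuration exists, which is the delicate finite (but not entirely routine) case analysis. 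The unmarked case $\Gamma=\{0\}$ giving the same answer $\tfrac1{671}$ as $\Gamma=\{0,1\}$ is then a consequence of the fact that the coefficient-$1$ components, being optional, do not help lower the minimum in that regime.
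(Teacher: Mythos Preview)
Your proposal has several fundamental misunderstandings that would prevent the argument from working.

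First, you have misread the notation: the prime in $\Ivol_\lc^\Gamma(2,1)'$ denotes the \emph{derived set} (set of accumulation points), not a marking condition. So the theorem asserts that the minimum Iitaka volume is $\tfrac{1}{671}$ and the minimal \emph{accumulation point} is $\tfrac{1}{66}$, for both $\Gamma=\{0\}$ and $\Gamma=\{0,1\}$. Your discussion of ``the extra marking condition'' and your claim that coefficient-$1$ components change the minimum from $\tfrac{1}{671}$ to $\tfrac{1}{66}$ are therefore off target.

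Second, and more seriously, your formula for the Iitaka volume is wrong for this problem. When $\Gamma=\{0\}$, the original pair is $(X,0)$ with $X$ singular; passing to the crepant smooth model $(S,B_S)$ produces a \emph{nonzero} vertical boundary $B_S=\sum_z c(z) h^*z$, and the permissible values of $c(z)$ are determined by the condition that some exceptional divisor over $h^*z$ has log discrepancy in $\{0,1\}$. These values depend on the Kodaira type of $h^*z$ and form the discrete sets $\sC_0,\dots,\sC_4,\sC_0^*,\dots,\sC_4^*$ listed in Proposition~\ref{prop: Ivol chi>0 lc}. The Iitaka volume is then $2g(Z)-2+\chi(\sO_S)+\sum_z c(z)$, not $-2+\sum(1-\tfrac{1}{m_i})+\tfrac{k}{12}$. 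Your Sylvester-sequence ansatz with denominators $2\cdot 3\cdot 7\cdot 43$ has nothing to do with the actual optimum: in the paper, the minimum $\tfrac{1}{671}$ arises on a rational elliptic surface ($\chi=1$, $g=0$) as
\[
-1+\frac{50}{61}+\frac{2}{11}=\frac{1}{671},
\]
where $\tfrac{50}{61}\in\sC_2$ comes from a fiber of type $\II$ and $\tfrac{2}{11}\in\sC_3^*$ from a fiber of type $\III^*$. The minimal accumulation point $\tfrac{1}{66}=-1+\tfrac{5}{6}+\tfrac{2}{11}$ arises when the coefficient on the type-$\II$ fiber is pushed to its log canonical threshold $\tfrac{5}{6}=\max\sC_2$. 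The actual proof is a careful case analysis over $(\chi,g)$ and over the possible configurations of Kodaira fiber types on a rational elliptic surface, using the explicit sets $\sC_i,\sC_i^*$; for $\Gamma=\{0,1\}$ one additionally handles the case $B^h\neq 0$, where $S\to Z$ is a $\PP^1$-bundle (Lemma~\ref{lem: lc Ivol Gamma1 0}), and shows its minimum $\tfrac{1}{42}$ and minimal accumulation point $\tfrac{1}{6}$ are both larger.
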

For comparison, note that in the case $(d,\kappa)=(2,2)$, the conjectural minima of $\Ivol_\lc^{\{0\}}(2,2)$ and $\Ivol_\lc^{\{0,1\}}(2,2)$ are the same $\frac{1}{48983}$, which is realized by an example constructed in \cite{AL19a}. Recently, the minimal accumulation points of $\Ivol_\lc^{\{0\}}(2,2)$ and $\Ivol_\lc^{\{0,1\}}(2,2)$ were found in \cite{LL23}:
\[
\min\Ivol_\lc^{\{0\}}(2,2)' =\min \Ivol_\lc^{\{0,1\}}(2,2)' = \frac{1}{825}.
\]

\medskip

In the end of the paper, Section~\ref{sec: anticanonical}, we investigate the Iitaka volumes of anti-log canonical divisors $-(K_X+B)$, which have completely different behaviour. Still, it is possible to find some pattern under some assumptions on singularities and on the existence of certain horizontal $\ZZ$-divisors.
\begin{thm}\label{thm: anti Iitaka ACC}
Let $d$ be a positive integer, $\epsilon,u$ two positive rational numbers, and $\Gamma\subset[0,1]$ a DCC set. Then there exists an ACC set $\mathcal{A}$ depending only on $d,\epsilon,u,$ and $\Gamma$ satisfying the following.

Assume that $(X,B)$ is a projective pair such that
\begin{enumerate}
    \item $(X,B)$ is $\epsilon$-lc of dimension $d$ with $B\in\Gamma$,
    \item $-(K_X+B)$ is semi-ample defining a contraction $f\colon X\to Z$, and
    \item there is a $\Qq$-Cartier $\ZZ$-divisor $N$ on $X$ with $\vol(N|_F)=u$ for the general fibers $F$ of $f$.
\end{enumerate}
Then the Iitaka volume of $-(K_X+B)$ belongs to $\mathcal{A}$.
\end{thm}

\medskip

\noindent{\bf Notation and Conventions.} Let $A\subset\RR$ be a subset of real numbers.
\begin{itemize}[leftmargin=*]
\item For $b\in \RR$,  we denote $A_{>b}:=\{a\in A\mid a>b\}.$ The subsets $A_{\geq b}$, $A_{<b}$, $A_{\leq b}$ of $A$ are similarly defined.
\item We use $A^{(1)}$ or $A'$ to denote the derived set of  $A$, that is, the set of accumulation points of $A$. Inductively, for $n\geq 2$, $A^{(n)}:=\left(A^{(n-1)}\right)'$ is the $n$-th derived set of $A$. 
\item For a positive integer $n$, we denote
\[\sum_{\leq n} A :=\left\{a=\sum_{i=1}^r a_i\,\Bigg|\, r\leq n, a_i\in A\right\} \text{ and } \sum A = \bigcup_{n\geq 1} \left(\sum_{\leq n} A\right).
\]
\item The set $A$ is said to satisfy the \emph{descending chain condition} (DCC for short) if every infinite descreasing sequence in $A$ stabilizes. In this case, we say that $A$ is a DCC set.

Similarly, the set $A$ is said to satisfy the \emph{ascending chain condition} (ACC for short) if every infinite increasing sequence in $A$ stabilizes. In this case, we say that $A$ is an ACC set.
\end{itemize}

\bigskip
\noindent\textbf{Acknowledgement}.
This work originates from the 2023 Workshop on Explicit Birational Geometry in Fudan University.
The authors would like to thank Junpeng Jiao, Jihao Liu, and Xin Lu for valuable discussions and suggestions. The second named author was supported by National Key Research and Development Program of China (Grant No. 2020YFA0713200), and he is a member of LMNS, Fudan University.

 
\section{Preliminaries}\label{sec2}
We adopt the standard notation and definitions (e.g.~pairs and singularities) in \cite{KM98,BCHM10} and will freely use them. Note that, for a pair $(X,B)$, we require the boundary $B$ to be effective in this paper. If $X$ is equipped with a projective morphism $f\colon X\rightarrow Z$, then we say the pair $(X, B)$ is over $Z$, and write it as $(X/Z, B)$ if $f$ is clear from the context or not important. Furthermore, if $z\in Z$ is a scheme theoretic point, then we write $(X/Z\ni z, B)$ to indicate that we are mainly concerned with the germ of $(X, B)$ over a neighborhood of $z$ in $Z$.

The following are the sets of pairs and of their Iitaka volumes considered in this paper.
\begin{nota}\label{nota: Ivol}
For positive integers $d\geq \kappa$, and a DCC set $\Gamma\subset [0,1]\cap\Qq$, we introduce the following sets of pairs:
\begin{itemize}
    \item $\gP_\lc^\Gamma(d,\kappa) =  \{(X,B) \mid (X, B) \text{ is lc projective, } \dim X=d,\,  \kappa(K_X+B)=\kappa,\, B\in \Gamma\}$.
   \item $\gP_\klt^\Gamma(d,\kappa) =  \{(X,B) \mid (X, B) \text{ is klt projective, } \dim X=d,\,  \kappa(K_X+B)=\kappa,\,  B\in \Gamma\}$.
    \item $\gP_\sm(d,\kappa) =  \{X\mid X \text{ is smooth projective, } \dim X=d,\,  \kappa(K_X)=\kappa\}$.
\end{itemize}
The corresponding sets of Iitaka volumes are defined as follows:
\begin{itemize}
  \item $\Ivol_\lc^\Gamma(d,\kappa) =\big\{\vol_\kappa(K_X+B) \mid (X, B)\in \gP_\lc^\Gamma(d,\kappa) \big\}$.
  \item $\Ivol_\klt^\Gamma(d,\kappa)=\big\{\vol_\kappa(K_X+B) \mid (X, B)\in \gP_\klt^\Gamma(d,\kappa) \big\}$.
  \item $\Ivol_\sm(d,\kappa)=\big\{\vol_\kappa(K_X) \mid X\in \gP_\sm(d,\kappa) \big\}$.
\end{itemize}
When $\Gamma=\{0\}$, we often omit it from above notation.
\end{nota}

\subsection{Divisors}\label{div}
Let $\FF$ be one of the rings $\ZZ,\,\QQ,\,\RR$, and $X$ a normal variety. An $\FF$-divisor $B$ on $X$ is simply a finite formal sum $\sum_{i=1}^s b_i B_i$, where $b_i\in \FF$, and $B_i$ are prime divisors on $X$. An $\FF$-divisor is $\FF$-Cartier if it is an $\FF$-linear combination of Cartier divisors. For a set $\Gamma\subset \RR$,  we write $B\in\Ii$ if $b_i\in\Ii$ for every $i$. We usually omit $\FF$ from the notation if $\FF=\ZZ$. 

Let $B:=\sum_{i=1}^s b_iB_i$ and $B':=\sum_{i=1}^s b_i'B_i$ be $\FF$-divisors on a normal variety $X$, where $B_i$ are distinct prime divisors. We define 
$$\lf B\rf:=\sum_{i=1}^s \lf b_i\rf B_i, \quad \{ B\}:=\sum_{i=1}^s \{ b_i\} B_i,  \quad B\wedge B':=\sum_{i=1}^s \min\{ b_i,b_i'\} B_i.$$ 
The two $\FF$-divisors $B$ and $B'$ are \emph{$\FF$-linearly equivalent}, denoted $B\sim_\FF B'$, if 
\[
B-B' =\sum_{1\leq i\leq k} r_i\cdot (\varphi_i)_X
\]
where $r_i\in \FF$, $\varphi_i$ are nonzero rational functions on $X$, and $(\varphi_i)_X$ denotes their divisors. 

Let $f\colon X\rightarrow Z$ be a projective morphism from $X$ to another normal variety $Z$. The horizontal$/Z$ part and the vertical$/Z$ part of $B$ with respect to $f$ are respectively 
\[
B^h = \sum_{f(B_i)=Z} b_i B_i, \quad B^v= \sum_{f(B_i)\neq Z} b_i B_i.
\]
We say $B$ and $B'$ are \emph{relatively $\FF$-linear equivalent} over $Z$, denoted $D\sim_{\FF, Z} D'$, if there is an $\FF$-Cartier $\FF$-divisor $H$ on $Z$ such that $D\sim_{\FF} D'+f^*H$. For a point $z\in Z$, we define $$B_z:=\sum_{i\mid f(B_i)=\bar z}b_iB_i.$$

A \emph{$\rb$-$\FF$-divisor} $\bfB$ on a normal variety $X$ is a collection of $\FF$-divisors $\bfB_Y$ on the birational models $Y$ of $X$ that are compatible under pushforward; $\bfB_Y$ is then called the trace of $\bfB$ on $Y$ (see \cite[Definition 2.1]{HL21b}). The following constructions of $\rb$-$\FF$-divisors are used in this paper:
\begin{itemize}[leftmargin=*]
    \item For an $\FF$-Cartier $\FF$-divisor $B$ on $X$, we may define a $\rb$-$\FF$-divisor $\overline{B}$ so that its trace on a higher birational model $Y$ of $X$ is the pullback of $B$. Such a $\rb$-$\FF$-divisor is called $\rb$-$\FF$-Cartier. A $\rb$-$\FF$-divisor $\bfB$ on $X$ is said to \emph{descend to $X$} if $\bfB_X$ is $\FF$-Cartier and $\bfB=\overline{\bfB_X}$.
    \item For each normal birational model $Y$ of $X$, we may and will choose a canonical Weil-divisor $K_Y$ in such a way, that they form a $\rb$-divisor as $Y$ varies. Two pairs $(Y, B_Y)$ and $(X, B_X)$ are \emph{crepant} to each other if there is an equality of $\rb$-$\RR$-Cartier divisors $\overline{K_Y+B_Y} = \overline{K_X+B_X}$.
    \item Let $v$ be a divisorial valuation of the function field $K(X)$ such that $\Center_X(X)\neq \emptyset$. Then we may define a $\rb$-divisor whose trace on a higher birational model $Y$ of $X$ is
    $$ E_Y(v):=\begin{cases}
        \Center_Y(v) & \text{ if $\Center_Y(v)$ is a divisor,}\\
        0& \text{ if $\Center_Y(v)$ is not a divisor.}
    \end{cases} $$
\end{itemize}
The notion of $\rb$-$\FF$-divisors is also a convenient gadget in the definition of generalized pairs and in the formulation of canonical bundle formulas, to be discussed in Sections~\ref{sec: g-pair} and \ref{sec: cbf}.

\subsection{Complements}
We recall the definition of complements and the Existence of Complements Conjecture.
\begin{defn}\label{defn complement}
Let $n$ be a positive integer, $\Ii\subset (0,1]$ a set, and $(X/Z\ni z,B)$ and $(X/Z\ni z,B^+)$ two pairs. We say that $(X/Z\ni z,B^+)$ is an \emph{$\Rr$-complement} of $(X/Z\ni z,B)$ if $(X,B^+)$ is lc, $B^+\geq B$, and $K_X+B^+\sim_{\Rr}0$ over a neighborhood of $z$. We say that $(X/Z\ni z,B)$ is \emph{$\Rr$-complementary} if $(X/Z\ni z,B)$ has an $\Rr$-complement. 

We say that $(X/Z\ni z,B^+)$ is an \emph{$n$-complement} of $(X/Z\ni z,B)$ if
\begin{itemize}
\item $(X/Z\ni z,B^+)$ is lc,
\item $nB^+\geq \lfloor (n+1)\{B\}\rfloor+n\lfloor B\rfloor$, and
\item $n(K_X+B^+)\sim 0$ over a neighborhood of $z$.
\end{itemize}
\end{defn}

\begin{conj}[Existence of Complements Conjecture]\label{conj: boundedness and existence of n complement nft}
Let $d$ be a positive integer and $\Ii\subset [0,1]$ a DCC set. Then there exists a positive integer $n$ depending only on $d$ and $\Ii$ satisfying the following. 

Assume that $(X/Z\ni z,B)$ is an $\Rr$-complementary pair of dimension $d$ such that $B\in\Ii$, then $(X/Z\ni z,B)$ has an $n$-complement $(X/Z\ni z,B^+)$. Moreover, if the closure of $\Ii$ belongs to $[0,1]\cap\mathbb Q$, then we can pick $B^+\ge B$.
\end{conj}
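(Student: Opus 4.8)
\medskip
\noindent\emph{Proof proposal.} This is a deep conjecture rather than a routine exercise: it is known only for $d\le 3$ by \cite[Theorem~2.14]{CHL24}, resting on Birkar's boundedness of complements for Fano varieties, and is open in general. What follows is the inductive strategy one would pursue, which for $d\le 3$ is the route actually taken in the literature.

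\textbf{Step 1: reduction to the Fano-type case, and from $\RR$ to $\QQ$.} The plan begins by running an appropriate MMP over $Z$ near $z$ and passing to a $\QQ$-factorial dlt modification; because $(X/Z\ni z, B)$ is $\RR$-complementary the relevant MMP terminates, and after the standard descent and lifting of complements along crepant birational maps one reduces to the case where $X$ is of Fano type over a neighborhood of $z\in Z$, at the price of enlarging $\Gamma$ to another DCC set. Next one removes the reals: by the global ACC for log canonical thresholds (\cite{HMX14}) together with the DCC of $\Gamma$, only finitely many ``extremal'' coefficient configurations occur, and the uniform-rational-polytope technique lets one write $B$ as a convex combination of boundaries $B_j\le B$ with coefficients in a fixed finite subset of $[0,1]\cap\QQ$. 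An $\RR$-complement of $(X,B)$ can then be assembled from uniformly bounded $n$-complements of the finitely many rational pairs $(X,B_j)$; since each $B_j\le B$, this respects the monotonicity $B^+\ge B$ required in the ``moreover'' clause.

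\textbf{Step 2: induction on dimension.} For a rational Fano-type pair one splits into two cases. If some $\RR$-complement of $(X,B)$ has a non-klt center, extract a prime divisor $S$ computing a log canonical place over it, so that (after the extraction) $(X, S+B')$ is plt with $-(K_X+S+B')$ nef over $z$. By divisorial adjunction $(S, B_S)$ again has DCC coefficients (DCC of the coefficients of the different) and is $\RR$-complementary of dimension $d-1$, hence admits an $n_{d-1}$-complement with $n_{d-1}$ uniform by the inductive hypothesis. One lifts this complement from $S$ to $X$ through the restriction sequence
\[
0\longrightarrow \sO_X(L-S)\longrightarrow \sO_X(L)\longrightarrow \sO_S(L|_S)\longrightarrow 0,
\]
for $L$ an appropriate rounding of $-n(K_X+S+B')$, the obstruction $H^1(X,\sO_X(L-S))$ vanishing by Kawamata--Viehweg. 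If instead no $\RR$-complement ever has a non-klt center, then $(X,B)$ is $\epsilon$-lc for a fixed $\epsilon>0$ and of Fano type, hence lies in a bounded family by the BAB theorem; boundedness forces a uniform bound on the Cartier index of $K_X+B$ and, via the canonical bundle formula applied to any fibration occurring in the construction, a uniform bound on the $\rb$-Cartier index of the corresponding moduli part, so that a single uniform $n$ suffices.

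\textbf{Main obstacle.} The two genuinely deep ingredients are the \emph{effective birationality} controlling the restriction of $|-n(K_X+S+B')|$ to $S$ that makes the lifting in Step~2 work, and the \emph{boundedness of $\epsilon$-lc Fano varieties} (BAB) of dimension $d$ used in the other case. These are precisely the statements available unconditionally only for $d\le 3$; their absence in dimension $\ge 4$ is what confines Conjecture~\ref{conj: boundedness and existence of n complement nft} --- and hence the unconditional reach of Theorems~\ref{thm: main1} and~\ref{thm:main2} --- to low dimensions.
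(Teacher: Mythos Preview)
The paper does not prove this statement: it is explicitly labeled and used as a \emph{conjecture} (Conjecture~\ref{conj: boundedness and existence of n complement nft}), invoked as a standing hypothesis in Lemmas~\ref{lem: cbfindex}, \ref{lem: cbf (d,1)}, \ref{lem:ivolaccfollowfrom2conj}, and Theorem~\ref{thm: closed}, with the remark that it is known for $d\le 3$ via \cite[Theorem~2.14]{CHL24}. So there is no ``paper's own proof'' to compare against, and you were right to flag this as an open problem and sketch the expected strategy rather than claim a proof.

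That said, your diagnosis of the obstruction is off. You identify effective birationality and BAB as ``available unconditionally only for $d\le 3$'', but both are theorems of Birkar in \emph{all} dimensions (\cite{Bir19,Bir20}); indeed, boundedness of $n$-complements for pairs of \emph{Fano type} is already a theorem in every dimension \cite{Bir19}. The genuine gap lies in your Step~1: the reduction from an arbitrary $\RR$-complementary pair $(X/Z\ni z,B)$ to a Fano-type situation is not free. Your assertion that ``because $(X/Z\ni z,B)$ is $\RR$-complementary the relevant MMP terminates'' is exactly what is not known --- termination of the required MMP (or equivalently existence of good minimal models/Mori fiber spaces in the relevant relative setting) is the missing ingredient, and this is why \cite{CHL24} can only establish the conjecture for $d\le 3$. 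So the bottleneck is Conjecture~\ref{conj: exist gmm}-type input, not BAB.
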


\subsection{Generalized pairs and their singularities}\label{sec: g-pair}
We briefly discuss generalized pairs and their singularities, and refer the reader to \cite{BZ16,HL22a,HL23a} for further details. 

\begin{defn}
A projective \emph{generalized pair} $(X,B+\bM)$ consists of a normal projective variety $X$, an effective $\Rr$-divisor $B$ on $X$, and a nef b-$\Rr$-divisor $\bM$ on $X$, such that $K_X+B+\bM_X$ is $\Rr$-Cartier. 

Let $(X,B+\bM) $ be a generalized pair and $\phi\colon W \to X$ a log resolution of $(X,\Supp B)$ such that $\bM$ descends to $W$. We may write 
$$K_{W}+B_W+\bM_W=\phi^*(K_X+B+\bM_X)$$
for some $\Rr$-divisor $ B_W$ on $W$. Let $E$ be a prime divisor on $W$. The \emph{log discrepancy} of $E$ with respect to $(X,B+\bM)$ is defined as
$$a(E,X,B+\bM):=1-\mult_EB_W.$$
We say $(X,B+\bM)$ is \emph{lc} if $a(E,X,B+\bM)\ge0$ for any prime divisor $E$ over $X$.
\end{defn}
The importance of generalized pairs is reflected in their appearance in the canonical bundle formula, to be treated in the next subsection.

\subsection{Canonical bundle formulas}\label{sec: cbf}
We recall the canonical bundle formula for log canonical pairs, and refer the reader to \cite[\S3.4]{Bir19} for its basic properties.

Recall that a projective morphism $f\colon  X \to Z$ between normal varieties is a \emph{contraction} or \emph{fibration} if $f_*\Oo_X = \Oo_Z$; the word \emph{fibration} is used more often  if $\dim X>\dim Z$. A pair $(X, B)$ is called \emph{lc-trivial} with respect to $f$ if $K_X+B\sim_{\RR, Z} 0$. 
\begin{thm}\label{thm: cbf}
Let $(X,B)$ be an lc pair and $f\colon X\to Z$ a contraction between normal quasi-projective varieties such that $K_X+B\sim_{\Rr,Z}0$. Then we can find an $\Rr$-divisor $B_Z\ge0$ and a nef over $Z$ b-$\Rr$-divisor $\bM$ on $Z$, such that $(Z,B_Z+\bM)$ is an lc generalized pair, and
$$K_X+B\sim_\Rr f^*\left(K_Z+B_Z+\bM_{Z}\right).$$
Here $B_Z$ (resp. $\bM$) is called the \emph{discriminant part} (resp. a \emph{moduli part}) of a canonical bundle formula for $(X,B)$ over $Z$ which is uniquely determined (resp. determined up to $\Rr$-linear equivalence).
\end{thm}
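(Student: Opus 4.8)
The plan is to construct the discriminant part $B_Z$ canonically via log canonical thresholds, define the moduli $\rb$-divisor $\bM$ through the defining relation, and then prove its nefness by Hodge-theoretic semipositivity.

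First I would define $B_Z$. For each prime divisor $P$ on $Z$ put
\[
\gamma_P := \sup\{\, t\in\RR \mid (X,B+t\,f^*P) \text{ is lc over the generic point of } P \,\},
\]
and set $b_P := 1-\gamma_P$, $B_Z := \sum_P b_P\, P$. Two points need checking. (i) $b_P\ge 0$, i.e. $\gamma_P\le 1$: since $f$ is a contraction, $f^*P$ has a component $D$ (a prime divisor on $X$ dominating $P$) with $\mult_D f^*P\ge 1$, so for $\epsilon>0$ the coefficient of $D$ in $B+(1+\epsilon)f^*P$ exceeds $1$ and the pair is not lc over the generic point of $P$. (ii) $b_P=0$ for all but finitely many $P$: choose a log resolution of $(X,\Supp B)$ and shrink $Z$ to the dense open locus over which $f$ and the resolution are ``nice''; over that locus $\gamma_P=1$. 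Hence $B_Z\ge0$ is a well-defined $\RR$-divisor, manifestly independent of all choices, which also gives its uniqueness.

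Next I would define the moduli part. Since $K_X+B\sim_{\Rr,Z}0$ and $f_*\sO_X=\sO_Z$ (so $f^*$ is injective on $\RR$-linear equivalence classes), there is an $\RR$-Cartier $\RR$-divisor $L_Z$ on $Z$, unique up to $\RR$-linear equivalence, with $K_X+B\sim_\Rr f^*L_Z$. Fixing a canonical $\rb$-divisor $K_\bullet$, set $\bM_Z := L_Z-K_Z-B_Z$ as an $\RR$-Weil-divisor class. Carrying out the same construction on every birational model $Z'\to Z$, together with a compatible birational model $X'\to X$ on which $(X',B_{X'})$ is crepant to $(X,B)$, produces traces $B_{Z'}$ and $\bM_{Z'}$ compatible under pushforward, hence a $\rb$-divisor $\bfB_Z$ and a $\rb$-$\RR$-divisor $\bM$ with $K_X+B\sim_\Rr f^*(K_Z+B_Z+\bM_Z)$, the class $\bM$ being determined up to $\RR$-linear equivalence once $L_Z$ is.

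The substantial step is then to show that $\bM$ descends to some birational model of $Z$ and is nef there, and that $(Z,B_Z+\bM)$ is lc generalized. I would first reduce to the klt-trivial case: pass to a dlt modification and restrict to an lc centre dominating $Z$ (the standard adjunction argument for lc-trivial fibrations), so that one may assume $(X,B)$ is klt over the generic point of $Z$. After a base change and a log resolution, $\bM_{Z'}$ then differs by a semiample correction from a divisor governed by the bottom Hodge piece of the variation of Hodge structure attached to the fibration; Fujita--Kawamata semipositivity (in the log and $\RR$-coefficient refinements of Fujino--Fujisawa, Floris, and Birkar) shows $\bM_{Z'}$ is nef for $Z'$ high enough, i.e. $\bM$ descends and is nef, while lc-ness of $(Z,B_Z+\bM)$ follows from the discrepancy computation and lc-ness of $(X,B)$. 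For $\RR$-coefficients one works in the generalized-pair framework directly, or writes $B$ as a convex combination of $\QQ$-boundaries and passes to the limit, the $b_P$ varying affinely. The main obstacle is exactly this last step: nefness of the moduli part rests on Hodge theory and on the reduction from log canonical to klt fibrations, and handling $\RR$-coefficients there cleanly takes some care; everything preceding it is formal bookkeeping.
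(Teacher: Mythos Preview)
The paper does not prove this statement at all: Theorem~\ref{thm: cbf} is stated as background, with the sentence ``We recall the canonical bundle formula for log canonical pairs, and refer the reader to \cite[\S3.4]{Bir19} for its basic properties'' preceding it. So there is no proof in the paper to compare against; the authors are simply quoting a known result.

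Your sketch follows the standard route in the literature (Kawamata, Ambro, Fujino, and the lc extension via dlt modification and adjunction as in Fujino--Gongyo), and the outline is broadly correct. One point to be careful with: the reduction ``pass to a dlt modification and restrict to an lc centre dominating $Z$'' is more delicate than you indicate, since after adjunction the boundary on the centre need not be effective in general, and one has to track that the moduli part of the induced fibration coincides (as a $\rb$-divisor) with that of the original; this is handled in the literature but is not a one-line step. Similarly, the $\RR$-coefficient case via convex combinations requires knowing that the klt locus over the generic point is preserved for the nearby rational perturbations, which again is fine but needs a word. These are refinements rather than genuine gaps; as a sketch of the known proof your proposal is adequate, but since the paper itself only cites the result, no proof is expected here.
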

Here we emphasize that $\bM$ is only determined up to $\Rr$-linear equivalence, so there are many choices of $\bM$, some of which could behave badly. For many purposes, we need to choose $\bM$ with controlled b-Cartier index, as in the following statement.
\begin{lem}\label{lem: cbfindex}
Let $\Gamma\subset[0,1]$ be a DCC set. Then there exist a finite subset $\Gamma_0\subset \Gamma\cap\QQ$, a DCC subset $\tGamma\subset[0,1]$, and a positive integer $p$, depending only on $\Gamma$, such that the following holds.

Assume that $(X,B)$ is an lc pair and $f\colon X\to Z$ is a contraction such that $\dim X-\dim Z\le2$, $B\in\Gamma,$ $K_X+B\sim_{\Rr,Z}0,$ and $\kappa(K_X+B)\ge0$. Then the following holds.
\begin{enumerate}
\item  We can choose a moduli part $\bM$ of the canonical bundle formula for $(X,B)$ over $Z$ such that $p\Mm$ is b-Cartier, $B_Z\in\tGamma$, and 
$$p(K_X+B)\sim pf^*(K_Z+B_Z+\bM_{Z}),$$
where $B_Z$ is the discriminant part of the canonical bundle formula for $(X,B)$ over $Z$.
\item If $B$ is horizontal$/Z$, then $B\in \Gamma_0$. 
\end{enumerate}
Moreover, if $\Gamma$ is a hyperstandard set (see \cite[2.2]{Bir19}), then so is $\tGamma$.
\end{lem}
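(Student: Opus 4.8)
The plan is to reduce the statement to the boundedness of the $\rb$-Cartier index of the moduli part, which is governed by the Existence of Complements Conjecture (Conjecture~\ref{conj: boundedness and existence of n complement nft}) in relative dimension $\le 2$; this is available unconditionally in dimensions $\le 3$ by \cite[Theorem~2.14]{CHL24}. First I would treat the \emph{horizontal} statement (2), which is the cleanest: after passing to the generic point $\eta$ of $Z$, we get an lc pair $(X_\eta, B_\eta)$ of dimension $\dim X - \dim Z \le 2$ over the field $K(Z)$ with $K_{X_\eta}+B_\eta \sim_\RR 0$ and $B_\eta \in \Gamma$. Since $B$ is horizontal$/Z$, every coefficient of $B$ appears in $B_\eta$, so it suffices to show the coefficients of $B_\eta$ lie in a fixed finite set $\Gamma_0 \subset \Gamma\cap\QQ$. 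This follows from the ACC for lc thresholds / global ACC in dimension $\le 2$ (e.g.~\cite{HMX14}) together with the fact that a DCC set of numbers realizing an $\RR$-trivial lc structure in bounded dimension is finite: the relation $K_{X_\eta}+B_\eta\sim_\RR 0$ forces the coefficients into the solution set of finitely many affine-linear equations with bounded denominators, and intersecting a DCC set with such a set gives a finite set of rational numbers. One must also check $\Gamma_0$ depends only on $\Gamma$ (and the bound $2$), not on the pair, which is exactly what global ACC provides.

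For part (1), the strategy is: (a) produce a finite rational set of coefficients "seen" on the generic fibers — this is part (2) applied to $B^h$ — and combine with the DCC property of the discriminant $B_Z$; (b) bound the $\rb$-Cartier index of $\bM$. For (a): the discriminant $B_Z$ is the standard one from Theorem~\ref{thm: cbf}; its coefficients along a prime divisor $D\subset Z$ are $1 - \lct$-type quantities computed from $(X, B)$ over the generic point of $D$, which is again an lc-trivial structure in relative dimension $\le 2$. By the ACC for lc thresholds in bounded dimension with DCC coefficients (\cite{HMX14}), the coefficients of $B_Z$ lie in a DCC set $\tGamma\subset[0,1]$ depending only on $\Gamma$; if $\Gamma$ is hyperstandard, the inversion-of-adjunction-type formulas make $\tGamma$ hyperstandard as well, as in \cite[\S6]{Bir19}. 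For (b): by the canonical bundle formula, $\bM$ is, up to $\RR$-linear equivalence, the moduli $\rb$-divisor of an lc-trivial fibration whose general fiber carries an lc-trivial pair of dimension $\le 2$ with coefficients in a fixed finite set $\Gamma_0$. By \cite[Theorem~2.14]{CHL24} (Existence of Complements in relative dimension $\le 2$ with coefficients in the finite set $\Gamma_0$), there is $n$ depending only on $\Gamma$ such that the generic fiber admits an $n$-complement; by the theory of moduli $\rb$-divisors for lc-trivial fibrations (\cite{Bir19, FL19}), this implies $\bM$ can be chosen with $\rb$-Cartier index dividing a fixed multiple of $n$. Taking $p$ to be a common multiple of that index and the denominators appearing in $\tGamma$ and $\Gamma_0$ then gives $p\bM$ $\rb$-Cartier and $p(K_X+B)\sim p f^*(K_Z+B_Z+\bM_Z)$; the $\RR$-linear equivalence can be upgraded to $\ZZ$-linear equivalence after multiplying by $p$ because $\kappa(K_X+B)\ge 0$ ensures $p(K_X+B)$ is an honest integral divisor linearly (not merely $\RR$-linearly) equivalent to a pullback, using that $\bM_Z$ and $B_Z$ are then $\QQ$-divisors with denominators dividing $p$ and that $H^0$ of a pullback-twisted sheaf detects the equivalence.

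The main obstacle I expect is step (b): getting a \emph{uniform} bound on the $\rb$-Cartier index of the moduli part, rather than merely its rationality. This is precisely where one cannot argue abstractly and must invoke the Existence of Complements Conjecture — and it is why the lemma is stated with the hypothesis $\dim X - \dim Z \le 2$, so that \cite[Theorem~2.14]{CHL24} applies unconditionally. A secondary technical point is matching the $\rb$-divisor formalism: ensuring that the chosen $\bM$ within its $\RR$-linear equivalence class is the one with controlled index and simultaneously making the integral equivalence $p(K_X+B)\sim p f^*(K_Z+B_Z+\bM_Z)$ hold on the nose (not just $\sim_\QQ$), which requires care about torsion in $\Pic$ and is handled by enlarging $p$ and by the $\kappa\ge 0$ hypothesis. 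The hyperstandard-preservation clause is routine once the shape of the coefficient formulas for $B_Z$ (via adjunction) is written down, but I would flag it as the kind of bookkeeping that should be done carefully rather than asserted.
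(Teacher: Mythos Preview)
Your overall strategy matches the paper's: global ACC in dimension $\le 2$ for part (2), ACC for log canonical thresholds for the DCC of $\tGamma$, and the Existence of Complements in relative dimension $\le 2$ via \cite[Theorem~2.14]{CHL24} to control the index of $\bM$. So the architecture is right.

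There is, however, a genuine gap in how you obtain the \emph{integral} linear equivalence $p(K_X+B)\sim pf^*(K_Z+B_Z+\bM_Z)$. You propose to take $p$ as a common multiple of the $\rb$-Cartier index of $\bM$ and ``the denominators appearing in $\tGamma$ and $\Gamma_0$''. But $\tGamma$ is only a DCC set and in general has unbounded denominators (already the hyperstandard set $\{1-\tfrac{1}{n}\}$ does), so no such common multiple exists. Likewise, the appeal to $\kappa(K_X+B)\ge 0$ to ``ensure $p(K_X+B)$ is an honest integral divisor'' is not valid: the vertical part of $B$ may have irrational coefficients, and the Kodaira dimension hypothesis does nothing to change that. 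The paper sidesteps this entirely by reversing the order of construction: first use the $p$-complement on the general fiber to get $p(K_F+B_F)\sim 0$, hence $p(K_X+B)\sim 0$ over $\eta_Z$; then choose a rational function $\varphi$ so that $pL:=p(K_X+B)+(\varphi)_X$ vanishes near $\eta_Z$, write $L=f^*L_Z$ via \cite[Lemma~2.11]{Li20}, and \emph{define} $\bM_Z:=L_Z-K_Z-B_Z$. The integral equivalence then holds tautologically, and one checks a posteriori (as in \cite[Proposition~3.1]{CHL24}) that this $\bM$ has $p\bM$ $\rb$-Cartier. In short: do not start from an abstract $\bM$ and try to clear denominators; construct $\bM$ from a chosen trivialization so that the $\sim$ is built in.
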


\begin{proof}
By \cite[Theorem 2.14]{CHL24}, we may find a positive integer $p$ depending only on $\Gamma$ such that for any pair $(W/T\ni t,\Delta)$ of dimension $\le3$, if $(W/T\ni t,\Delta)$ has an $\Rr$-complement, then $(W/Z\ni z,\Delta)$ has a $p$-complement. We will show that $p$ has the required properties. 

Let $F$ be a general fiber of $f$, and set $K_F+B_F:=(K_X+B)|_F$. According to the proof of \cite[Lemma~5.1]{CHL24} and the global ACC for numerically trivial pairs (\cite[Theorem~D]{HMX14}), $B_F\in\Gamma_0$ for some finite set $\Gamma_0\subset\Gamma\cap\Qq$ that only depends on $\Gamma$. Then as $\dim X-\dim Z\le2$, we see that $p(K_F+B_F)\sim0$. Hence $p(K_X+B)\sim0$ over the generic point $\eta_Z$ of $Z$. There exists a nonzero $\varphi\in K(X)$ such that $pL:=p(K_X+B)+(\varphi)_X$ is zero near $\eta_Z$. In particular, $L$ is vertical over $Z$ and
$$pL\sim p(K_X+B)\sim_{\Rr,Z}0.$$ 
By \cite[Lemma 2.11]{Li20}, $L=f^*L_Z$ for some $\Rr$-Cartier $\Rr$-divisor $L_Z$ on $Z$. Let $B_Z$ be the discriminant part of the canonical bundle formula of $(X,B)$ over $Z$, and $\bM_{Z}:=L_Z-K_Z-B_Z.$ Then
$$p(K_X+B)\sim pL=pf^*L_Z=pf^*(K_Z+B_Z+\bM_{Z}).$$
Then by the same arguments as the proof of \cite[Proposition 3.1]{CHL24}, one can see that $p\bM$ is b-Cartier. Note here that the existence of $\tGamma$ and the last statement of the proposition also follow.
\end{proof}

For later use, we will show some additional properties enjoyed by the canonical bundle formula when the base is a curve. It is contingent on the Existence of Complements Conjecture (Conjecture~\ref{conj: boundedness and existence of n complement nft}) and the following Good Minimal Model Conjecture, both of which hold true in dimensions $\leq 3$.
\begin{conj}[Good Minimal Model Conjecture]\label{conj: exist gmm}
Let $d$ be a positive integer. Assume that $(X/Z,B)$ is an lc pair of dimension $d$ such that $K_X+B$ is pseudo-effective over $Z$. Then $(X/Z,B)$ has a good minimal model $(Y/Z,B_Y)$ over $Z$, that is, $(Y,B_Y )$ is a minimal model of $(X,B)$ over $Z$ and $K_Y + B_Y$ is semi-ample over $Z.$
\end{conj}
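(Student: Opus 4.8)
The plan is to prove the statement in the range $d\le 3$, which is the case actually used in this paper; in higher dimensions this is one of the central open problems of the Minimal Model Program, and I will comment at the end on why that is. The strategy in low dimensions is the standard three-step one: reduce to a $\QQ$-factorial dlt pair, run a $(K+B)$-MMP over $Z$ and show it terminates, and then invoke abundance.

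First I would reduce to the case where $(X,B)$ is $\QQ$-factorial dlt. Take a $\QQ$-factorial dlt modification $\phi\colon (X',B')\to(X,B)$; then $\phi$ is crepant, $K_{X'}+B'$ is pseudo-effective over $Z$ if and only if $K_X+B$ is, and a good minimal model of $(X'/Z,B')$ over $Z$ pushes forward to one of $(X/Z,B)$ over $Z$ (one checks that the negativity lemma forces the induced birational map to contract exactly the $\phi$-exceptional divisors). Hence it suffices to treat $\QQ$-factorial dlt pairs.

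Next I would run the $(K_{X'}+B')$-MMP over $Z$ with scaling of an ample divisor. In dimension $\le 3$ each step exists: the cone and contraction theorems apply to lc (in particular dlt) pairs, and flips for $\QQ$-factorial dlt threefolds exist (Mori, Shokurov, Hacon--McKernan, and in the lc generality Birkar/Fujino). The essential point is termination: in dimension $\le 3$, the $(K+B)$-MMP with scaling terminates for lc pairs, by special termination together with termination of threefold flips. This produces a minimal model $(Y/Z,B_Y)$ with $K_Y+B_Y$ nef over $Z$. Finally I would apply the log abundance theorem for lc pairs in dimension $\le 3$ in its relative form (Keel--Matsuki--McKernan in the absolute case, with the relative and lc refinements due to Fujino and others): $K_Y+B_Y$ nef over $Z$ implies $K_Y+B_Y$ semi-ample over $Z$, so $(Y/Z,B_Y)$ is a good minimal model, as required.

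The main obstacle — and the reason this is stated as a conjecture rather than a theorem — is that all three ingredients above (existence of lc flips, termination of flips, and abundance) are open in dimension $\ge 4$; there, only existence of flips is presently available, and only for klt pairs via Birkar--Cascini--Hacon--McKernan, with lc flips being considerably more delicate. In dimension $\le 3$ all three are theorems, so the argument sketched above is unconditional in exactly the range needed for the applications in this paper, namely the canonical bundle formula over a curve and the proofs of Theorems~\ref{thm: main1} and \ref{thm:main2}.
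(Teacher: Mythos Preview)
The paper states this as a \emph{conjecture}, not a theorem, and offers no proof: it is invoked throughout as a standing hypothesis (e.g.\ in Lemmas~\ref{lem: cbf (d,1)}, \ref{lem: model right coefficient}, \ref{lem:ivolaccfollowfrom2conj}, \ref{lem: finite acc complexity} and Theorem~\ref{thm: closed}), and the paper simply records that it is known in dimension $\le 3$ when deducing Corollary~\ref{cor: conj holds d=3}, without supplying an argument. So there is no proof in the paper to compare your proposal against.

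That said, your sketch of the $d\le 3$ case is the standard and correct one: pass to a $\QQ$-factorial dlt modification, run the MMP with scaling (flips exist and terminate for lc threefolds), and apply relative log abundance for lc threefolds. One small factual correction to your concluding remarks: existence of flips for lc pairs is known in all dimensions by Birkar \cite{Bir12} and Hacon--Xu \cite{HX13}, not only for klt pairs; what remains open in dimension $\ge 4$ is termination and abundance, which is indeed why the statement is a conjecture.
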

\begin{lem}\label{lem: cbf (d,1)}
Let $d$ be a positive integer and $\Gamma\subset[0,1]$ a DCC set. Assume that both the Existence of Complements Conjecture (Conjecture \ref{conj: boundedness and existence of n complement nft}) and the Good Minimal Model Conjecture hold in dimension $d$. Then there exist a positive integer $p$, a finite subset $\Gamma_0\subset\Gamma\cap\QQ$, two DCC subsets $\tGamma_0 \subset\tGamma\subset [0,1]$, all depending only on $\Gamma$ and $d$, such that the following holds. 
\begin{enumerate}
    \item Let $(X, B)$ be a projective log canonical pair, and $f\colon X\rightarrow Z$ a fibration onto a smooth projective curve $Z$ such that $K_X+B\sim_{\RR, Z} B^v \sim_{\RR, Z}0$ and $\kappa(K_X+B)\ge0$. Let $B_Z$ and $B_Z'$ be the discriminant parts of the canonical bundle formula for $(X, B)$ and $(X, B^h)$ over $Z$ respectively. Then we may take a common moduli part $\bM$ of the canonical bundle formula for the two pairs $(X, B^h)$ and $(X, B)$ such that
\[
B^h\in \Gamma_0,\quad  B_Z'\in \tGamma_0,\quad B_Z\in \tGamma,\quad p\bM_Z\in \ZZ.
\]
 \item The set $\tGamma_0$ is a hyperstandard set, so its only possible accumulation point is $1$.
 \item We have 
 \[
 \tGamma\subset\bigcup_{m\in\Zz_{>0}}\left(\frac{1}{m}\Gamma+\tGamma_0\right),
 \]
 where $\frac{1}{m}\Gamma+\tGamma_0=\left\{\frac{a}{m}+b\,\bigg|\, a\in\Gamma,\, b\in\tGamma_0\right\}$. In particular, the accumulation complexity of $\tGamma$ is at most one more than that of $\Gamma$.
\end{enumerate}
\end{lem}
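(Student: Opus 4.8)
The plan is to run the canonical bundle formula for the two lc‑trivial fibrations $(X,B)\to Z$ and $(X,B^h)\to Z$ simultaneously, using crucially that, $Z$ being a curve, the vertical part $B^v$ is a genuine pullback. First, from $K_X+B\sim_{\RR,Z}B^v\sim_{\RR,Z}0$ one gets $K_X+B^h\sim_{\RR,Z}0$, so $(X,B^h)$ is again an lc pair, lc‑trivial over $Z$. Restricting to a general fibre $F$ of $f$, the pair $(F,B_F)$ is lc with $K_F+B_F\sim_\RR0$ and $B_F\in\Gamma$, and the coefficients of $B_F$ are precisely those of $B^h$. The global ACC for numerically trivial pairs (\cite[Theorem D]{HMX14}) then produces a finite $\Gamma_0\subset\Gamma\cap\QQ$, depending only on $\Gamma$ and $d$, with $B^h\in\Gamma_0$; this is the first assertion of (1).

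Next, since $B^v\sim_{\RR,Z}0$ and $B^v$ is $\RR$‑Cartier, \cite[Lemma 2.11]{Li20} gives $B^v=f^*D$ for an effective $\RR$‑divisor $D=\sum_P c_PP$ on $Z$. Over a point $P$, adding $c_Pf^*P$ to the boundary merely shifts the log canonical threshold, so the discriminant parts satisfy $B_Z=B_Z'+D$, and hence any moduli part $\bM$ of $(X,B)$ over $Z$ is simultaneously a moduli part of $(X,B^h)$ over $Z$. I would then apply the argument in the proof of Lemma~\ref{lem: cbfindex} to $(X,B)\to Z$, using the Existence of Complements Conjecture (Conjecture~\ref{conj: boundedness and existence of n complement nft}) in dimension $d$ in place of \cite[Theorem 2.14]{CHL24}: this yields a positive integer $p=p(\Gamma,d)$ and a choice of $\bM$ with $p\bM_Z$ an integral divisor. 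Since the germ $(X/Z\ni P,B^h)$ is $\RR$‑complementary (as $K_X+B^h\sim_{\RR,Z}0$), running the discriminant‑analysis part of that argument for the \emph{horizontal} fibration $(X,B^h)\to Z$ — which uses only $p$‑complements of fibration germs and no hypothesis on $\kappa$ — gives that $B_Z'$ lies in a hyperstandard set $\tGamma_0=\Phi(\fR_0)$ with $\fR_0=\fR_0(\Gamma,d)$, whose only possible accumulation point is $1$. This proves (2) and all of (1) except the inclusion $B_Z\in\tGamma$.

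For $(X,B)$ itself: for each $P$ with $c_P>0$ the coefficients of $B^v=f^*D$ along the components $D_{P,j}$ of $f^*P$ are $c_P\,m_{P,j}$ with $m_{P,j}=\mult_{D_{P,j}}f^*P\ge1$, and they lie in $\Gamma$ because $B^v\le B\in\Gamma$; setting $\mu_P:=\min_jm_{P,j}$ gives $c_P\in\tfrac1{\mu_P}\Gamma$. Hence $\mult_PB_Z=\mult_PB_Z'+c_P\in\tGamma_0+\tfrac1{\mu_P}\Gamma$, so, assuming harmlessly that $0\in\Gamma$, every coefficient of $B_Z$ and all of $\tGamma_0$ lie in $\bigcup_{m\in\ZZ_{>0}}(\tfrac1m\Gamma+\tGamma_0)$. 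I would then let $\tGamma$ be the set of all such coefficients together with $\tGamma_0$, and verify it is DCC with accumulation complexity at most $\mathrm{cx}(\Gamma)+1$ using the estimate $\mult_PB_Z'\ge 1-\tfrac1{\mu_P}$ — obtained by using the component of $f^*P$ of multiplicity $\mu_P$ to bound $\lct_{\eta_P}(X,B^h;f^*P)\le\tfrac1{\mu_P}$ — together with $\mult_PB_Z\le1$: along a strictly decreasing sequence of these coefficients either $\mu_P$ stays bounded, so the sequence lies in the DCC set $\bigcup_{m\le M}(\tfrac1m\Gamma+\tGamma_0)$ and stabilizes, or $\mu_P\to\infty$, which forces the coefficients up towards $1$ and contradicts strict decrease; the complexity bound comes out the same way, since $\tGamma_0$ accumulates only at $1$.

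The main obstacle is the middle step: carrying out the \cite[Proposition 3.1]{CHL24}‑type analysis of the moduli part for the horizontal fibration $(X,B^h)\to Z$ in \emph{arbitrary} relative dimension, i.e.\ proving, conditionally on the Existence of Complements Conjecture in dimension $d$, both that $\bM$ descends to $Z$ with bounded index and that $B_Z'$ is hyperstandard. Once that input is available, the remaining steps are routine bookkeeping with log canonical thresholds and DCC sets.
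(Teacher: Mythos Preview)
Your proposal is correct and follows essentially the same route as the paper: write $B^v=f^*D$, deduce $B_Z=B_Z'+D$ so that a common moduli part may be chosen, then invoke the argument of Lemma~\ref{lem: cbfindex} (equivalently \cite[Proposition~3.1]{CHL24}) for both $(X,B)$ and $(X,B^h)$ to obtain $p$, $\Gamma_0$, the hyperstandard $\tGamma_0$, and the DCC set $\tGamma$. What you flag as the ``main obstacle'' is not an obstacle at all: the only place the restriction $\dim X-\dim Z\le 2$ enters the proof of Lemma~\ref{lem: cbfindex} is the citation of \cite[Theorem~2.14]{CHL24} for the existence of $p$-complements, and this is precisely what your assumed Conjecture~\ref{conj: boundedness and existence of n complement nft} supplies in dimension $d$; the rest of that proof, including the hyperstandard conclusion for $B_Z'$, goes through verbatim.
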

\begin{proof}
(1) The positive integer $p$ and the DCC sets $\Gamma_0, \,\tGamma_0, \tGamma$ exist by Lemma~\ref{lem: cbfindex}, applied to the pairs $(X, B)$ and $(X, B^h)$ respectively. 

It remains to show that the same moduli part may be taken for the canonical bundle formula of the two pairs $(X, B^h)$ and $(X, B)$ over $Z$.

Since $B^v$ is a vertical $\RR$-divisor such that $B^v\in \Gamma$ and $B^v\sim_{\RR, Z}0$, it is a finite $\RR_{\geq 0}$-linear combination of fibers:
\[
B^v = \sum_{1\leq j\leq n}c_{j} f^* z_{j},
\]
where $c_{j}\in \RR_{\geq 0}$ is such that $mc_j\in \Gamma$ for any multiplicity $m$ of an irreducible component appearing in the fiber $f^* z_{j}$. By the construction of the discriminant parts (cf.~\cite[Lemma 7.4]{PS09}), we have
\begin{equation}\label{eq: BZ vs BZ'}
B_{Z} = B_{Z}' + \sum_{1\leq j\leq n}c_{j} z_{j}.
\end{equation}
Thus $f^*B_Z = f^*B_Z' + B^v$, and we may take the moduli parts of the canonical bundle formulas for the pairs $(X, B)$ and $(X, B^h)$ over $Z$ to be the same.

\medskip

(2) follows from the last statement of Lemma~\ref{lem: cbfindex} and the fact that $\Gamma_0$ is finite.

\medskip

(3) By \eqref{eq: BZ vs BZ'}, the set $\tGamma$ may be taken as a subset of $\bigcup_{m\in\Zz_{>0}}(\frac{1}{m}\Gamma+\tGamma_0)$.
\end{proof}

\subsection{Crepant birational models of pairs with an lc-trivial fibration over a curve}
For an lc pair $(X, B)$ and a fibration $f\colon X\rightarrow Z$ onto a curve such that $K_X+B\sim_{\RR, Z} 0$, we will use the minimal model program to construct a crepant birational model $(Y, B_Y)$ over $Z$ such that the coefficients of $B_Y$ are the ``right" ones.

The following lemma is well-known to experts. For lack of definite reference, we write a proof of it.
\begin{lem}\label{lem: R-trivial}
Let $f\colon X\rightarrow Z$ and $g\colon Y\rightarrow Z$ be projective morphisms between normal varieties, and $D$ an $\RR$-Cartier $\RR$-divisor on $X$ such that $D\sim_{\RR, Z}0$. Let $\phi\colon X\dashrightarrow Y$ be a birational contraction over $Z$. Then $D_Y:=\phi_* D$ is $\RR$-Cartier, $D_Y\sim_{\RR, Z}0$, and there is an equality of $\rb$-$\RR$-Cartier divisors $\overline D=\overline{D_Y}$.
\end{lem}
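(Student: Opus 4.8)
The plan is to reduce everything to the case where $\phi$ is a single elementary step, then use the negativity lemma. First I would recall that since $D\sim_{\RR,Z}0$, there is an $\RR$-Cartier $\RR$-divisor $H$ on $Z$ with $D\sim_\RR f^*H$; write $D = f^*H + \sum_i r_i (\varphi_i)_X$ with $r_i\in\RR$ and $\varphi_i$ rational functions on $X$. Pushing forward by $\phi$, and using that $\phi$ is a birational contraction over $Z$ (so $g\circ\phi = f$ as rational maps and $\phi$ extracts no divisors), I would get $D_Y = \phi_*D = g^*H + \sum_i r_i(\varphi_i)_Y$ as $\RR$-Weil divisors, since pushforward of a principal divisor along a birational contraction is the principal divisor of the same rational function, and $\phi_* f^*H = g^*H$ because $H$ is $\RR$-Cartier and $f, g$ share the target. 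The content of the lemma is therefore that $D_Y$ is $\RR$-Cartier and that the b-divisors agree; the displayed identity $D_Y\sim_{\RR,Z}0$ is then immediate from $D_Y = g^*H + \sum_i r_i(\varphi_i)_Y$.

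For $\RR$-Cartierness and the b-divisor equality, I would argue as follows. Take a common resolution $p\colon W\to X$, $q\colon W\to Y$ of $\phi$ (so that $W$ dominates $X$, $Y$, and $Z$ compatibly). On $W$, $p^*D$ is well-defined since $D$ is $\RR$-Cartier, and $p^*D \sim_{\RR} (f\circ p)^*H$, so $p^*D$ is the trace on $W$ of the b-divisor $\overline D$; moreover $\overline D = \overline{p^*D}$ descends to $W$. Now $q_*(p^*D) = \phi_*D = D_Y$ as $\RR$-Weil divisors. To see $D_Y$ is $\RR$-Cartier, I would show $p^*D = q^*D_Y$; but a priori this requires knowing $D_Y$ is $\RR$-Cartier, so instead I would write $p^*D - q^*(q_*(p^*D))$ — once we know the latter makes sense — and invoke negativity. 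The clean way: since $\phi$ is a birational \emph{contraction}, every $q$-exceptional divisor $E$ is $p$-exceptional (or at least not extracted), so the $q$-exceptional part of $p^*D$ is supported on divisors contracted by $q$ to subvarieties of $Y$. Apply the negativity lemma (\cite[Lemma~3.39]{KM98} in its b-divisor form) to $p^*D$ over $Y$: since $p^*D \equiv_{Y} 0$ (indeed $p^*D \sim_{\RR,Y} 0$ because $D\sim_{\RR,Z}0$ and hence $\sim_{\RR,Y}0$ as $g$ factors the map to $Z$ — wait, one must be careful that $Y\to Z$ is a morphism, which it is), the b-divisor $\overline{p^*D}$ is the pullback of its trace on $Y$. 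That trace on $Y$ is $q_*(p^*D) = D_Y$, and pullback-being-defined forces $D_Y$ to be $\RR$-Cartier, with $\overline{D_Y} = \overline{p^*D} = \overline D$.

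More carefully, here is the order of steps I would write: (i) reduce $D\sim_{\RR,Z}0$ to $D\sim_{\RR,Y}0$ using that $g\colon Y\to Z$ is a morphism and $\phi$ is over $Z$; precisely, on a common resolution $p^*D\sim_{\RR}(f\circ p)^* H = (g\circ q)^* H = q^*(g^*H)$, so $p^*D\sim_{\RR, Y} 0$. (ii) Let $D_Y := \phi_*D = q_*(p^*D)$, an $\RR$-Weil divisor on $Y$. (iii) Apply the negativity lemma over $Y$ to the $\RR$-Cartier divisor $p^*D$ on $W$: since $p^*D$ is numerically trivial over $Y$, we get $p^*D = q^*(D_Y)$ in the sense that $q^* $ of the Weil divisor $D_Y$ is defined and equals $p^*D$; in particular $D_Y$ is $\RR$-Cartier and $\overline{D_Y}$ descends to $W$ with trace $p^*D = \overline D_W$, so $\overline{D_Y} = \overline D$. (iv) Conclude $D_Y\sim_{\RR,Z}0$ from $\overline{D_Y} = \overline D$ and $D\sim_{\RR,Z}0$ (the b-divisor of a relatively trivial divisor pulls back from the base, and this is a b-birational property).

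\textbf{Main obstacle.} The one subtle point — and the place I would be most careful — is justifying that the negativity lemma applies in the $\RR$-b-divisor / non-$\QQ$-factorial setting to yield not just an inequality but the full equality $p^*D = q^*D_Y$ together with the a priori non-obvious $\RR$-Cartierness of $D_Y$. The standard negativity lemma is usually stated for a morphism with a divisor that is relatively nef (or relatively anti-nef) and gives effectivity of an exceptional combination; here I am using the two-sided version (apply it to both $p^*D$ and $-p^*D$, both relatively trivial hence relatively nef) to get that the ``difference'' is zero, which is exactly the statement that $\overline{p^*D}$ is the pullback of its own trace on $Y$. I would cite this in the form of \cite[Lemma~3.39]{KM98} or the b-divisor formulation in \cite{BCHM10}, and remark that relative numerical triviality of $p^*D$ over $Y$ is what makes both directions available. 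Everything else — compatibility of pushforward with principal divisors, $\phi_* f^*H = g^*H$ — is routine.
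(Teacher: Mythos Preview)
Your first paragraph is essentially identical to the paper's argument: write $D = f^*H + \sum_i r_i(\varphi_i)_X$ and push forward to get $D_Y = g^*H + \sum_i r_i(\varphi_i)_Y$. The paper makes this rigorous by choosing a big open $U\subset Y$ over which $\phi$ is an isomorphism, but your ``pushforward of a principal divisor is principal'' and ``$\phi_* f^*H = g^*H$'' are the same content.

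Where you diverge is in the b-divisor equality. The paper simply computes directly from the explicit formula: since $D_Y = g^*H + \sum_i r_i(\varphi_i)_Y$, pulling back by $q$ gives $q^*D_Y = q^*g^*H + \sum_i r_i(\varphi_i)_W = p^*f^*H + \sum_i r_i(\varphi_i)_W = p^*D$, where one uses that the rational function $\varphi_i$ has the same principal divisor on $W$ whether viewed via $X$ or $Y$, and $g\circ q = f\circ p$. No negativity lemma is needed. Your negativity-lemma route also works, but is more machinery than the situation requires.

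More importantly, your ``main obstacle'' is not an obstacle at all: your own formula $D_Y = g^*H + \sum_i r_i(\varphi_i)_Y$ already exhibits $D_Y$ as a sum of an $\RR$-Cartier pullback and principal divisors, so $D_Y$ is $\RR$-Cartier immediately. There is no circularity in forming $q^*D_Y$, and the two-sided negativity argument then goes through cleanly for $p^*D - q^*D_Y$ (which is $q$-exceptional and $q$-numerically trivial). You spend most of your effort worrying about a step you had already completed in the first paragraph.
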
 
\begin{proof}
Since $D\sim_{\RR, Z}0$, there are nonzero rational functions $h_1,\dots, h_r\in K(X)$ and real numbers $b_1,\dots, b_r$ such that 
\[
D=\sum_{1\leq i\leq r} b_i\cdot (h_i)_X + f^* H
\]
where $H$ is some $\RR$-Cartier $\RR$-divisor on $Z$. 

Let $p\colon W\rightarrow X$ and $q\colon W\rightarrow Y$ be a common resolution. Since $\phi\colon X\dashrightarrow Y$ is a birational contraction, there is an open subset $U\subset Y$ such that $\codim_Y(Y\setminus U)\geq 2$, and $q^{-1}(U)\rightarrow U$ and $p(q^{-1}(U))\rightarrow U$ are isomorphisms. Thus $D_Y$ is the closure of its restriction $(D_Y)|_U$ to $U$, which is exactly $\phi_*\left( D|_{p(q^{-1}(U))}\right)$, and we infer that
\[
D_Y =\sum_{1\leq i\leq r} b_i (\phi^{-1*} h_i)_Y + g^* H.
\]
Therefore, $D_Y\sim_{\RR, Z}0$, and
\[
q^*D_Y = \sum_{1\leq i\leq r} b_i (q^*\phi^{-1*} h_i)_W + q^*g^* H = \sum_{1\leq i\leq r} b_i (p^*h_i)_W + p^*f^*H = p^* D.
\]
\end{proof}


\begin{lem}\label{lem: model nef Bv}
Let $(X, B)$ be an lc pair, and $f\colon X\rightarrow Z$ a contraction such that $K_X+B\sim_{\RR, Z} 0$. Then there is a birational contraction $\phi\colon X\dashrightarrow Y$ over $Z$ such that $(Y, B_Y)$ is crepant to $(X, B)$ and $B_Y^v\sim_{\RR, Z} 0$, where $B_Y=\phi_*B$ and $B_Y^v$ is the vertical$/Z$ part of $B_Y$. More precisely, the following holds:
\begin{enumerate}
    \item $K_{Y}+B_{Y}\sim_{\RR, Z} K_{Y}+B_{Y}^h\sim_{\RR, Z} B_{Y}^v\sim_{\RR, Z}0$, where $B_{Y}^h$ is the horizontal$/Z$ part of $B_{Y}$.
    \item We have an equality of $\mathrm{b}$-divisors $\overline{K_X+B} = \overline{K_{Y}+B_{Y}}$.
\end{enumerate} 
\end{lem}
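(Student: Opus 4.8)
The plan is to run a $(K_X+B^h)$-MMP over $Z$ — equivalently, a relative MMP that only modifies the vertical part of the boundary — to reach a model where the vertical boundary becomes relatively $\RR$-trivial. First I would write $B = B^h + B^v$ and note that, since $K_X+B \sim_{\RR,Z} 0$, we have $K_X + B^h \sim_{\RR,Z} -B^v$; the key elementary observation is that $-B^v$ is vertical and anti-effective, so $K_X+B^h$ is not pseudoeffective over $Z$ unless $B^v$ is already relatively trivial, but more usefully, running the MMP for $K_X+B$ with scaling and contracting only the extremal rays that are negative on $B^v$ will contract exactly the vertical components of $B$ that obstruct relative triviality. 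Concretely, I would invoke the relative MMP (valid in the generality needed here by \cite{BCHM10} for klt and by standard lc MMP results, or one may pass to a dlt modification) to run a $K_X+B$-MMP over $Z$ that is also a $B^v$-MMP: at each step the contracted ray $R$ satisfies $(K_X+B)\cdot R = 0$, hence the step is $(K_X+B)$-crepant, and we only perform steps with $B^v \cdot R < 0$. Since $B^v$ is effective and vertical, such a sequence terminates (the relative Picard rank or the number of vertical components strictly drops, or one argues via special termination / length of extremal rays), reaching a model $\phi\colon X\dashrightarrow Y$ on which $B_Y^v$ is nef over $Z$.

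Next I would upgrade "nef over $Z$" to "$\sim_{\RR,Z} 0$". On $Y$ we still have $K_Y + B_Y \sim_{\RR,Z} 0$ by Lemma~\ref{lem: R-trivial}, so $B_Y^v \sim_{\RR,Z} -(K_Y+B_Y^h)$. Restricting to a general fiber $F$, $B_Y^h|_F = (K_X+B)|_F$-related data shows $B_Y^v$ is numerically trivial over the generic point; being nef over $Z$ and contained in fibers, a vertical nef divisor whose restriction to every fiber is numerically trivial must itself be relatively numerically trivial over $Z$ — here I would cite the standard fact (e.g.\ via the negativity lemma applied fiberwise, or \cite[Lemma~2.11]{Li20}-type arguments as already used in the proof of Lemma~\ref{lem: cbfindex}) that an effective $\RR$-Cartier divisor supported on fibers which is relatively nef is of the form $f^* C$ for an effective $\RR$-divisor $C$ on $Z$, hence $\sim_{\RR,Z} 0$. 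Combined with $K_Y+B_Y \sim_{\RR,Z} 0$, this gives $K_Y+B_Y^h \sim_{\RR,Z} -B_Y^v \sim_{\RR,Z} 0$ as well, establishing all the relative linear equivalences in item (1).

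Finally, for item (2), each step of the MMP is $(K_X+B)$-crepant by construction, so $\phi$ is a sequence of crepant birational contractions; combining Lemma~\ref{lem: R-trivial} applied to $D = K_X+B$ (together with the compatibility of canonical $\rb$-divisors under pushforward recorded in Section~\ref{div}) yields the equality of $\rb$-divisors $\overline{K_X+B} = \overline{K_Y+B_Y}$, so $(Y,B_Y)$ is crepant to $(X,B)$. I expect the main obstacle to be the termination of the chosen MMP and verifying it stays within the lc category: one must argue carefully that contracting only the $B^v$-negative rays among the $(K_X+B)$-trivial ones yields a well-defined terminating sequence — this is where invoking existence/termination of MMP for lc pairs (or reducing to the dlt/klt case via a crepant modification and then descending) does the real work, and where the hypothesis that $f$ is a contraction with $K_X+B\sim_{\RR,Z}0$ is essential.
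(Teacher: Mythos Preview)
Your opening plan---run a $(K_X+B^h)$-MMP over $Z$---is exactly what the paper does, and the paper's entire proof is a one-line appeal to \cite[Theorem~1.1]{Has19}: since $(X,B^h)$ is lc and $K_X+B^h\sim_{\RR,Z}-B^v$ is $\RR$-linearly equivalent over $Z$ to an effective divisor (add $f^*D$ for $D$ large on $Z$), Hashizume's theorem guarantees that the MMP terminates with a \emph{good} minimal model. Semi-ampleness of $K_Y+B_Y^h$ over $Z$, together with its triviality on the general fiber, then forces $K_Y+B_Y^h\sim_{\RR,Z}0$ immediately, and (2) follows from Lemma~\ref{lem: R-trivial}.

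Your elaboration, however, contains a sign error that you should fix. Since $K_X+B^h\sim_{\RR,Z}-B^v$, a $(K_X+B^h)$-MMP contracts rays with $(K_X+B^h)\cdot R<0$, i.e.\ with $B^v\cdot R>0$, not $B^v\cdot R<0$. So the outcome is that $-B_Y^v$ (equivalently $K_Y+B_Y^h$) is nef over $Z$, not $B_Y^v$. The ``MMP'' you describe---contracting $(K_X+B^h)$-\emph{positive} rays---has no cone theorem behind it and cannot be run as stated. Your termination sketch (``Picard rank drops'') also does not handle flips; this is precisely what the citation to \cite{Has19} supplies.

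With the sign corrected, your ``upgrade nef to trivial'' step still works, and in fact the needed direction is easier: if $D\ge 0$ is vertical and $-D$ is nef over $Z$, then over each $z$ a very-exceptional/negativity argument forces $D_z$ to be a nonnegative multiple of $f^*z$, so $D=f^*D_Z$. (Your citation of \cite[Lemma~2.11]{Li20} is for a different statement---it assumes $D\sim_{\RR,Z}0$ already.) But once you invoke \cite{Has19} you get semi-ampleness for free, so this separate step becomes unnecessary.
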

\begin{proof}
By \cite[Theorem 1.1]{Has19}, we may run a $(K_X+B^h)$-MMP$/Z$ that terminates with a good minimal model $\phi:X\dashrightarrow Y$ over $Z$. Then pair $(Y,B_{Y}:=\phi_* B)$ is lc and has the required properties in (1). The property (2) follows from Lemma~\ref{lem: R-trivial}.
\end{proof}

\begin{lem}\label{lem:adltmodel}
Assume that $(X,B)$ is an lc pair and $v$ a divisorial valuation over $X$ such that the log discrepancy $a(v, X, B)\leq 1$. Then there is a projective birational morphism $\varphi\colon W\to X$ and a $\Qq$-factorial dlt pair $(W,B_W)$ crepant to $(X,B)$ such that $E_W(v):=\Center_W(v)$ is a divisor and $\mult_EB_W=1$ for any prime $\varphi$-exceptional divisor $E\neq E_W$.
\end{lem}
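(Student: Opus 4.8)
The plan is to obtain $(W,B_W)$ as the output of a relative minimal model program started from a well-chosen log resolution, in the spirit of the standard construction of $\Qq$-factorial dlt modifications. First I would take a log resolution $g\colon Y\to X$ of $(X,\Supp B)$ with the additional property that $E:=\Center_Y(v)$ is a divisor on $Y$; this is possible because $v$ is a divisorial valuation over $X$. Writing $K_Y+B_Y=g^*(K_X+B)$, one has $\mult_F B_Y=1-a(F,X,B)\le 1$ for every $g$-exceptional prime divisor $F$, since $(X,B)$ is lc. I then set
\[
\Theta_Y:=g_*^{-1}B+(1-a(v,X,B))\,E+\sum_{F} F ,
\]
where $F$ runs over the $g$-exceptional primes different from $E$. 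As $(X,B)$ is lc and $a(v,X,B)\le 1$ by hypothesis, the coefficient $1-a(v,X,B)$ lies in $[0,1]$, so $\Theta_Y$ is a boundary and $(Y,\Theta_Y)$ is log smooth, hence $\Qq$-factorial dlt. A direct discrepancy computation gives
\[
K_Y+\Theta_Y = g^*(K_X+B)+A,\qquad A:=\sum_{F\neq E} a(F,X,B)\,F ,
\]
with $A\ge 0$ a $g$-exceptional $\RR$-divisor and $E\not\subset\Supp A$.

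Next I would run a $(K_Y+\Theta_Y)$-MMP over $X$. Since $K_Y+\Theta_Y\sim_{\RR,X}A\ge 0$ is effective and exceptional over $X$, this is the relative MMP for an effective divisor and can be run to termination (cf.\ \cite{BCHM10,Has19}), ending with a projective birational morphism $\varphi\colon W\to X$ such that the $\Qq$-factorial dlt pair $(W,B_W)$ obtained by pushing forward $(Y,\Theta_Y)$ satisfies $K_W+B_W$ nef over $X$. Then $K_W+B_W\sim_{\RR,X}A_W$, the pushforward of $A$, which is effective and $\varphi$-exceptional; being also nef over $X$, it vanishes by the negativity lemma. Hence $K_W+B_W=\varphi^*(K_X+B)$, so $(W,B_W)$ is crepant to $(X,B)$.

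Finally I would track which divisors survive the MMP. Each step contracts a $(K+\Theta)$-negative extremal ray; such a ray is over $X$, and $K+\Theta\sim_{\RR,X}A$, so the ray is also $A$-negative. A divisorial step over $X$ can only contract a divisor that is exceptional over $X$, and that divisor is covered by curves in the contracted ray, hence by $A$-negative curves, so it is a component of the strict transform of $\Supp A$; flips contract no divisor. Since $E\not\subset\Supp A$, the strict transform of $E$ is never contracted, so $E_W:=\Center_W(v)$ is a divisor on $W$. Conversely, every $g$-exceptional $F\neq E$ with $a(F,X,B)>0$ is a component of $\Supp A$ and hence is eventually contracted, because $A_W=0$. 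Thus the $\varphi$-exceptional prime divisors of $W$ other than $E_W$ are (the surviving strict transforms of) those $F$ with $a(F,X,B)=0$, each carrying coefficient $1$ in $\Theta_Y$ and hence in $B_W$, which is the claim. The only point that needs care is the legitimacy of the relative MMP in the second paragraph — harmless here because $K_Y+\Theta_Y$ is $\RR$-linearly equivalent over $X$ to the effective, relatively exceptional divisor $A$ — together with the bookkeeping that $E$ is never contracted, which is exactly the observation that a divisorial contraction in this MMP only contracts components of $\Supp A$.
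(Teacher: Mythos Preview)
Your proof is correct and follows essentially the same approach as the paper: your $\Theta_Y$ and $A$ are exactly the paper's $B_Y$ and $G$, and the model $W$ is produced by the same relative MMP. The only difference is that the paper appeals directly to \cite[Theorem~1.8]{Bir12}, which in one stroke gives both termination of the $(K_Y+\Theta_Y)$-MMP over $X$ and the fact that it contracts \emph{exactly} the components of $A$; you instead cite \cite{BCHM10,Has19} for termination and then track by hand which divisors are contracted, but the content is the same.
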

\begin{proof}
We may assume that the center of $v$ on $X$ is not a divisor. Let $g\colon Y\to X$ be a log resolution of $(X,\Supp B)$ such that the center of $v$ on $Y$ is a prime divisor $E_1$. Let $E_1,\dots,E_n$ be all the prime divisors on $Y$ that are exceptional$/X$ and set $B_Y:=g_*^{-1}B+(1-a(v,X,B))E_1+\sum_{i=2}^nE_i$. Then $(Y,B_Y)$ is log smooth and
$$K_Y+B_Y-g^*(K_X+B)=G\ge0$$
for some $\Rr$-divisor $G$ that is exceptional over $ X$ and $E_1\not\subset\Supp G$. By \cite[Theorem 1.8]{Bir12}, we may run a $(K_Y+B_Y)$-MMP over $X$ that contracts exactly the components of $G$ and get a model $W$. Let $B_W$ be the strict transform of $B_Y$ on $W$ and then $(W,B_W)$ has the required properties.
\end{proof}

\begin{lem}\label{lem: model right coefficient}
Let $(X, B)$ be a projective lc pair of dimension $d\geq 2$, and $f\colon X\rightarrow Z$ a fibration onto a smooth projective curve $Z$ such that $K_X+B\sim_{\Rr,Z}0$. Let $S=\{z_i\mid 1\leq i\leq n\}\subset Z$ be a (possibly empty) finite set of points such that $B_{z_i}\neq 0$ (see \ref{div} for the definition of $B_{z_i}$) and $(X, B)$ is klt along the fibers $f^*z_i$ for each $1\leq i\leq n$, and let $\{v_i\}_{1\leq i\leq n}$ be a set of divisorial valuations with $\mathrm{center}_X(v_i)\subset \Supp(f^*z_i)$ and $a(v_i, X, B) \leq 1$ for each $i$.

Assume that the Good Minimal Model Conjecture (Conjecture \ref{conj: exist gmm}) holds in dimension $d$. Then there is a birational map $\phi\colon X\dashrightarrow Y$ over $Z$ and  a projective log canonical pair $(Y, B_{Y})$ such that the following holds.
\begin{enumerate}
    \item We have an equality of $\mathrm{b}$-divisors $\overline{K_{X}+B} = \overline{K_{Y}+B_{Y}}$. 
    \item Let $g\colon Y\rightarrow Z$ be the structural fibration. Then, for each $1\leq i\leq n$, we have
    $$E_{Y}(v_i):=\Center_Y(v_i) = \Supp(g^*z_i),$$ which is thus irreducible.
        \item We have $B_Y^h=\phi_*B^h$. Moreover, if $d\geq 3$ then $\lfloor B_Y^h\rfloor =0$.
    \item We have $B_Y^v=\sum_{1\leq i\leq n} b_i E_{Y}(v_i)$, where $b_i:=1-a(v_i, X, B)$ for $1\leq i\leq n$. (In case $S=\emptyset$, we have $B_Y^v=0$.)
\end{enumerate} 
In particular, if $d\leq 3$, then $(Y, B_Y)$ as above always exists.
\end{lem}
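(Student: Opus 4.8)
The plan is to build the model $(Y, B_Y)$ in two stages: first arrange the vertical behaviour of $B_Y$ (the part over the points $z_i$), then correct the horizontal part without disturbing what has been achieved.

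\emph{Step 1: handling the vertical part over $S$.} For each $i$, the valuation $v_i$ has $a(v_i, X, B)\le 1$, so by Lemma~\ref{lem:adltmodel} there is a $\Qq$-factorial dlt model $\varphi\colon W\to X$, crepant to $(X,B)$, on which each $E_W(v_i)$ is a divisor and every other $\varphi$-exceptional divisor has coefficient $1$ in $B_W$. Now run a $(K_W + B_W^h)$-MMP over $Z$; since $K_W + B_W \sim_{\RR, Z} 0$ and $B_W^v$ is vertical, this is really a minimal model program that extracts/contracts vertical divisors, and by the Good Minimal Model Conjecture in dimension $d$ (applied as in Lemma~\ref{lem: model nef Bv}) it terminates with $\phi\colon X \dashrightarrow Y$ such that $B_Y^v \sim_{\RR, Z} 0$. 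Over each $z_i \in S$, $B_Y^v$ is then a multiple of the whole fibre $g^*z_i$; the components of $g^*z_i$ with coefficient $1$ in $B_Y$ are precisely the ones that survive (the MMP contracts the coefficient-$1$ exceptional divisors that become redundant, or they are absorbed), and one checks that the only divisor over $z_i$ that can remain with the value $b_i = 1-a(v_i,X,B)$ is $E_Y(v_i)$, forcing $\Supp(g^*z_i) = E_Y(v_i)$ to be irreducible, which is (2) and (4). Outside $S$, $B_{z}=0$ means no vertical component survives, so $B_Y^v = \sum_i b_i E_Y(v_i)$ exactly.

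\emph{Step 2: the horizontal part.} After Step 1 we have $B_Y^h = \phi_* B^h$ by Lemma~\ref{lem: R-trivial} (horizontal components are not touched by a purely vertical MMP, up to taking strict transforms). The only remaining issue is the claim $\lfloor B_Y^h\rfloor = 0$ when $d\ge 3$. Here one uses that over the generic point $\eta_Z$ the pair $(X,B)|_F$ is numerically trivial of dimension $d-1 \ge 2$, and that a prime horizontal component with coefficient $1$ would give a non-klt centre dominating $Z$; running the relevant MMP (or invoking adjunction and the structure of lc-trivial fibrations, e.g.\ the argument behind Lemma~\ref{lem: cbf (d,1)}) lets one modify $Y$ further, over the generic point of $Z$ and then spread out, to remove such components while keeping everything crepant. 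Property (1), the equality of $\rb$-divisors, is then just Lemma~\ref{lem: R-trivial} applied to $D = K_X + B \sim_{\RR, Z} 0$ along the birational contraction $\phi$.

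\emph{The final sentence.} The ``in particular'' clause is immediate: for $d\le 3$ the Good Minimal Model Conjecture is a theorem (by \cite{BCHM10} and the MMP for threefolds), so the hypothesis is automatically satisfied and the construction goes through unconditionally.

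\emph{Main obstacle.} The delicate point is Step 1's bookkeeping over each $z_i$: one must verify that after the vertical MMP the surviving divisor over $z_i$ is \emph{exactly} $E_Y(v_i)$ with the \emph{exact} coefficient $b_i$, i.e.\ that no other component of $g^*z_i$ persists and that the MMP does not alter $a(v_i, X, B)$ (it cannot, by crepancy, but one must ensure $E_Y(v_i)$ is not itself contracted). This requires knowing that $E_W(v_i)$ is the unique divisor over $z_i$ with coefficient $<1$ in $B_W$ — which is exactly what Lemma~\ref{lem:adltmodel} buys us — together with the standard fact that a $(K+B^h)$-negative vertical MMP over a curve cannot contract a divisor that dominates a point where the relative log canonical model has positive-dimensional fibre. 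Pinning this down carefully, rather than the existence of the models, is where the real work lies.
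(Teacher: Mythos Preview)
Your proposal has two genuine gaps, one in each step.

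\textbf{Step 1 does not yield (2).} Running a $(K_W+B_W^h)$-MMP over $Z$ only achieves $B_Y^v\sim_{\RR,Z}0$, i.e.\ $(B_Y)_{z_i}=c_i\cdot g^*z_i$ for some $c_i$; it does \emph{not} make $g^*z_i$ irreducible. Your ``Main obstacle'' paragraph asserts that $E_W(v_i)$ is the unique divisor over $z_i$ with coefficient $<1$ in $B_W$, but this is false: since $(X,B)$ is klt along $f^*z_i$, \emph{every} strict transform of a component of $f^*z_i$ also has coefficient $<1$ in $B_W$, and nothing in a $(-B_W^v)$-MMP singles out $E_W(v_i)$ among them. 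The paper instead defines an explicit very exceptional divisor $G$ on $W$ with $G_{z_i}=(\varphi^*f^*z_i)_{\red}-E_W(v_i)$, and runs a $(K_W+B_W+\epsilon G)$-MMP which, by \cite[Theorem~1.8]{Bir12}, contracts exactly $\Supp G$. This is what forces $\Supp(g^*z_i)=E_Y(v_i)$.

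\textbf{Step 2 does not yield $\lfloor B_Y^h\rfloor=0$.} A horizontal component of $B$ with coefficient $1$ cannot be ``removed while keeping everything crepant'' by any MMP of the sort you describe: it is part of the boundary, and an MMP on $K+B$ is trivial since $K+B\sim_{\RR,Z}0$. The paper's mechanism is entirely different: choose a very ample $H_{X_1}$ with $H_{X_1}-\lfloor B_{X_1}\rfloor$ ample, blow up along $H_{X_1}\cap\lfloor B_{X_1}\rfloor$ to a model $W_1$ on which (one shows) $\Supp H_{W_1}\cap\Supp\lfloor B_{W_1}\rfloor=\emptyset$, and then take the \emph{canonical model} of $K_{W_1}+B_{W_1}+H_{W_1}$ over $Z$. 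Since $H_Y$ is ample$/Z$ and is disjoint from $\lfloor B_{W_1}\rfloor$, any component of $\lfloor B_{W_1}\rfloor$ containing vertical curves (in particular any horizontal component when $d\ge 3$, since it fibres over $Z$ with $(d-2)$-dimensional fibres) must be contracted. This ample-divisor trick is the missing idea; without it there is no way to dispose of coefficient-$1$ horizontal components.
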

\begin{proof}
By Lemma \ref{lem:adltmodel}, there is a projective birational morphism $\varphi\colon W\to X$ and a $\Qq$-factorial dlt pair $(W,B_W)$ crepant to $(X,B)$ such that $E_{W}(v_i):=\Center_W(v_i)$ is a divisor for $1\leq i\leq n$ and $\mult_E B_W=1$ for any prime $\varphi$-exceptional divisor $E\neq E_{W}(v_i)$ for any $1\leq i\leq n$. Since $\Center_X(v_i)\subset f^*z_i$, $E_{W}(v_i)$ is vertical/$Z$ for each $i$. Note that the coefficient of $E_{W}(v_i)$ in $B_W$ is $b_i:=1-a(v_i, X, B)$, which is non-negative.

Define a vertical$/Z$ divisor $G$ on $W$ such that its support is contained in the union of the fibers over $z_i$ ($1\leq i\leq n)$ and of $\Supp(B_W^v)$, and its part over a point $z\in Z$ is as follows:
\[
G_{z}:=
\begin{cases}
(\varphi^*f^*z)_\red - E_{W}(v_i), & \text{ if $z\in S$,} \\
(\lceil B_W\rceil-\lfloor B_W\rfloor)_{z}, & \text{ if $z\notin S$.} 
\end{cases}
\]
Then $(W,B_W+\epsilon G)$ is dlt for any $0<\epsilon\ll 1$. By construction, $G_{z}$ is either $0$ or very exceptional over $Z$. By \cite[Theorem 1.8]{Bir12}, we may run a $(K_W+B_W+\epsilon G)$-MMP over $Z$, $\psi\colon W\dashrightarrow X_1$, which contracts exactly the components of $G$. Let $B_{X_1}:=\psi_* B_W$ and $f_1\colon X_1\rightarrow Z$ is the induced fibration. Then $(X_1,B_{X_1})$ is dlt, and by Lemma~\ref{lem: R-trivial}, we have 
\[
K_{X_1}+B_{X_1} = \psi_{*}(K_W+B_W)\sim_{\RR, Z} 0.
\] 
For $1\leq i\leq n$, the fiber $f_1^*z_i$ is irreducible with support $E_{X_1}(v_i) = \psi_* E_W(v_i)$, which is the center of $v_i$ on $X_1$, and  
\begin{equation}\label{eq: BX1 zi}
    (B_{X_1})_{z_i} = \psi_*(B_W)_{z_i} =b_i\psi_*E_W(v_i) =b_i E_{X_1}(v_i).
\end{equation}
Let $H_{X_1}$ be a general very ample divisor on $X_1$, so that 
\begin{itemize}
    \item the divisor $H_{X_1} -\lfloor B_{X_1} \rfloor$ is ample,
    \item the pair $(X_1, B_{X_1}+H_{X_1})$ is dlt, and
    \item the divisor $H_{X_1}$ is irreducible and normal.
\end{itemize}
Let $\rho\colon \tX_1\rightarrow X_1$ be the blow-up along $H_{X_1}\cap \lfloor B_{X_1}\rfloor$, and $\gE$ the set of valuations of $\rho$-exceptional divisors dominating the codimension-$2$ locus of $H_{X_1}\cap \lfloor B_{X_1}\rfloor$. By \cite[Corollary~1.4.3]{BCHM10}, there is a birational morphism $\psi_1\colon W_1\rightarrow X_1$ such that $W_1$ is $\QQ$-factorial and the $\psi_1$-exceptional divisors correspond to the elements of $\gE$. Let $B_{W_1}$ be the strict transform of $B_{X_1}$. Then $(W_1, B_{W_1})$ is lc, and since $a(v, X_1, B_{X_1}) =1$ for any $v\in \gE$, we have 
\[
K_{W_1}+ B_{W_1} = \psi_1^*(K_{X_1}+B_{X_1}) \sim_{\RR, Z} 0.
\]
Let $H_{W_1}$ be the strict transform of $H_{X_1}$ on $W_1$. 
Since $(X_1, B_{X_1}+H_{X_1})$ is dlt, $(W_1, B_{W_1}+H_{W_1})$ is lc. Since $H_{X_1} -\lfloor B_{X_1} \rfloor$ is ample, its pull-back $\varphi_1^*(H_{X_1} -\lfloor B_{X_1}\rfloor)$ is big$/Z$, and so is 
\[
K_{W_1}+ B_{W_1} +H_{W_1} \sim_{\RR, Z} H_{W_1} \sim_{\RR, Z}\varphi_1^*(H_{X_1} -\lfloor B_{X_1}\rfloor) + \lfloor B_{W_1}\rfloor.
\]

Since we are assuming the Good Minimal Model Conjecture in dimension $d$, we may take the canonical model $\psi_1\colon W_1\dashrightarrow Y$ of $K_{W_1}+B_{W_1}+H_{W_1}$  over $Z$, so that $K_{Y} + B_{Y} +H_{Y}$ is ample over $Z$, where $B_{Y}$ and $H_{Y}$ are the strict transforms of $B_{W_1}$ and $H_{W_1}$ respectively. 

Let $g\colon Y\rightarrow Z$ and $\phi\colon X\dashrightarrow Y$ be the induced maps.
We will verify that $(Y, B_Y)$ satisfies the properties (1)--(4) of the lemma. For the reader's convenience, we illustrate the constructed birational maps in the following diagram:
\[
\begin{tikzcd}
  & W\arrow[ld, "\varphi"']\arrow[rd, dashed, "\psi"] &  & W_1 \arrow[ld, "\varphi_1"'] \arrow[rd, dashed, "\psi_1"] &  \\  
X &   & X_1 & & Y
\end{tikzcd}
\]

By Lemma~\ref{lem: R-trivial}, we have in fact
\[
\overline{K_Y+B_Y} = \overline{K_{W_1}+B_{W_1}} = \overline{K_{X_1}+B_{X_1}} =  \overline{K_{W}+B_{W}} = \overline{K_X+B}. 
\]
In particular, (1) holds.

By \eqref{eq: BX1 zi}, $(B_{X_1})_{z_i} = b_iE_{X_1}(v_i)$ for each $z_i\in S$. Since $\varphi_1\colon W_1\rightarrow X_1$ only extracts divisors $E$ not contained in the fibers over $z_i$, we infer that $E_{W_1}(v_i):=\Center_{W_1}(v_i)$ supports the fiber $\varphi_1^*f_1^*z_i$. It follows that $E_{Y}(v_i) = \psi_{1*}E_{W_1}(v_i)$ also supports the fiber $g^*z$. This proves (2).

Note that the birational morphisms $\varphi\colon W\rightarrow X$ and $\varphi_1\colon W_1\rightarrow X_1$ only extract divisors $E$ with $a(E, X, B)\in\{0,1\}$ or with $E=E_{W}(v_i)$ for some $1\leq i\leq n$, while $\psi\colon W\rightarrow X_1$ and $\psi_1\colon W_1\rightarrow Y$ are birational contractions. To show (3) and (4), it suffices to show that every irreducible component of $\lfloor B_{W_1}\rfloor$ that is not generically finite over $Z$ is contracted by $\psi_1$. We achieve this by showing the following two claims.

\begin{claim}\label{claim: HW BW empty intersect}
We have $\Supp H_{W_1} \cap\, \Supp \lfloor B_{W_1}\rfloor =\emptyset$. 
\end{claim}
\noindent \textit{Proof of the Claim~\ref{claim: HW BW empty intersect}.} Suppose on the contrary that $\Supp H_{W_1} \cap\, \Supp \lfloor B_{W_1}\rfloor \neq \emptyset$. Since $W_1$ is $\Qq$-factorial, each irreducible component of $\Supp H_{W_1} \cap\, \Supp \lfloor B_{W_1}\rfloor$ is of codimension 2 in $W_1$. By construction, the image of $\Supp H_{W_1} \cap\, \Supp \lfloor B_{W_1}\rfloor$ in $X_1$ has codimension at least 3 in $X_1$, and hence $\Supp H_{W_1} \cap\, \Supp \lfloor B_{W_1}\rfloor$ is contained in the exceptional locus $\Exc(\varphi_1)$ of $\varphi_1$. Since $X_1$ is $\QQ$-factorial, $\Exc(\varphi_1)$ is of pure codimension 1  (\cite[Corollary~2.63]{KM98}), and hence is the union of the prime divisors $E_{W_1}(v):=\Center_{W_1}(v)$ for $v\in \gE$. 

Note that the intersection of lc centers is a union of lc centers (cf.~\cite[Theorem 5.14]{Kol13}), so the irreducible components of $\Supp H_{W_1} \cap\, \Supp \lfloor B_{W_1}\rfloor$ are lc centers of the lc pair $(W_1, B_{W_1}+H_{W_1}+\sum_{v\in \gE}E_{W_1}(v))$. But each of them is contained in $E_{W_1}(v)$ for some $v\in\gE$, which is a lc place of $(W_1, B_{W_1}+H_{W_1}+\sum_{v\in \gE}E_{W_1}(v))$, and this contradicts the fact that $(W_1, B_{W_1}+H_{W_1}+\sum_{v\in \gE}E_{W_1}(v))$ is lc.

\begin{claim}\label{claim: contract BW1}
Any component of $\lfloor B_{W_1}\rfloor$ that is not generically finite over $Z$ is contracted by $\psi_1$. In particular, each component of $\lfloor B_{W_1}\rfloor$ is contracted by $\psi_1$ if $d\geq 3$.
\end{claim}
\noindent \textit{Proof of Claim~\ref{claim: contract BW1}.} Let $p\colon W_2\to W_1$, $q\colon W_2\to Y$ be a common resolution. Then 
$$p^{*}H_{W_1}=q^{*}H_{Y}+E$$ 
for some $q$-exceptional $\RR$-divisor $E\ge 0$. Let $D_2\subset W_2$ be the strict transform of an irreducible component $D_1$ of $\lfloor B_{W_1}\rfloor$ that contains a vertical/$Z$ curve, and $\Sigma$ a general curve on $D_2$ that is contracted by $D_2\rightarrow Z$. By Claim~\ref{claim: HW BW empty intersect}, we have $\Supp H_{W_1}\cap \Supp B_{W_1}=\emptyset$, and hence
$$0=H_{W_1}\cdot(p_*\Sigma) = p^{*}H_{W_1}\cdot \Sigma=(q^{*}H_{Y}+E)\cdot\Sigma.$$ 
If $D_1$ were not contracted by $\psi_1$, then $D_2$ is not contracted by $q$ and hence $q(\Sigma)$ is still a curve. Now we obtain the following contradiction:
$$0\le E\cdot \Sigma=-H_{Y}\cdot q(\Sigma)<0$$
as $H_Y$ is ample over $Z$. 
\end{proof}

\section{Accumulation properties  for Iitaka volumes}\label{sec: DCC}

\subsection{Proofs of Theorems \ref{thm: main1}, \ref{thm:main2}, \ref{thm: main3}}
\begin{lem}\label{lem:ivolaccfollowfrom2conj}
Let $d$ be a positive integer. Assume that both the Existence of Complements Conjecture (Conjecture \ref{conj: boundedness and existence of n complement nft}) and the Good Minimal Model Conjecture (Conjecture \ref{conj: exist gmm}) hold in dimension $d$. Then Conjecture \ref{conj:ivoldcc} holds in dimension $d$. 
\end{lem}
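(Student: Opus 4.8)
The plan is to argue by contradiction, reducing the Iitaka volume of a pair in $\gP_\lc^\Gamma(d,\kappa)$ to the volume of a generalized pair on the base of an Iitaka fibration, and then invoking the DCC of volumes for generalized pairs. So fix $0<\kappa<d$ and a DCC set $\Gamma\subset[0,1]$, and suppose there were pairs $(X_i,B_i)\in\gP_\lc^\Gamma(d,\kappa)$ with $\vol_\kappa(K_{X_i}+B_i)$ strictly decreasing. Since $\kappa(K_{X_i}+B_i)=\kappa\ge0$, the divisor $K_{X_i}+B_i$ is pseudo-effective, so the Good Minimal Model Conjecture (Conjecture~\ref{conj: exist gmm}) in dimension $d$ lets us replace $(X_i,B_i)$ by a good minimal model $(Y_i,B_{Y_i})$, which preserves both $\kappa$ and $\vol_\kappa$. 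Now $K_{Y_i}+B_{Y_i}$ is semi-ample and induces an Iitaka fibration $f_i\colon Y_i\to Z_i$ onto a normal projective variety $Z_i$ with $\dim Z_i=\kappa$, and $K_{Y_i}+B_{Y_i}\sim_\Rr f_i^*A_i$ for some ample $\Rr$-divisor $A_i$ on $Z_i$; in particular $K_{Y_i}+B_{Y_i}\sim_{\Rr,Z_i}0$.

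I would then apply the canonical bundle formula (Theorem~\ref{thm: cbf}) to $f_i\colon(Y_i,B_{Y_i})\to Z_i$, choosing the moduli part with controlled $\rb$-Cartier index. The proof of Lemma~\ref{lem: cbfindex} carries over to a fibration whose general fibers have arbitrary dimension $d-\kappa$: the hypothesis $\dim X-\dim Z\le2$ there was used only to invoke \cite[Theorem~2.14]{CHL24}, and if one instead assumes the Existence of Complements Conjecture (Conjecture~\ref{conj: boundedness and existence of n complement nft}) in dimension $d$ — hence in dimension $d-\kappa$ — one still gets $p(K_F+B_F)\sim0$ on a general fiber $F$, where $K_F+B_F:=(K_{Y_i}+B_{Y_i})|_F$ (by \cite[Theorem~D]{HMX14} the coefficients of $B_F$ lie in a fixed finite rational set, and the ensuing $p$-complement of $(F,B_F)$ must then coincide with $(F,B_F)$, since an effective $\Rr$-divisor that is numerically trivial on the projective $F$ vanishes). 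One thereby obtains a DCC set $\tGamma$ and a positive integer $p$, both depending only on $\Gamma$ and $d$, and for each $i$ a choice of moduli part $\bM^i$ such that $p\bM^i$ is $\rb$-Cartier, $B_{Z_i}\in\tGamma$, and
\[
p(K_{Y_i}+B_{Y_i})\sim pf_i^*\!\left(K_{Z_i}+B_{Z_i}+\bM^i_{Z_i}\right).
\]

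Comparing this with $K_{Y_i}+B_{Y_i}\sim_\Rr f_i^*A_i$ and using that $f_i$ is a contraction shows that $K_{Z_i}+B_{Z_i}+\bM^i_{Z_i}$ is $\Rr$-linearly equivalent to an ample divisor; pushing forward $\Oo_{Y_i}\!\left(mp(K_{Y_i}+B_{Y_i})\right)$ for $m$ large and divisible and using the projection formula then yields
\[
\vol_\kappa(K_{X_i}+B_i)=\vol_\kappa(K_{Y_i}+B_{Y_i})=\vol\!\left(K_{Z_i}+B_{Z_i}+\bM^i_{Z_i}\right)=\left(K_{Z_i}+B_{Z_i}+\bM^i_{Z_i}\right)^{\kappa}.
\]
Thus each $(Z_i,B_{Z_i}+\bM^i)$ is an lc generalized pair of dimension $\kappa$ with ample log canonical divisor, with $B_{Z_i}$ in the fixed DCC set $\tGamma$ and $p\bM^i$ $\rb$-Cartier for the fixed $p$. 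By the DCC of volumes for such generalized pairs — the generalized-pair analogue of \cite{Ale94,HMX14}, cf.~\cite{BZ16} — the set $\{\vol_\kappa(K_{X_i}+B_i)\}_i$ satisfies the DCC, contradicting its strict monotonicity. Hence $\Ivol_\lc^\Gamma(d,\kappa)$ is a DCC set, i.e.\ Conjecture~\ref{conj:ivoldcc} holds in dimension $d$.

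I expect the crux to be the middle step: extracting a \emph{uniform} bound on the $\rb$-Cartier index of the moduli parts $\bM^i$ is exactly where the Existence of Complements Conjecture enters, and it is the only genuinely new conjectural input beyond the existence of good minimal models (which is needed just to set up the Iitaka fibration). A secondary technical point is that the canonical bundle formula lands in the log canonical — not merely klt — range of generalized pairs on $Z_i$, so the DCC-of-volumes input must be available in its generalized lc form rather than only for generalized klt pairs; if only the klt version were at hand one would need a standard boundary perturbation to descend to it.
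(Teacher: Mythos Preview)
Your argument is correct and follows essentially the same route as the paper: pass to a good minimal model, take the Iitaka fibration, use the Existence of Complements Conjecture to bound the $\rb$-Cartier index of the moduli part in the canonical bundle formula (the paper cites \cite[Proposition~3.1]{CHL24} directly rather than re-running the proof of Lemma~\ref{lem: cbfindex}), and then appeal to the DCC of volumes for generalized lc pairs. The only minor point is that the precise reference for the last step is \cite[Theorem~1.3]{Bir21} rather than \cite{BZ16}; also, the contradiction framing is unnecessary since the argument directly shows $\Ivol_\lc^\Gamma(d,\kappa)$ lies in a DCC set.
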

\begin{proof}
Let $(X,B)$ be a projective lc pair of dimension $d$ such that $\kappa(K_X+B)\ge0$ and $B\in\Ii$. Possibly replacing $(X,B)$ with a good minimal model, we may assume that $(X,B)$ is lc and $K_X+B$ is semi-ample. Let $f\colon X\to Z$ be the ample model of $K_X+B$.

By \cite[Theorem 1.1]{HMX14} and \cite[Proposition 3.1]{CHL24}, if the Existence of Complements Conjecture and the Good Minimal Model Conjecture hold in dimension $d$, then we can find a positive integer $p$ and a DCC set $\Ii'\subset[0,1]$ depending only on $d$ and $\Ii$ such that there is a generalized pair $(Z,B_Z+\bM)$ satisfying that $B_{Z}\in\Ii'$, $p\bM$ is b-Cartier, and 
$$p(K_X+B)\sim pf^*(K_Z+B_Z+\bM_Z).$$
Moreover, $K_Z+B_Z+\Mm_{Z}$ is big as $Z$ is the ample model of $K_X+B$. By \cite[Theorem 1.3]{Bir21}, $\vol(K_Z+B_Z+\bM_{Z})$ belongs to a DCC set $\mathcal{V}$ depending only on $d$, $p$, and $\tGamma$. It follows by \cite[II Lemma 2.11]{Nak04} that the Iitaka volume of $K_X+B$ equals to
$$\vol(K_Z+B_Z+\bM_{Z})$$
and thus also belongs to the DCC set $\mathcal{V}$. We may finish the proof.
\end{proof}

\begin{cor}\label{cor: conj holds d=3}
Conjecture \ref{conj:ivoldcc} holds when $d\le 3.$
\end{cor}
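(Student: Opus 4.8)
The plan is to obtain the corollary as an immediate consequence of Lemma~\ref{lem:ivolaccfollowfrom2conj}. That lemma shows that, in any fixed dimension $d$, Conjecture~\ref{conj:ivoldcc} follows once one knows both the Existence of Complements Conjecture (Conjecture~\ref{conj: boundedness and existence of n complement nft}) and the Good Minimal Model Conjecture (Conjecture~\ref{conj: exist gmm}) in dimension $d$. So the only thing to verify is that these two conjectures hold unconditionally for $d\le 3$.

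First I would record that the Good Minimal Model Conjecture holds in dimensions $\le 3$. For lc pairs this is classical in dimensions $1$ and $2$, and in dimension $3$ it is a consequence of the minimal model program and abundance for log canonical threefolds (Shokurov, Kawamata, Keel--Matsuki--McKernan, and the subsequent literature); concretely, any lc pair $(X/Z,B)$ with $\dim X\le 3$ and $K_X+B$ pseudo-effective over $Z$ admits a good minimal model over $Z$.

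Second, I would invoke the Existence of Complements Conjecture in dimension $\le 3$, which is precisely \cite[Theorem~2.14]{CHL24}, already used in the proof of Lemma~\ref{lem: cbfindex}: for a DCC set $\Gamma\subset[0,1]$ there is $n=n(\Gamma)\in\Zz_{>0}$ such that every $\RR$-complementary pair of dimension $\le 3$ with coefficients in $\Gamma$ has an $n$-complement.

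With both inputs available for each $d\in\{1,2,3\}$, Lemma~\ref{lem:ivolaccfollowfrom2conj} applies directly and yields that $\Ivol_\lc^\Gamma(d,\kappa)$ is a DCC set for every DCC set $\Gamma\subset[0,1]$ and all positive integers $\kappa<d\le 3$, which is exactly the assertion of Conjecture~\ref{conj:ivoldcc}. I do not expect any genuine obstacle here: the proof is purely a matter of citing the two known results and feeding them into the already-proved lemma, the one minor point being to keep track of the fact that the resulting DCC set depends only on $d$ and $\Gamma$, as is guaranteed by the statement of Lemma~\ref{lem:ivolaccfollowfrom2conj}.
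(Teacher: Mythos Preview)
Your proposal is correct and follows exactly the paper's own approach: apply Lemma~\ref{lem:ivolaccfollowfrom2conj} and note that both the Existence of Complements Conjecture and the Good Minimal Model Conjecture are known in dimension $\le 3$. The paper's proof is a one-sentence version of what you wrote.
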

\begin{proof}
This immediately follows from Lemma \ref{lem:ivolaccfollowfrom2conj}, since the Existence of Complements Conjecture and the Good Minimal Model Conjecture hold in dimension $d\leq 3$.
\end{proof}

\begin{lem}\label{lem: finite acc complexity}
Let $d$ be a positive integer, $\Gamma\subset[0,1]$ a DCC subset, and $M$ a positive real number. Assume that both the Existence of Complements Conjecture (Conjecture \ref{conj: boundedness and existence of n complement nft}) and the Good Minimal Model Conjecture (Conjecture \ref{conj: exist gmm}) hold in dimension $d$. If $\Gamma$ has finite accumulation complexity, then so does $\Ivol_\lc^\Gamma(d,1)_{\le M}$. 
\end{lem}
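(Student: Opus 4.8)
The plan is to reduce, using the Good Minimal Model Conjecture and the canonical bundle formula over the Iitaka base (which for $\kappa=1$ is a curve), to a statement about degrees of divisors on that curve, and then to combine the control of the discriminant furnished by Lemma~\ref{lem: cbf (d,1)} with an elementary estimate on the accumulation complexity of sumsets.

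\textbf{Step 1 (reduction to a canonical bundle formula over a curve).} I would start with an arbitrary $(X,B)\in\gP_\lc^\Gamma(d,1)$ with $\vol_1(K_X+B)\le M$. By the Good Minimal Model Conjecture in dimension $d$ one may replace $(X,B)$ by a good minimal model and assume $K_X+B$ is semi-ample; since $\kappa(K_X+B)=1$, its ample model is a fibration $f\colon X\to Z$ onto a normal, hence smooth, projective curve $Z$ with $K_X+B\sim_{\RR,Z}0$. These operations keep $B\in\Gamma$, keep $\kappa=1$, and do not change $\vol_1(K_X+B)$. Applying Lemma~\ref{lem: model nef Bv} one passes to a crepant birational model over $Z$, still written $(X,B)\to Z$, with $B^v\sim_{\RR,Z}0$ and nothing relevant altered. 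Now Lemma~\ref{lem: cbf (d,1)} applies and provides a positive integer $p$ and DCC sets $\tGamma_0\subset\tGamma\subset[0,1]$, depending only on $\Gamma$ and $d$, and a moduli part $\bM$ of the canonical bundle formula for $(X,B)$ over $Z$ with $B_Z\in\tGamma$, $p\bM_Z$ an integral divisor, and — the key point — $\tGamma$ of finite accumulation complexity (at most one more than that of $\Gamma$). By Theorem~\ref{thm: cbf} together with \cite[II, Lemma~2.11]{Nak04} (exactly as in the proof of Lemma~\ref{lem:ivolaccfollowfrom2conj}),
\[
\vol_1(K_X+B)=\vol(K_Z+B_Z+\bM_Z)=\deg(K_Z+B_Z+\bM_Z).
\]

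\textbf{Step 2 (bounding the pieces on the curve).} Write $\deg(K_Z+B_Z+\bM_Z)=(2g-2)+\deg B_Z+\deg\bM_Z$ with $g=g(Z)$. Since $B_Z\ge0$ and $\bM_Z$ is nef on a curve, $\deg B_Z\ge0$ and $\deg\bM_Z\ge0$, so the bound $\le M$ forces: only finitely many genera $g$ (namely $g\le M/2+1$); only finitely many values of $\deg\bM_Z$, since $p\deg\bM_Z\in\ZZ\cap[0,p(M+2)]$; and $\deg B_Z\le M+2$. As $\tGamma$ is a DCC subset of $[0,1]$, its nonzero elements are bounded below by some $\delta=\delta(\Gamma,d)>0$, so $\deg B_Z$ is a sum of at most $N:=\lceil(M+2)/\delta\rceil$ elements of $\tGamma$, i.e. $\deg B_Z\in\sum_{\le N}\tGamma$. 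Hence
\[
\Ivol_\lc^\Gamma(d,1)_{\le M}\ \subseteq\ \bigcup_{g,\,j}\Bigl((2g-2)+\tfrac{j}{p}+\textstyle\sum_{\le N}\tGamma\Bigr),
\]
a finite union of translates of $\sum_{\le N}\tGamma$.

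\textbf{Step 3 (sumsets and conclusion).} Since passing to a subset, taking finite unions, and translating all preserve finiteness of accumulation complexity (indeed $(A\cup B)'=A'\cup B'$ and $(c+A)^{(r)}=c+A^{(r)}$), it remains to show that $\sum_{\le N}\tGamma$, which is the finite union $\bigcup_{k=1}^{N}\tGamma^{+k}$ of $k$-fold sumsets $\tGamma^{+k}:=\tGamma+\dots+\tGamma$, has finite accumulation complexity; for this it suffices to prove that if $A,B$ are DCC subsets of $[0,1]$ with finite accumulation complexity then so is $A+B$. Here one uses that a DCC set has all of its accumulation points approached from below, so $\overline A=A\cup A'$ is compact and $\overline A^{(r)}=A^{(r)}$ for $r\ge1$; then $\overline{A+B}=\overline A+\overline B$, and for compact sets the inclusion $(K+L)'\subseteq(K'+L)\cup(K+L')$ iterates to
\[
(A+B)^{(r)}\ \subseteq\ \bigcup_{i+j=r}\bigl(\overline A^{(i)}+\overline B^{(j)}\bigr),\qquad \overline A^{(0)}:=\overline A,\ \ \overline B^{(0)}:=\overline B.
\]
Taking $r$ one larger than the sum of the accumulation complexities of $A$ and $B$ makes every summand empty, so $A+B$ has accumulation complexity bounded by that sum. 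Applying this inductively bounds the accumulation complexity of $\tGamma^{+k}$ by $k$ times that of $\tGamma$, hence that of $\sum_{\le N}\tGamma$ by $N$ times that of $\tGamma$, which is finite; this would complete the proof.

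\textbf{Main obstacle.} The genuinely hard input — a uniform bound on the $\rb$-Cartier index of the moduli part and control on the accumulation behaviour of the discriminant coefficients — is entirely absorbed into Lemma~\ref{lem: cbf (d,1)} (which in turn rests on the Existence of Complements Conjecture). Granting that, the only point requiring care above is checking that the two successive reductions (to a good minimal model, then to the model of Lemma~\ref{lem: model nef Bv}) simultaneously preserve $B\in\Gamma$, the value $\vol_1(K_X+B)$, and all hypotheses needed to invoke Lemma~\ref{lem: cbf (d,1)}; the rest is elementary bookkeeping on the curve $Z$ and the sumset estimate.
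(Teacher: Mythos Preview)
Your proposal is correct and follows essentially the same route as the paper: reduce to a good minimal model, apply Lemma~\ref{lem: model nef Bv} to make $B^v\sim_{\RR,Z}0$, invoke Lemma~\ref{lem: cbf (d,1)} for the canonical bundle formula with controlled $p$ and $\tGamma$, and then observe that under the bound $\le M$ the genus and $\deg\bM_Z$ range over a finite set while $\deg B_Z$ lies in a bounded sumset of $\tGamma$. Your Step~3 supplies a careful proof that finite sumsets of a DCC set with finite accumulation complexity again have finite accumulation complexity, which the paper simply asserts as ``clear''; you also correct the bound on $\deg B_Z$ to $M+2$ rather than $M$.
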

\begin{proof}
Suppose that $(X,B)$ is a projective lc pair of dimension $d$ such that 
\[
\kappa(K_X+B)=1,\quad B\in\Gamma, \quad \vol_1(K_X+B)\leq M.
\]
Possibly replacing $(X,B)$ with a good minimal model, we may assume that $X$ is $\Qq$-factorial and $K_X+B$ is semi-ample. Let $f\colon X\to Z$ be the contraction induced by $K_X+B$ where $Z$ is a curve. Let $B^h$ and $B^v$ be the horizontal$/Z$ and vertical$/Z$ parts of $B$ respectively. 
By Lemma~\ref{lem: model nef Bv}, we may assume that
$$K_{X}+B^h\sim_{\Rr,Z}0\sim_{\Rr,Z}B^v.$$

Let $p\in \Zz_{>0}$, $\Gamma_0, \, \tGamma_0, \, \tGamma$ be as in Lemma~\ref{lem: cbf (d,1)}. Applying the canonical bundle formula, we have
\[
K_X + B \sim_{\RR, Z} f^*(K_Z+B_Z+\bM_Z)
\]
where $B_Z \in \tGamma$ and $\bM_Z\in \frac{1}{p}\ZZ$ are the discriminant and moduli parts respectively. Therefore,
\[
\vol_1(K_X+B) = \deg(K_Z+B_Z+\bM_Z) = 2g(Z)-2+\deg B_Z + \deg \bM_Z.
\]
Under the condition that $\vol_1(K_X+B)\leq M$, each summand above varies in a set of finite accumulation complexity, and hence so does the sum $\vol_1(K_X+B)$:
\begin{itemize}
    \item Since $g(Z)\in \ZZ_{\geq 0}$, $\deg\bM_Z\in\frac{1}{p}\ZZ_{\geq 0}$, and $\vol_1(K_X+B)\leq M$, there are finitely many choices for $g(Z)$ and $\deg \bM_Z$.
    \item Since $B_Z\in \tGamma$, we have $\deg B_Z\in (\sum\tGamma)_{\leq M}$. Since $\Gamma$ is of finite accumulation complexity, so is the set $\tGamma$ by Lemma~\ref{lem: cbf (d,1)}. It is then clear that $(\sum\tGamma)_{\leq M}$ has finite accumulation complexity.
\end{itemize}
\end{proof}

\begin{cor}\label{cor:fac}
If $\Gamma\subset[0,1]$ is a DCC subset with finite accumulation complexity, then both $\Ivol_\lc^\Gamma(2,1)_{\le M}$ and $\Ivol_\lc^\Gamma(3,1)_{\le M}$ have finite accumulation complexity.
\end{cor}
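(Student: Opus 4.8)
The plan is to deduce this directly from Lemma~\ref{lem: finite acc complexity} by observing that its two conditional hypotheses are unconditional in dimensions $2$ and $3$. Concretely, I would apply Lemma~\ref{lem: finite acc complexity} once with $d=2$ and once with $d=3$: in each case that lemma asserts precisely that if $\Gamma$ has finite accumulation complexity then so does $\Ivol_\lc^\Gamma(d,1)_{\le M}$, under the assumption that both the Existence of Complements Conjecture (Conjecture~\ref{conj: boundedness and existence of n complement nft}) and the Good Minimal Model Conjecture (Conjecture~\ref{conj: exist gmm}) hold in dimension $d$.

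Thus the only point to record is that both conjectures are known for $d\le 3$. The Good Minimal Model Conjecture in dimension $\le 3$ follows from the established minimal model program for threefolds (termination of flips and abundance), and the Existence of Complements Conjecture in dimension $\le 3$ is \cite[Theorem 2.14]{CHL24}, which was already invoked in the proof of Lemma~\ref{lem: cbfindex}. Feeding these into Lemma~\ref{lem: finite acc complexity} for $d\in\{2,3\}$ yields exactly the two assertions of the corollary.

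There is essentially no obstacle here: the corollary is just the specialization of Lemma~\ref{lem: finite acc complexity} to the low-dimensional cases where the two input conjectures are theorems. The only thing worth a second thought is that the reductions used inside Lemma~\ref{lem: finite acc complexity} — replacing $(X,B)$ by a $\QQ$-factorial good minimal model, running the MMP of Lemma~\ref{lem: model nef Bv} to arrange $B^v\sim_{\RR,Z}0$, and applying the canonical bundle formula of Lemma~\ref{lem: cbf (d,1)} to a fibration over a curve — are all available in dimensions $2$ and $3$, which is again precisely the range in which the cited results hold. Hence nothing beyond assembling these ingredients is needed.
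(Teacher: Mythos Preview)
Your proposal is correct and matches the paper's approach exactly: the paper states the corollary with no proof, treating it as immediate from Lemma~\ref{lem: finite acc complexity} together with the fact that both input conjectures are theorems in dimension $\le 3$. Your additional remarks about the internal reductions in the lemma being available in these dimensions are accurate but more than the paper itself spells out.
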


\begin{thm}\label{thm: closed}
Let $\Gamma\subset[0,1]$ be a closed DCC subset. Assume that the Existence of Complements Conjecture and the Good Minimal Model Conjecture hold in dimension $d$. Then $\Ivol_\lc^{\Gamma}(d,1)$ is closed. In particular, $\Ivol_\lc^{\Gamma}(d,1)$ is closed for $d\in\{2,3\}$.
\end{thm}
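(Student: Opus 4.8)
The plan is to mimic the structure of the proof of Lemma~\ref{lem: finite acc complexity}, trading "finite accumulation complexity" for "closedness", and importing the analogue of Filipazzi's closedness result \cite[Theorem~1.2]{Fil20} at the level of the base. Concretely, given a sequence $v_j = \vol_1(K_{X_j}+B_j)$ in $\Ivol_\lc^\Gamma(d,1)$ converging to some $v_\infty\in\RR$, I want to produce a single pair in $\gP_\lc^\Gamma(d,1)$ realizing $v_\infty$. First I would replace each $(X_j,B_j)$ with a $\QQ$-factorial good minimal model, then by Lemma~\ref{lem: model nef Bv} arrange $K_{X_j}+B_j^h\sim_{\RR,Z_j}0\sim_{\RR,Z_j}B_j^v$, where $f_j\colon X_j\to Z_j$ is the Iitaka fibration onto a curve. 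Applying Lemma~\ref{lem: cbf (d,1)} with the \emph{fixed} data $p,\Gamma_0,\tGamma_0,\tGamma$ (depending only on $\Gamma$ and $d$), we get generalized pairs $(Z_j, B_{Z_j}+\bM_j)$ with $B_{Z_j}\in\tGamma$, $p\bM_{j,Z_j}\in\ZZ$, and
\[
v_j = \deg(K_{Z_j}+B_{Z_j}+\bM_{j,Z_j}) = 2g(Z_j)-2+\deg B_{Z_j}+\deg\bM_{j,Z_j}.
\]
So the problem reduces to showing that the set of numbers of the form $2g-2+\deg B_Z+\deg\bM_Z$, with $g\in\ZZ_{\geq0}$, $\deg\bM_Z\in\frac1p\ZZ_{\geq0}$, and $B_Z\in\tGamma$ effective with $K_Z+B_Z+\bM_Z$ big (i.e.\ of positive degree), is closed in $\RR$.

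For that reduction I would argue as follows. Since $v_j\to v_\infty$, the $v_j$ are bounded, so by the DCC (Theorem~\ref{thm: main1}, or directly the argument of Lemma~\ref{lem:ivolaccfollowfrom2conj}) only finitely many values occur among sufficiently large $j$ unless $v_\infty$ is an accumulation point from above; in the latter case I pass to a strictly decreasing subsequence $v_j\downarrow v_\infty$. Boundedness of $v_j$ forces $g(Z_j)$ and $\deg\bM_{j,Z_j}$ to lie in finite sets, so after a further subsequence $g(Z_j)=g$ and $\deg\bM_{j,Z_j}=\mu$ are constant; hence $\deg B_{Z_j} = v_j-(2g-2)-\mu =: \beta_j \to \beta_\infty := v_\infty-(2g-2)-\mu$, a decreasing sequence. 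Now $\beta_j\in\sum\tGamma$, and I claim $\beta_\infty\in\sum\tGamma$ as well and is realized by an \emph{effective} divisor on a curve of genus $g$: writing $B_{Z_j}=\sum_k b_{j,k}P_{j,k}$ with $b_{j,k}\in\tGamma$, the number of components is bounded (each $b_{j,k}$ is bounded below away from $0$ along the sequence once we discard the trivial case, since $\tGamma$ is DCC and the $\beta_j$ are bounded), so after reindexing and passing to a subsequence the coefficient vectors converge; since $\tGamma$ is closed (here I must check $\tGamma$ is closed when $\Gamma$ is — this follows from part~(3) of Lemma~\ref{lem: cbf (d,1)}, $\tGamma\subset\bigcup_m(\frac1m\Gamma+\tGamma_0)$, together with $\Gamma$ closed, $\tGamma_0$ closed being a hyperstandard set, and a boundedness argument on $m$), the limiting coefficients again lie in $\tGamma$, and they sum to $\beta_\infty$. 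Place these limiting coefficients on $g+1$ distinct points of $\PP^1$ or a curve of genus $g$ to get $(Z_\infty, B_{Z_\infty})$ with $\deg B_{Z_\infty}=\beta_\infty$, and take $\bM_\infty$ to be $\mu$ times a point (so $p\bM_{\infty,Z_\infty}\in\ZZ$, and $\bM_\infty$ is trivially nef); then $\deg(K_{Z_\infty}+B_{Z_\infty}+\bM_{\infty,Z_\infty})=v_\infty>0$. Finally I must lift this back to a $d$-dimensional pair: since $v_\infty>0$ the generalized pair $(Z_\infty,B_{Z_\infty}+\bM_\infty)$ has a big (indeed ample) generalized log canonical divisor, and by the converse direction of the canonical bundle formula — e.g.\ taking a suitable lc-trivial fibration, or most simply a product-type construction realizing the prescribed discriminant and moduli data, as in the constructions of Section~\ref{sec: DCC} — one produces $(X_\infty,B_\infty)\in\gP_\lc^\Gamma(d,1)$ with $\vol_1(K_{X_\infty}+B_\infty)=v_\infty$. (One must be slightly careful that the constructed $B_\infty$ has coefficients in $\Gamma$, not merely in $\Gamma_0\cup$ multiples; but since we are free to use coefficients from $\Gamma_0\subset\Gamma$ on the horizontal part and to engineer the vertical coefficients $c_j$ with $m c_j\in\Gamma$, this is arranged exactly as in Lemma~\ref{lem: cbf (d,1)}.)

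The main obstacle I anticipate is the last step — the \emph{converse} to the canonical bundle formula: given the limiting base data $(Z_\infty, B_{Z_\infty}+\bM_{\infty})$ with the prescribed $p$-divisibility of $\bM_{\infty,Z_\infty}$ and coefficients in $\tGamma$, constructing an honest $d$-dimensional lc pair over $Z_\infty$ with boundary coefficients in $\Gamma$ (not just $\tGamma$) inducing exactly this formula. For $d-\dim Z_\infty = d-1\leq 2$ this should be handled by the same mechanism underlying Lemma~\ref{lem: cbf (d,1)} run in reverse — the bounded $p$ lets one take a fixed isotrivial (or elliptic, for $d-1=1$; abelian-surface or K3-type for $d-1=2$) model with the right multiple fibers over the points in $\Supp B_{Z_\infty}$, exactly matching the combinatorics of \eqref{eq: BZ vs BZ'}. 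The rest of the argument (boundedness of $g$, $\deg\bM_Z$, number of components; closedness of $\tGamma$; closedness of $\sum_{\le N}\tGamma$ for fixed $N$) is routine given the closedness and DCC hypotheses on $\Gamma$, so I would keep those parts brief. The final sentence of the theorem ($d\in\{2,3\}$) is then immediate since the Existence of Complements Conjecture and the Good Minimal Model Conjecture are theorems in those dimensions.
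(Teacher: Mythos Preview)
Your reduction to the base curve is correct and matches the paper's setup: pass to good minimal models, apply Lemma~\ref{lem: model nef Bv}, use Lemma~\ref{lem: cbf (d,1)} to fix $p,\Gamma_0,\tGamma_0,\tGamma$, and after a subsequence freeze $g(Z_j)=g$, $\deg\bM_{j,Z_j}=\mu$, and the number of points in $\Supp B_{Z_j}$. (Minor slip: since $\Ivol_\lc^\Gamma(d,1)$ is DCC, the nontrivial case is $v_j\uparrow v_\infty$, not $v_j\downarrow v_\infty$.)

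The genuine gap is your ``converse to the canonical bundle formula'' step. You propose to build $(Z_\infty,B_{Z_\infty}+\bM_\infty)$ from scratch on a fresh curve, with $\bM_\infty$ taken to be $\mu$ times a point, and then manufacture an lc-trivial fibration over it with boundary in $\Gamma$. There is no mechanism in the paper (or in the literature you cite) that produces, from arbitrary base data $(g,B_Z\in\tGamma,\bM\in\tfrac{1}{p}\ZZ)$, a $d$-dimensional lc pair with coefficients in $\Gamma$ whose canonical bundle formula recovers exactly that data. The moduli part is governed by the variation of the fibers and cannot simply be declared; and even if you could realize the base generalized pair, you still need the total-space coefficients to land in $\Gamma$, not merely in $\tGamma$. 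Your parenthetical about ``engineering the vertical coefficients'' via \eqref{eq: BZ vs BZ'} presupposes a fibration already in hand.

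The paper avoids this entirely by \emph{recycling the first term of the sequence}. After freezing $(g,n,\mu)$, it keeps $X_\infty:=X_1$ with the same fibration $f_1\colon X_1\to Z_1$ and the same horizontal boundary $B_1^h$ (hence automatically the same moduli part), and only replaces the vertical part: over each $z_{1j}$ it sets the coefficient to $\delta_j=\lim_i c_{ij}$ when the limiting discriminant coefficient $b_j<1$, and to $\lct(z_{1j})$ when $b_j=1$. The point is that for $b_j<1$ one shows $m_{ij}$ stabilizes to some $m_j$ and $\gamma_{ij}:=m_j c_{ij}\in\Gamma$ converges to $\gamma_j\in\Gamma$ (using that $\Gamma$ is closed), so $\delta_j=\gamma_j/m_j$. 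This pair $(X_\infty,B_\infty)$ is lc and has Iitaka volume $v_\infty$, but its vertical coefficients $\delta_j$ need not lie in $\Gamma$. The final move is Lemma~\ref{lem: model right coefficient}: a crepant birational modification over $Z_1$ that replaces each fiber $f_1^*z_{1j}$ (with $b_j<1$) by an irreducible one supported on the center of a chosen valuation of multiplicity $m_j$, so that the single vertical coefficient there becomes $\gamma_j\in\Gamma$; over the points with $b_j=1$ the vertical boundary is contracted away. This produces $(X,B)\in\gP_\lc^\Gamma(d,1)$ with $\vol_1(K_X+B)=v_\infty$.

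So the missing idea is: do not build a new fibration; reuse $(X_1,B_1^h)\to Z_1$ to inherit the moduli part for free, adjust only $B^v$, and then invoke Lemma~\ref{lem: model right coefficient} to force the vertical coefficients back into $\Gamma$.
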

\begin{proof}
Without loss of generality, we may assume that $0\in \Gamma$. Let the number $p\in \Zz_{>0}$, and the DCC subsets $\Gamma_0, \,\tGamma_0, \, \tGamma$ of $[0,1]$ be as in Lemma~\ref{lem: cbf (d,1)}.

By Lemma~\ref{lem: model nef Bv}, we have
\[
\Ivol^\Gamma_\lc(d,1) = \{\vol_1(K_X+B)\mid (X, B)\in \gP^\Gamma_\lc(d,1), \text{$K_X+B$, and $B^v$ are nef}\}
\]
where $B^h$ and $B^v$ are the horizontal and vertical parts of $B$ respectively with respect to the Iitaka fibration of $(X, B)$.


We need to show that $\Ivol^\Gamma_\lc(d,1)'\subset \Ivol^\Gamma_\lc(d,1)$. So take any $v\in \Ivol^\Gamma_\lc(d,1)'$. Then there is a sequence of fibrations $f_i\colon X_i\rightarrow Z_i$ with $(\dim X_i, \dim Z)=(d,1)$, and a boundary $\RR$-divisor $B_i$ on $X_i$ such that the following holds:
\begin{enumerate}
\item $(X_i, B_i)\in \gP_\lc^\Gamma(d,1)$.
\item One has
\[
K_{X_i} + B_i^h \sim_{\RR, \ZZ} B_i^v\sim_{\RR, \ZZ} 0,
\]
where $B_i^h$ and $B_i^v$ are the horizontal$/Z_i$ and vertical$/Z_i$ parts of $B_i$ respectively.
\item The Iitaka volumes $\vol_1(K_{X_i}+B_i)$ are increasing to $v$.
\end{enumerate} 
By the canonical bundle formula for the pairs $(X_i, B_i)$ and $(X_i, B_i^h)$ over $Z$, we have for each $i$
\[
K_{X_i} + B_i^h \sim_{\RR} f_i^*(K_{Z_i}+B_{Z_i}' +\bM_{Z_i}),\quad K_{X_i} + B_i \sim_{\RR} f_i^*(K_{Z_i}+B_{Z_i} +\bM_{Z_i}),
\]
where $B_{Z_i}'\in \tGamma_0$ and $B_{Z_i}\in\tGamma$ are the discriminant parts, and $\bM_{Z_i}\in\frac{1}{p}\ZZ$ is a common moduli part. Thus the Iitaka volume of $K_{X_i} + B_i$ is
\begin{equation}\label{eq: Ivol v_i}
\vol_1(K_{X_i} + B_i) = \deg(K_{Z_i}+B_{Z_i} + \bM_{Z_i}).
\end{equation}
Note that $\deg K_{Z_i} = 2g(Z_i)-2\in \ZZ$, $\deg B_{Z_i} \in \sum_{\leq n_i}\tGamma$ for some $n_i\in \ZZ_{\geq 0}$, and $\deg\bM_{Z_i}\in \frac{1}{p}\ZZ_{\geq 0}.$ Since $\vol_1(K_{Z_i}+B_i)< v$ for all $i$, there are finitely many possibilities for the triple $(g(Z_i), n_i, \deg\bM_{Z_i})$. By possibly passing to a subsequence of $\{(X_i, B_i)\}_i$, one may assume that $(g(Z_i), n_i, \deg\bM_{Z_i})$ are the same for all $i$, which is then denoted by $(g, n, \lambda)$. By passing to a subsequence, we may assume that $\Supp(B_{Z_i})$ consists of exactly $n$ points, which are denoted by $\{z_{ij}\}_{1\leq j\leq n}$.

Thus, the increment of $\vol_1(K_{X_i} + B_i)$ with $i$ is due to the increment of $\deg B_{Z_i}$. We need to make a more careful analysis of $\deg B_{Z_i}$. Recall that, $B_i^v$ is an effective vertical $\RR$-divisor such that $B_i^v\sim_{\RR, Z}0$, so it is a finite $\RR_{\geq 0}$-linear combination of fibers:
\[
B_i^v = \sum_{1\leq j\leq n}c_{ij} f_i^* z_{ij},
\]
where $c_{ij}\in \RR_{\geq 0}$. By the construction of the discriminant parts, we have (cf.~proof of Lemma~\ref{lem: cbf (d,1)})
\[
B_{Z_i} = B_{Z_i}' + \sum_{1\leq j\leq n}c_{ij} z_{ij}.
\]
Since $B_i^v\in \Gamma$, we have $mc_{ij}\in \Gamma$ for any $m$ appearing as the multiplicity of an irreducible component in $f_i^* z_{ij}$. Let $m_{ij}$ be the maximal multiplicity of an irreducible component in $f_i^* z_{ij}$. Then we have
\[
\mult_{z_{ij}} B_{Z_i}'\geq 1-\frac{1}{m_{ij}},
\]
and there is some $\gamma_{ij}\in \Gamma$ such that $c_{ij} = \frac{\gamma_{ij}}{m_{ij}}$. It follows that
\begin{equation}\label{eq: coeff B_Z_i}
\mult_{z_{ij}} B_{Z_i} = \mult_{z_{ij}} B_{Z_i}' + c_{ij}\geq 1-\frac{1}{m_{ij}}+\frac{\gamma_{ij}}{m_{ij}}.
\end{equation}
By relabelling the $z_{ij}\in \Supp(B_{Z_i})$, $1\leq j\leq n$, and taking subsequence, we may assume that the sequences of numbers $\{b_{ij}':=\mult_{z_{ij}}B_{Z_i}'\}_i$, $\{b_{ij}:=\mult_{z_{ij}}B_{Z_i}\}_i$,  $\{m_{ij}\}_i$, and $\{\gamma_{ij}\}_i$ are all non-decreasing for each $1\leq j\leq n$. Set $\delta_{ij}=\frac{\gamma_{ij}}{m_ij}$, and $$b_j':=\lim_i b_{ij}',\quad b_j:=\lim_i b_{ij}, \quad \delta_j :=\lim_i \delta_{ij}.$$
Then we have $$0< b_j = b_j'+\delta_j \leq 1.$$ Let $\lct(z_{ij})$ be the log canonical threshold of $f_i^*z_{ij}$ with respect to $(X_i, B_i^h)$ for any $i,\, j$. By the definition of the discriminant part $B_{Z_i}'$, we know that $$\lct(z_{ij}) = 1 - \mult_{z_{ij}}B_{Z_i}'$$
and hence
\begin{equation}\label{eq: lct limit}
    \lim_i \lct(z_{ij})  = 1 - \lim_i \mult_{z_{ij}}B_{Z_i}' = 1- b_j' \geq b_j-b_j' = \delta_j.
\end{equation}


Suppose that $b_j<1$. Then, by \eqref{eq: coeff B_Z_i}, $\{m_{ij}\}_i$ is bounded, and $\{\mult_{z_{ij}}B_{Z_i}'\}_i$ is a discrete subset of $[0,1]$ away from $1$.  By passing to a subsequence of $\{(X_i, B_i)\}_i$, we may assume that
\[
m_{ij} = m_j, \quad \mult_{z_{ij}} B_{Z_i}' = b_j',\quad
\]
where $m_j\in \ZZ_{>0}$ and $b_j'\in \tGamma_0$ are independent of $i$. In this case, we set
\begin{equation}
\gamma_{j}:= m_j\delta_j = \lim_{i} \gamma_{ij}.
\end{equation}
Since $\Gamma$ is closed in $[0,1]$ and $\gamma_{ij}\in \Gamma$, we have $\gamma_{j}\in \Gamma$. Moreover, we have 
\begin{equation}\label{eq: lct constant}
    \lct(z_{ij}) = 1 - \mult_{z_{ij}}B_{Z_i}' = 1-b_j' = \lct(z_{1j}),
\end{equation} 
which is independent of $i$.


Now define a projective log canonical pair $(X_\infty, B_\infty)$ of dimension $d$ as follows: The projective variety $X_\infty$ is taken to be $X_1$, and the boundary $\RR$-divisor $B_\infty=B_1^h + B_\infty^v$, where $B_{\infty}^v=\sum_{1\leq j\leq n}c_j f_1^* z_{1j},$ and the coefficients are specified as follows
$$ c_j = \begin{cases}
    \delta_j = \frac{\gamma_j}{m_j} & \text{if $b_j<1$} \\
    \lct(z_{1j}) & \text{if $b_j=1$.}
\end{cases}  $$  
By \eqref{eq: lct limit} and \eqref{eq: lct constant}, the pair $(X_\infty, B_\infty)$ has log canonical singularities.
Moreover, we have $$K_{X_\infty}+B_{\infty}\sim_{\RR, Z_1} K_{X_1}+B_1^h \sim_{\RR, Z_1} 0.$$ Let
\[
J_0 = \{j\mid 1\leq j\leq n,\, b_j<1\},\quad J_1 = \{m\mid 1\leq j\leq n,\, b_j=1\}.
\]
Then
\begin{align*}
    \vol_1(K_{X_\infty}+B_{\infty})  &= 2g-2+ \lambda + \# J_1 + \sum_{j\in J_0} (b_j'+\delta_j)  \\
    &=2g-2+\lambda +\sum_{1\leq j\leq n} \lim_i b_{ij} \\&= \lim_i \vol_1(K_{X_i}+B_i) = v.
\end{align*}

By Lemma~\ref{lem: model right coefficient}, we may construct a crepant birational model $(X, B)$ of $(X_\infty, B_\infty)$ over $Z_1$, such that the horizontal$/Z_1$ part $B^h$ is the strict transform of $B_\infty$, and the vertical over$/Z_1$ part $B^v$ satisfies
\[
(B^v)_z = 
\begin{cases}
    \gamma_j E_j & \text{if $z=z_{1j}$ for some $1\leq j\leq n$ and $\frac{\gamma_j}{m_j}<\lct(z_{1j})$,} \\
    0 & \text{otherwise,}
\end{cases}
\]
where $E_j$ is the strict transform of one component of $f_1^*z_{1j}$ with multiplicity $m_j$. In particular, we have $B\in \Gamma$, and hence $(X, B)\in \gP^{\Gamma}_\lc(d,1)$. Its Iitaka volume is  
\[
\vol_1(K_X+B) =\vol_1(K_{X_\infty}+B_{\infty}) = v,
\]
and hence $v\in \Ivol^{\Gamma}_\lc(d,1)$.
\end{proof}

\begin{prop}
For $d>\kappa>0$ integers, $\Ivol_\sm(d,\kappa)$ has infinite accumulation complexity. As a consequence, $\Ivol_\lc^{\Gamma}(d,\kappa)$ has infinite accumulation complexity for any subset $\Gamma\subset [0,1]$.
\end{prop}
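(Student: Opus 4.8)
The plan is to reduce to the base case $(d,\kappa)=(2,1)$ by a product construction, and to settle that case using smooth properly elliptic surfaces over $\PP^1$ with many multiple fibers. Concretely, start from a rational elliptic surface $S_0\to\PP^1$ (say the blow-up of $\PP^2$ at the nine base points of a general pencil of plane cubics), so $\chi(\sO_{S_0})=1$ and $\kappa(S_0)=-\infty$, and perform logarithmic transformations at general fibers to introduce multiple fibers of multiplicities $m_1,\dots,m_r\in\ZZ_{\ge2}$. The resulting relatively minimal elliptic surface $f\colon S\to\PP^1$ still has $\chi(\sO_S)=1$, and Kodaira's canonical bundle formula gives $K_S\sim_{\QQ}f^*\mathfrak d$ with $\deg\mathfrak d=-2+\chi(\sO_S)+\sum_{i=1}^r(1-\tfrac1{m_i})=-1+\sum_{i=1}^r(1-\tfrac1{m_i})$; when this number is positive we get $\kappa(S)=1$ and $\vol_1(K_S)=\deg\mathfrak d$. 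Hence every element of $A_\infty:=\big\{-1+\sum_{i=1}^r(1-\tfrac1{m_i})\mid r\in\ZZ_{>0},\,m_i\in\ZZ_{\ge2}\big\}_{>0}$ lies in $\Ivol_\sm(2,1)$; this is exactly the inclusion $\Ivol(\sE_{1,0})\subseteq\Ivol_\sm(2,1)$ of Theorem~\ref{thm: sm surf Ivol}.

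Next I would show $A_\infty$ has infinite accumulation complexity, which is the combinatorial core of the argument. Set $A:=\{1-\tfrac1m\mid m\in\ZZ_{\ge2}\}$, a strictly increasing sequence with $A'=\{1\}$, $A''=\emptyset$, and $A_k:=\sum_{\le k}A$, so $\bigcup_{k\ge1}A_k=\sum A$. One checks $A_{k-1}+A\subseteq A_k$, and that for any $B\subseteq\RR$ and $j\ge0$ one has $(B+A)^{(j+1)}\supseteq 1+B^{(j)}$: for $s\in B^{(j)}$ the points $(1-\tfrac1m)+s$ are distinct, converge to $1+s$, and lie in $(B+A)^{(j+1)}$, by induction on $j$. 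Iterating gives $A_k^{(k)}\supseteq 1+A_{k-1}^{(k-1)}$, and since $A_1^{(1)}=\{1\}$ we obtain $k\in A_k^{(k)}\subseteq(\sum A)^{(k)}$ for every $k\ge1$. Because derived sets are local and invariant under the translation $x\mapsto x-1$, the $k$-th derived set of $A_\infty=(-1+\sum A)_{>0}$ contains the interior point $k-1$ for every $k\ge2$; hence $A_\infty^{(k)}\ne\emptyset$ for all $k$, i.e. $A_\infty$, and a fortiori $\Ivol_\sm(2,1)$, has infinite accumulation complexity.

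For general $d>\kappa\ge1$, fix a smooth projective $Y$ with $\dim Y=d-2$ and $\kappa(Y)=\kappa-1$: a point if $(d,\kappa)=(2,1)$, an abelian variety if $\kappa=1<d$, and $Y=C_1\times\cdots\times C_{\kappa-1}\times A$ with the $C_j$ smooth curves of genus $\ge2$ and $A$ an abelian variety of dimension $d-\kappa-1\ge0$ if $\kappa\ge2$; put $c:=\vol_{\kappa-1}(K_Y)>0$. For a surface $S$ as above, $X:=S\times Y$ is smooth projective of dimension $d$ with $\kappa(X)=\kappa(S)+\kappa(Y)=\kappa$ (additivity of Kodaira dimension for products), and the Künneth formula $h^0(X,mK_X)=h^0(S,mK_S)\,h^0(Y,mK_Y)$ together with comparison of leading terms yields $\vol_\kappa(K_X)=\kappa\cdot\vol_1(K_S)\cdot c$. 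Thus $\kappa c\cdot A_\infty\subseteq\Ivol_\sm(d,\kappa)$, and since multiplication by the positive constant $\kappa c$ is a homeomorphism of $\RR$ it preserves accumulation complexity; by the previous paragraph $\Ivol_\sm(d,\kappa)$ has infinite accumulation complexity. Finally, for any $\Gamma\subset[0,1]$ and any smooth $X$ with $\kappa(K_X)=\kappa$, the pair $(X,0)$ is lc with boundary coefficients (vacuously) in $\Gamma$ and $\kappa(K_X+0)=\kappa$, so $\Ivol_\sm(d,\kappa)\subseteq\Ivol_\lc^\Gamma(d,\kappa)$; since the $n$-th derived set is monotone under inclusion, $\Ivol_\lc^\Gamma(d,\kappa)$ also has infinite accumulation complexity.

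The main obstacle is the second paragraph: isolating and proving cleanly that the iterated sumsets of $\{1-\tfrac1m\}$ have nonempty derived sets of every order, and verifying that neither translating by $-1$ nor restricting to the positive part destroys this — both work because those high-order derived sets contain the interior integer points $k-1$. The geometric inputs (Kodaira's canonical bundle formula and the construction of multiple fibers via logarithmic transformations, the product formula for Iitaka volumes, additivity of Kodaira dimension for products, and the locality/monotonicity of derived sets) are all standard.
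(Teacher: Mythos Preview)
Your proof is correct and follows essentially the same approach as the paper: reduce to $(d,\kappa)=(2,1)$ via a product $S\times Y$ with $\dim Y=d-2$ and $\kappa(Y)=\kappa-1$, and use that $\Ivol_\sm(2,1)\supseteq\Ivol(\sE_{1,0})$ has infinite accumulation complexity. The only cosmetic difference is that you supply the combinatorial argument for the latter in-line (via iterated sumsets of $\{1-\tfrac1m\}$), whereas the paper defers it to Corollary~\ref{cor: sm acc points}, which is in turn deduced from the explicit computation of the derived sets in Theorem~\ref{thm: Ivol sm derived}.
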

\begin{proof}
For positive integers, $d>\kappa >0$, take two smooth projective curves $C$ and $E$ with $g(C)>1=g(E)$, and consider the set of smooth projective varieties
\[
C^{\kappa-1}\times E^{d-\kappa-1}\times \gP_{\sm}(2,1):=\left\{C^{\kappa-1}\times E^{d-\kappa-1} \times S \mid S\in \gP_{\sm}(2,1)\right\}.
\]
Then $C^{\kappa-1}\times E^{d-\kappa-1}\times \gP_{\sm}(2,1)\subset \gP_\sm(d,\kappa)$, and hence the inclusion
\[
V:=\left\{\vol_\kappa(K_X) \mid X = C^{\kappa-1}\times E^{d-\kappa-1}\times S \mid S\in \gP_{\sm}(2,1)\right\} \subset \Ivol_\sm(d,\kappa).
\]
By a straightforward computation, one sees that
\[
V=\left\{\kappa!\cdot (2g(C)-2)^{\kappa-1}\cdot v\mid v\in \Ivol_\sm(2,1)\right\}.
\]
Since $\Ivol_\sm(2,1)$ has infinite accumulation complexity by Corollary~\ref{cor: sm acc points}, the set $V$ also has infinite accumulation complexity. A fortiori, as supersets of $V$, both $\Ivol_\sm(d,\kappa)$ and $\Ivol_\lc^\Gamma(d,\kappa)$ have infinite accumulation complexity, where $\Gamma$ can be any subset of $[0,1]$.
\end{proof}

The following lemma is a generalization of \cite[Theorem 0.2]{Amb05}, from coefficients in $\Qq$ to coefficients in $\Rr$.
\begin{lem}\label{lem:ambrdiv}
Assume that $(X,B)$ is a klt pair and $f\colon X\to Z$ is a contraction such that $\dim Z>0$ and $K_X+B\sim_{\Rr,Z}0$. Then there exists an $\Rr$-divisor $B_Z$ on $Z$ such that $(Z,B_Z)$ is klt and $K_X+B\sim_{\Rr}f^*(K_Z+B_Z)$.
\end{lem}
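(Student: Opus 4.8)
The plan is to reduce the assertion to the case of $\QQ$-boundaries, which is \cite[Theorem 0.2]{Amb05}, by expressing $B$ as a finite convex combination of $\QQ$-boundaries each of which still defines a klt-trivial fibration over $Z$, and then averaging the outputs of Ambro's theorem.

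First I would unwind the hypothesis $K_X+B\sim_{\Rr,Z}0$. By definition there is an $\Rr$-Cartier $\Rr$-divisor $D$ on $Z$ with $K_X+B\sim_\Rr f^*D$; so, fixing a canonical divisor $K_X$, there are fixed Cartier divisors $A_1,\dots,A_m$ on $Z$, fixed rational functions $g_1,\dots,g_s$ on $X$, and real numbers $e_k$, $a_i$, with
\[
K_X+B=\sum_i a_i\,(g_i)_X+\sum_k e_k\, f^*A_k
\]
as honest $\Rr$-divisors, and $D=\sum_k e_kA_k$. Writing $B=\sum_\ell\beta_\ell P_\ell$ over its prime components, I view the tuple $\mathbf v_0=\big((\beta_\ell)_\ell,(e_k)_k,(a_i)_i\big)$ as a point of $\Rr^N$ and let $\mathcal S\subseteq\Rr^N$ be the locus of tuples for which the displayed identity still holds after replacing $B$ by $\sum_\ell\beta'_\ell P_\ell$, the $e_k$ by $e'_k$, and the $a_i$ by $a'_i$. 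Equating the two sides coefficient by coefficient over each prime divisor of $X$ exhibits $\mathcal S$ as the solution set of a system of affine-linear equations with rational coefficients and rational constant terms (the constants coming from the fixed integral divisor $K_X$); as $\mathbf v_0\in\mathcal S$, the set $\mathcal S$ is a nonempty rational affine subspace of $\Rr^N$.

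Now I would use the elementary fact that a point of a rational affine subspace is a convex combination of rational points of that subspace lying arbitrarily close to it: choose $\mathbf v_1,\dots,\mathbf v_r\in\mathcal S\cap\QQ^N$ near $\mathbf v_0$ and positive reals $\lambda_1,\dots,\lambda_r$ with $\sum_j\lambda_j=1$ and $\sum_j\lambda_j\mathbf v_j=\mathbf v_0$. Each $\mathbf v_j$ yields a $\QQ$-divisor $B^{(j)}$ with $\Supp B^{(j)}\subseteq\Supp B$, a $\QQ$-Cartier $\QQ$-divisor $D^{(j)}$ on $Z$, and an identity $K_X+B^{(j)}=\sum_i a_i^{(j)}(g_i)_X+f^*D^{(j)}$, so $K_X+B^{(j)}\sim_{\QQ,Z}0$; for the $\mathbf v_j$ sufficiently close to $\mathbf v_0$ the coefficients of $B^{(j)}$ stay positive and $(X,B^{(j)})$ stays klt, both being open conditions. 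Applying \cite[Theorem 0.2]{Amb05} to $f\colon(X,B^{(j)})\to Z$ produces an effective $\QQ$-divisor $B_Z^{(j)}$ with $(Z,B_Z^{(j)})$ klt and $K_X+B^{(j)}\sim_\QQ f^*(K_Z+B_Z^{(j)})$. I then set $B_Z:=\sum_j\lambda_j B_Z^{(j)}$: since the log discrepancies of $(Z,-)$ computed on a fixed resolution are affine in the boundary, $(Z,B_Z)$ is klt, and weighting the $\QQ$-linear equivalences by the $\lambda_j$ gives $K_X+B\sim_\Rr f^*(K_Z+B_Z)$, as desired. I expect the only points needing genuine care to be the bookkeeping that makes $\mathcal S$ visibly rational and the verification that effectivity and klt-ness of $B^{(j)}$ survive the perturbation; the rest is formal.
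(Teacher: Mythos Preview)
Your proof is correct and follows essentially the same strategy as the paper: decompose $K_X+B$ as a convex combination of $\QQ$-coefficient klt pairs that are $\QQ$-trivial over $Z$, apply \cite[Theorem 0.2]{Amb05} to each, and average the resulting boundaries on $Z$. The only difference is that the paper invokes \cite[Lemma 5.4 and Corollary 5.5]{HLS19} for the decomposition step, whereas you reconstruct that argument directly via the rational affine subspace $\mathcal S$.
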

\begin{proof}
According to \cite[Lemma 5.4 and Corollary 5.5]{HLS19}, we may find positive real numbers $a_1,\dots,a_l\in(0,1]$ and effective $\Qq$-divisors $B_i$ on $X$ such that
\begin{itemize}
    \item $\sum_{i=1}^l a_i=1$,
    \item $\sum_{i=1}^l a_i(K_X+B_i)=K_X+B$, and
    \item $(X,B_i)$ is klt and $K_X+B_i\sim_{\Qq,Z}0$ for any $1\le i\le l$.
\end{itemize}
By \cite[Theorem 0.2]{Amb05}, for each $1\le i\le l$, there exists a $\Qq$-divisor $B_{Z,i}$ on $Z$ such that $(Z,B_{Z,i})$ is klt and 
$$K_X+B_i\sim_{\Qq}f^*(K_Z+B_{Z,i}).$$ 
Set $B_Z:=\sum_{i=1}^la_iB_{Z,i}$. It is clear that $(Z,B_Z)$ has the required properties.
\end{proof}


\begin{prop}[cf.~{\cite{GW22,HS21,Fil23}}]\label{thm:gmm}
Let $(X, B)$ be a projective klt pair of dimension $d\ge3$ with $\kappa(K_X+B) \geq d-3$. Then $(X, B)$ has a good minimal model.
\end{prop}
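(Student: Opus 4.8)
The plan is to reduce, via the Iitaka fibration, to the existence of good minimal models on fibers of dimension at most three — where this is known unconditionally — and then propagate the conclusion back to $X$ using the ``fibered by good minimal models'' criterion.

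First I would reduce to the case that $X$ is $\QQ$-factorial by passing to a small $\QQ$-factorialization $\mu\colon X'\to X$ (which exists by \cite{BCHM10}): since $\mu$ is small, $K_{X'}+B'=\mu^*(K_X+B)$ for $B'$ the strict transform of $B$, so $(X',B')$ and $(X,B)$ are crepant and a good minimal model of one is a good minimal model of the other. Now $d\ge 3$ and the hypothesis give $\kappa(K_X+B)\ge d-3\ge 0$, so $K_X+B$ is pseudo-effective. If $d=3$, the Good Minimal Model Conjecture in dimension $\le 3$ (applied with base a point), which holds unconditionally, already produces a good minimal model of $(X,B)$; so I may assume $d\ge 4$, whence $\kappa:=\kappa(K_X+B)\ge d-3\ge 1$.

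Let $f\colon X\dashrightarrow Z$ be the Iitaka fibration of $K_X+B$, so $\dim Z=\kappa$, and set $(F,B_F):=(X,B)|_F$ for a general fiber $F$. Then $\dim F=d-\kappa\le 3$, the pair $(F,B_F)$ is klt, and by the defining property of the Iitaka fibration $\kappa(K_F+B_F)=0$, so $K_F+B_F$ is pseudo-effective. Since $\dim F\le 3$, the Good Minimal Model Conjecture in dimension $\le 3$ shows that $(F,B_F)$ has a good minimal model. I would then invoke the theorem that a klt pair whose Iitaka fibration has a general fiber admitting a good minimal model itself admits a good minimal model — Lai's theorem for $\QQ$-boundaries, and its extension to $\RR$-boundaries in \cite{GW22,HS21,Fil23} — to conclude that $(X,B)$ has a good minimal model.

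The birational reductions and the dimension count are routine; the substantive input is the combination of two black boxes, the unconditional Good Minimal Model Conjecture in dimension $\le 3$ and the ``fibered by good minimal models'' criterion of \cite{GW22,HS21,Fil23}. The point most deserving of care is checking that the hypothesis $\kappa(K_X+B)\ge d-3$ is precisely what forces the Iitaka fiber $F$ to have dimension $\le 3$, so that the dimension-$3$ results genuinely apply; under the weaker assumption $\kappa\ge d-2$ of Theorem~\ref{thm:main2} the fibers would even have dimension $\le 2$, which is what additionally allows the b-Cartier index of the moduli part to be controlled there.
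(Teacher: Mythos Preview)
Your overall strategy matches the paper's: pass to the Iitaka fibration, use that the general fiber has dimension at most $3$ so that good minimal models exist there unconditionally, and then propagate back to $X$. Where you diverge is in the propagation step. You invoke as a single black box ``a klt pair whose Iitaka fibration has a general fiber admitting a good minimal model itself admits a good minimal model,'' attributing the $\RR$-boundary version to \cite{GW22,HS21,Fil23}. That attribution is shaky: those papers treat related but more restricted situations (elliptic fibrations, $\kappa=d-1$, etc.), and the paper's choice to write out a proof rather than cite one is a signal that the authors did not find the $\RR$-boundary statement directly available. Lai's original theorem is for $\QQ$-boundaries, and the passage to $\RR$-coefficients is exactly where some care is needed.

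The paper instead unpacks this step: after passing to a smooth model $X_\infty\to Z$ of the Iitaka fibration with a modified klt boundary $B_{X_\infty}=h_*^{-1}B+(1-\epsilon)\Exc(h)$, it uses abundance on the $\le 3$-dimensional fibers together with \cite[Lemma~2.13]{HH20} to get a good minimal model $(Y,B_Y)$ over $Z$; then it contracts to the ample model $Y\to V$ over $Z$, applies the $\RR$-divisor extension of Ambro's theorem (Lemma~\ref{lem:ambrdiv}) to produce a klt pair $(V,B_V)$ with $K_Y+B_Y\sim_\RR\psi^*(K_V+B_V)$; since $\dim V=\dim Z=\kappa_\iota(K_V+B_V)$, the divisor $K_V+B_V$ is big and hence abundant by \cite{BCHM10}, which pulls back to abundance of $K_X+B$; a second application of \cite[Lemma~2.13]{HH20}, now over a point, finishes. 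This is essentially a hands-on verification that Lai's argument survives with $\RR$-coefficients, using the invariant Iitaka dimension $\kappa_\iota$ to handle the comparison of Kodaira dimensions along the way. Your outline is correct in spirit, but you should either locate a precise reference for the $\RR$-boundary Lai-type theorem or supply the paper's argument; the references you cite do not obviously cover it.
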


\begin{proof}
We may assume that $d\ge 4$ and $\kappa(K_X+B)>0$. Suppose that $X_\infty\to Z$ is an Iitaka fibration of $K_X+B$, where $X_\infty$ and $Z$ are smooth varieties (cf. \cite[Definition 3.19]{Li22}). We may pick a sufficiently small positive real number $\epsilon$ such that
$$K_{X_\infty}+h_*^{-1}B+(1-\epsilon){\Exc}(h)\ge h^*(K_X+B)$$
and $\left(X_\infty,h_*^{-1}B+(1-\epsilon){\Exc}(h)\right)$ is klt, where $h$ denotes the morphism $X_\infty\to X$ and ${\Exc}(h)$ is the sum of all the $h$-exceptional prime divisors. Let $B_{X_\infty}:=h_*^{-1}B+(1-\epsilon){\Exc}(h)$. Note that $$\kappa(K_{X_\infty}+B_{X_\infty})=\kappa(K_X+B)=\dim Z.$$
Let $F$ be a very general fiber of $X_\infty\to Z$, then $\dim F\le3$ by assumption. Thus $(F,(B_{X_\infty})|_F)$ has a good minimal model and $K_F+(B_{X_\infty})|_F=(K_{X_\infty}+B_{X_\infty})|_F$ is abundant which implies that the $\Rr$-divisor $K_{X_\infty}+B_{X_\infty}$ is abundant over $Z$ (see \cite[Definition 2.9]{HH20} for the definition of abundant $\Rr$-divisors). Then by \cite[Lemma 2.13]{HH20} we know that $(X_\infty,B_{X_\infty})$ has a good minimal model over $Z$ which we may denote by $Y$. 

Let $B_Y$ be the strict transform of $B_{X_\infty}$ on $Y$, then $(Y,B_Y)$ is klt and $K_Y+B_Y$ is semi-ample over $Z$. Suppose that $\psi\colon Y\to V$ is the ample model of $K_Y+B_Y$ over $Z$. Remark that since $\kappa(X_\infty/Z,K_{X_\infty}+B_{X_\infty})=0$, $\dim V=\dim Z$. According to Lemma \ref{lem:ambrdiv}, one can find an $\Rr$-divisor $B_V$ on $V$ such that $(V,B_V)$ is klt and 
$$K_Y+B_Y\sim_\Rr \psi^*(K_V+B_V).$$ 
By \cite[II Lemma 3.11]{Nak04} and \cite[Proposition 2.2.2]{Cho08}, $\kappa_\iota(K_V+B_V)=\kappa_\iota(K_Y+B_Y)=\kappa(K_X+B)$ (see Definition \ref{defn:inviiitdim}).
Thus $\dim V=\kappa_\iota(K_Y+B_Y).$ In particular, we have that $\dim V=\kappa_\iota(K_V+B_V)$, i.e., $K_V+B_V$ is a big $\Rr$-divisor and therefore is abundant \cite{BCHM10}. It immediately implies that $K_Y+B_Y$ is abundant and thus so is $K_{X}+B$. By \cite[Lemma 2.13]{HH20} again, $(X,B)$ has a good minimal model. 
\end{proof}

\begin{proof}[Proof of Theorem \ref{thm:main2}]
If $(X,B)$ is a projective klt pair of dimension $d$ such that $\kappa(K_X+B)\ge d-2$ and $B\in\Ii$, then by Proposition \ref{thm:gmm}, $(X,B)$ has a good minimal model $(X',B')$. Therefore we may assume that $(X,B)$ is lc and $K_X+B$ is semi-ample. Then the proof is the same as that of Lemma \ref{lem:ivolaccfollowfrom2conj}, replacing \cite[Proposition 3.1]{CHL24} with Lemma~\ref{lem: cbfindex}.
\end{proof}

\subsection{DCC property for invariant Iitaka volumes}\label{sec:invivol}

In this subsection, we briefly discuss the invariant Iitaka volumes.

\begin{defn}\label{defn:inviiitdim}
Let $X$ be a normal variety and $D$ an $\Rr$-divisor on $X$. 

(1) ({\cite[Definition 2.2.1]{Cho08}}) The \emph{invariant Iitaka dimension} $\kappa_{\iota}(D)$ of $D$ is defined as follows. If $|D|_{\Rr}\not=\emptyset$, then we define
$$\kappa_{\iota}(D):=\kappa\left(D'\right)$$
for some $\Rr$-divisor $D'\in|D|_\Rr$. Otherwise, let $\kappa_{\iota}(D):=-\infty$. By \cite[Corollary 2.1.4]{Cho08}, $\kappa_{\iota}(D)$ is independent of the choice of $D'$.

(2) In case $\kappa:=\kappa_\iota(D)\geq 0$, the \emph{invariant Iitaka volume}
\[
\vol^{\iota}_\kappa(D):=\limsup_{m \rightarrow \infty} \frac{ h^0\left(X, \mathcal{O}_X(m D')\right)}{m^{\kappa}/\kappa!}
\]
for some $\Rr$-divisor $D'\in|D|_\Rr$, which is independent of the choice of $D'$.
\end{defn}
The following example shows that the invariant Iitaka dimensions and thus the invariant Iitaka volumes behave differently.

\begin{ex}\label{ex: why invairant iitaka dimension}
Let $X:=\Pp^1$ and $p_1,p_2,p_3,p_4$ four different closed points on $X$. For any real number $a$, we let $B(a):=a(p_1+p_2)+(1-a)(p_3+p_4)$. Then whenever $a\in (0,1)$, $(X,B(a))$ is klt and $K_X+B(a)\sim_{\Rr}0$. However, for any integer $m$,
$$\deg \lfloor m(K_X+B(a))\rfloor=-2,\text{ if }ma\not\in\Zz,$$
and
$$\deg \lfloor m(K_X+B(a))\rfloor=0,\text{ if }ma\in\Zz.$$
This implies that
\begin{enumerate}
    \item $\kappa(K_X+B(a))=-\infty$ when $a\not\in\Qq$ and $\kappa(K_X+B(a))=0$ when $a\in\Qq$. Nevertheless, $\kappa_{\iota}(K_X+B(a))=0$ for any $a\in\Rr$.
    \item The Iitaka volume of $K_X+B(a)$ is undefined when $a\notin\Qq$, while the invariant Iitaka volume $\vol_{0}^{\iota}(K_X+B(a))=0$ for any $a\in\Rr$.
\end{enumerate}
\end{ex}

However, we still expect the DCC property for the invariant Iitaka volumes:

\begin{conj}[DCC of invariant Iitaka volumes]\label{conj:invivoldcc}
Let $\kappa <d$ be positive integers and $\Gamma \subset[0,1]$ a DCC set. Then the set of invariant Iitaka volumes:
\[
\left\{\vol^{\iota}_\kappa(K_X+B)\mid (X,B)\text{ is projective lc, $\dim X=d,\ B\in\Gamma,\ \kappa_\iota(K_X+B)=\kappa$}\right\}
\]
is a DCC set.
\end{conj}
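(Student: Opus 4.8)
\noindent\emph{Proof proposal for Conjecture~\ref{conj:invivoldcc}.}
The plan is to prove the statement in dimension $d$ assuming the Existence of Complements Conjecture (Conjecture~\ref{conj: boundedness and existence of n complement nft}) and the Good Minimal Model Conjecture (Conjecture~\ref{conj: exist gmm}) in dimension $d$, in exact parallel with Lemma~\ref{lem:ivolaccfollowfrom2conj}; since both inputs are available unconditionally for $d\le 3$, this yields the low-dimensional case (Theorem~\ref{thm:invivoldcc3}). The only new ingredient compared with the treatment of ordinary Iitaka volumes is a reduction, via the minimal model program, of $\vol^\iota_\kappa(K_X+B)$ to the ordinary volume of a big generalized log canonical class on a base of dimension $\kappa$; the role of Example~\ref{ex: why invairant iitaka dimension} is precisely to warn that this reduction cannot be performed naively on $(X,B)$ itself when $B$ has irrational coefficients, so one must pass to a good minimal model first.

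First I would fix $(X,B)$ projective lc with $\dim X=d$, $B\in\Gamma$, $\kappa_\iota(K_X+B)=\kappa$ and $1\le\kappa<d$. Since $\kappa_\iota(K_X+B)\ge 0$, the divisor $K_X+B$ is pseudo-effective, so by the Good Minimal Model Conjecture there is a good minimal model $\phi\colon(X,B)\dashrightarrow(Y,B_Y)$ with $B_Y=\phi_*B\in\Gamma$ and $K_Y+B_Y$ semi-ample. Next I would check that $\phi$ changes neither $\kappa_\iota$ nor $\vol^\iota_\kappa$: on a common resolution $p\colon W\to X$, $q\colon W\to Y$ one has $p^*(K_X+B)=q^*(K_Y+B_Y)+E$ with $E\ge 0$ exceptional over $Y$ (every divisor on $Y$ is the birational transform of one on $X$, since the MMP only contracts divisors), and both invariants are insensitive to birational pullback and to adding an effective divisor exceptional over $Y$ by the independence-of-representative statements of \cite{Cho08} and the floor-function lemmas \cite[II Lemma 2.11, II Lemma 3.11]{Nak04}. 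Let $g\colon Y\to V$ be the ample model of $K_Y+B_Y$, so $K_Y+B_Y\sim_\RR g^*A$ with $A$ an ample $\RR$-divisor on $V$ and $K_Y+B_Y\sim_{\RR,V}0$; here $\dim V$ equals the numerical dimension of $K_Y+B_Y$, which coincides with $\kappa_\iota(K_Y+B_Y)=\kappa$ since $K_Y+B_Y$ is abundant. Then the canonical bundle formula (Theorem~\ref{thm: cbf}) applied to $(Y,B_Y)$ over $V$ gives a generalized lc pair $(V,B_V+\bM)$ with $K_Y+B_Y\sim_\RR g^*(K_V+B_V+\bM_V)$ and $K_V+B_V+\bM_V\sim_\RR A$ big, and using $g^*A$ and \cite[II Lemma 2.11]{Nak04} I obtain
\[
\vol^\iota_\kappa(K_X+B)=\vol^\iota_\kappa(K_Y+B_Y)=\vol(K_V+B_V+\bM_V).
\]

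To control the coefficients of $B_V$ and the b-Cartier index of $\bM$, I would invoke Lemma~\ref{lem: cbfindex} for the contraction $g$: this already applies verbatim when $\kappa\ge d-2$ since $\dim Y-\dim V\le2$, and in general requires replacing the input \cite[Theorem 2.14]{CHL24} by the Existence of Complements Conjecture in dimension $d$. This produces a positive integer $p$ and a DCC set $\tGamma\subset[0,1]$ depending only on $\Gamma$ and $d$, with $B_V\in\tGamma$ and $p\bM$ b-Cartier. Finally, since $K_V+B_V+\bM_V$ is the big generalized log canonical class of a generalized pair on a $\kappa$-dimensional variety ($\kappa<d$) with boundary in the DCC set $\tGamma$ and nef part of bounded b-Cartier index $p$, \cite[Theorem 1.3]{Bir21} (together with the global ACC \cite[Theorem D]{HMX14} used to produce $\tGamma$) places $\vol(K_V+B_V+\bM_V)$ in a DCC set $\mathcal V$ depending only on $d$, $p$ and $\tGamma$. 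Hence $\vol^\iota_\kappa(K_X+B)\in\mathcal V$, and the set in Conjecture~\ref{conj:invivoldcc} is DCC; for $d\le 3$ all assumptions hold unconditionally, proving Theorem~\ref{thm:invivoldcc3}.

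I expect the main obstacle to be the birational-invariance step: verifying carefully, in the $\RR$-coefficient setting and for a minimal model $\phi$ that is not crepant, that neither $\kappa_\iota$ nor $\vol^\iota_\kappa$ changes, the delicate point being the interaction of $\lfloor\,\cdot\,\rfloor$ with pushforward along $q$ and $g$ for $\RR$-divisors and the passage from the Iitaka dimension of a chosen representative to the invariant Iitaka dimension of the $\RR$-linear class. A secondary difficulty, already flagged above, is that outside the range $\kappa\ge d-2$ the b-Cartier index bound for a fibration with fibers of arbitrary dimension $d-\kappa$ genuinely needs the full Existence of Complements Conjecture rather than its currently known three-dimensional case, which is why the unconditional conclusion is limited to $d\le 3$.
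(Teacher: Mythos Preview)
Your overall strategy---pass to a good minimal model, take the ample model $g\colon Y\to V$, apply the canonical bundle formula, and then invoke \cite[Theorem~1.3]{Bir21}---is the same as the paper's, but there is a genuine gap at the step where you invoke Lemma~\ref{lem: cbfindex} to bound the b-Cartier index of $\bM$. That lemma carries the hypothesis $\kappa(K_X+B)\ge 0$ (ordinary Iitaka dimension), and this hypothesis is not decorative: it is what forces the horizontal coefficients $B_F$ on a general fiber to be \emph{rational}. Indeed, if $h^0(\lfloor m(K_F+B_F)\rfloor)>0$ for some $m$ while $K_F+B_F\sim_\RR 0$, one gets an effective divisor $D\sim mK_F+\lfloor mB_F\rfloor$ with $D+\{mB_F\}\equiv 0$, hence $\{mB_F\}=0$. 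Without rationality of $B_F$ you cannot conclude $p(K_F+B_F)\sim 0$ for any fixed $p$, and the argument producing a bounded b-Cartier index for $\bM$ collapses. Passing to a good minimal model does not help here: $B_Y=\phi_*B$ keeps the same, possibly irrational, coefficients, so Example~\ref{ex: why invairant iitaka dimension} is not merely a warning about $(X,B)$ before the MMP but an obstruction that persists on $(Y,B_Y)$.

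The paper's proof of Theorem~\ref{thm:invivoldcc3} supplies exactly the missing ingredient. After arranging $K_X+B^h\sim_{\RR,Z}0$ and noting that $B^h$ lies in a finite set $\Gamma_0$ by global ACC, it uses \cite[Lemma~5.4, Corollary~5.5]{HLS19} to write $K_X+B^h=\sum a_i(K_X+B_i)$ as a convex combination with each $B_i\in\Gamma_1\subset[0,1]\cap\QQ$ finite and $K_X+B_i\sim_{\QQ,Z}0$. Now \cite[Proposition~3.1]{CHL24} applies to each $(X,B_i)$ separately, giving moduli parts $\bM_i$ with $p\bM_i$ b-Cartier for one fixed $p$, and \cite[Theorem~3.3]{HLX23} shows that $\bM:=\sum a_i\bM_i$ serves as a moduli part for $(X,B^h)$ and for $(X,B)$. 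This is the ``key point'' the paper itself flags before the proof; your proposal should incorporate this decomposition step rather than appealing to Lemma~\ref{lem: cbfindex} directly.
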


The conjecture holds true in low dimensions.

\begin{thm}\label{thm:invivoldcc3}
Conjecture \ref{conj:invivoldcc} holds when $d\le 3$.
\end{thm}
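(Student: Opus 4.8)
The plan is to reduce the statement about invariant Iitaka volumes to the already-established Conjecture~\ref{conj:ivoldcc} in dimension $\leq 3$ (Corollary~\ref{cor: conj holds d=3}), by replacing a given lc pair $(X,B)$ with one whose boundary has rational coefficients but the same invariant Iitaka volume. Concretely, suppose $(X,B)$ is a projective lc pair of dimension $d\leq 3$ with $B\in\Gamma$ and $\kappa_\iota(K_X+B)=\kappa$, where $0<\kappa<d$. First I would pass to a good minimal model $(Y,B_Y)$ of $(X,B)$, which exists since $d\leq 3$; by \cite[II Lemma~3.11]{Nak04} and the definition of $\kappa_\iota$, we have $\kappa_\iota(K_Y+B_Y)=\kappa_\iota(K_X+B)=\kappa$ and $\vol^\iota_\kappa(K_Y+B_Y)=\vol^\iota_\kappa(K_X+B)$, and now $K_Y+B_Y$ is semi-ample. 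Let $g\colon Y\to Z$ be the contraction defined by $K_Y+B_Y$, so $\dim Z=\kappa$ and $K_Y+B_Y\sim_{\RR,Z}0$.

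The key step is then to apply the canonical bundle formula (Theorem~\ref{thm: cbf}) together with the index control of Lemma~\ref{lem: cbfindex}. Since $\dim Y-\dim Z=d-\kappa\leq 2$ (here we use $\kappa\geq d-2$, which holds automatically for $d\leq 3$ and $0<\kappa<d$), Lemma~\ref{lem: cbfindex} provides a positive integer $p$, a finite set $\Gamma_0\subset\Gamma\cap\QQ$, and a DCC set $\tGamma\subset[0,1]$, all depending only on $\Gamma$, and a choice of moduli part $\bM$ with $p\bM$ b-Cartier, $B_Z\in\tGamma$, and
\[
p(K_Y+B_Y)\sim pg^*(K_Z+B_Z+\bM_Z).
\]
Because $B_Z\in\tGamma\subset[0,1]$ and $p\bM_Z$ is Cartier, the pair $(Z,B_Z+\bM_Z)$ is a generalized pair with controlled coefficients and controlled b-index. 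Moreover $K_Z+B_Z+\bM_Z$ is big on $Z$ (as $Z$ is the ample model), so $\vol^\iota_\kappa(K_Y+B_Y)=\vol(K_Z+B_Z+\bM_Z)$ by \cite[II Lemma~3.11]{Nak04} (or \cite[II Lemma~2.11]{Nak04} in the semi-ample case).

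At this point I would invoke \cite[Theorem~1.3]{Bir21}: the volume $\vol(K_Z+B_Z+\bM_Z)$ of a generalized pair of dimension $\kappa$ with $B_Z$ in a fixed DCC set and $p\bM$ b-Cartier lies in a DCC set $\mathcal V$ depending only on $\kappa$, $p$, and $\tGamma$ — hence only on $d$ and $\Gamma$. Since every invariant Iitaka volume in the set under consideration equals such a $\vol(K_Z+B_Z+\bM_Z)$, the whole set is contained in the DCC set $\mathcal V$, and is therefore itself a DCC set. The only subtlety — and the step I expect to require the most care — is verifying that the reduction to a good minimal model genuinely preserves the invariant Iitaka volume and the equality $\vol^\iota_\kappa=\vol$ on the base $Z$; this is exactly where the birational invariance results of \cite{Cho08} and \cite[II]{Nak04} cited in Definition~\ref{defn:inviiitdim} and in the proof of Proposition~\ref{thm:gmm} are needed. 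Everything else is a direct transcription of the argument in Lemma~\ref{lem:ivolaccfollowfrom2conj}, with the invariant Iitaka dimension playing the role of the usual Iitaka--Kodaira dimension.
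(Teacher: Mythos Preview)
There is a genuine gap in your argument at the point where you invoke Lemma~\ref{lem: cbfindex}. That lemma carries the hypothesis $\kappa(K_X+B)\geq 0$ --- the \emph{usual} Iitaka--Kodaira dimension, not the invariant one --- and this hypothesis is exactly what forces the horizontal coefficients to lie in $\Gamma\cap\QQ$ and hence makes the $\ZZ$-linear equivalence $p(K_X+B)\sim pf^*(K_Z+B_Z+\bM_Z)$ possible. When only $\kappa_\iota(K_X+B)\geq 0$ is assumed, this can fail: if $B^h$ has irrational coefficients, then on a general fibre $F$ one has $K_F+B_F\sim_\RR 0$ but $p(K_F+B_F)\not\sim 0$ for every $p$, and there is no integer $p$ for which the required linear equivalence holds. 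A concrete instance is $X=\PP^1\times Z$ with $Z$ of genus $\geq 2$ and $B$ the pullback of $a(p_1+p_2)+(1-a)(p_3+p_4)$ for irrational $a$ (compare Example~\ref{ex: why invairant iitaka dimension}): here $K_X+B$ is semi-ample with $\kappa_\iota=1$, yet $\kappa(K_X+B)=-\infty$, so Lemma~\ref{lem: cbfindex} simply does not apply. Your opening sentence correctly identifies that one must ``replace $(X,B)$ with one whose boundary has rational coefficients,'' but passing to a good minimal model does not accomplish this.

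The paper's proof addresses precisely this point. After reducing to $K_X+B^h\sim_{\RR,Z}0$, the global ACC gives $B^h\in\Gamma_0$ for a \emph{finite} set $\Gamma_0$ (not yet rational). The key extra step is to invoke \cite[Lemma~5.4 and Corollary~5.5]{HLS19} to write $K_X+B^h=\sum_i a_i(K_X+B_i)$ as a convex combination with each $B_i\in\Gamma_1\subset[0,1]\cap\QQ$ and $K_X+B_i\sim_{\QQ,Z}0$; then \cite[Proposition~3.1]{CHL24} yields moduli parts $\bM_i$ with $p\bM_i$ b-Cartier for a uniform $p$, and \cite[Theorem~3.3]{HLX23} ensures that $\bM:=\sum_i a_i\bM_i$ is a legitimate moduli part for $(X,B)$ over $Z$. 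Only after this decomposition does one have the controlled b-Cartier data required to apply \cite[Theorem~1.3]{Bir21}. This decomposition into rational pieces is the missing idea in your proposal.
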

The proof is quite similar to that of Theorem \ref{thm: main1}. Therefore, we will provide only a brief outline below. The key point is that we need to choose a moduli part in the canonical bundle formula with controlled coefficients, which then allows us to apply Birkar's result (\cite[Theorem 1.3]{Bir21}).
\begin{proof}
Let $(X,B)$ be a projective lc pair of dimension $\le3$ with $B\in\Gamma$ and $\kappa_\iota(K_X+B)=\kappa$. We may assume that $X$ is $\Qq$-factorial and $K_X+B$ is semi-ample. Let $f:X\to Z$ be the ample model of $K_X+B$. Replacing $(X,B)$ with a good minimal model of $(X,B^h)$ over $Z$, we may assume that $K_X+B^h\sim_{\Rr,Z}0$, where $B^h$ is the horizontal$/Z$ part of $B$. Note that $B^h\in\Gamma_0$ for some finite set $\Gamma_0\subset\Gamma$ which only depends on $\Gamma$. According to \cite[Lemma 5.4 and Corollary 5.5]{HLS19}, we may find positive real numbers $a_1,\dots,a_l\in(0,1]$ and a finite set $\Gamma_1\subset[0,1]\cap\Qq$ that only depends on $\Gamma_0$, such that there exist effective $\Qq$-divisors $B_1,\dots,B_l\in\Gamma_1$ on $X$ such that
\begin{itemize}
    \item $\sum_{i=1}^l a_i=1$,
    \item $\sum_{i=1}^l a_i(K_X+B_i)=K_X+B^h$, and
    \item $(X,B_i)$ is lc and $K_X+B_i\sim_{\Qq,Z}0$ for any $1\le i\le l$.
\end{itemize}
By \cite[Proposition 3.1]{CHL24}, there is an integer $p$ depending only on $\Gamma_1$ such that we can choose a moduli part $\bM_i$ of the canonical bundle formula for $(X,B_i)$ over $Z$ such that $p\bM_i$ is b-Cartier for each $1\le i\le l$. By \cite[Theorem 3.3]{HLX23} we infer that $\bM:=\sum_{i=1}^l a_i\bM_i$ is a moduli part of the canonical bundle formula for $(X,B^h)$ over $Z$, as well as a moduli part of the canonical bundle formula for $(X,B)$ over $Z$. Then the theorem follows from \cite[Theorem 1.3]{Bir21}.
\end{proof}

\begin{rem}
Theorem \ref{thm: main3} for invariant Iitaka volumes also holds true by the same arguments. However, Theorem \ref{thm:main2} is not clear for invariant Iitaka volumes since we do not know the existence of good minimal model for klt pairs with $\kappa_\iota(K_X+B)\ge\dim X-3$.
\end{rem}

\section{Explicit description of the Iitaka volumes for several classes of surfaces}
For surfaces, one hopes to have a more explicit description of their Iitaka volumes. In particular, for a naturally appearing class of log canonical surfaces with Iitaka--Kodaira dimension 1, it would be interesting to know the minimum as well as the minimal accumulation point of the set of their Iitaka volumes.

We start with smooth properly elliptic surfaces in Section~\ref{sec: sm surf}. The set $\Ivol_\sm(2,1)$ of Iitaka volumes of this class of surfaces is completely determined in Theorem~\ref{thm: sm surf Ivol}. Based on this, we obtain the minimum, as well as information on the accumulation points of  of $\Ivol_\sm(2,1)$ in Theorem~\ref{thm: min Ivol sm} and Corollary~\ref{cor: sm acc points}. Then, similar results for Iitaka volumes of elliptic surfaces with prescribed geometric genus are proved in Theorem~\ref{thm: Ivol sm prescribe pg}, Corollaries~\ref{cor: min Ivol sm prescribe pg} and \ref{cor: Ivol ccc sm prescribe pg}.

For singular lc surfaces,  we give in Section~\ref{sec: lc surf} a recipe to use a crepant smooth model to compute the Iitaka volume set $\Ivol_\lc^{\Gamma}(2,1)$ for a DCC subset $\Gamma\subset[0,1]$. This is explicitly carried out for the coefficient sets $\Gamma=\{0\}$ and $\{0,1\}$. The main discovery are Theorems~\ref{thm: min lc Ivol 0} and \ref{thm: min lc Ivol 1}, giving the minima and minimal accumulation points of $\Ivol_\lc^{\{0\}}(2,1)$ and $\Ivol_\lc^{\{0,1\}}(2,1)$ respectively.

\subsection{Iitaka volumes of smooth properly elliptic surfaces}\label{sec: sm surf}
Let $X$ be a smooth projective surface with $\kappa(X)=1$, that is, $X\in \gP_\sm(2,1)$.  Such a surface is called a \emph{properly elliptic surface}. There is a (unique) elliptic fibration $f\colon X\rightarrow Z$ defined by the pluricanonical systems of $X$.

For the purpose of birational classification, we may well assume that $X$ is minimal, that is, $K_X$ is nef. We use holomorphic Euler characteristic $\chi(\sO_X)$ of $X$ and the genus $g(Z)$ of $Z$ to divide $\gP_\sm(2,1)$ into subsets:
\[
\sE_{\chi, g} = \{X\in \gP_\sm(2,1)\mid \chi(\sO_X)=\chi,\, g(Z)=g, \text{ $K_X$ is nef}\}
\]
and its corresponding Iitaka volume set is then
\[
\Ivol(\sE_{\chi, g}) = \{\vol_1(K_X)\mid  X\in \sE_{\chi, g}\}.
\]

For an elliptic surface $f\colon X\rightarrow Z$, it is well known that one of following cases occurs:
\begin{enumerate}
    \item $\chi(\sO_X)>0$ and $q(X)=g(Z)$. In this case, there exist singular fibers whose reduction is not smooth.
    \item $\chi(\sO_X)=0$, and $q(X)=g(Z)$ or $g(Z)+1$. In this case, all singular fibers have smooth reduction, and $f$ is a so-called quasi-bundle (\cite{Ser91}).
\end{enumerate}
\begin{lem}\label{lem: elliptic surf exists}
For each pair of non-negative integers $\chi$ and $g$, there is a relatively minimal elliptic surface $f\colon X\rightarrow Z$, not necessarily of Kodaira dimension 1, with a section such that $\chi(\sO_X)=\chi$ and $g(Z)=g$.
\end{lem}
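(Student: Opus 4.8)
The plan is to construct $X$ explicitly as a Weierstrass elliptic surface over $Z$, i.e.\ as an elliptic surface with section obtained by taking the Weierstrass model associated to a suitable line bundle $L$ on $Z$ and resolving. Recall that for a relatively minimal elliptic surface $f\colon X\to Z$ with section, one has $f_*\omega_{X/Z}=L^{-1}$ for some line bundle $L$ on $Z$ with $\deg L=\chi(\sO_X)$ (Kodaira's canonical bundle formula gives $K_X\sim f^*(K_Z+L)$ when there are no multiple fibers), and conversely, given any $Z$ and any line bundle $L$ on $Z$ with $\deg L=\chi\geq 0$, one can build such a surface. So I would fix a smooth projective curve $Z$ of genus $g$ (which exists for every $g\geq 0$) and a line bundle $L$ on $Z$ of degree $\chi$.

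\textbf{Construction.} First I would handle the case $\chi>0$. Choose sections $a\in H^0(Z,L^{\otimes 4})$ and $b\in H^0(Z,L^{\otimes 6})$ that are sufficiently general; since $\deg L^{\otimes 4}=4\chi>0$ and $\deg L^{\otimes 6}=6\chi>0$, these spaces are nonzero for $\chi$ large, but to cover all $\chi\geq 1$ one may simply allow $a$ or $b$ to be identically zero (one cannot have both zero, but e.g.\ $b=0$, $a\neq 0$ is fine as long as the discriminant $\Delta=-16(4a^3+27b^2)$ is not identically zero, which holds as soon as $a\neq 0$, and $a\neq 0$ is possible once $4\chi$ is large enough; for the finitely many small $\chi$ one argues directly or replaces $L$). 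Form the Weierstrass model $W\subset \PP(\sO_Z\oplus L^{\otimes 2}\oplus L^{\otimes 3})$ cut out by $y^2z=x^3+axz^2+bz^3$, then let $X\to W$ be the minimal resolution of singularities; $X\to Z$ is then a relatively minimal elliptic surface with a section (the image of $z=x=0$) and with $\chi(\sO_X)=\deg L=\chi$ and base genus $g(Z)=g$. For the case $\chi=0$, a relatively minimal elliptic surface with section and $\chi=0$ must be isotrivial with no singular fibers up to a trivial case; concretely take $X=E\times Z$ for a fixed elliptic curve $E$, with $f$ the second projection and the section given by $\{0\}\times Z$ — this has $\chi(\sO_X)=\chi(\sO_E)\cdot\chi(\sO_Z)=0$ and any prescribed $g(Z)=g$.

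\textbf{Verification.} The remaining steps are routine checks: that $X$ is smooth and projective (minimal resolution of a projective surface), that $f\colon X\to Z$ is relatively minimal (the Weierstrass resolution introduces only chains of $(-2)$-curves in the fibers, no $(-1)$-curves, so $X$ is relatively minimal; for $E\times Z$ every fiber is smooth), that there is a section (exhibited above), and that $\chi(\sO_X)=\chi$ — the last via the standard formula $\chi(\sO_X)=\deg(f_*\omega_{X/Z})^{-1}$, or equivalently the Euler-number count $e(X)=\deg\Delta=12\deg L$ together with Noether's formula, in the $\chi>0$ case, and directly in the product case. The statement does not require $\kappa(X)=1$, so no further analysis of the Kodaira dimension is needed (indeed for small $\chi$ or $g$ one gets rational or K3 or bielliptic-type surfaces, which is allowed).

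\textbf{Main obstacle.} The one genuinely delicate point is ensuring, for \emph{every} pair $(\chi,g)$ including the small values $\chi=1,2$ and $g=0$, that a Weierstrass datum $(a,b)$ exists with $\Delta\not\equiv 0$ so that the resulting surface is relatively minimal with the correct $\chi$; here one uses that $H^0(Z,L^{\otimes 4})$ or $H^0(Z,L^{\otimes 6})$ is nonzero (which holds once $4\chi$ or $6\chi$ exceeds $2g-2$, and can be arranged by choosing $L$ appropriately among degree-$\chi$ bundles, e.g.\ effective ones), and one checks that a general such section gives at worst rational double points on $W$, so that $X\to Z$ stays relatively minimal after resolution. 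The case $\chi=0$ sidesteps this entirely via the product construction.
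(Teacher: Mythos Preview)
Your approach via Weierstrass models is different from the paper's and is a valid, standard strategy. For $\chi>0$ the paper instead starts from a specific rational elliptic surface $X_{(1,0)}\to\PP^1$ with a section and exactly two $\I_0^*$ fibers, pulls back along a double cover $Z\to\PP^1$ branched at $2g+2$ points (one of which lies under an $\I_0^*$ fiber, so that fiber becomes smooth) to produce $X_{(1,g)}\to Z$ with $\chi=1$ and $g(Z)=g$, and then for $\chi\geq 2$ applies the $*$-transfer of \cite[V.4]{Mir89} to $2(\chi-1)$ smooth fibers, each transfer raising $e(X)$ by $6$ and hence $\chi(\sO_X)$ by $1$. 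Both approaches use $E\times Z$ for $\chi=0$. Your route is more uniform in $\chi$; the paper's is more hands-on but sidesteps any existence question for Weierstrass data.

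Your argument has a gap, and you have misdiagnosed where it lies. The cases $\chi=1,2$ with $g=0$ are the easy ones: over $\PP^1$ every positive-degree line bundle has plenty of sections with simple zeros. The genuine difficulty is $\chi$ small relative to $g$. For instance, if $\chi=1$ and $g$ is large, taking an effective $L=\sO_Z(p)$ may well force $h^0(L^{\otimes 4})=h^0(L^{\otimes 6})=1$, so the only available data are $a=cs^4$, $b=c's^6$ for the section $s$ vanishing at $p$; then $v_p(a)=4$ and $v_p(b)=6$, the Weierstrass model is non-minimal over $p$, and after relative minimalization one obtains $\chi(\sO_X)=0$ rather than $1$. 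Saying ``for the finitely many small $\chi$ one argues directly'' does not help, since $g$ is unbounded for each such $\chi$.

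A clean fix within your framework: rather than choosing $L$ first, pick a reduced divisor $D=q_1+\cdots+q_{6\chi}$ on $Z$ and let $L$ be any of the $6^{2g}$ sixth roots of $\sO_Z(D)$ in $\Pic(Z)$. Then $L^{\otimes 6}\cong\sO_Z(D)$ admits a section $b$ with only simple zeros; taking $a=0$ gives a minimal Weierstrass model $y^2=x^3+b$ whose singular fibers are all of type $\II$, and one reads off $e(X)=2\cdot 6\chi=12\chi$, hence $\chi(\sO_X)=\chi$ as required.
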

\begin{proof}
If $\chi=0$, then we may take the projection $f\colon X=E\times Z\rightarrow Z$, where $E$ and $Z$ are smooth projective curves of genus $1$ and $g$ respectively.

Next we consider the case $\chi>0$. For $(\chi, q)=(1,0)$, we may take any rational elliptic surface $f_{(1,0)}\colon X_{(1,0)}\rightarrow \PP^1$ with a section. Moreover, we may assume that $f_0$ has exactly two singular fibers $f^*0$ and $f^*\infty$, $i=1,2$, which are of type $\I_0^*$ (\cite[VIII.1.4]{Mir89}). 

Now take a double cover $\pi\colon Z\rightarrow \PP^1$ branched at $2g+2$ points $t_1=0, t_2, \dots, t_{2g+2}\in \PP^1\setminus\{\infty\}$, and let $f_{(1,g)}\colon X_{(1,g)}\rightarrow Z$ be the relatively minimal elliptic fibration obtained by resolving the singularities of the fiber product $X_{(1,0)}\times_{\PP^1} Z$. Then $f_{(1,g)}$ has a section and exactly two singular fibers, which lie over the two points $\pi^{-1}(\infty)$ and are of type $\I_0^*$. By the Riemann--Hurwitz formula, we have $g(Z)=g$. It follows from the Noether formula that $\chi\left(\sO_{X_{(1,q)}}\right)=\frac{1}{12}e(X_{(1,q)})=1$, where for a projective variety $V$, $e(V)$ denotes its topological Euler characteristics.

For $(\chi, q)$ with $\chi\geq 2$, we may transfer $2(\chi-1)$ smooth fibers of $f_{(1,q)}$ to singular fibers of type $\I_0^*$, and obtain a relatively minimal elliptic fibration $f_{(\chi,q)}\colon X_{(\chi,q)}\rightarrow Z$ with a section such that $\chi(\sO_{X_{(\chi,q)}})=\chi$ and $g(Z)=g$ (\cite[V.4]{Mir89}).
\end{proof}

\begin{prop}\label{prop: sm Ivol1}
For given $\chi\in \ZZ_{>0}$ and $g\in\ZZ_{\geq 0}$, we have
\begin{equation}\label{eq: Ivol E_{chi g}}
\Ivol(\sE_{\chi, g})=
\left\{2g-2+\chi+\sum_{1\leq i\leq r} \left(1-\frac{1}{m_i}\right)\, \bigg|\, r\in \ZZ_{\geq 0},\, m_i\in\ZZ_{\geq 2} \right\}_{>0}.
\end{equation}
\end{prop}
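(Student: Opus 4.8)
The plan is to compute $\vol_1(K_X)$ for a minimal properly elliptic surface $f\colon X\to Z$ in the class $\sE_{\chi,g}$ via the Kodaira canonical bundle formula, and then to realize every number on the right-hand side by a suitable construction. Recall that for a relatively minimal elliptic fibration $f\colon X\to Z$ with multiple fibers $m_1F_1,\dots,m_rF_r$ (so $m_i\ge 2$), Kodaira's formula reads
\[
K_X \sim_{\QQ} f^*\!\Big(K_Z + \sum_{1\le i\le r}\Big(1-\tfrac1{m_i}\Big)P_i\Big) + \sum_{1\le i\le r}(m_i-1)F_i,
\]
and there is an equality of line bundles $f_*\omega_{X/Z}^{\otimes 1}$ has degree $\chi(\sO_X)$ (Kodaira), so that $\deg_Z\big(K_Z+\sum(1-\tfrac1{m_i})P_i\big)=2g-2+\chi+\sum_i(1-\tfrac1{m_i})$. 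Since $K_X$ is nef and $\kappa(X)=1$, the Iitaka fibration of $K_X$ is exactly $f$, and $\vol_1(K_X)$ equals the degree of the $\QQ$-divisor $K_Z+\sum_i(1-\tfrac1{m_i})P_i$ on $Z$; this gives the inclusion ``$\subseteq$'' of \eqref{eq: Ivol E_{chi g}}, the positivity being automatic since $\kappa(X)=1$ forces this degree to be positive. I would cite \cite[V]{Mir89} or a standard reference for the precise statement that $\deg f_*\omega_{X/Z}=\chi(\sO_X)$ for relatively minimal elliptic surfaces.

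For the reverse inclusion ``$\supseteq$'', fix $\chi,g$ and a finite tuple $(m_1,\dots,m_r)$ with $m_i\ge 2$ such that $2g-2+\chi+\sum_i(1-\tfrac1{m_i})>0$; I must produce $X\in\sE_{\chi,g}$ with exactly these multiple fibers. Start from the relatively minimal elliptic surface $f_{(\chi,g)}\colon X_{(\chi,g)}\to Z$ with a section and $\chi(\sO)=\chi$, $g(Z)=g$ produced in Lemma~\ref{lem: elliptic surf exists}. Then apply $r$ logarithmic transformations of multiplicities $m_1,\dots,m_r$ at $r$ distinct points of $Z$ over which the fiber is smooth (possible since a general fiber is smooth and we may choose the branch data freely); a logarithmic transformation of multiplicity $m$ leaves $\chi(\sO)$ and $g(Z)$ unchanged while inserting a multiple fiber of multiplicity $m$ (see \cite[V.4]{Mir89}). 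The resulting surface $X$ is relatively minimal, has $\chi(\sO_X)=\chi$, $g(Z)=g$, and multiple fibers precisely $m_1F_1,\dots,m_rF_r$; by the computation above its Iitaka volume is the prescribed number. It remains to check that $\kappa(X)=1$ and $K_X$ is nef: nefness of $K_X$ for a relatively minimal elliptic surface that is not rational or ruled follows from the canonical bundle formula since $K_Z+\sum(1-\tfrac1{m_i})P_i$ is a nef (indeed positive-degree, by our hypothesis) $\QQ$-divisor and $f^*$ of a nef divisor plus an effective vertical divisor supported on fibers is nef; and $\kappa(X)=1$ follows because $K_X$ is nef, not big (it is pulled back from a curve up to an effective vertical correction), and not torsion (its degree on a general fiber section is controlled by the positive number $2g-2+\chi+\sum_i(1-\tfrac1{m_i})$), so $X$ is genuinely properly elliptic.

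The main obstacle is the reverse inclusion: one must be careful that logarithmic transformations can be performed at points with smooth fibers and that they do not disturb $\chi(\sO_X)$, $g(Z)$, or relative minimality, and that after the transformations the surface indeed has Kodaira dimension exactly $1$ rather than $0$ or $2$ — this is where the positivity hypothesis $2g-2+\chi+\sum_i(1-\tfrac1{m_i})>0$ is used, and it must be invoked to rule out the degenerate cases (e.g. if $\chi=0,g=0$ one needs enough multiple fibers, and one must confirm $X$ is not a ruled or rational surface). I would handle this by noting that $\chi(\sO_X)>0$ already forces $X$ to be properly elliptic once $K_X$ is nef and of positive degree over $Z$ in the above sense, and cite the Enriques--Kodaira classification to exclude the remaining low cases. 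The ``$\{\cdot\}_{>0}$'' truncation in the statement is then exactly what makes the two sides match, since both the set of achievable volumes and the right-hand prescription are constrained only by positivity.
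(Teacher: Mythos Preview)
Your approach matches the paper's: Kodaira's canonical bundle formula for the inclusion $\subseteq$, and Lemma~\ref{lem: elliptic surf exists} followed by logarithmic transformations for $\supseteq$. A few points need tightening.

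Your displayed Kodaira formula is muddled: you write $f^*\big(K_Z+\sum(1-\tfrac1{m_i})P_i\big)+\sum(m_i-1)F_i$, which both omits the degree-$\chi$ line bundle and double-counts the multiple-fiber contribution; the clean statement is $K_X\sim_\QQ f^*\big(K_Z+L+\sum_i(1-\tfrac1{m_i})P_i\big)$ with $\deg L=\chi(\sO_X)$, which is what your degree computation actually uses.

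More importantly, the paper is explicit that the logarithmic transformation must yield an \emph{algebraic} elliptic surface and cites \cite[Chapter~I, Theorem~6.12]{FriMor94} for this; you should too, since logarithmic transformations can in general produce non-algebraic surfaces (this is exactly what happens when $\chi=0$, cf.\ the remark following Theorem~\ref{thm: min Ivol sm}). Here the hypothesis $\chi>0$ is what guarantees algebraicity, but you do not say so.

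Finally, your check that $\kappa(X)=1$ via ``degree on a general fiber section'' does not parse, since $K_X$ restricts trivially to each fiber; just observe that $K_X\sim_\QQ f^*D$ with $\deg D>0$ on the curve $Z$, whence $\kappa(K_X)=1$ directly.
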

\begin{proof}
Let $V_{\chi,g}$ denote the right hand side of \eqref{eq: Ivol E_{chi g}}.

For $X\in \sE_{\chi, g}$, let $f\colon X\rightarrow Z$ be the Iitaka fibration. Then $\chi(\sO_X)=\chi$, $g(Z)=g$, and by the canonical bundle formula,
    \[
    \vol_1(K_X) = 2g-2+\chi + \sum_{1\leq i\leq r} \left(1-\frac{1}{m_i}\right)>0
    \]
    where $m_i F_i$ ($1\leq i\leq r$) are the multiple fibers of $f$. Thus $\vol_1(K_X)\in V_{\chi, g}$.
    
Conversely, for any $v=2g-2+\chi + \sum_{1\leq i\leq r} \left(1-\frac{1}{m_i}\right)\in V_{\chi,g}$, we take a relatively minimal elliptic surface $f'\colon X'\rightarrow Z$ with a section such that $\chi(\sO_{X})=\chi$ and $g(Z)=g$ by Lemma~\ref{lem: elliptic surf exists}. We may perform logarithmic transformation along $r$ smooth fibers $f'^*z_i$ to obtain a new \emph{algebraic} elliptic surface $f\colon X\rightarrow Z$ such that $f^*z_i$ are fibers of type $m_i\I_0$ for $1\leq i\leq r$ (see \cite[Chapter I, Theorem~6.12]{FriMor94}). Then $X\in \sE_{\chi, g}$, and $\vol_1(K_{X}) = v$.  
\end{proof}

\begin{cor}
For $\chi\geq \chi'>0$ and $g\geq g'\geq 0$, we have $\Ivol(\sE_{\chi,g})\subset \Ivol(\sE_{\chi',g'})$.
\end{cor}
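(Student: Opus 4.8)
The plan is to deduce this immediately from the explicit description in Proposition~\ref{prop: sm Ivol1}, turning the statement into a purely arithmetic inclusion. Write $V_{\chi,g}$ for the right-hand side of \eqref{eq: Ivol E_{chi g}}; the proposition gives $\Ivol(\sE_{\chi,g})=V_{\chi,g}$ and $\Ivol(\sE_{\chi',g'})=V_{\chi',g'}$, so it suffices to show $V_{\chi,g}\subset V_{\chi',g'}$.

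Take $v\in V_{\chi,g}$ and write $v=2g-2+\chi+\sum_{i=1}^{r}\bigl(1-\tfrac1{m_i}\bigr)>0$ with $r\in\ZZ_{\geq0}$ and $m_i\in\ZZ_{\geq2}$. The hypotheses $\chi\geq\chi'$ and $g\geq g'$ make
\[
N:=(2g-2+\chi)-(2g'-2+\chi')=2(g-g')+(\chi-\chi')
\]
a non-negative integer, so $v=2g'-2+\chi'+N+\sum_{i=1}^{r}\bigl(1-\tfrac1{m_i}\bigr)$. The one substantive (though elementary) point is to absorb the integer $N$ into a sum of terms of the form $1-\tfrac1m$, $m\in\ZZ_{\geq2}$: since $1=\bigl(1-\tfrac12\bigr)+\bigl(1-\tfrac12\bigr)$, the number $N$ is a sum of $2N$ copies of $1-\tfrac12$. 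Hence $N+\sum_{i=1}^{r}\bigl(1-\tfrac1{m_i}\bigr)$ is a sum of $2N+r$ terms of the required shape, and since $v>0$ this exhibits $v\in V_{\chi',g'}$, which gives the desired inclusion.

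I do not expect any real obstacle here: everything reduces to the sign $N\geq0$ --- which is precisely where $\chi\geq\chi'$ and $g\geq g'$ are used --- together with the trivial identity trading one unit for two copies of $\tfrac12$; when $N=0$ the two sets coincide outright. (Geometrically, a surface with the smaller invariants $\chi'$, $g'$ realizes the same Iitaka volume by carrying additional multiple fibers, but only the arithmetic is needed for the proof.)
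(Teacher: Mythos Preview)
Your proof is correct and follows essentially the same approach as the paper: both invoke Proposition~\ref{prop: sm Ivol1} to reduce to the arithmetic statement $V_{\chi,g}\subset V_{\chi',g'}$, then absorb the nonnegative integer difference $(2g-2+\chi)-(2g'-2+\chi')$ into additional summands of the form $1-\tfrac12$. The only cosmetic difference is that the paper writes this absorption as ``$+\tfrac{1}{2}s$'' while you spell out explicitly that $N$ equals $2N$ copies of $1-\tfrac12$.
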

\begin{proof}
By Proposition~\ref{prop: sm Ivol1}, any element of
$\Ivol(\sE_{\chi,g})$ is a positive rational number $v$ of the form $2g-2+\chi + \sum_{1\leq i\leq r} \left(1-\frac{1}{m_i}\right)$. We may rewrite it as 
\[
v=2g'-2+\chi' + \sum_{1\leq i\leq r} \left(1-\frac{1}{m_i}\right)+\frac{1}{2}s,
\]
where $s =(2g-2+\chi)-(2g'-2+\chi')$. By Proposition~\ref{prop: sm Ivol1} again, we have $v\in \Ivol(\sE_{\chi',g'})$.
\end{proof}

\begin{cor}\label{cor: Ivol E10}
We have 
\[
\bigcup_{\chi>0, g\geq 0}\Ivol(\sE_{\chi,g}) =\Ivol(\sE_{1,0})=\left\{-1+\sum_{1\leq i\leq r}a_i \,\bigg|\, r\in \Zz_{>0}, a_i\in \Gamma\right\}_{>0},
\]
where $\Gamma= \{1-\frac{1}{m}\mid m\in \Zz_{>0}\}$.
\end{cor}


For ellptic surfaces $f\colon X\rightarrow Z$ with $\chi(\sO_X)=0$, due to restriction on the monodromy around the mutlitple fibers, there may not exist any logarithmic transformation resulting in an algebraic elliptic surface with arbitrarily prescribed constellation of multiple fibers. In order to understand the possible constellation of multiple fibers in this case, one needs to take base change to present $X$ as the \'etale quotient of a product surface.

Let $f\colon X\rightarrow Z$ be a relatively minimal elliptic fibration with $\chi(\sO_X)=0$. Then there is a Galois base change $\tZ\rightarrow Z$ such that the normalization of $X\times_Z \tZ$ is $\tZ\times E$, where $E$ is a smooth elliptic curve. Let $G$ be the Galois group of $\tZ\rightarrow Z$. Then $G$ acts on $E$ and $X = (\tZ\times E)/G$, where $G$ acts on $\tZ\times E$ diagonally. We have the following commutative diagram
\[
\begin{tikzcd}
\tZ\times E \arrow[r]\arrow[d, "\pr_1"'] & X=(\tZ\times E)/G\arrow[d, "f"] \\
\tZ \arrow[r, "\pi"]& Z = \tZ/G
\end{tikzcd}
\]
where the horizontal arrows denote the quotient maps, and $\pr_1$ is the projection onto the first factor.

Let $Q_1, \dots, Q_r\in Z$ be the branch points of the quotient map $\tZ\rightarrow Z$, and let $\sigma_i$ be a generator of the stabilizer $G_{P_i}$ for some $P_i\in \pi^{-1}(Q_i)$. Then by the canonical bundle formula, we have
\begin{equation}\label{eq: vol1 chi 0}
\vol_1(K_X) = 2g(Z)-2+\sum_{1\leq i\leq r}\left(1-\frac{1}{m_i}\right)
\end{equation}
where $m_i$ is the order of $\sigma_i$ for $1\leq i\leq r$. 

\begin{prop}\label{prop: vol sm g>0}
 For $g\geq 1$, we have $\Ivol(\sE_{0,g})\subset \Ivol(\sE_{1, 0})$.
\end{prop}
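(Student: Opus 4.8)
The plan is to read off the Iitaka volume of a surface in $\sE_{0,g}$ from the canonical bundle formula \eqref{eq: vol1 chi 0}, and then to match it with the explicit description of $\Ivol(\sE_{1,0})$ coming from Proposition~\ref{prop: sm Ivol1}; the only nontrivial ingredient beyond those two facts is the trivial arithmetic observation that a positive integer is a sum of copies of $1-\tfrac12$.

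Concretely, I would start from an arbitrary $v\in\Ivol(\sE_{0,g})$ with $g\geq1$. By \eqref{eq: vol1 chi 0} there are $r\in\ZZ_{\geq0}$ and integers $m_1,\dots,m_r\geq2$ (the orders of the stabilizer generators $\sigma_i$) with
\[
v = 2g-2+\sum_{1\leq i\leq r}\left(1-\frac{1}{m_i}\right)>0 .
\]
Then I would rewrite $2g-2 = -1+(2g-1)$ and express the positive integer $2g-1$ as $\sum_{k=1}^{4g-2}\bigl(1-\tfrac12\bigr)$, which is legitimate since $g\geq1$ forces $4g-2\geq2$ and $\tfrac{4g-2}{2}=2g-1$. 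Substituting gives
\[
v = -1+\sum_{k=1}^{4g-2}\left(1-\frac12\right)+\sum_{1\leq i\leq r}\left(1-\frac{1}{m_i}\right),
\]
which is of the form $-1+\sum_{1\leq j\leq s}\bigl(1-\tfrac{1}{n_j}\bigr)$ with $s=4g-2+r\geq1$ and every $n_j\in\{2,m_1,\dots,m_r\}\subset\ZZ_{\geq2}$. Since $v>0$, Proposition~\ref{prop: sm Ivol1} applied with $(\chi,g)=(1,0)$ yields $v\in\Ivol(\sE_{1,0})$, proving the inclusion.

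I do not expect a genuine obstacle here: the substantive input, the volume formula \eqref{eq: vol1 chi 0}, is already established, and the rest is a one-line identity. The only points to keep an eye on are the boundary case $r=0$ (then $v=2g-2$, which is positive only for $g\geq2$, and the displayed representation still has $s=4g-2\geq6$ terms, so the argument goes through unchanged) and the fact that the argument never uses which tuples $(m_1,\dots,m_r)$ are actually realized in $\sE_{0,g}$ — in particular no knowledge of the monodromy constraints on the $\sigma_i$ (such as the relation $\sum_i\sigma_i=0$ that intervenes in the genus-zero case) is required.
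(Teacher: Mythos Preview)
Your proposal is correct and follows essentially the same approach as the paper: the paper's proof simply cites \eqref{eq: vol1 chi 0} and Proposition~\ref{prop: sm Ivol1}, leaving the arithmetic implicit, while you have spelled out the rewriting $2g-2=-1+(4g-2)\cdot\tfrac12$ that makes the inclusion transparent (the same $\tfrac12$-trick appears explicitly in the proof of the preceding Corollary~\ref{cor: Ivol E10}). Your treatment of the edge case $r=0$ is also fine.
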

\begin{proof}
For $X\in \sE_{0,g}$, we have $\vol_1(K_X)\in \Ivol(\sE_{1,0})$ by \eqref{eq: vol1 chi 0} and Proposition~\ref{prop: sm Ivol1}. 
\end{proof}

In the following we assume that $\chi(\sO_X)=g(Z)=0$, so 
\[
\vol_1(K_X) = -2+\sum_{1\leq i\leq r}\left(1-\frac{1}{m_i}\right).
\]
Note that $q(X) = g(Z) + g(E/G) = g(E/G)\leq 1$. On the other hand, $q(X)= p_g(X)+\chi(\sO_X)=1+p_g(X)\geq 1$. Hence we have
\[
q(X)  = g(E/G) = 1.
\]
It follows that $G$ acts on $E$ by translations of torsion elements, so $G$ is abelian and is generated by at most two elements. Hence the stabilizers $G_{P_i}$ do not depend on the choice of $P_i\in \pi^{-1}(Q_i)$, and we may choose the generators $\sigma_i$ of $G_{P_i}$ for $1\leq i\leq r$, so that $\sum_i \sigma_i =0\in G$. 


\begin{prop}\label{prop: Ivol E00}
We have
    \[
\Ivol(\sE_{0,0})=\left\{-2 +\sum_{1\leq i\leq r} \left(1- \frac{1}{|\sigma_i|}\right)\,\Bigg|\, 
   \sigma_i\in E \text{ are torsions such that $\sum_{1
\leq i\leq r}\sigma_i=0$}
\right\}_{>0}.
\]
\end{prop}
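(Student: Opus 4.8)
The plan is to prove the two inclusions separately. For the inclusion of $\Ivol(\sE_{0,0})$ into the right-hand side, I would simply invoke the discussion immediately preceding the statement: for any $X\in\sE_{0,0}$ the base-change construction presents $X=(\tZ\times E)/G$ with $G$ abelian acting on $E$ by torsion translations, $g(Z)=0$, $\chi(\sO_X)=0$, and the canonical bundle formula \eqref{eq: vol1 chi 0} gives $\vol_1(K_X)=-2+\sum_i(1-1/m_i)$ with $m_i=|\sigma_i|$, where the generators $\sigma_i$ of the stabilizers can be chosen so that $\sum_i\sigma_i=0\in G\subset E$. Since $\vol_1(K_X)>0$ by definition of $\sE_{0,0}$ (Kodaira dimension $1$), this element lies in the right-hand set.

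The substance is the reverse inclusion. Fix torsion points $\sigma_1,\dots,\sigma_r\in E$ with $\sum_i\sigma_i=0$ and $-2+\sum_i(1-1/|\sigma_i|)>0$; let $m_i=|\sigma_i|$. I would construct a properly elliptic surface realizing this datum as follows. Let $G\subset E$ be the (finite abelian) subgroup generated by $\sigma_1,\dots,\sigma_r$; it acts on $E$ by translation, freely, with $E/G$ again an elliptic curve, so $g(E/G)=1$. The goal is to produce a Galois cover $\pi\colon\tZ\to\PP^1=Z$ with group $G$, branched exactly over $r$ points $Q_1,\dots,Q_r$, such that a generator of the stabilizer $G_{P_i}$ (for $P_i\in\pi^{-1}(Q_i)$) equals $\sigma_i$. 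Existence of such a cover is a classical consequence of Riemann's existence theorem: a connected Galois $G$-cover of $\PP^1$ branched over $Q_1,\dots,Q_r$ with prescribed local monodromy $\sigma_i$ around $Q_i$ exists precisely when the $\sigma_i$ generate $G$ and $\prod_i\sigma_i=1$ (in additive notation $\sum_i\sigma_i=0$), which holds by hypothesis. Then $G$ acts diagonally and freely on $\tZ\times E$ (freely on the second factor), and I set $X:=(\tZ\times E)/G$ with $f\colon X\to Z=\tZ/G$ the induced elliptic fibration. One checks $\chi(\sO_X)=0$ (e.g.\ $e(X)=e(\tZ\times E)/|G|=0$ and Noether's formula, or directly from $\chi(\sO_{\tZ\times E})=0$ and the quotient being free), and that over $Q_i$ the fibre is multiple of multiplicity $m_i$. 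Applying the canonical bundle formula \eqref{eq: vol1 chi 0} gives $\vol_1(K_X)=-2+\sum_i(1-1/m_i)$, which is $>0$ by assumption; hence $\kappa(X)=1$ and, after passing to the relatively minimal model (which does not change $K_X$-data on a relatively minimal elliptic surface), $X\in\sE_{0,0}$ with the prescribed Iitaka volume.

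I expect the main obstacle to be the careful bookkeeping in the construction of $\tZ$: one must ensure the $G$-cover is \emph{connected} (so that $X$ is irreducible) and that the local monodromies are realized as \emph{exactly} $\sigma_i$ up to the appropriate ambiguity, and then that the quotient $X=(\tZ\times E)/G$ is smooth — which uses precisely that $G$ acts freely on $E$ by translations, so the diagonal action on $\tZ\times E$ is free even where the action on $\tZ$ is ramified. A secondary point is verifying that the multiplicity of the fibre over $Q_i$ in the quotient elliptic fibration is indeed the order $m_i$ of $\sigma_i$ and not some divisor of it; this follows because $E/\langle\sigma_i\rangle\to E/\langle\sigma_i\rangle$ is unramified, so the ramification index $m_i$ of $\tZ\to Z$ at $P_i$ translates directly into fibre multiplicity $m_i$ in $X\to Z$ via the standard local model $(\Delta\times E)/(\mu_{m_i})$ with $\mu_{m_i}$ acting by $(\zeta, t)\mapsto(\zeta\cdot\mathrm{pt}, t+\sigma_i)$. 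Once these local computations are in place, the global numerical identity is immediate from \eqref{eq: vol1 chi 0}.
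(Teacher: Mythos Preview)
Your proposal is correct and follows essentially the same approach as the paper: both directions are handled identically, with the forward inclusion deferred to the preceding discussion and the reverse inclusion obtained by taking $G=\langle\sigma_1,\dots,\sigma_r\rangle\subset E$, invoking the Riemann existence theorem to produce a $G$-cover $\tZ\to\PP^1$ with the prescribed local monodromies, and forming the free diagonal quotient $X=(\tZ\times E)/G$. Your write-up is in fact more careful than the paper's about the hypotheses needed for Riemann existence (generation and $\sum_i\sigma_i=0$), smoothness of the quotient, and the identification of the fibre multiplicities; note only that passing to a relatively minimal model is unnecessary here, since the free quotient $X$ is already smooth and the fibres of $X\to\PP^1$ are all (multiples of) smooth elliptic curves.
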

\begin{proof}
Denote the right hand side of the equality by $V_{0,0}$.
By the above discussions, we know that $\Ivol(\sE_{0,0})\subset V_{0,0}$. 

Conversely, for $v= -2 +\sum_{1\leq i\leq r} \left(1- \frac{1}{|\sigma_i|}\right)\in V_{0,0}$, where the $\sigma_i\in E$ are torsion elements such that $\sum_{1
\leq i\leq r}\sigma_i=0$, let $G=\langle\sigma_1,\dots, \sigma_r\rangle\subset E$ be the subgroup of $E$ generated by the $\sigma_i$'s. By the Riemann existence theorem, there is a ramified $G$-cover $\pi\colon \tZ \rightarrow \PP^1$ which has exactly $r$ branch points, say $Q_1,\dots, Q_r$, and the stabilizer over $Q_i$ is exactly $\langle \sigma_i\rangle$. Then the diagonal action of $G$ on the product $\tZ\times E$ is free, and the quotient surface $X=(\tZ\times E)/G$ is smooth with Kodaira dimension 1, with Iitaka fibration $X\rightarrow \tZ/G\cong \PP^1$, $\chi(\sO_X)=0$, and $\vol_1(K_X)=v$. Therefore, $v\in \Ivol(\sE_{0,0})$.
\end{proof}


\begin{thm}\label{thm: sm surf Ivol}
 Using Notation~\ref{nota: Ivol}, we have $$\Ivol_\sm(2,1)= \Ivol(\sE_{1,0})\cup \Ivol(\sE_{0,0}),$$ 
where $\Ivol(\sE_{1,0})$ and $\Ivol(\sE_{0,0})$ are specified in Propositions~\ref{prop: sm Ivol1} and \ref{prop: Ivol E00}.
\end{thm}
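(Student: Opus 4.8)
The plan is to obtain the theorem as a bookkeeping consequence of the stratum-by-stratum computations already in hand, the only genuinely new input being a reduction to minimal models. First I would observe that $\vol_1(K_X)$ is a birational invariant of smooth projective surfaces, since the plurigenera $h^0(X,\sO_X(mK_X))$ are; thus, for the purpose of computing $\Ivol_\sm(2,1)$, every $X\in\gP_\sm(2,1)$ may be replaced by its unique minimal model, on which $K_X$ is nef. I would then note that a minimal properly elliptic surface satisfies $\chi(\sO_X)\ge0$: since $K_X^2=0$, Noether's formula gives $\chi(\sO_X)=\tfrac{1}{12}e(X)$, and $e(X)$ equals the sum of the (strictly positive) Euler numbers of the singular fibres of the elliptic fibration, hence is $\ge0$. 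Combining these two points with the inclusion $\sE_{\chi,g}\subset\gP_\sm(2,1)$ valid for all $\chi,g\ge0$, one gets
\[
\Ivol_\sm(2,1)=\bigcup_{\chi\ge0,\ g\ge0}\Ivol(\sE_{\chi,g}).
\]

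Next I would split this union according to whether $\chi>0$ or $\chi=0$. For the part with $\chi>0$, Corollary~\ref{cor: Ivol E10} already identifies $\bigcup_{\chi>0,\ g\ge0}\Ivol(\sE_{\chi,g})$ with $\Ivol(\sE_{1,0})$. For the part with $\chi=0$, I would peel off the case $g=0$ and invoke Proposition~\ref{prop: vol sm g>0}, which gives $\Ivol(\sE_{0,g})\subset\Ivol(\sE_{1,0})$ for every $g\ge1$; hence $\bigcup_{g\ge0}\Ivol(\sE_{0,g})$ equals $\Ivol(\sE_{0,0})$ together with a subset of $\Ivol(\sE_{1,0})$. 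Putting the two parts together yields $\Ivol_\sm(2,1)=\Ivol(\sE_{1,0})\cup\Ivol(\sE_{0,0})$, and the explicit descriptions of these two sets are then read off from Proposition~\ref{prop: sm Ivol1} applied with $(\chi,g)=(1,0)$ and from Proposition~\ref{prop: Ivol E00}.

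As for the main obstacle: for this particular statement there is essentially none, because all the substantial work is already packaged in the cited results — the canonical bundle formula computations giving the ``$\subseteq$'' inclusions, and the constructions via logarithmic transformations (for $\chi>0$) and via ramified Galois base changes of $\tZ\times E$ (for $\chi=0$) giving the ``$\supseteq$'' inclusions. The only point that still needs care here is verifying that the two reductions above are lossless: that passing to the minimal model does not change $\vol_1(K_X)$, and that the stratum $\chi=0,\ g\ge1$ genuinely contributes nothing beyond $\Ivol(\sE_{1,0})$, so that only $\sE_{1,0}$ and $\sE_{0,0}$ survive in the final union.
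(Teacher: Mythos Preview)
Your proposal is correct and follows essentially the same route as the paper: decompose $\Ivol_\sm(2,1)$ as $\bigcup_{\chi\ge0,\,g\ge0}\Ivol(\sE_{\chi,g})$, collapse the $\chi>0$ strata to $\Ivol(\sE_{1,0})$ via Corollary~\ref{cor: Ivol E10}, and absorb the $\chi=0,\,g\ge1$ strata into $\Ivol(\sE_{1,0})$ via Proposition~\ref{prop: vol sm g>0}. The only minor slip is the parenthetical ``strictly positive'': multiple fibres of type $m\I_0$ have Euler number $0$, but this does not affect the conclusion $e(X)\ge0$ and hence $\chi(\sO_X)\ge0$.
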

\begin{proof}
By definition, we have $\Ivol_\sm(2,1)=\cup_{\chi, g} \Ivol(\sE_{\chi,g})$. The theorem follows from Corollary~\ref{cor: Ivol E10} and Proposition~\ref{prop: vol sm g>0}. The description of $\Ivol(\sE_{1,0})$ and $\Ivol(\sE_{0,0})$ are given in Propositions~\ref{prop: sm Ivol1} and \ref{prop: Ivol E00} respectively.
\end{proof}

We will study the geometry of $\Ivol_\sm(2,1)$, using the description of $\Ivol_\sm(2,1)$ given in Theorem~\ref{thm: sm surf Ivol}. First of all, we have
\begin{thm}\label{thm: min Ivol sm}
$\min\Ivol_\sm(2,1) = \frac{1}{6}$.
\end{thm}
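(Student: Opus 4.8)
The plan is to read off the minimum from the explicit description in Theorem~\ref{thm: sm surf Ivol}, which gives $\Ivol_\sm(2,1) = \Ivol(\sE_{1,0}) \cup \Ivol(\sE_{0,0})$ with the two pieces as in Propositions~\ref{prop: sm Ivol1} and~\ref{prop: Ivol E00}. It then suffices to show that $\frac16 \in \Ivol(\sE_{1,0})$ and that every element of $\Ivol(\sE_{1,0})$ and of $\Ivol(\sE_{0,0})$ is $\geq \frac16$. The membership is immediate: taking $r=2$ and $(m_1,m_2)=(2,3)$ in Proposition~\ref{prop: sm Ivol1} gives $-1+(1-\frac12)+(1-\frac13)=\frac16$.

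For the lower bound on $\Ivol(\sE_{1,0})$, I would write a typical positive element as $(r-1)-\sum_{i=1}^r \frac1{m_i}$ with $m_i \in \ZZ_{\geq 2}$. For $r=1$ this is negative, hence $r\geq 2$; for $r\geq 3$ it is at least $(r-1)-\frac r2 = \frac r2 - 1 \geq \frac12$. In the remaining case $r=2$, positivity forces $\frac1{m_1}+\frac1{m_2}<1$, and for integers $m_1\leq m_2$ with $m_i\geq 2$ this forces either $m_1=2,\ m_2\geq 3$, so that $\frac1{m_1}+\frac1{m_2}\leq \frac56$, or $m_1\geq 3$, so that $\frac1{m_1}+\frac1{m_2}\leq \frac23$; in either case the element is $\geq \frac16$.

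For the lower bound on $\Ivol(\sE_{0,0})$, I would write a typical positive element as $(r-2)-\sum_{i=1}^r \frac1{m_i}$ with $m_i=|\sigma_i|$, where the $\sigma_i$ are torsion elements of an elliptic curve $E$ with $\sum_i \sigma_i = 0$; discarding those $\sigma_i$ equal to $0$ does not change the value, so one may assume $m_i\geq 2$ (and reset $r$ accordingly), and positivity then forces $r\geq 3$. For $r\geq 5$ the element is $\geq \frac r2 - 2 \geq \frac12$. For $r=4$, positivity forbids $m_1=\cdots=m_4=2$, whence $\sum \frac1{m_i}\leq \frac32+\frac13 = \frac{11}6$ and the element is $\geq \frac16$; moreover equality would force $(m_1,\dots,m_4)=(2,2,2,3)$ up to order, which cannot be realized because a sum of three elements of order $2$ has order at most $2$ and so cannot cancel an element of order $3$, so the element is in fact $>\frac16$ there. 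The decisive case is $r=3$: here $\sigma_3=-(\sigma_1+\sigma_2)$, so $m_3=|\sigma_1+\sigma_2|$ divides $\lcm(m_1,m_2)$, and cyclically each $m_i$ divides the lcm of the other two; one also has the finer fact that if $|\sigma_1|=2$ and $|\sigma_2|$ is odd then $|\sigma_1+\sigma_2|=2|\sigma_2|$. The plan is to enumerate the finitely many ordered triples $m_1\leq m_2\leq m_3$ with $\frac1{m_1}+\frac1{m_2}+\frac1{m_3}\in(\frac56,1)$ — these are $(2,3,m)$ with $m\geq 7$, $(2,4,m)$ with $5\leq m\leq 11$, $(2,5,m)$ with $5\leq m\leq 7$, $(3,3,4)$ and $(3,3,5)$ — and to check that each is ruled out by these arithmetic constraints; consequently every realizable triple with $\sum \frac1{m_i}<1$ satisfies $\sum\frac1{m_i}\leq \frac56$, so the element is $\geq \frac16$. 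The bound $\frac16$ is attained by $(m_1,m_2,m_3)=(2,6,6)$, realized by an element of order $2$ and two elements of order $6$ spanning a copy of $\ZZ/2\times\ZZ/6$ inside $E$.

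Assembling these, $\min\Ivol_\sm(2,1)=\frac16$. The one genuinely delicate step is the $r=3$ case of $\Ivol(\sE_{0,0})$: the purely numerical minimization of $(r-2)-\sum \frac1{m_i}$ over all $m_i\geq 2$ would descend as low as $\frac1{42}$, coming from $(2,3,7)$, and it is exactly the arithmetic constraint imposed by three torsion points of an elliptic curve summing to zero — the mutual divisibility among their orders — that excludes these near-Euclidean triples and pins the minimum at $\frac16$.
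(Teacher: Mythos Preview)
Your proof is correct and follows essentially the same route as the paper: reduce via Theorem~\ref{thm: sm surf Ivol} to the two pieces $\Ivol(\sE_{1,0})$ and $\Ivol(\sE_{0,0})$, and analyze each by a case split on $r$. The only organizational difference is in the $r=3$ case for $\sE_{0,0}$: the paper proceeds forward, fixing $m_1\in\{2,3,\geq 4\}$ and using the relation $\sigma_1+\sigma_2+\sigma_3=0$ to classify the realizable $(m_2,m_3)$ and compute the minimum directly (obtaining $\frac{1}{6}$ at $(2,6,6)$), whereas you proceed backward, listing the finitely many numerical triples with $\sum\frac{1}{m_i}\in(\frac{5}{6},1)$ and excluding each via the divisibility constraints $m_i\mid\lcm(m_j,m_k)$. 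Both arguments are equally elementary and rest on the same arithmetic input; the paper's version also records slightly sharper bounds in the intermediate cases (e.g.\ $v\geq\frac{1}{3}$ for $r=4$), but these are not needed for the theorem.
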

\begin{proof}
By Theorem~\ref{thm: sm surf Ivol}, we have
\begin{equation}\label{eq: min Ivol sm}
    \min\Ivol_\sm(2,1) = \min\left\{\min\Ivol(\sE_{1,0}), \min\Ivol(\sE_{0,0})\right\}. 
\end{equation}
Now one verifies readily that 
\[
\min\Ivol(\sE_{1,0}) =-1+\frac{1}{2}+\frac{2}{3}  =\frac{1}{6}.
\]
Thus it suffices to show that $\min\Ivol(\sE_{0,0}) \geq \frac{1}{6}$. Note that any $v\in \Ivol(\sE_{0,0})$ is a positive rational number of the form
\[
v=-2+ \sum_{1\leq i\leq r}\left(1-\frac{1}{m_i}\right)
\]
where the $m_i=|\sigma_i|$ are the orders of some torsion elements $\sigma_i$ of an elliptic curve satisfying $\sum_{1\leq i\leq r}\sigma_i=0$. We may assume that $m_1\leq m_2\leq \cdots \leq m_r$. Since $v>0$ and $1-\frac{1}{m_i}<1$ for each $i$, we have $r\geq 3$.

If $r\geq 5$, then 
\[
v\geq -2 + \frac{1}{2}\cdot r \geq -2 + \frac{5}{2} = \frac{1}{2}.
\]
\begin{claim}\label{claim: r=4}
If $r=4$, then $m_3\geq 3$, and hence 
\[
v \geq -2 + \frac{1}{2}\cdot 2 + \frac{2}{3}\cdot 2 =\frac{1}{3}.
\]
\end{claim}
\begin{proof}[Proof of Claim~\ref{claim: r=4}]
Otherwise, $m_1=m_2=m_3=2$. Since $\sum_{1\leq i\leq 4}\sigma_i=0$, we have $m_4=|\sigma_4|=2$. It follows that $v=-2+ \frac{1}{2}\cdot 4=0$, which is a contradiction.
\end{proof}
\begin{claim}\label{claim: r=3}
 If $r=3$, then $v\geq \frac{1}{6}$, with the equality if and only if $m_1=2, m_2=m_3=6$.
\end{claim}
\begin{proof}[Proof of Claim~\ref{claim: r=3}]
If $r=3$, then we have 
\[
v=1-\left(\frac{1}{m_1}+\frac{1}{m_2}+\frac{1}{m_3} \right).
\]
If $m_1\geq 4$, then 
\[
v\geq 1-\frac{1}{4}\cdot 3 =\frac{1}{4}.
\]
If $m_1=3$, using the facts that $\sigma_1+\sigma_2+\sigma_3 = 0$ and $v>0$, one of the following cases occurs:
\begin{itemize}
    \item $(m_2, m_3)=(3k, 3k)$ for some $k\geq 2$. In this case, 
    \[
    v=1-\frac{1}{3}-\frac{2}{3k}=\frac{2k-2}{3k}\geq \frac{1}{3}.
    \]
    \item $\gcd(3, m_2)=1$ and $m_3=3m_2$. In this case,
    \[
    v=1-\frac{1}{3}-\frac{1}{m_2} - \frac{1}{3m_2} =\frac{2m_2-4}{3m_2}\geq \frac{1}{3}.
    \]
\end{itemize}
If $m_1=2$, using the facts that $\sigma_1+\sigma_2+\sigma_3 = 0$ and $v>0$ again, one of the following cases occurs:
\begin{itemize}
    \item $(m_2, m_3)=(2k+1, 4k+2)$ for some $k\geq 2$. In this case, 
    \[
    v=1-\frac{1}{2}-\frac{1}{2k+1} - \frac{1}{4k+2}=\frac{k-1}{2k+1}\geq \frac{1}{5}.
    \]
    \item $m_2=m_3=2k$ for some $k\geq 3$. In this case,
    \[
    v=1-\frac{1}{2}-\frac{1}{2k} -\frac{1}{2k} =\frac{k-2}{2k} \geq \frac{1}{6}
    \]
where $v=\frac{1}{6}$ holds if and only if $m_2=m_3=6$.
\end{itemize}
\end{proof}
\end{proof}
\begin{rmk}
    In the proof of Theorem~\ref{thm: min Ivol sm}, we have shown that $\min\Ivol(\sE_{0,0})\geq \frac{1}{6}$. In fact, as pointed out by Xin Lu, the equality can be realized: Let $C$ be the genus two curve with affine equation $y^2=x^6-1$. Then the group $G=\langle\sigma,\tau\rangle \cong (\ZZ/6\ZZ)\oplus (\ZZ/2\ZZ)$ acts on $C$ by $\sigma(x,y) = (\xi x, y)$ and $\tau(x,y)=(x,-y)$, where $\xi=\exp(\frac{2\pi i}{6})$ is a primitive $6$-th root of $1$. Then the quotient map $\pi\colon C\rightarrow C/G=\PP^1$ has three branch points with respective stabilizers $\langle\sigma\rangle$, $\langle\sigma^{-1}\tau\rangle$, $\langle\tau\rangle$. Let $G$ act on an elliptic curve by translations, and then diagonally on $C\times E$. Then the quotient surface $X=(C\times E)/G$ is smooth with $\kappa(K_X)=1$, and the Iitaka fibration is simply $X\rightarrow C/G=\PP^1$, induced by the projection $C\times E\rightarrow C$. We can check easily that $X\in \sE_{0,0}$ and $\vol_1(K_X) = \frac{1}{6}$.
\end{rmk}
Next we study the iterated derived sets of $\Ivol_\sm(2,1)$.
\begin{thm}\label{thm: Ivol sm derived}
For $n\in\Zz_{>0}$, the $n$-th iterated derived set of $\Ivol_\sm(2,1)$ is
\[
\Ivol_\sm(2,1)^{(n)} = \left\{n-1 + \sum_{1\leq i\leq s}\left(1 - \frac{1}{m_i}\right) \,\bigg|\, s\in \ZZ_{\geq 0},\, m_i\in \ZZ_{\geq 2} \right\}_{>0}.
\]
\end{thm}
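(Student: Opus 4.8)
The plan is to rewrite the whole computation in terms of the single set
\[
\Sigma:=\Bigl\{\textstyle\sum_{i=1}^{s}\bigl(1-\tfrac1{m_i}\bigr) : s\in\ZZ_{\ge0},\ m_i\in\ZZ_{\ge2}\Bigr\},
\]
the empty sum being $0$, so that the asserted $n$-th derived set is exactly $R_n:=\bigl((n-1)+\Sigma\bigr)_{>0}$. By Theorem~\ref{thm: sm surf Ivol}, $\Ivol_\sm(2,1)=\Ivol(\sE_{1,0})\cup\Ivol(\sE_{0,0})$; by Proposition~\ref{prop: sm Ivol1}, $\Ivol(\sE_{1,0})=(-1+\Sigma)_{>0}$; and by Proposition~\ref{prop: Ivol E00}, every element of $\Ivol(\sE_{0,0})$ equals $-2+\sum_{i=1}^{r}\bigl(1-\tfrac1{m_i}\bigr)$ where (dropping any trivial torsion points) the $\sigma_i$ are nonzero torsion points on an elliptic curve with $\sum_i\sigma_i=0$ and $m_i=|\sigma_i|\ge2$. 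I would first record two elementary facts about $\Sigma$. First, $1+\Sigma\subseteq\Sigma$, because a unit can be absorbed as $(1-\tfrac12)+(1-\tfrac12)$; hence $(n-1)+\Sigma$ is nondecreasing in $n$, and for $n\ge2$ one has $R_n=(n-1)+\Sigma\subseteq[n-1,\infty)$, while $R_1=\Sigma\setminus\{0\}$. Second, $\Sigma'=1+\Sigma$: the inclusion $\supseteq$ follows from $x+(1-\tfrac1N)\to 1+x$ for $x\in\Sigma$, and for $\subseteq$, since each term of a representation is $\ge\tfrac12$, a convergent sequence in $\Sigma$ can be written with a fixed number $s$ of terms, each either eventually constant or tending to $1$, and distinctness forces at least one of them to tend to $1$, so the limit lies in $k+\Sigma\subseteq1+\Sigma$ for the number $k\ge1$ of such terms.

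Next I would prove $R_n\subseteq\Ivol_\sm(2,1)^{(n)}$ by induction on $n$, using only $\Ivol(\sE_{1,0})\subseteq\Ivol_\sm(2,1)$. If $v=(n-1)+\sum_{i=1}^{s}(1-\tfrac1{m_i})>0$ lies in $R_n$, then $v-\tfrac1N=(n-2)+(1-\tfrac1N)+\sum_{i=1}^{s}(1-\tfrac1{m_i})$ tends to $v$ from below, and these numbers are positive for all $N>1/v$. When $n=1$ they lie in $\Ivol(\sE_{1,0})=(-1+\Sigma)_{>0}$ directly, and when $n\ge2$ they lie in $R_{n-1}\subseteq\Ivol_\sm(2,1)^{(n-1)}$ by the inductive hypothesis; in both cases $v\in\Ivol_\sm(2,1)^{(n)}$.

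For the reverse inclusion the inductive step is purely set-theoretic: granting $\Ivol_\sm(2,1)^{(n-1)}=R_{n-1}$, the second fact above gives $\Ivol_\sm(2,1)^{(n)}=R_{n-1}'=(n-1)+\Sigma=R_n$ (here $R_{n-1}$ differs from $(n-2)+\Sigma$ at most by the single point $0$, and only when $n=2$). So everything reduces to the base case $\Ivol_\sm(2,1)'\subseteq R_1$, i.e.\ $\Ivol(\sE_{1,0})'\cup\Ivol(\sE_{0,0})'\subseteq R_1$. The first piece is immediate: $\Ivol(\sE_{1,0})'\subseteq(-1+\Sigma)'=\Sigma$ by the second fact, and since $\min\Ivol(\sE_{1,0})=\tfrac16>0$ (from the proof of Theorem~\ref{thm: min Ivol sm}) this is contained in $\Sigma\setminus\{0\}=R_1$.

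The one genuine obstacle is $\Ivol(\sE_{0,0})'\subseteq R_1$. A priori $\Ivol(\sE_{0,0})$ only sits inside $(-2+\Sigma)_{>0}$, which is strictly larger than $R_1$ — for instance $\tfrac47=-2+3\bigl(1-\tfrac17\bigr)\in\Ivol(\sE_{0,0})\setminus R_1$, realized by $\sigma_1,\sigma_2,\sigma_3\in\ZZ/7\ZZ$ with $1+2+4\equiv0$ — so the relation $\sum_i\sigma_i=0$ must be used. Given distinct elements $v_k=-2+\sum_{i=1}^{r_k}\bigl(1-\tfrac1{m_i^{(k)}}\bigr)\to v$: as each $1-\tfrac1{m_i^{(k)}}\ge\tfrac12$, the $r_k$ are bounded, so after passing to a subsequence $r_k\equiv r\ge3$ and each order $m_i^{(k)}$ is either eventually constant, say $=\mu_i$ for $i$ in a set of size $a$, or tends to $\infty$ for the remaining $b=r-a$ indices. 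The crux is that $b\ge2$: picking $i_0$ with $m_{i_0}^{(k)}$ of maximal order, the relation gives $\sigma_{i_0}^{(k)}=-\sum_{i\ne i_0}\sigma_i^{(k)}$, so $m_{i_0}^{(k)}$ divides $\lcm_{i\ne i_0}m_i^{(k)}$; were at most one order tending to $\infty$, the others would stay bounded and so would this lcm, contradicting $m_{i_0}^{(k)}\to\infty$. Consequently
\[
v=(r-2)-\sum_{i=1}^{a}\tfrac1{\mu_i}=\sum_{i=1}^{a}\bigl(1-\tfrac1{\mu_i}\bigr)+(b-2),
\]
which lies in $\Sigma$ by the first fact since $b-2\ge0$, and is positive (it is $\ge\tfrac12$ if $a\ge1$, and equals $r-2\ge1$ if $a=0$), hence $v\in R_1$. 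The bookkeeping to get right in the full write-up is the diagonal extraction of one subsequence along which this order pattern stabilizes, and — since the ambient elliptic curve may vary with $k$ — first refining to a subsequence along which the finitely many relevant torsion configurations of the bounded-order points are fixed, so that the relation $\sum_i\sigma_i^{(k)}=0$ can be applied as stated.
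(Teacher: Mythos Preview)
Your proof is correct and follows essentially the same approach as the paper: both reduce to the decomposition $\Ivol_\sm(2,1)=\Ivol(\sE_{1,0})\cup\Ivol(\sE_{0,0})$, both compute derived sets by bounding the number of summands and passing to a subsequence where each $m_i$ is either eventually constant or tends to $\infty$, and both handle the $\sE_{0,0}$ part via the relation $\sum_i\sigma_i=0$ forcing at least two orders to go to infinity. Your packaging is a bit cleaner---isolating the lemma $\Sigma'=1+\Sigma$ once and then running the induction formally---and your lcm argument for $b\ge2$ is more explicit than the paper's (which simply asserts $r-s\ge2$); also, your final worry about refining to a subsequence where the torsion configurations of bounded-order points are fixed is unnecessary, since the divisibility $m_{i_0}^{(k)}\mid\lcm_{i\neq i_0}m_i^{(k)}$ holds for each $k$ on its own elliptic curve.
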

\begin{proof}
By Theorem~\ref{thm: sm surf Ivol}, we have
\[
\Ivol_\sm(2,1)' = \Ivol(\sE_{1,0})'\cup \Ivol(\sE_{0,0})'.
\]
We look at $\Ivol(\sE_{1,0})'$ first. 
\begin{claim}
 We have   \[
 \Ivol(\sE_{1,0})' = \left\{\sum_{1\leq i\leq s}\left(1 - \frac{1}{m_i}\right) \,\bigg|\, s, m_i\in \Zz_{>0} \right\}_{>0}.
\]
\end{claim}
\begin{proof}
Suppose $v\in \Ivol(\sE_{1,0})'$. Then, since $\Ivol(\sE_{1,0})$ is DCC, there is an strictly increasing sequence $v_i\in \Ivol(\sE_{1,0})$ converging to $v$. We may write 
\[
v_i = -1+\sum_{1\leq j\leq r_i} \left(1-\frac{1}{m_{ij}}\right).
\]
Since $1-\frac{1}{m_{ij}}\geq \frac{1}{2}$ for each $1\leq j\leq r_i$, we have 
\[
\frac{1}{2} r_i-1 \leq v_i < v.
\]
It follows that $r_i < 2v + 4$ for any $i$. Therefore, by taking a subsequence of $\{v_i\}_i$, we may assume that $r_i=r$ is fixed for each $i$, and $\{(m_{i1}, \dots, m_{ir})\}_i$ is a strictly increasing sequence, where we use the lexicographic order for the integer vectors. By taking further subsequence, we may assume that, for $1\leq j\leq s$, $m_{ij}$ stay the same for any $j$, to be denoted by $m_j$, while for $s+1\leq j\leq r$, $m_{ij}$ goes to infinity with $i$. Then it is clear that the limit is
\begin{equation}\label{eq: v_infty}
 v = -1 + (r-s) + \sum_{1\leq j\leq s}\left(1 - \frac{1}{m_j}\right) = \frac{1}{2}(2r-2s-2)+\sum_{1\leq j\leq s}\left(1 - \frac{1}{m_j}\right).   
\end{equation}
and hence
\[
 \Ivol(\sE_{1,0})' \subset \left\{\sum_{1\leq j\leq s}\left(1 - \frac{1}{m_j}\right) \,\bigg|\, s\in \ZZ_{> 0},\, m_j\in \ZZ_{\geq 2}\right\}_{>0}.
\]
On the other hand, for any positive $v$ of the form $\sum_{1\leq j\leq s}\left(1 - \frac{1}{m_j}\right)$, we can write it as
\[
v = -1 + 1 + \sum_{1\leq j\leq s}\left(1 - \frac{1}{m_j}\right) = \lim_{m\to \infty}\left( -1+ \left(1-\frac{1}{m}\right) + \sum_{1\leq j\leq s}\left(1 - \frac{1}{m_j}\right)\right)
\]
which lies in $ \Ivol(\sE_{1,0})'$.
\end{proof}
Now we look at $\Ivol(\sE_{0,0})'$.
\begin{claim}
    We have  $ \Ivol(\sE_{0,0})' \subset\Ivol(\sE_{1,0})'$.
\end{claim}
\begin{proof}
Any given $v\in\Ivol(\sE_{0,0})'$ is the limit of an increasing sequence $\{v_i\}$ with
\[
v_i = -2+\sum_{1\leq j\leq r_i} \left(1-\frac{1}{m_{ij}}\right).
\]
Up to passing to a subsequence, we may assume that $r_i=r$ for each $j$, $m_{ij}=m_j$ for each $1\leq j\leq s$, and $m_{ij}$ goes to infinity with $j$ for $s+1\leq j\leq r$. Denote by $\sigma_{ij}$ the generator of local monodromy of order $m_{ij}$. Then we have $\sum_{1\leq i\leq r_i} \sigma_{ij}=0$. It follows that $r-s\geq 2$, that is, there are at least two indices $i$ such that $m_{ij}$ goes to infinity with $j$. It follows that
\[
v = (r-s-2) + \sum_{1\leq j\leq s}\left(1 - \frac{1}{m_j}\right) \subset \Ivol(\sE_{1,0})'.
\]
\end{proof}
In conclusion, we have proved that
\[
\Ivol_\sm(2,1)' = \left\{\sum_{1\leq j\leq s}\left(1 - \frac{1}{m_j}\right) \,\bigg|\, s\in \ZZ_{>0},\, m_j\in \ZZ_{\geq 2} \right\}_{>0}.
\]

Similarly, for $n\geq 2$, we may use induction to show that
\[
\Ivol_\sm(2,1)^{(n)} =\left(\Ivol_\sm(2,1)^{(n-1)}\right)' = \left\{n-1 + \sum_{1\leq j\leq s}\left(1 - \frac{1}{m_j}\right) \,\bigg|\, s\in \ZZ_{\geq 0}, m_j\in \ZZ_{\geq 2} \right\}.
\]
\end{proof}

\begin{cor}\label{cor: sm acc points}
\begin{enumerate}
      \item The minimal accumulation point of $\Ivol_\sm(2,1)$ is $\frac{1}{2}$.
      \item For a given $M\geq\frac{1}{2}$, the accumulation complexity of $\Ivol_\sm(2,1)_{\leq M}$ is $\lfloor M \rfloor + 1$.
      \item The accumulation complexity of $\Ivol_\sm(2,1)$ is infinite.
      \item $\Ivol_\sm(2,1)$ is closed in $\RR$.
  \end{enumerate} 
\end{cor}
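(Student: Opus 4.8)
The plan is to deduce all four statements directly from the description of the iterated derived sets in Theorem~\ref{thm: Ivol sm derived}. Write $A:=\Ivol_\sm(2,1)$, so that for every $n\ge 1$
\[
A^{(n)}=\left\{n-1+\sum_{1\le i\le s}\left(1-\frac{1}{m_i}\right)\,\bigg|\, s\in\ZZ_{\ge 0},\ m_i\in\ZZ_{\ge 2}\right\}_{>0}.
\]
The first thing I would record is the value of $\min A^{(n)}$. For $n=1$ the choice $s=0$ is not allowed (the empty sum is $0$, which is excluded), so the smallest element comes from $s=1$, $m_1=2$, giving $\min A^{(1)}=\frac12$. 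For $n\ge 2$ the choice $s=0$ yields $n-1>0$, and since every summand $1-\frac1{m_i}$ is strictly positive there is nothing smaller, so $\min A^{(n)}=n-1$. Statement (1) is exactly $\min A^{(1)}=\frac12$, and statement (3) is immediate: $A^{(n)}\neq\emptyset$ for all $n$ (it contains $n-1$ for $n\ge 2$), so there is no $n$ with $A^{(n+1)}=\emptyset$.

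For statement (4) I would check $A'\subseteq A$ by hand. By Theorem~\ref{thm: sm surf Ivol} one has $A\supseteq\Ivol(\sE_{1,0})=\left\{-1+\sum_{1\le i\le r}\left(1-\frac1{m_i}\right)\right\}_{>0}$. Given $v\in A'$, Theorem~\ref{thm: Ivol sm derived} gives $v=\sum_{1\le i\le s}\left(1-\frac1{m_i}\right)$ with $v>0$, hence $s\ge 1$; then
\[
v=-1+\left(1-\frac12\right)+\left(1-\frac12\right)+\sum_{1\le i\le s}\left(1-\frac1{m_i}\right)
\]
exhibits $v$ as an element of $\Ivol(\sE_{1,0})\subseteq A$. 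Thus $A'\subseteq A$ and $A$ is closed in $\RR$.

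Statement (2) needs one auxiliary fact: for any DCC set $B\subseteq\RR$ and any real $M$ one has $(B_{\le M})'=(B')_{\le M}$. The inclusion $(B_{\le M})'\subseteq(B')_{\le M}$ holds because $B_{\le M}$ lies both in $B$ and in the closed set $(-\infty,M]$; for the reverse inclusion, any accumulation point $x\le M$ of the DCC set $B$ is the limit of a strictly increasing sequence of elements of $B$, whose terms are $<x\le M$ and hence lie in $B_{\le M}$. Since $A$ is DCC and the derived set of a DCC set is again DCC, iterating this gives $(A_{\le M})^{(n)}=(A^{(n)})_{\le M}$ for every $n\ge 0$. Now $(A^{(n)})_{\le M}\neq\emptyset$ if and only if $\min A^{(n)}\le M$; by the computation above $\min A^{(1)}=\frac12\le M$ and $\min A^{(n)}=n-1$ for $n\ge 2$, so this holds precisely for $n\le\lfloor M\rfloor+1$ (here $\lfloor M\rfloor+1\ge 1$ since $M\ge\frac12$) and fails for $n\ge\lfloor M\rfloor+2$. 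Hence $\lfloor M\rfloor+1$ is the largest level at which the iterated derived set of $A_{\le M}$ is nonempty, so the accumulation complexity of $A_{\le M}$ equals $\lfloor M\rfloor+1$.

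Once Theorem~\ref{thm: Ivol sm derived} is in hand the argument is essentially mechanical: the only input that is not a direct substitution is the interchange $(B_{\le M})'=(B')_{\le M}$ for DCC sets, together with the fact that derived sets of DCC sets remain DCC. I expect the only place demanding care to be the edge-case bookkeeping in (2) — separating $n=1$ from $n\ge 2$, and the range $\frac12\le M<1$ where $\lfloor M\rfloor+1=1$.
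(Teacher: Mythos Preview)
Your proof is correct and follows essentially the same route as the paper: both deduce all four parts directly from the description of $A^{(n)}$ in Theorem~\ref{thm: Ivol sm derived}. Your treatment of (2) is actually more careful than the paper's, which asserts $(A_{\le M})^{(n)}=A_{\le M}\cap A^{(n)}$ without justification; your auxiliary identity $(B_{\le M})'=(B')_{\le M}$ for DCC sets, together with the observation that $B'$ is again DCC, supplies exactly the missing step.
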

\begin{proof}
(1) By Theorem~\ref{thm: Ivol sm derived}, we have 
\[
\min \Ivol_\sm(2,1)' =\left\{\sum_{1\leq i\leq s}\left(1 - \frac{1}{m_i}\right) \,\bigg|\, s\in \ZZ_{\geq 0},\, m_i\in \ZZ_{\geq 2} \right\}_{>0}= \frac{1}{2}. 
\]

(2) For any given $M\geq\frac{1}{2}$,
\[
\Ivol_\sm(2,1)_{\leq M}^{(n)} = \Ivol_\sm(2,1)_{\leq M} \cap \Ivol_\sm(2,1)^{(n)} = 
\begin{cases}
    \neq \emptyset &  \text{if $n\leq M +1$}, \\
    = \emptyset & \text{if $n> M + 1$}.
\end{cases}
\]
Therefore, the accumulation complexity of $\Ivol_\sm(2,1)_{\leq M}$ is $\lfloor M \rfloor +1$.

(3) The number $M$ in (2) can be arbitrarily large, so the accumulation complexity of $\Ivol_\sm(2,1)$ is infinite.

(4) Combining Theorems~\ref{thm: sm surf Ivol} and \ref{thm: Ivol sm derived}, we see that $\Ivol_\sm(2,1)'$ is subset of $\Ivol_\sm(2,1)$, and hence $\Ivol_\sm(2,1)$ is a closed subset of $\RR$.
\end{proof}

\begin{rmk}\label{rmk: not bdd}
For a given $v\in \Ivol_\sm(2,1)$, the set of surfaces $$\gP_\sm(2,1)_v:=\{X\in \gP_\sm(2,1)\mid \vol_1(K_X)=v\}$$ is usually not bounded. For example, for $v=2g-2$ with $g\in \ZZ_{\geq 2}$ and an arbitrary $n\in \ZZ_{>0}$, we can take a smooth projective curve $C$ with genus $g$ and an \'etale  $(\ZZ/n\ZZ)$-cover $\pi\colon \tC_n \rightarrow C$. Let $G=\ZZ/n\ZZ$ act on a smooth elliptic curve $E$ by translations. Then the diagonal action of $G$ on $\tC_n\times E$ is free, and $X_n:=(\tC_n\times E)/G\rightarrow C=\tC_n/G$ is an elliptic bundle with $\vol_1(K_{X_n}) = 2g-2 = v$. However, the set of surfaces $\{X_n\mid n\in \ZZ_{>0}\}$ is not bounded.
\end{rmk}

\begin{rmk}
    If we allow non-algebraic surfaces, then by applying the logarithmic transformation to three fibers of $\PP^1\times E\rightarrow \PP^1$, we can obtain a smooth elliptic surface $X\rightarrow \PP^1$ with exactly three singular fibers, which are of type $2\I_0, 3\I_0$ and $7\I_0$ respectively. Then $\chi(\sO_X)=0$ and by the canonical bundle formula we have
    \[
    \vol_1(K_X) = -2 +\left(\frac{1}{2} +\frac{2}{3} + \frac{6}{7} \right) = \frac{1}{42}.
    \]
    It is easy to see that this is the minimal possible Iitaka volume of a properly elliptic surface that is not necessarily algebraic.
\end{rmk}

    Next we want to investigate the Iitaka volumes of smooth elliptic surfaces with prescribed geometric genus $p_g(X)$. For a given non-negative integer $p_g$, we introduce the set
    \[
    \Ivol_\sm(2,1; p_g):=\{\vol_1(K_X)\mid X\in \gP_\sm(2,1),\, p_g(X) =p_g\}.
    \]
\begin{thm}\label{thm: Ivol sm prescribe pg}
The following holds.
\begin{enumerate}
    \item $\Ivol_\sm(2,1; 0)= \Ivol_\sm(2,1)$.
    \item If $p_g>0$, then
    \[
\Ivol_\sm(2,1;p_g) = \left\{p_g-1+\sum_{1\leq i\leq r} \left(1-\frac{1}{m_i}\right) \,\,\bigg|\,\, r\geq0,\, m_1,\dots, m_r\in \ZZ_{\geq 2}\right\}_{>0}.
\]
\end{enumerate}
\end{thm}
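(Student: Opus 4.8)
The plan is to reduce both statements to the Kodaira canonical bundle formula (the engine already used in Proposition~\ref{prop: sm Ivol1}), together with two classical facts: the identity $\chi(\sO_X)=1-q(X)+p_g(X)$, and the dichotomy recalled before Lemma~\ref{lem: elliptic surf exists}, namely $q(X)=g(Z)$ when $\chi(\sO_X)>0$ and $q(X)\in\{g(Z),g(Z)+1\}$ when $\chi(\sO_X)=0$. Throughout I would replace $X$ by its (unique) minimal model, which changes neither $p_g$ nor $\vol_1(K_X)$, so that the Iitaka fibration $f\colon X\to Z$ is a relatively minimal elliptic fibration with multiple fibers $m_1F_1,\dots,m_rF_r$ and $\vol_1(K_X)=2g(Z)-2+\chi(\sO_X)+\sum_{i=1}^r(1-1/m_i)$.

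For part (1), the inclusion $\Ivol_\sm(2,1;0)\subseteq\Ivol_\sm(2,1)$ is immediate. For the reverse, I would invoke Theorem~\ref{thm: sm surf Ivol}, $\Ivol_\sm(2,1)=\Ivol(\sE_{1,0})\cup\Ivol(\sE_{0,0})$, and simply note that every $X\in\sE_{1,0}$ has $p_g(X)=\chi(\sO_X)-1+q(X)=1-1+0=0$, while every $X\in\sE_{0,0}$ has $p_g(X)=q(X)-1=0$ (the equality $q(X)=1$ being established in Section~\ref{sec: sm surf}); thus both constituent sets already consist of Iitaka volumes of surfaces with $p_g=0$.

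For the inclusion $\subseteq$ of part (2), let $X\in\gP_\sm(2,1)$ with $p_g(X)=p_g>0$. Substituting $\chi(\sO_X)=1-q(X)+p_g$ into the canonical bundle formula rewrites $\vol_1(K_X)$ as $(p_g-1)+\bigl(2g(Z)-q(X)\bigr)+\sum_{i=1}^r(1-1/m_i)$. The key point is that $2g(Z)-q(X)$ is a non-negative integer: if $\chi(\sO_X)>0$ then $q(X)=g(Z)$ and the number is $g(Z)\ge0$; if $\chi(\sO_X)=0$ then $q(X)=p_g+1$, and $q(X)\le g(Z)+1$ forces $g(Z)\ge p_g\ge1$, so $2g(Z)-q(X)=2g(Z)-p_g-1\ge p_g-1\ge0$. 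Writing this integer as a sum of that many copies of $1-\frac12$ exhibits $\vol_1(K_X)$ as an element of the right-hand side. This verification of $2g(Z)-q(X)\ge0$, which is precisely where the irregularity/base-genus dichotomy enters, is the only step beyond routine bookkeeping; everything else mirrors Proposition~\ref{prop: sm Ivol1}.

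For the inclusion $\supseteq$ of part (2), given $v=p_g-1+\sum_{i=1}^r(1-1/m_i)>0$ with $m_i\in\ZZ_{\ge2}$, I would follow the second half of the proof of Proposition~\ref{prop: sm Ivol1}: set $\chi:=p_g+1\ge2$, use Lemma~\ref{lem: elliptic surf exists} to produce a relatively minimal elliptic surface $X'\to\PP^1$ with a section and $\chi(\sO_{X'})=\chi$, and perform logarithmic transformations along $r$ smooth fibers to obtain an algebraic relatively minimal elliptic surface $X\to\PP^1$ with multiple fibers of multiplicities $m_1,\dots,m_r$. Logarithmic transformations preserve $\chi(\sO_X)$ and keep the base $\PP^1$, hence $q(X)=0$ and $p_g(X)=\chi-1=p_g$, while the canonical bundle formula gives $\vol_1(K_X)=-2+\chi+\sum_i(1-1/m_i)=v>0$, so $\kappa(X)=1$ and $X\in\gP_\sm(2,1)$ realizes $v$ with the prescribed geometric genus. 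I expect the only delicate point here to be, as in Proposition~\ref{prop: sm Ivol1}, ensuring the logarithmic transformations can be carried out within the algebraic category, which is handled by the presence of a section.
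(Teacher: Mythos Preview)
Your argument is correct and follows essentially the same route as the paper. The only presentational differences are that for the inclusion $\supseteq$ in part~(2) you invoke Lemma~\ref{lem: elliptic surf exists} directly rather than redoing the cyclic-cover construction, and for $\subseteq$ you split into the cases $\chi(\sO_X)>0$ versus $\chi(\sO_X)=0$ whereas the paper splits by $g(Z)\ge1$, $q(X)=0$, and $g(Z)=0\wedge q(X)>0$; both case analyses establish the same inequality $2g(Z)-q(X)\ge0$, equivalently $2g(Z)-2+\chi(\sO_X)\ge p_g-1$.
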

\begin{proof}
By the proofs of Propositions~\ref{prop: sm Ivol1} and \ref{thm: sm surf Ivol}, one sees that each $v\in \Ivol_\sm(2,1)$ is realized as $\vol_1(K_X)$ for some smooth elliptic surface $X$ with $\kappa(K_X)=1$ and $p_g(X)=0$. Therefore, $ \Ivol_\sm(2,1; 0)= \Ivol_\sm(2,1)$.

Let $V(p_g)$ denote the right hand side of the equality in (2). 

We first show $\Ivol_\sm(2,1;p_g)\supset V(p_g)$ by constructing smooth elliptic surfaces with positive geometric genus $p_g$ as follows: Start with a rational elliptic surface $f_1\colon X_1\rightarrow C_1\cong \PP^1$ with a section, which has invariants
    \[
    \chi(\sO_{X_1}) = 1,\, e(X_1) = 12\chi(\sO_{X_1}) =12.
    \]
For a given $n\in \Zz_{>0}$, take two smooth fibers $F_i=f_1^*b_i$, $i\in\{1,2\}$, and a $(\ZZ/n\ZZ)$-cover $C_n\cong \PP^1 \rightarrow C_1\cong \PP^1$ branched exactly at the two points $b_1$ and $b_2$, and then form the fiber product
\[
\begin{tikzcd}
    X_n = X_1\times_{C_1} C_n \arrow[r] \arrow[d, "f_n"']& X_1\arrow[d, "f_1"] \\ 
    C_n\arrow[r] & C_1.
\end{tikzcd}
\]
Then $f_n\colon X_n\rightarrow C_n$ is a relatively minimal elliptic fibration with 
\[
e(X_n) =  12n,\, \chi(\sO_{X_n}) = \frac{1}{12}e(X_n) = n.
\]
Since $f_n$ contains singular fibers not of type $m\I_0$, we have
\[
q(X_n) = g(C_n) = 0.
\]
It follows that 
\[
p_g(X_n) = \chi(\sO_{X_n})+q(X_n)-1 = n-1.
\]
Also, $f_n\colon X_n\rightarrow C_n$ has a section induced from that of $f_1$. 

For a given $p_g>0$, the algebraic elliptic surfaces $X$ obtained by performing appropriate logarithmic transformations along  smooth fibers of $X_{p_g+1}$ can have invariants \[
p_g(X)=p_g,\, \vol_1(K_X) = p_g-1+\sum_{1\leq i\leq r} \left(1-\frac{1}{m_i}\right)
\]
for any collection of multiplicities $m_1, \dots, m_r$.

Now we show the other inclusion $\Ivol_\sm(2,1;p_g)\subset V(p_g)$, using the canonical bundle formula for a properly elliptic surface $f\colon X\rightarrow C$ with $p_g(X)=p_g$:
\[
\vol_1(K_X) = 2g(C) -2 +\chi(\sO_X)+ \sum_{i} \left(1-\frac{1}{m_i}\right),
\]
where $\chi(\sO_X) = p_g(X) - q(X) + 1$. 

To show that $\vol_1(K_X)\in V(p_g)$, it suffices to show that $2g-2+\chi\geq p_g-1$.
Note that $g(C)\leq q(X)\leq g(C)+1$, and if $q(X)=g(C)+1$ then $f$ has only singular fibers of type $m\I_0$ and $\chi(\sO_X)=0$. Therefore, if $g(C)\geq 1$ then
\[
2g(C) -2 +\chi(\sO_X) = 2g(C) -2 + p_g(X) - q(X) + 1\geq p_g(X) + g(C) -2 \geq p_g-1.
\]
If $q(X)=0$ then again
\[
2g(C) -2 +\chi(\sO_X) = 2g(C) -2 + p_g(X) + 1\geq p_g-1.
\]
Now suppose that $g(C)=0$ and $q(X)>0$. Then $q(X)=1$ and the singular fibers of $f\colon X\rightarrow C$ are all of type $m\I_0$. It follows that $\chi(\sO_X)=0$, and hence $p_g(X) = 0$, which is a contradiction to the assumption that $p_g>0$. 
\end{proof}

\begin{cor}\label{cor: min Ivol sm prescribe pg}
Let $p_g$ be a non-negative integer. Then the following holds.
\begin{enumerate}
\item
\[
\min\Ivol_\sm(2,1;p_g) = 
\begin{cases}
    \frac{1}{6} & \text{if $p_g=0$},\\
    \frac{1}{2} & \text{if $p_g=1$},\\
    p_g-1 & \text{if $p_g\geq 2$}.\\
 \end{cases}
\]
\item For $0\leq p_g< p_g'$, we have $\Ivol_\sm(2,1;p_g) \supsetneq \Ivol_\sm(2,1;p_g')$.
\end{enumerate} 
\end{cor}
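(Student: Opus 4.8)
The plan is to read everything off the explicit descriptions in Theorem~\ref{thm: Ivol sm prescribe pg}, together with the value $\min\Ivol_\sm(2,1)=\tfrac16$ from Theorem~\ref{thm: min Ivol sm}; no new geometric input is required.

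For part~(1), the case $p_g=0$ is just $\Ivol_\sm(2,1;0)=\Ivol_\sm(2,1)$ combined with Theorem~\ref{thm: min Ivol sm}. For $p_g\ge1$, every element of $\Ivol_\sm(2,1;p_g)$ has the shape $p_g-1+\sum_{1\le i\le r}\bigl(1-\tfrac1{m_i}\bigr)$ with $r\ge0$ and each $m_i\in\ZZ_{\ge2}$, so every summand $1-\tfrac1{m_i}$ lies in $[\tfrac12,1)$. When $p_g\ge2$ the constant $p_g-1$ is already positive, hence it is itself a member of the set (take $r=0$) and is visibly the minimum; when $p_g=1$ the choice $r=0$ yields $0$, which is excluded by the subscript ${}_{>0}$, so the smallest admissible value is obtained with $r=1$ and $m_1=2$, namely $\tfrac12$. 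This gives the three asserted minima.

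For part~(2), when $p_g=0$ the inclusion $\Ivol_\sm(2,1;p_g')\subset\Ivol_\sm(2,1)=\Ivol_\sm(2,1;0)$ is immediate from the definitions, since a surface $X$ with $p_g(X)=p_g'$ still belongs to $\gP_\sm(2,1)$. When $p_g\ge1$ one uses the combinatorial identity $\sum_{1\le j\le 2k}\bigl(1-\tfrac12\bigr)=k$ for $k\in\ZZ_{>0}$, which allows the rewriting
\[
p_g'-1+\sum_{1\le i\le r}\left(1-\frac{1}{m_i}\right)=p_g-1+\sum_{1\le j\le 2(p_g'-p_g)}\left(1-\frac12\right)+\sum_{1\le i\le r}\left(1-\frac{1}{m_i}\right);
\]
by Theorem~\ref{thm: Ivol sm prescribe pg}(2) the left-hand side ranges over $\Ivol_\sm(2,1;p_g')$ while the right-hand side lies in $\Ivol_\sm(2,1;p_g)$ (it is positive, being equal to a positive number), so $\Ivol_\sm(2,1;p_g')\subset\Ivol_\sm(2,1;p_g)$. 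Finally the inclusion is strict in every case because part~(1) gives $\min\Ivol_\sm(2,1;p_g)<\min\Ivol_\sm(2,1;p_g')$ --- the minima run through the strictly increasing list $\tfrac16,\tfrac12,1,2,\dots$ --- so the element realising $\min\Ivol_\sm(2,1;p_g)$ lies in $\Ivol_\sm(2,1;p_g)\setminus\Ivol_\sm(2,1;p_g')$. I do not anticipate any genuine difficulty here; the only point to watch is the bookkeeping around the subscript ${}_{>0}$ (that $r=0$ is permitted precisely when the constant term is already positive), together with the trivial observation that inserting multiplicity-$2$ fibers only raises the value and so never destroys positivity.
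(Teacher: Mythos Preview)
Your proof is correct and follows exactly the approach implicit in the paper, which states the corollary without proof as an immediate consequence of Theorem~\ref{thm: Ivol sm prescribe pg} and Theorem~\ref{thm: min Ivol sm}. The only minor quibble is phrasing: in the $p_g=0$ case of part~(2) the inclusion $\Ivol_\sm(2,1;p_g')\subset\Ivol_\sm(2,1;0)$ is not literally ``from the definitions'' of the $p_g$-stratified sets but rather uses the identification $\Ivol_\sm(2,1;0)=\Ivol_\sm(2,1)$ from Theorem~\ref{thm: Ivol sm prescribe pg}(1), which you do invoke; otherwise everything is in order.
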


The following statements and their proofs are parallel to those of Corollary~\ref{cor: sm acc points}.
\begin{cor}\label{cor: Ivol ccc sm prescribe pg}
Let $p_g$ be a positive integer.
\begin{enumerate}
      \item The minimal accumulation point of $\Ivol_\sm(2,1; p_g)$ is $p_g$.
      \item For a given $M\geq p_g$, the accumulation complexity of $\Ivol_\sm(2,1; p_g)_{\leq M}$ is $\lfloor M\rfloor -p_g + 1$.
      \item The accumulation complexity of $\Ivol_\sm(2,1; p_g)$ is $\infty$.
      \item $\Ivol_\sm(2,1;p_g)$ is closed in $\RR$.
  \end{enumerate} 
\end{cor}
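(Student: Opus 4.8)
The plan is to imitate the proof of Corollary~\ref{cor: sm acc points} line by line; the only genuinely new input is the analogue of Theorem~\ref{thm: Ivol sm derived} computing the iterated derived sets of $\Ivol_\sm(2,1;p_g)$. The starting point is Theorem~\ref{thm: Ivol sm prescribe pg}(2), which for $p_g>0$ exhibits
\[
\Ivol_\sm(2,1;p_g)=\left\{(p_g-1)+\sum_{1\leq i\leq r}\left(1-\frac{1}{m_i}\right)\ \bigg|\ r\in\ZZ_{\geq 0},\ m_i\in\ZZ_{\geq 2}\right\}_{>0},
\]
which has exactly the additive shape of $\Ivol(\sE_{1,0})$ with $-1$ replaced by $p_g-1$; note there is no $\sE_{0,0}$-type contribution to deal with, so this is a strict simplification of the situation in Theorem~\ref{thm: Ivol sm derived}. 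Since each summand $1-\tfrac1{m_i}$ is $\geq\tfrac12$, this set is DCC, hence so are all of its derived sets, and every accumulation point of a DCC subset of $\RR$ is a limit from below.

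First I would prove, by induction on $n\geq 0$, that
\[
\Ivol_\sm(2,1;p_g)^{(n)}=\left\{(p_g-1+n)+\sum_{1\leq i\leq s}\left(1-\frac{1}{m_i}\right)\ \bigg|\ s\in\ZZ_{\geq 0},\ m_i\in\ZZ_{\geq 2}\right\}_{>0}.
\]
For ``$\supseteq$'', the value $(p_g-1+n)+\sum_{1\leq i\leq s}(1-\tfrac1{m_i})$ is the limit of the strictly increasing sequence $(p_g-1+n-1)+(1-\tfrac1k)+\sum_{1\leq i\leq s}(1-\tfrac1{m_i})\in\Ivol_\sm(2,1;p_g)^{(n-1)}$ as $k\to\infty$ (for $n=0$ this is read off directly from the display above). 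For ``$\subseteq$'', given $v$ in the $n$-th derived set, extract a strictly increasing sequence in $\Ivol_\sm(2,1;p_g)^{(n-1)}$ converging to $v$ (possible since that set is DCC); because each summand is $\geq\tfrac12$, the number of summands along this sequence is bounded, so after passing to a subsequence it is constant, and after sorting the multiplicities finitely many stabilize while the rest diverge (at least one must diverge, the sequence being non-constant); the limit $v$ then has the asserted form, where one uses that the extra ``$n$'' integer units can always be written as a sum of terms $1-\tfrac12$. This is the one substantive step, and it is precisely the argument of Theorem~\ref{thm: Ivol sm derived}.

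Granting the derived-set formula, the four assertions drop out exactly as in Corollary~\ref{cor: sm acc points}. For (1), $\min\Ivol_\sm(2,1;p_g)^{(1)}=(p_g-1)+1=p_g$ (attained with $s=0$), so $p_g$ is the minimal accumulation point. For (4), any accumulation point equals $(p_g-1)+1+\sum_{1\leq i\leq s}(1-\tfrac1{m_i})=(p_g-1)+\sum_{1\leq i\leq s+2}(1-\tfrac1{m_i})$ with two multiplicities equal to $2$, hence lies in $\Ivol_\sm(2,1;p_g)$ by Theorem~\ref{thm: Ivol sm prescribe pg}(2), so the set is closed. For (2), the identity $\left(\Ivol_\sm(2,1;p_g)_{\leq M}\right)^{(n)}=\Ivol_\sm(2,1;p_g)^{(n)}_{\leq M}$ holds because accumulation points of DCC sets are limits from below, and the right-hand side is empty precisely when its smallest element $p_g-1+n$ exceeds $M$, i.e.\ when $n\geq\lfloor M\rfloor-p_g+2$ (using $p_g\in\ZZ$); hence the accumulation complexity of $\Ivol_\sm(2,1;p_g)_{\leq M}$ is $\lfloor M\rfloor-p_g+1$. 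Taking $M$ arbitrarily large yields (3). The main---though entirely routine---obstacle is the bookkeeping in the induction of the second paragraph; everything else is a transcription of arguments already in the paper.
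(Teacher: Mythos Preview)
Your proposal is correct and takes essentially the same approach as the paper, which in fact gives no proof beyond the remark that the statements and their proofs are parallel to those of Corollary~\ref{cor: sm acc points}. You have correctly identified and supplied the one missing ingredient, namely the analogue of Theorem~\ref{thm: Ivol sm derived} for $\Ivol_\sm(2,1;p_g)$, and your derivation of parts (1)--(4) from it matches the paper's treatment of Corollary~\ref{cor: sm acc points}.
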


\subsection{Crepant smooth models for log canonical surfaces $(X, B)\in \gP_\lc^\Gamma(2,1)$}\label{sec: lc surf}
\begin{lem}\label{lem: rel min model}
Let $(X, B)$ be a projective log canonical surface, and $f\colon X\rightarrow Z$ a fibration onto a curve $Z$ such that $K_X+B\sim_{\RR, Z} 0$. Consider the following construction:
\begin{equation}\label{diag: sm model}
\begin{tikzcd}
    &(\widetilde X, B_{\widetilde X}) \arrow[rd, "\rho"]\arrow[ld,"\pi"']& \\
    (X,B) & & (S, B_S)
\end{tikzcd}
\end{equation}
where 
\begin{itemize}
    \item $\pi\colon\tX\rightarrow X$ is the minimal resolution of singularities, $K_{\tX}+B_{\tX} = \pi^*(K_X+B)$ such that $\pi_*B_{\tX} = B$, and
    \item $\rho \colon \tX\rightarrow S$ is a contraction of $\tilde f$-vertical curves, where $\tilde f=f\circ \pi$, and $B_{S}:=\rho _* B_{\tX}$.
\end{itemize}
Then $K_{S} + B_{S}\sim_{\RR, Z} 0$, $\mld(S, B_{S}) = \mld(X,B)$. Moreover, the Iitaka--Kodaira dimensions and Iitaka volumes of $K_X+B$ and $K_{S}+B_{S}$ are the same.
\end{lem}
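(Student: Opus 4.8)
The plan is to realize $(X,B)$, $(\tX,B_{\tX})$ and $(S,B_S)$ as three pairwise crepant log canonical pairs carrying the same $\rb$-$\RR$-divisor $\overline{K_X+B}$, and to deduce the statement from the fact that relative $\RR$-triviality over $Z$ (via Lemma~\ref{lem: R-trivial}), the minimal log discrepancy, the Iitaka--Kodaira dimension and the Iitaka volume are all unchanged under crepant birational modifications. By the defining relation $K_{\tX}+B_{\tX}=\pi^*(K_X+B)$ one has $\overline{K_X+B}=\overline{K_{\tX}+B_{\tX}}$, and $B_{\tX}$ is effective: writing $B_{\tX}=\pi_*^{-1}B+G$ with $G$ supported on $\Exc(\pi)$, for each $\pi$-exceptional curve $C$ we have $B_{\tX}\cdot C=-K_{\tX}\cdot C\le0$ because $K_{\tX}$ is $\pi$-nef ($\pi$ being the minimal resolution), whence $G\cdot C\le-\pi_*^{-1}B\cdot C\le0$ and the negativity lemma gives $G\ge0$; being crepant to the lc pair $(X,B)$, the pair $(\tX,B_{\tX})$ is lc and $K_{\tX}+B_{\tX}=\pi^*(K_X+B)\sim_{\RR,Z}0$. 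Since $\rho$ contracts only $\tilde f$-vertical curves, $\tilde f$ factors as $\tilde f=g\circ\rho$ for a morphism $g\colon S\to Z$, so $\rho$ is a birational contraction over $Z$; Lemma~\ref{lem: R-trivial} applied to $D:=K_{\tX}+B_{\tX}$ and $\phi:=\rho$ yields that $\rho_*D=K_S+B_S$ is $\RR$-Cartier, that $K_S+B_S\sim_{\RR,Z}0$, and that $\overline{K_{\tX}+B_{\tX}}=\overline{K_S+B_S}$. Hence $\overline{K_X+B}=\overline{K_S+B_S}$, so $(S,B_S)$ is lc; and since $X$, $\tX$ and $S$ are projective with the same function field, $a(E,\cdot,\cdot)$ is the same on all three for every divisorial valuation $E$, giving $\mld(S,B_S)=\mld(X,B)$.

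For the Iitaka--Kodaira dimension and the Iitaka volume it suffices to prove $h^0(X,\sO_X(\lfloor m(K_X+B)\rfloor))=h^0(S,\sO_S(\lfloor m(K_S+B_S)\rfloor))$ for all $m\in\Zz_{\ge0}$, since $\kappa$ and $\vol_\kappa$ are determined by these plurigenera. Identifying a section with a rational function, $H^0(X,\sO_X(\lfloor m(K_X+B)\rfloor))$ consists of $0$ together with the $\varphi$ with $\operatorname{div}_X\varphi+m(K_X+B)\ge0$, using that $\operatorname{div}_X\varphi+\lfloor D\rfloor\ge0$ is equivalent to $\operatorname{div}_X\varphi+D\ge0$ for an $\RR$-divisor $D$ and integral $\operatorname{div}_X\varphi$. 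Because $\pi$ takes principal divisors to principal divisors, $\pi^*(m(K_X+B))=m(K_{\tX}+B_{\tX})$, and $\pi^*D'\ge0\iff D'\ge0$ for an $\RR$-Cartier divisor $D'$ on $X$ (a projective birational morphism of normal varieties is an isomorphism over the generic point of every prime divisor of the target, and pullbacks of effective $\RR$-Cartier divisors stay effective), this space coincides with $H^0(\tX,\sO_{\tX}(\lfloor m(K_{\tX}+B_{\tX})\rfloor))$; the same computation with $\rho$ in place of $\pi$ identifies it with $H^0(S,\sO_S(\lfloor m(K_S+B_S)\rfloor))$. Therefore all plurigenera agree, whence $\kappa(K_X+B)=\kappa(K_S+B_S)$ and $\vol_\kappa(K_X+B)=\vol_\kappa(K_S+B_S)$.

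None of these steps is deep; the points that need a little care are the effectivity of $B_{\tX}$ (so that $(\tX,B_{\tX})$, and hence $(S,B_S)$, are genuine pairs) and the bookkeeping that turns plurigenera into inequalities of $\RR$-divisors, and I do not expect either to present a real obstacle.
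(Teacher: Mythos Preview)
Your proof is correct and follows essentially the same approach as the paper. The paper argues directly that the $\rho$-contracted curves are $(K_{\tX}+B_{\tX})$-trivial (being $\tilde f$-vertical) and hence $K_{\tX}+B_{\tX}=\rho^*(K_S+B_S)$, whereas you invoke Lemma~\ref{lem: R-trivial} to the same end; and the paper simply states that $\pi^*(K_X+B)=\rho^*(K_S+B_S)$ forces equality of Iitaka dimension and volume, whereas you spell out the plurigenus identification. Your extra verification that $B_{\tX}\ge0$ via $\pi$-nefness of $K_{\tX}$ and the negativity lemma is a nice piece of care the paper omits, though strictly speaking the lemma as stated only asserts the $\mld$ equality and does not require $(S,B_S)$ to be a pair.
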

\begin{proof}
Since $K_{\tX}+B_{\tX} = \pi^*(K_X+B)$, and $K_X+B\sim_{\RR, Z}0$, we infer that $K_{\tX}+B_{\tX}\sim_{\RR, Z} 0$. Since $\rho $ contracts only $\tilde f$-vertical curves, which is $(K_{\tX}+B_{\tX})$-trivial, we infer that $K_{S}+B_{S}$ is numerically trivial over $Z$, and $K_{\tX}+B_{\tX} = \rho ^*(K_{S}+B_{S})$ holds. It follows that 
\[
\mld(S, B_{S}) =\mld(\tX, B_{\tX}) = \mld(X,B), \text{ and } K_{S}+B_{S}\sim_{\RR, Z}0.
\]
By the fact that $\pi^*(K_X+B) = \rho ^*(K_{S}+B_{S})$, the log canonical divisors $K_X+B$ and $K_{S}+B_{S}$ have the same Iitaka--Kodaira dimension and Iitaka volume.
\end{proof}

Note that we can take $(S, B_S)$ in Lemma~\ref{lem: rel min model} so that $S$ is smooth and the induced $h\colon S\rightarrow Z$ is relatively minimal, that is, there are no $(-1)$-curves in the fibers of $h$. This gives a way of finding all the Iitaka volumes of log canonical surfaces with coefficients from a given set $\Gamma\subset[0,1]$.

\begin{thm}\label{thm: lc surf Ivol}
Let $\Gamma\subset[0,1]$ be a subset containing $0$. Then a positive real number $v$ lies in $\Ivol_\lc^\Gamma(2,1)$ if and only if there is a smooth projective surface $S$, a relative minimal fibration $h\colon S\rightarrow Z$, and a boundary divisor $B_S$ on $S$ such that  $(S, B_S)$ is lc, $\kappa(K_{S}+B_S)=1$, $K_{S}+B_S\sim_{\RR, Z}0$,  $\vol_1(K_{S}+B_S) =v$, and, additionally, the following holds:
\begin{enumerate}
    \item Let $B_S=B_S^h + B_S^v$ be the decomposition into the horizontal part $B_S^h$ and the vertical part $B_S^v$. Then the coefficients of $B_S^h$ lie in $\Gamma$.
    \item $B_S^v =\sum_i c(z_i) h^* z_i$, where the $h^* z_i$ are distinct fibers of $h$, and for each $i$ there is a prime divisor $E$ over $S$ with its center lying in $h^*z_i$ and $1-a(E, S, B_S)\in \Gamma\cup\{1\}$.
\end{enumerate} 
\end{thm}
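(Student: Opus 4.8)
The plan is to pass, in both directions, between a pair in $\gP_\lc^\Gamma(2,1)$ and a smooth relatively minimal crepant model, exploiting that the Iitaka--Kodaira dimension and the Iitaka volume are unchanged under crepant birational maps and under passage to a good minimal model (which exist for surfaces), together with Lemmas~\ref{lem: model right coefficient} and \ref{lem: rel min model} (the Good Minimal Model Conjecture holds in dimension~$2$). \emph{For the ``only if'' direction}, take $(X,B)\in\gP_\lc^\Gamma(2,1)$ with $\vol_1(K_X+B)=v$. Replacing $(X,B)$ by a good minimal model I may assume $K_X+B$ is semi-ample; its ample model is then a fibration $f\colon X\to Z$ onto a normal --- hence smooth --- projective curve, with $K_X+B\sim_{\RR,Z}0$, while $\kappa(K_X+B)=1$, $\vol_1(K_X+B)=v$, and the coefficients of $B$ are all unchanged. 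I then apply Lemma~\ref{lem: model right coefficient} with the finite set of those points $z\in Z$ over which $(X,B)$ is klt and $B_z\neq0$, and, for each such $z$, the valuation $v_z$ of a prime component of $B_z$ (so $a(v_z,X,B)=1-\mult_{v_z}B<1$). This produces a crepant model $(Y,B_Y)$ over $Z$ with $B_Y^h$ the strict transform of $B^h$ and $B_Y^v=\sum_z(\mult_{v_z}B)\,E_Y(v_z)$ supported on the reduced fibres over the chosen points, so $B_Y\in\Gamma$ and $K_Y+B_Y\sim_{\RR,Z}0$. Running the construction of Lemma~\ref{lem: rel min model} on $(Y,B_Y)$ --- a minimal resolution followed by the contraction of vertical curves --- gives a relatively minimal fibration $h\colon S\to Z$ and a log canonical pair $(S,B_S)$ crepant to $(X,B)$ with $K_S+B_S\sim_{\RR,Z}0$, $\kappa(K_S+B_S)=1$ and $\vol_1(K_S+B_S)=v$; since all exceptional curves introduced or contracted in this last step are vertical over $Z$, the part $B_S^h$ is the strict transform of $B_Y^h$, which gives condition (1).

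For condition (2) I first observe that $B_S^v$ is automatically a non-negative combination of scheme-theoretic fibres. The general fibre $F$ of $h$ has $2g(F)-2=-\deg B_S^h|_F\le0$ and $(K_S+B_S^h)\cdot F=\deg(K_F+B_S^h|_F)=0$; combined with relative minimality of $h$ this forces $K_S+B_S^h$ to be nef and numerically trivial over $Z$, whence $B_S^v=(K_S+B_S)-(K_S+B_S^h)\sim_{\RR,Z}0$, and by the negativity of the intersection form on the components of a fibre an effective vertical divisor numerically trivial over a curve is a non-negative combination of fibres, so $B_S^v=\sum_i c(z_i)h^*z_i$. Now fix $z_i$ with $c(z_i)>0$. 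If $(X,B)$ were klt along $f^*z_i$ with $B_{z_i}=0$, then each component of $h^*z_i$ that is a birational transform of a component of $f^*z_i$ would have coefficient $1-a(\cdot,X,B)=0$ in $B_S$, forcing $c(z_i)=0$, a contradiction; hence either $(X,B)$ is not klt along $f^*z_i$, or $B_{z_i}\neq0$ and $z_i$ was a chosen point. In the first case an lc place $E$ of $(X,B)$ with centre over $z_i$ gives $1-a(E,S,B_S)=1-a(E,X,B)=1\in\Gamma\cup\{1\}$; in the second, $E:=v_{z_i}$ has centre over $z_i$ and $1-a(v_{z_i},S,B_S)=\mult_{v_{z_i}}B\in\Gamma$. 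So (2) holds, and $v$ lies in the right-hand side.

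\emph{For the ``if'' direction}, given $(S,B_S)$ and $h\colon S\to Z$ as in the statement, let $T_0$ be the set of $z_i$ with $c(z_i)>0$ along whose fibre $(S,B_S)$ is klt; for $z_i\in T_0$ the divisor $E_i$ of condition (2) has $a(E_i,S,B_S)>0$, so $t_i:=1-a(E_i,S,B_S)$ lies in $\Gamma$ (it is in $\Gamma\cup\{1\}$ and $<1$) and $a(E_i,S,B_S)\le1$. Applying Lemma~\ref{lem: model right coefficient} to $(S,B_S)$ over $Z$ with the set $T_0$ and the valuations $\{E_i\}_{z_i\in T_0}$ produces a crepant model $(X,B)$ over $Z$ with $B^h$ the strict transform of $B_S^h$ and $B^v$ having the (transforms of the) $E_i$ as its only vertical components, with coefficients $t_i$; thus $B\in\Gamma$, and since $(X,B)$ is crepant to $(S,B_S)$ one has $\kappa(K_X+B)=1$ and $\vol_1(K_X+B)=\vol_1(K_S+B_S)=v$, so $(X,B)\in\gP_\lc^\Gamma(2,1)$ and $v\in\Ivol_\lc^\Gamma(2,1)$. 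Omitting from the chosen set the points $z_i\notin T_0$ costs nothing: the corresponding lc places persist as divisors over $X$ and $\overline{K_X+B}=\overline{K_S+B_S}$ is unchanged, so $\vol_1$ is unaffected.

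\emph{Main obstacle.} The technical heart is the bookkeeping of the vertical part in the ``only if'' direction: one must guarantee that on the smooth relatively minimal model every vertical fibre carrying boundary is accounted for either by an lc place or by a divisor of coefficient in $\Gamma$. This hinges on the two facts isolated above --- that $B_S^v$ is forced to be a sum of scheme-theoretic fibres, and that a vertical fibre over which $(X,B)$ is klt with $B$ horizontal nearby receives no boundary --- which together exclude ``phantom'' vertical coefficients originating from the singularities of $X$. Making the application of Lemma~\ref{lem: model right coefficient} mesh correctly with the subsequent minimal resolution and contraction --- so that the chosen valuations remain divisorial, or at least explicit, on $S$ with the expected log discrepancies, and so that no new coefficient-$1$ components appear when $1\notin\Gamma$ --- is the part demanding the most care.
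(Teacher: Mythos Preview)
Your ``if'' direction matches the paper's use of Lemma~\ref{lem: model right coefficient}. The ``only if'' direction, however, is more convoluted than necessary and contains an unjustified step.

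The paper does \emph{not} invoke Lemma~\ref{lem: model right coefficient} in the ``only if'' direction at all. It simply applies Lemma~\ref{lem: rel min model} directly to the good minimal model $(X,B)$ to obtain $(S,B_S)$, and then, for condition~(2), takes $E$ to be (the strict transform on $\tX$ of) \emph{any} irreducible component of $f^*z_i$. Since $(S,B_S)$ is crepant to $(X,B)$, one has $1-a(E,S,B_S)=1-a(E,X,B)=\mult_E B\in\Gamma$ (possibly $0$, which lies in $\Gamma$ by hypothesis). This single observation handles every fiber $h^*z_i$ with $c(z_i)>0$, with no case-split.

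Your detour through Lemma~\ref{lem: model right coefficient} forces you into a three-case analysis, and the third case (``$(X,B)$ klt over $z_i$ with $B_{z_i}=0$'') contains a gap. You argue that components of $h^*z_i$ which are birational transforms of components of $f^*z_i$ have coefficient $0$ in $B_S$, ``forcing $c(z_i)=0$''. But you never check that \emph{any} component of $f^*z_i$ survives the chain $X\dashrightarrow Y\leftarrow\tY\to S$ as a divisor on $S$; in principle every component of $f^*z_i$ could be contracted along the way. Since the good minimal model $X$ may well have non-terminal klt singularities over $z_i$, the minimal resolution step can introduce new fiber components with log discrepancy strictly between $0$ and $1$, and these could give $c(z_i)>0$ even when $B_{z_i}=0$. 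The fix is immediate and is exactly the paper's argument: do not try to prove $c(z_i)=0$; instead, in \emph{every} case exhibit a component of $f^*z_i$ on $X$ as the required $E$, giving $1-a(E,S,B_S)=\mult_E B\in\Gamma$. Once you adopt this, the preliminary application of Lemma~\ref{lem: model right coefficient} in the ``only if'' direction becomes entirely superfluous.
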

\begin{proof}
We prove the only if part of the statement first. Starting with $(X, B)\in\gP_\lc^\Gamma(2,1)$ with $\vol_1(K_X+B)=v$ and Iitaka fibration $f\colon X\rightarrow Z$, we take $(\tX, B_{\tX})$ and $(S, B_S)$ as in Lemma~\ref{lem: rel min model} such that $S$ is the a relatively minimal smooth model of $\tX$ over $Z$. Then, by Lemma~\ref{lem: rel min model}, $(S, B_S)$ is lc, and 
\[
\kappa(K_{S}+B_S)=1,\,\, K_{S}+B_S\sim_{\RR, Z}0,\,\,  \vol_1(K_{S}+B_S) =v.
\]
Since the birational morphisms $\pi\colon \tX\rightarrow X$ and $\rho\colon\tX\rightarrow S$ are over $Z$, the horizontal parts $B_X^h$ and $B_S^h$ of $B_X$ and $B_S$ respectively are the strict transforms of each other. Since $B_X^h\in \Gamma$, so is $B_S^h$. This proves (1). 

For (2), note that $h\colon S\rightarrow Z$ is a relatively minimal elliptic fibration (resp.~a $\PP^1$-bundle) if $B^h=0$ (resp.~if $B^h\neq 0$). In both cases, we have $B^v\sim_{\RR, Z}0$, and hence $B_S^v =\sum_i c(z_i) h^* z_i$ for suitable $c(z_i)\in \RR_{\geq 0}$. Take $E\subset \tX$ to be the strict transform of an irreducible component of $f^*z_i$. Then
\[
1-a(E, S, B_S) = 1- a(E, X, B) \in \Gamma.
\]
Note that, even when $\pi(E)$ does not appear in $B$, we have $1-a(E, X, B)=0\in \Gamma$ by the assumption on $\Gamma$.  

Now we prove the if part. If all of the coefficients of $B_S$ are in $\Gamma$, then $v = \vol_1(K_{S}+B_S)\in \Ivol_\lc^\Gamma(2,1)$. In general, by Lemma~\ref{lem: model right coefficient}, we may construct a crepant model $(X, B)$ of $(S, B_S)$ that lies in $\gP_\lc^\Gamma(2,1)$. Therefore, $v\in \Ivol_\lc^\Gamma(2,1)$.
\end{proof}

Therefore, in order to determine $\Ivol_\lc^\Gamma(2,1)$, it suffices to find all the smooth lc surfaces $(S, B_S)$ satisfying the conditions specified in Theorem~\ref{thm: lc surf Ivol}. In the next subsection, we will use this recipe to give a more detailed description of the two sets $\Ivol_\lc^{\{0\}}(2,1)$ and $\Ivol_\lc^{\{0,1\}}(2,1)$. In particular, their minima and minimal accumulation points are found; see Theorems~\ref{thm: min lc Ivol 0} and \ref{thm: min lc Ivol 1}.

\begin{nota}\label{nota: Ivol(S/Z) and V(chi,g)}
For a relatively minimal fibration $h\colon S\rightarrow Z$ with fiber genus $\leq 1$, we define
\[
\Ivol_\lc^\Gamma(S/Z):=\left\{\vol_1(K_X+B)  \,\,\Bigg|\,\,
\begin{aligned}
& \text{$(X, B)$ is lc, $B\in\Gamma$, $\kappa(K_X+B)=1$},\\
& \text{and there is a diagram as in \eqref{diag: sm model}}
\end{aligned}
\right\}.
\]
For $(\chi, g)\in \ZZ_{\geq 0}^2$, let $V_{\lc}^\Gamma(\chi, g)$ be the union of all $\Ivol_\lc^\Gamma(S/Z)$ such that $S\rightarrow Z$ is an elliptic fibration with $\chi(\sO_S)=\chi$ and $g(Z)=g$.

When $\Gamma=\{0\}$, we usually omit $\Gamma$ from the above notation.
\end{nota} 

\subsection{The set $\Ivol_\lc^{\{0\}}(2,1)$}
\begin{prop}\label{prop: Ivol chi>0 lc}
    Let $h\colon S\rightarrow Z$ be a relatively minimal elliptic surface with $\chi(\sO_S)>0$, and let $n_1$, $n_2$, $n_3$, $n_4$, $n_0^*$, $n_1^*$, $n_2^*$, $n_3^*$ and $n_4^*$ be the number of singular fibers of type $m\I_k$ ($m\geq 2$ or $k>0$), $\II$, $\III$, $\IV$, $\I_0^*$, $\I^*_k (k\geq 1)$, $\II^*$, $\III^*$ and $\IV^*$ respectively. Then $$\Ivol_\lc(S/Z) = \left\{v\,\bigg|\, v=2g(Z)-2+\chi(\sO_S)+ \sum_{0\leq i\leq 4}c_i+\sum_{0\leq i\leq 4} c_i^* \right\}_{>0}$$
    where $c_i\in \sum_{\leq n_i} \sC_i$ and $c_i^*\in \sum_{\leq n_i^*} \sC_i^*$ for $0\leq i\leq 4$, and $n_0$ is understood to be $+\infty$. Here the sets $\sC_i$ and $\sC_i^*$, $0\leq j\leq 4$ are specified as follows:
\begin{itemize}[leftmargin=*]
    \item $\sC_0=\{1, 0\}$,
    \item $\sC_1=\{1, 1-\frac{1}{n}\mid n\in \Zz_{>0}\}=\{1,0,\frac{1}{2},\frac{2}{3},\frac{3}{4},\frac{4}{5},\frac{5}{6},\frac{6}{7},\frac{7}{8},\frac{8}{9},\frac{9}{10},\frac{10}{11},\frac{11}{12},\frac{12}{13},\frac{13}{14},\dots\}$,
    \item $\sC_2=\left\{\frac{5}{6}, \frac{5m+n-1}{6m+n}\,\,\bigg|\,\, m\in\ZZ_{\geq 0}, n\in \{1,2,3\} \right\} = \left\{\frac{5}{6},0,\frac{5}{7},\frac{3}{4},\frac{10}{13},\frac{7}{9},\frac{11}{14},\frac{15}{19},\frac{4}{5},\frac{25}{31},\frac{21}{26}, \frac{17}{21}, \frac{13}{16},\frac{22}{27}, \dots\right\}$, 
    \item $\sC_3=\left\{\frac{3}{4}, \frac{3m+n-1}{4m+n} \,\,\big|\,\, m\in\ZZ_{\geq 0}, n\in \{1,2\}\right\}=\left\{\frac{3}{4},0,\frac{1}{2},\frac{3}{5},\frac{2}{3},\frac{9}{13},\frac{7}{10},\frac{12}{17},\frac{5}{7},\frac{18}{25}, \frac{13}{18}, \frac{21}{29},\frac{8}{11},\frac{27}{37}, \frac{19}{26},\dots\right\}$,
    \item $\sC_4=\left\{\frac{2}{3}, \frac{1}{3}, \frac{2m}{3m+1} \,\,\big|\,\,m\in\ZZ_{\geq 0}\right\}=\left\{\frac{2}{3},0,\frac{1}{3},\frac{1}{2},\frac{4}{7},\frac{3}{5},\frac{8}{13},\frac{5}{8},\frac{12}{19},\frac{7}{11},\frac{16}{25},\frac{9}{14}, \frac{20}{31}, \frac{11}{17}, \frac{24}{37},\dots\right\}$,
    \item $\sC_0^*=\left\{\frac{1}{2}, \frac{m}{2m+1}\,\,\big|\,\, m\in \ZZ_{\geq 0}\right\}=\left\{\frac{1}{2},0,\frac{1}{3},\frac{2}{5},\frac{3}{7},\frac{4}{9},\frac{5}{11},\frac{6}{13},\frac{7}{15},\frac{8}{17},\frac{9}{19},\frac{10}{21},\frac{11}{23},\frac{12}{25},\frac{13}{27},\dots\right\}$,
    \item $\sC_1^*=\left\{\frac{1}{2}, \frac{\lfloor (m-1)/2\rfloor}{m}\,\,\big|\,\, m\in \Zz_{>0}\right\}=\left\{\frac{1}{2},0,\frac{1}{4},\frac{1}{3},\frac{3}{8},\frac{2}{5},\frac{5}{12}, \frac{3}{7},\frac{7}{16},\frac{4}{9},\frac{5}{11},\frac{6}{13},\frac{7}{15},\frac{8}{17},\frac{9}{19},\dots\right\}$,
    \item $\sC_2^*=\left\{\frac{1}{6}, \frac{m}{6m+n}\,\,\big|\,\, m\in \ZZ_{\geq 0}, n\in\{3,4,5\}\right\} = \left\{\frac{1}{6},0, \frac{1}{11},\frac{1}{10},\frac{1}{9},\frac{2}{17},\frac{1}{8},\frac{3}{23},\frac{2}{15},\frac{3}{22},\frac{4}{29},\frac{1}{7},\frac{5}{34},\frac{4}{27},\frac{5}{33},\dots\right\}$,
     \item $\sC_3^*=\left\{\frac{1}{4}, \frac{m}{4m+n}\,\,\big|\,\, m\in \ZZ_{\geq 0}, n\in\{2,3\}\right\}=\left\{\frac{1}{4},0,\frac{1}{7}, \frac{1}{6},\frac{2}{11}, \frac{1}{5},\frac{4}{19}, \frac{3}{14},\frac{5}{23}, \frac{2}{9}, \frac{7}{31}, \frac{5}{22},\frac{8}{35}, \frac{3}{13},\frac{10}{43},\dots\right\}$,
     \item $\sC_4^*=\left\{\frac{1}{3}, \frac{m}{3m+2}\,\,\big|\,\, m\in \ZZ_{\geq 0}\right\}=\left\{\frac{1}{3},0,\frac{1}{5},\frac{1}{4},\frac{3}{11},\frac{2}{7},\frac{5}{17},\frac{3}{10},\frac{7}{23},\frac{4}{13},\frac{9}{29},\frac{5}{16},\frac{11}{35},\frac{6}{19},\frac{13}{41},\dots\right\}$.
\end{itemize}   
\end{prop}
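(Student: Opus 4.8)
The plan is to combine Kodaira's canonical bundle formula for the relatively minimal elliptic surface $h\colon S\to Z$ with a fibre‑by‑fibre analysis of the admissible boundary coefficients, using the characterization of crepant smooth models in Theorem~\ref{thm: lc surf Ivol}. First I would record that since $\Gamma=\{0\}$, Theorem~\ref{thm: lc surf Ivol}(1) forces $B_S^h=0$, so $h$ is a genuine relatively minimal elliptic fibration and $B_S=B_S^v=\sum_i c(z_i)h^*z_i$ by Theorem~\ref{thm: lc surf Ivol}(2). Kodaira's formula gives $\vol_1(K_S)=2g(Z)-2+\chi(\sO_S)+\sum_j\frac{m_j-1}{m_j}$, the sum running over the multiple fibres $m_j\widetilde{F}_j$, and since every $h^*z_i$ has degree $1$ over $Z$ the divisor $B_S$ raises the Iitaka volume by $\sum_i c(z_i)$, so
\[
\vol_1(K_S+B_S)=2g(Z)-2+\chi(\sO_S)+\sum_{z}\gamma(z),\qquad
\gamma(z)=
\begin{cases}
\tfrac{m-1}{m}+c(z) & \text{if $h^*z$ has type $m\I_k$,}\\
c(z) & \text{otherwise.}
\end{cases}
\]
Thus everything reduces to determining, for each Kodaira type and for a smooth fibre, the set of values $\gamma(z)$ can take.

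Next I would carry out the local analysis at each fibre. By the classical log canonical threshold computations for the planar cusp, the tacnode, the ordinary triple point, and the $\widetilde D$– and $\widetilde E$–configurations (where the component of multiplicity $\ge 2$ drives the bound), the requirement that $(S,c(z)h^*z)$ be log canonical near $h^*z$ bounds $c(z)$ above by $1$, $\tfrac1m$, $\tfrac56$, $\tfrac34$, $\tfrac23$, $\tfrac12$, $\tfrac12$, $\tfrac13$, $\tfrac14$, $\tfrac16$ for the types $\I_k\ (k\ge1)$, $m\I_k$, $\II$, $\III$, $\IV$, $\I_0^*$, $\I_k^*$, $\IV^*$, $\III^*$, $\II^*$ respectively. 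On top of this, Theorem~\ref{thm: lc surf Ivol}(2) demands a divisorial valuation $E$ with centre in $h^*z$ and $1-a(E,S,c(z)h^*z)\in\{0,1\}$, equivalently that $(S,c(z)h^*z)$ be crepant to a pair $(X,0)$ with $X$ log canonical. Running the weighted blow‑ups adapted to the local singularity of each fibre, computing the log discrepancies of the extracted valuations, and matching the contracted curve configurations against the classification of log canonical surface singularities (cyclic quotient singularities supplying all the values strictly below the threshold, and a simple elliptic, cusp, or $\ZZ/2$‑quotient singularity realizing the extremal value, which is precisely where the value $1$ in $\sC_0,\dots,\sC_4$ comes from), one reads off exactly the sets $\sC_i$ and $\sC_i^*$ in the statement. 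A smooth fibre $\I_0$ admits only $c(z)\in\{0,1\}$, the value $1$ arising when $X$ has a simple elliptic singularity over $z$, which is $\sC_0$; since $S$ carries infinitely many smooth fibres one encodes this by $n_0=+\infty$.

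Finally I would assemble the global statement. The quantities $\gamma(z)$ at distinct fibres are chosen independently, so $\vol_1(K_S+B_S)$ ranges over $\bigl\{2g(Z)-2+\chi(\sO_S)+\sum_{0\le i\le4}c_i+\sum_{0\le i\le4}c_i^*\bigr\}$ with $c_i\in\sum_{\le n_i}\sC_i$ and $c_i^*\in\sum_{\le n_i^*}\sC_i^*$, and intersecting with $\RR_{>0}$ encodes the requirement $\kappa(K_S+B_S)=1$ from Notation~\ref{nota: Ivol(S/Z) and V(chi,g)}. For the inclusion $\supseteq$, given such a combination I would prescribe the corresponding coefficients $c(z_i)$ on the appropriate fibres of $S$, verify that $(S,B_S)$ is log canonical with $\kappa(K_S+B_S)=1$, and then invoke Theorem~\ref{thm: lc surf Ivol} together with Lemma~\ref{lem: model right coefficient} to produce an actual pair in $\gP_\lc^{\{0\}}(2,1)$ fitting into a diagram as in \eqref{diag: sm model} and having the prescribed Iitaka volume.

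The hard part will be the second step: the explicit fibre‑by‑fibre determination of the sets $\sC_i$ and $\sC_i^*$. This is a bookkeeping‑heavy computation with weighted valuations and with the classification of log canonical surface germs, most delicate for the additive ($*$) fibres, where the component of multiplicity $\ge 2$ forces the boundary coefficient on it to be at least $2c(z)$ and controls the whole discriminant estimate, and for the extremal case $c(z)$ equal to the log canonical threshold of the fibre, where one must exhibit the strictly log canonical (simple elliptic, cusp, or $\ZZ/2$‑quotient) crepant model $X$ explicitly rather than merely check the numerical condition of Theorem~\ref{thm: lc surf Ivol}(2).
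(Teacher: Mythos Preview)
Your plan is essentially the paper's own approach: reduce via Theorem~\ref{thm: lc surf Ivol} to vertical boundaries on the relatively minimal elliptic surface, apply Kodaira's canonical bundle formula, and determine the admissible coefficient at each fibre type separately; the handling of multiple fibres (absorbing $\tfrac{m-1}{m}$ into $\gamma(z)$ so that the total contribution lands in $\sC_1$) is also what the paper does in its Step~2.

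Two small points of divergence are worth noting. First, the paper does \emph{not} interpret the condition of Theorem~\ref{thm: lc surf Ivol}(2) via the classification of log canonical surface germs; it simply takes a log resolution of the fibre (explicitly, three blow-ups for type $\II$), writes down the log discrepancies of every toric valuation over the resulting snc configuration, and solves the equations $a(E,S,c\,h^*z)\in\{0,1\}$ directly for $c$. Your route through cyclic-quotient and strictly-lc contractions is equivalent, but the paper's direct computation is shorter and avoids any worry about ``exhibiting the crepant model explicitly'' at the threshold: once a valuation $E$ with $a(E,S,c\,h^*z)\in\{0,1\}$ is found, Theorem~\ref{thm: lc surf Ivol} already supplies the required pair, so your last sentence overstates the difficulty. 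Second, your parenthetical ``which is precisely where the value $1$ in $\sC_0,\dots,\sC_4$ comes from'' is inaccurate: only $\sC_0$ and $\sC_1$ contain $1$; for types $\II$, $\III$, $\IV$ the extremal value is the log canonical threshold $\tfrac56$, $\tfrac34$, $\tfrac23$ respectively, not $1$.
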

\begin{proof}
We divide the proof into two steps.

\medskip 

\noindent\textbf{Step 1.} In this step, we assume that $h\colon S\rightarrow Z$ does not have any multiple fibers. By Theorem~\ref{thm: lc surf Ivol}, we need to consider log canonical surface $(S, B_S)$ with $\kappa(K_S+B_S)=1$, where $B_S=\sum_{i=1}^s c(z_i) h^* z_i$ is an $\RR_{\geq 0}$-linear combination of fibers of $h$. Since $h$ does not have any multiple fiber, by the canonical bundle formula for elliptic fibrations, the Iitaka volume of $(S, B_S)$ is then
    \begin{equation}\label{eq: Ivol S}
            \vol_1(K_S+B_S) = 2g(Z)-2 + \chi(\sO_S) +\sum_{i=1}^s c (z_i).
    \end{equation}
 Therefore, the task is to find out the possibilities of the coefficients $c(z_i)$, which are subject to the condition (2) of Theorem~\ref{thm: lc surf Ivol}. For the coefficient set $\Gamma=\{0\}$, the requirement is that there is an exceptional divisor $E$ over $F$ such that the log discrepancy $a(E, S, B_S)\in\{0,1\}$. 

We can proceed according to the Kodaira types of the fibers (\cite[page 201]{BHPV04}), and find the following table:
\begin{center}
\begin{tabular}{|c|c|c|c|c|c|c|c|c|c|c|}
\hline
Kodaira type of $h^*z$ & $\I_0$ & $\I_k, k\geq 1$ & $\II$ & $\III$ & $\IV$ & $\I_0^*$ &$\I_k^*, k\geq 1$ & $\II^*$ & $\III^*$ & $\IV^*$\\
\hline
set of $c(z)$ & $\sC_0$  & $\sC_1$  & $\sC_2$  & $\sC_3$  & $\sC_4$  &$\sC_0^*$& $\sC_1^*$  & $\sC_2^*$  & $\sC_3^*$  & $\sC_4^*$  \\
\hline
\end{tabular}
\end{center}
where the first row denotes the Kodaira type of the fiber $h^*z$, and the second row denotes the set of possible values for $c(z)$.

We explain how we obtain the coefficient $c:=c(z)$ for a fiber $F:= h^*z$ of type $\II$; the other cases are similar. We blow up three times to arrive at a log resolution $\tilde\rho=\rho_1\circ\rho_2\circ\rho_3\colon \widetilde S\rightarrow S$, so that
\[
\tilde\rho^*(K_S+cF) = K_{\widetilde S} + c\widetilde F + (2c-1)E_1 + (3c-2)E_2 + (6c-4)E_3
\]
where $E_i\subset\widetilde S$ is the strict transform of $\Exc(\rho_i)$ and $\tF$ is the strict transform of $F$. The following picture illustrates the inverse images of $F$ under the blow-ups:
\begin{center}
\begin{tikzpicture}[font=\tiny]

\begin{scope}[xshift=-5cm]
\draw (-1,0.75) -- (1,0.75)node[right]{$2-2c$};
\draw (-1, 0) -- (1,0)node[right]{$3-3c$};
\draw (-1, -0.75) -- (1,-0.75)node[right]{$1-c$};
\draw (0, -1) -- (0,1) node[above]{$5-6c$};
\draw[->] (2.5,0) --node[above]{$\rho_3$}(3.5,0);
\end{scope}

\begin{scope}
\draw (-1, 1) --(1,-1) node[right]{$1-c$} ;
\draw (-1,-1) -- (1,1)node[right]{$2-2c$} ;
\draw (-1,0) -- (1,0)node[right]{$3-3c$} ;
\draw[->] (2.5,0) --node[above]{$\rho_2$}(3.5,0);
\end{scope}
\begin{scope}[xshift = 5cm]
\path (0,0) coordinate (O);
\path (.8,1) coordinate (P1);
\path (.8,-1) coordinate (P2);
\path (1.5, .8) coordinate (R2);
\path (1,1) coordinate (S1);
\draw (P1) to  [out=210, in = 90] (O);
\draw (P2) to  [out=150, in = -90] (O);
\draw (0,-1)node[left]{$2-2c$} -- (0,1);
\node[right] at (P2) {$1-c$};
\draw[->] (2,0) --node[above]{$\rho_1$}(3,0);
\end{scope}
\begin{scope}[xshift = 8.5cm]
\draw (0,0) .. controls (.5,0) and (1,.5).. (1,1);
\draw (0,0) .. controls (.5,0) and (1,-.5).. (1,-1)node[right]{$1-c$};
\end{scope}
\end{tikzpicture}
\end{center}
where the numbers indicate the log discrepancies of the components with respect to the pair $(S, cF)$. For $(S, cF)$ to be log canonical, we need $0\leq c\leq \frac{5}{6}$.  The log discrepancy of a divisor $E$ over the point $E_3\cap \widetilde F$ (resp.~$E_3\cap E_1$, resp.~$E_3\cap E_2$) can be written as $m(5-6c)+n(1-c)$ (resp.~$m(5-6c)+n(2-2c)$, resp.~$m(5-6c)+n(3-3c)$), where $m$ and $n$ are coprime positive integers. Thus, a divisor $E$ with log discrepancy $a(E, S, cF)\in\{0,1\}$ and $\Center_S(E)\subset F$ exists if and only if one of the following expressions has value in $\{0,1\}$ under the restriction that $0\leq c\leq \frac{5}{6}$:
\[
5-6c,\, 2-2c,\, 3-3c,\, 1-c,\, m(5-6c)+n(1-c),\, m(5-6c)+n(2-2c),\, m(5-6c)+n(3-3c)
\]
where $m,n$ are coprime positive integers. Solving these equations for $c$, we obtain the set 
\begin{align*}
\sC_2 &=\left\{0, \frac{1}{2}, \frac{2}{3}, \frac{5}{6}, \frac{5m+n-1}{6m+n},\frac{5m+2n-1}{6m+2n},\frac{5m+3n-1}{6m+3n} \,\,\bigg|\,\, (m,n)\in \Zz_{>0}^2, \gcd(m,n)=1\right\}_{\leq \frac{5}{6}} \\
&=\left\{0, \frac{1}{2}, \frac{2}{3}, \frac{5}{6}, \frac{5m}{6m+1}, \frac{5m+1}{6m+2}, \frac{5m+2}{6m+3} \,\,\bigg|\,\, m\in\Zz_{>0}\right\} \\
&= \left\{\frac{5}{6}, \frac{5m+n-1}{6m+n}\,\,\bigg|\,\, m\in\ZZ_{\geq 0}, n\in \{1,2,3\} \right\}
\end{align*}
where for the second equality we use the following transformation of expressions
\[
\frac{5m+4}{6m+5} = \frac{3(5m+4)}{3(6m+5)} = \frac{5(3m+2)+2}{6(3m+2)+3}
\]
and
\[
\frac{5m+3}{6m+4} = \frac{2(5m+3)}{2(6m+4)} = \frac{5(2m+1)+1}{6(2m+1)+2}.
\]

Now we can compute the $\vol_1(K_S+B_S)$ by \eqref{eq: Ivol S}, and the description of $\Ivol_\lc(S/Z)$ as in the proposition follows.

\medskip

\noindent\textbf{Step 2.} In this step, we consider the general case, where $h$ can have multiple fibers. For a multiple fiber $h^*z = mF $ of type $mI_k, k\geq 0$, we can compute in the same way as in Step 1 that the coefficient $c(z)$ should be as follows:
\begin{center}
\begin{tabular}{|c|c|c|}
\hline
Kodaira type & $m\I_0$ & $m\I_k, k\geq 1$ \\
\hline
 $c(z)$ & $\frac{1}{m}\sC_0$  & $\frac{1}{m}\sC_1$  \\
\hline
\end{tabular}
\end{center}
so that $(\widetilde S, c(z)mF))$ satisfies the condition (2) of Theorem~\ref{thm: lc surf Ivol}.

Up to relabeling, we may assume that $\tilde h^*z_i =m_i F_i$, $1\leq i\leq r$ are all of the multiple fibers of $\tilde h\colon\widetilde S\rightarrow Z$. Then by the canonical bundle formula, we have
  \begin{equation}\label{eq: Ivol S with multiple fib}
            \vol_1(K_S+B_S) = 2g(Z)-2 + \chi(\sO_S) +\sum_{1\leq i\le r}\left(1-\frac{1}{m_i}\right)+\sum_{i=1}^s c (z_i)
    \end{equation}
where $c(z_i)\in \frac{1}{m_i}\sC_0$ or $\frac{1}{m_i}\sC_1$ for $1\leq i\leq r$, depending on whether $\tilde h^*z_i$ is of type $mI_0$ or $mI_k$ with $k>0$. The total contribution of a multiple fiber $m_iF_i$ in \eqref{eq: Ivol S with multiple fib} can be 
$$
\begin{cases}
1-\frac{1}{m_i}, \text{ or } 1 =  (1-\frac{1}{m_i}) + \frac{1}{m_i} & \text{if $h^*z$ is of type $mI_0$}\\
1-\frac{1}{m_i},\, 1-\frac{1}{m_in_i} = (1-\frac{1}{m_i})+\frac{1}{m_i}(1-\frac{1}{n_i}), \text{ or } 1 = 1-\frac{1}{m_i} + \frac{1}{m_i} & \text{if $h^*z$ is of type $mI_k, k>0$}
\end{cases}
$$
In conclusion, we may write
\begin{equation}\label{eq: Ivol S with multiple fib'}
            \vol_1(K_S+B_S) = 2g(Z)-2 + \chi(\sO_S) +\sum_{i=1}^r c (z_i)' + \sum_{i=r+1}^s c (z_i)
    \end{equation}
where $c(z_i)'$ can take any value in $\sC_1 = \{1, 1-\frac{1}{n} \mid n\in \Zz_{>0}\}$. Therefore, $\Ivol_\lc(S/Z)$ is still of the form given in the lemma.
\end{proof}


\begin{prop}\label{prop: Ivol chi=0 lc}
Let $h\colon S\rightarrow Z$ be a relatively minimal elliptic fibration with $\chi(\sO_X)=0$. Then $\Ivol_\lc(S/Z) \subset \Ivol_\sm(2,1)$.
\end{prop}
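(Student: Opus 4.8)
The plan is to combine the reduction provided by Theorem~\ref{thm: lc surf Ivol} with the structure of quasi-bundle elliptic fibrations and the explicit description of $\Ivol_\sm(2,1)$ in Theorem~\ref{thm: sm surf Ivol}. First I would use that $\chi(\mathcal{O}_S)=0$ forces every singular fiber of $h$ to be a multiple fiber of type $m\mathrm{I}_0$, so that every fiber of $h$ is irreducible and supported on a smooth elliptic curve; write $\mu_1F_1,\dots,\mu_kF_k$ (over points $w_1,\dots,w_k$, with $\mu_j\geq2$) for the multiple fibers. Given $v\in\Ivol_\lc(S/Z)$, Theorem~\ref{thm: lc surf Ivol} with $\Gamma=\{0\}$ produces an lc pair $(S,B_S)$ with $\kappa(K_S+B_S)=1$, $K_S+B_S\sim_{\RR,Z}0$, $v=\vol_1(K_S+B_S)$, $B_S^h=0$, and $B_S^v=\sum_i c(z_i)h^*z_i$ meeting condition~(2) of that theorem. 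Since the fiber $h^*z_i=\mu(z_i)F_{z_i}$ is irreducible, $F_{z_i}$ appears in $B_S$ with coefficient $c(z_i)\mu(z_i)\in(0,1]$; a short log-discrepancy computation (the divisor $F_{z_i}$ has log discrepancy $1-c(z_i)\mu(z_i)$, while every exceptional divisor over a point of $F_{z_i}$ has log discrepancy $\geq 2-c(z_i)\mu(z_i)$) shows that condition~(2) can hold only when $c(z_i)\mu(z_i)=1$. Hence $B_S=\sum_{i\in I}F_{z_i}$ is a reduced sum of distinct fibers.

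Next I would feed this into Kodaira's canonical bundle formula $K_S=h^*(K_Z+L)+\sum_j(\mu_j-1)F_j$ with $\deg L=\chi(\mathcal{O}_S)=0$. Writing $F_{z_i}=\tfrac{1}{\mu(z_i)}h^*z_i$, setting $t:=|I|$ for the number of fibers in $\Supp B_S^v$ and $J\subseteq\{1,\dots,k\}$ for the set of multiple fibers lying in $\Supp B_S^v$, and regrouping (an in-support multiple fiber contributes $(1-\tfrac1{\mu_j})+\tfrac1{\mu_j}=1$ and an in-support smooth fiber contributes $1$), this yields
\[
v=\deg\Bigl(K_Z+L+\sum_j\tfrac{\mu_j-1}{\mu_j}w_j+\sum_{i\in I}\tfrac1{\mu(z_i)}z_i\Bigr)=\bigl(2g(Z)-2+t\bigr)+\sum_{j\notin J}\Bigl(1-\tfrac1{\mu_j}\Bigr).
\]

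Finally I would split into two cases. If $2g(Z)+t\geq1$, set $N:=2g(Z)-1+t\geq0$; then adjoining $2N$ copies of $\tfrac12=1-\tfrac12$ to the multiplicities $\{\mu_j:j\notin J\}$ rewrites $v$ as $-1+\sum_a(1-\tfrac1{n_a})$ with each $n_a\in\ZZ_{\geq2}$, so by Proposition~\ref{prop: sm Ivol1} applied with $(\chi,g)=(1,0)$, together with the fact that $v>0$, we obtain $v\in\Ivol(\sE_{1,0})\subseteq\Ivol_\sm(2,1)$, the inclusion being Theorem~\ref{thm: sm surf Ivol}. If instead $g(Z)=0$ and $t=0$, then $I=\varnothing$, so $B_S=0$ and $v=\vol_1(K_S)$; since $h$ is relatively minimal with $\kappa(K_S)=1$, the surface $S$ carries no $(-1)$-curve (a vertical one would be a smooth elliptic fiber component, hence $K_S$-trivial; a horizontal one meets the pullback of an ample divisor on $Z$ nonnegatively), so $S$ is minimal and $S\in\sE_{0,0}$, whence $v\in\Ivol(\sE_{0,0})\subseteq\Ivol_\sm(2,1)$ by definition.

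I expect the extraction of the constraint $c(z_i)\mu(z_i)=1$ from condition~(2) of Theorem~\ref{thm: lc surf Ivol} to be the main obstacle: it relies essentially on the irreducibility of the fibers of $h$, which is precisely where the hypothesis $\chi(\mathcal{O}_S)=0$ enters, together with an accurate accounting of the log discrepancies of divisorial valuations centered on a smooth elliptic fiber of multiplicity coefficient $c(z_i)\mu(z_i)$. Once one knows that $B_S$ is a reduced sum of fibers, the remaining steps amount to bookkeeping with the canonical bundle formula and with the descriptions of $\Ivol(\sE_{1,0})$ and $\Ivol(\sE_{0,0})$ from Propositions~\ref{prop: sm Ivol1} and \ref{prop: Ivol E00}.
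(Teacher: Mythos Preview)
Your proposal is correct and follows essentially the same route as the paper's proof: both identify that the only admissible coefficient on a type-$m\I_0$ fiber is $c(z)\in\{0,\tfrac{1}{m}\}$ (the paper invokes $\sC_0=\{0,1\}$ from the preceding proposition, while you carry out the log-discrepancy calculation directly), and both then feed this into the canonical bundle formula to obtain the shape $2g(Z)-2+t+\sum_{j\notin J}(1-\tfrac{1}{\mu_j})$. The paper simply asserts membership in $\Ivol_\sm(2,1)$ at this point; you spell out the two cases (padding with copies of $\tfrac12$ to land in $\Ivol(\sE_{1,0})$ when $2g(Z)+t\ge1$, and recognizing $S\in\sE_{0,0}$ when $g(Z)=t=0$), which is exactly the verification the paper leaves implicit.
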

\begin{proof}
Let $h^*z_i = m_i F_i$, $1\leq i\leq r$ be the multiple fibers of $h$. Suppose that $B_S = \sum_{1\leq i\leq s} c(z_i) h^*z_i$, where $h^*z_i=m_i F_i$ is of type $m_i\I_0$ and $m_i=1$ for $r<i\leq s$. Then we have $c(z_i)\in\frac{1}{m_i}\sC_0=\left\{0,\frac{1}{m_i}\right\}$. Let $I=\{1\leq i\leq r \mid c(z_i)=\frac{1}{m_i}\}$ and $J=\{1\leq i\leq r \mid c(z_i)=0\}$. By the canonical bundle formula
\begin{align*}
\vol_1(K_S+B_S) & = 2g(Z) - 2 + \sum_{1\leq i\leq r} \left(1-\frac{1}{m_i}\right) + \sum_{1\leq i\leq s} c(z_i) \\
&  = 2g(Z) - 2 + (s-r)+ \#I +\sum_{i\in J} \left(1-\frac{1}{m_i}\right) \in \Ivol_\sm(2,1).
\end{align*}
\end{proof}

Now we look for the minimum and the minimal accumulation point of $\Ivol_\lc(2,1)$.
\begin{thm}\label{thm: min lc Ivol 0}
    We have $\min\Ivol_\lc(2,1)=\frac{1}{671}$, and the minimal accumulation point of $\Ivol_\lc(2,1)$ is $\frac{1}{66}$.
\end{thm}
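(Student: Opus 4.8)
The plan is to use Theorem~\ref{thm: lc surf Ivol} together with Propositions~\ref{prop: Ivol chi>0 lc} and \ref{prop: Ivol chi=0 lc} to reduce the computation of $\min\Ivol_\lc(2,1)$ and of its minimal accumulation point to an explicit optimization over the combinatorial data appearing in Proposition~\ref{prop: Ivol chi>0 lc}. Indeed, by Theorem~\ref{thm: sm surf Ivol} and Proposition~\ref{prop: Ivol chi=0 lc}, the contribution of relatively minimal elliptic fibrations with $\chi(\sO_S)=0$ is contained in $\Ivol_\sm(2,1)$, whose minimum is $\tfrac16$ and whose minimal accumulation point is $\tfrac12$ (Theorems~\ref{thm: min Ivol sm} and \ref{thm: Ivol sm derived}); since we will exhibit values far below $\tfrac16$ coming from the $\chi(\sO_S)>0$ case, the $\chi(\sO_S)=0$ surfaces are irrelevant to both minima. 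Likewise, any $(S,B_S)$ with $B_S^h\neq 0$ gives a $\PP^1$-bundle, and a quick check shows those Iitaka volumes are bounded below away from $\tfrac1{671}$, so the extremal examples are genuinely the elliptic fibrations with $\chi(\sO_S)>0$ and $B_S^v$ supported on a few special fibers.

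So by Proposition~\ref{prop: Ivol chi>0 lc}, I would minimize
\[
v = 2g(Z)-2+\chi(\sO_S)+\sum_{0\leq i\leq 4}c_i+\sum_{0\leq i\leq 4}c_i^*
\]
over $g(Z)\geq 0$, $\chi(\sO_S)\geq 1$, and $c_i\in\sum_{\leq n_i}\sC_i$, $c_i^*\in\sum_{\leq n_i^*}\sC_i^*$, subject to the Noether/topological constraint relating $\chi(\sO_S)$ and the admissible multiset of singular fiber types (the Euler numbers of the singular fibers must sum to $12\chi(\sO_S)$). Since $v>0$ and $2g(Z)-2+\chi(\sO_S)\geq -1$, to get $v$ small one wants $g(Z)=0$, $\chi(\sO_S)=1$, so $2g-2+\chi=-1$, and then one must choose singular fibers whose "$c$-values" are as small as possible while summing to just over $1$. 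The smallest nonzero entries available are in $\sC_2^*$ (type $\II^*$), with minimal nonzero value $\tfrac1{11}$, and one checks $e(\II^*)=10$, so at most one $\II^*$ fiber is allowed when $\chi=1$ (leaving Euler number $2$ for the rest, i.e.\ only type-$\II$ fibers, each of Euler number $2$, contributing from $\sC_2$ whose minimal nonzero value is $\tfrac57$). The optimization is thus a finite case analysis: for $\min\Ivol_\lc(2,1)$ one seeks the least value of $-1+\sum(\text{small }c\text{'s})$ that is still positive, and I expect the extremal configuration to be one $\II^*$ fiber contributing a value $\tfrac{m}{6m+n}$ close to $\tfrac16$ from below, plus one $\II$ fiber, tuned so that the total just exceeds $1$; solving $-1+\tfrac{m}{6m+n}+\tfrac57=\tfrac1{671}$-type equations pins down the minimizer and gives $\tfrac1{671}$. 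For the minimal accumulation point, one instead allows one of the $c_i$ or $c_i^*$ to range over an infinite family accumulating at its supremum (e.g.\ $\sC_2^*\ni\tfrac{m}{6m+n}\to\tfrac16$), so the accumulation points of $\Ivol_\lc(2,1)$ are obtained by replacing one summand by its limiting value; minimizing the resulting expression over the finite choices for the remaining summands, subject again to the Euler-number constraint, should yield $\tfrac1{66}$ (realized by $-1+\tfrac16+$ a $\II$-type limit $\tfrac56$, i.e.\ $-1+\tfrac16+\tfrac56=0$ is too small, so in fact one needs an extra fiber — the precise bookkeeping is the point).

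Concretely, the steps are: (1) reduce to $\chi(\sO_S)>0$, $g(Z)=0$, $\chi(\sO_S)=1$, $B_S^h=0$ by the discussion above; (2) enumerate the admissible multisets of singular fiber Kodaira types with total Euler number $12$ that can carry nonzero coefficients, using the tables of $\sC_i,\sC_i^*$ and their minimal positive elements; (3) for each such configuration write $v$ as $-1$ plus a sum of the corresponding coefficient-set elements and minimize subject to $v>0$, identifying $\tfrac1{671}$; (4) for the accumulation points, note that each $\sC_i,\sC_i^*$ is a DCC set with a unique accumulation point (its first listed element $\tfrac56,\tfrac34,\tfrac23,\tfrac12,\tfrac16,\dots$), so the derived set of $\Ivol_\lc(2,1)$ is obtained by promoting one summand to its limit, and re-minimize to get $\tfrac1{66}$; (5) exhibit explicit elliptic surfaces realizing $\tfrac1{671}$ and sequences realizing $\tfrac1{66}$, invoking Theorem~\ref{thm: lc surf Ivol} to produce the log canonical pairs $(X,B)\in\gP_\lc^{\{0\}}(2,1)$.

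The main obstacle is step (2)–(3): the bookkeeping of which combinations of fiber types are simultaneously realizable (the constraint is not merely $\sum e(\text{fibers})=12$ but also that such a relatively minimal rational elliptic surface with prescribed singular fibers exists — this is governed by Persson's classification, or can be sidestepped by using base change / $*$-transfers as in Lemma~\ref{lem: elliptic surf exists}), combined with checking that the apparent minimizer is not beaten by some less obvious configuration (e.g.\ trading the $\II^*$ fiber for two $\III^*$ or $\IV^*$ fibers, or using $\sC_0^*$ from an $\I_0^*$ fiber). I would organize this as a short lemma bounding the number of each fiber type when $\chi(\sO_S)=1$, reducing to finitely many cases, and then a direct comparison of the resulting finite list of candidate minima; the value $\tfrac1{671}=\tfrac1{671}$ and the accumulation point $\tfrac1{66}$ should emerge as the unique winners, with the realizability handled by the logarithmic-transformation and base-change constructions already used in the proof of Proposition~\ref{prop: sm Ivol1}.
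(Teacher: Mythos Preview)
Your overall strategy is exactly the paper's: decompose over $(\chi,g)$, use Proposition~\ref{prop: Ivol chi=0 lc} and Theorem~\ref{thm: min Ivol sm} to dispose of $\chi=0$, observe that $(\chi,g)\neq(1,0)$ forces $v\geq\tfrac{1}{11}$, and then do a case analysis on rational elliptic surfaces with $\chi=1$ governed by the constraint $\sum e(\text{fibers})=12$. Two points, however, need correcting.

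First, a minor one: for $\Gamma=\{0\}$ the condition $B_S^h\in\Gamma$ in Theorem~\ref{thm: lc surf Ivol} already forces $B_S^h=0$, so the $\PP^1$-bundle case is vacuous and no ``quick check'' is needed.

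Second, and more substantively: your guessed extremal configuration is wrong, and the error is not merely cosmetic. A $\II^*$ fiber contributes $c^*\in\sC_2^*$ with $\max\sC_2^*=\tfrac16$, while a $\II$ fiber contributes $c\in\sC_2$ with $\max\sC_2=\tfrac56$; since $e(\II^*)+e(\II)=12$ leaves no room for further fibers of positive Euler number, you get $-1+c+c^*\leq -1+\tfrac56+\tfrac16=0$, so this pair \emph{never} gives a positive $v$. The actual minimizer pairs a type-$\II$ fiber with a type-$\III^*$ fiber (so $c^*\in\sC_3^*$, $\max\sC_3^*=\tfrac14$, and $e(\II)+e(\III^*)=2+9=11$, leaving room for an $\I_1$): one finds
\[
\min\Ivol_\lc(2,1)=-1+\tfrac{50}{61}+\tfrac{2}{11}=\tfrac{1}{671},
\]
with $\tfrac{50}{61}\in\sC_2$ and $\tfrac{2}{11}\in\sC_3^*$. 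Likewise the minimal accumulation point is obtained by pushing the $\sC_2$-coefficient to its limit $\tfrac56$:
\[
\min\Ivol_\lc(2,1)'=-1+\tfrac56+\tfrac{2}{11}=\tfrac{1}{66}.
\]
So in your step~(2) the key organizing observation is not ``which $\sC_i^*$ has the smallest positive element'' but rather ``which $*$-type fiber has $\max\sC_k^*$ large enough to exceed $1-\max\sC_j$ for some compatible partner'', and the case analysis the paper carries out systematically checks all such pairings. For realizability you cannot use logarithmic transformations (those only create $m\I_0$ fibers); the paper cites Persson's list \cite{Per90} to exhibit a rational elliptic surface with singular fibers $\II$, $\III^*$, $\I_1$.
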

\begin{proof}
We have 
\[
\Ivol_\lc(2,1) = \bigcup_{(\chi,g)\in\ZZ_{\geq 0}^2}V_\lc(\chi, g)
\]
where $V_\lc(\chi, g) = V^{\{0\}}_\lc(\chi, g)$ is defined as in Notation~\ref{nota: Ivol(S/Z) and V(chi,g)}.


By Proposition~\ref{prop: Ivol chi=0 lc}, we have $$\bigcup_{g\geq 0}V_\lc(0,g)\subset \Ivol_\sm(2,1).$$
Therefore, the minimum and the minimal accumulation point of $\bigcup_{g\geq 0}V_\lc(0,g)$ are no smaller than those of $\Ivol_\sm(2,1)$, which are $\frac{1}{6}$ and $\frac{1}{2}$ respectively by Theorems~\ref{thm: sm surf Ivol} and \ref{thm: Ivol sm derived}.

If $(\chi,g)\in \Zz_{>0}^2$ or $\chi\geq 3$, then by Proposition~\ref{prop: Ivol chi>0 lc}
\begin{equation}
\min V_{\chi,g} \geq 2g-2+\chi\geq 1.
\end{equation}

If $(\chi,g)=(2,0)$, we have by Proposition~\ref{prop: Ivol chi>0 lc}
\begin{equation}
\min V_\lc(2,0) \geq \min \left(\sC_0\cup\sC_1\cup\sC_2\cup\sC_3\cup\sC_4\cup \sC_0^*\cup\sC_1^*\cup\sC_2^*\cup\sC_3^*\cup\sC_4^*\right)_{>0} = \frac{1}{11}.    
\end{equation}
Combinig the above discussions with the following claim, we infer that $\min \Ivol_\lc(2,1)=\frac{1}{671}$.
\begin{claim}\label{claim: min V_lc(1,0)}
    We have $\min V_\lc(1,0)=\frac{1}{671}$. 
\end{claim}
\begin{proof}[Proof of Claim~\ref{claim: min V_lc(1,0)}]
Let $h\colon S\rightarrow Z=\PP^1$ be a relatively minimal elliptic fibration with $\chi(\sO_S)=1$. 
Since $\sum_z e(h^*z)=e(S)=12$, there are at most two singular fibers of $*$-type. We proceed according to the number of $*$-type fibers of $h$.

\medskip

Suppose that $h$ has two singular fibers of $*$-type. Then $h$ has two singular fibers of type $\I_0^*$, and the other possible singular fibers are of type $m\I_0$, $m\geq 2$. By Proposition~\ref{prop: Ivol chi>0 lc}, $\Ivol_\lc(S/Z)$ consists of positive rational numbers of the form
\[
v=-1+c_1 + c_0^*
\]
where $c_1\in \sum \sC_1$ and $c_0^*=c_{01}^*+c_{02}^*$ with $c_{0i}^*\in \sC_0^*$. Since $v>0$ and $c_0^*\leq 2\left(\max\sC_0^*\right) =1$, we have $c_1>0$. Then one of the following cases occurs:
\begin{enumerate}
    \item $c_1>1$. In this case, $v\geq -1+c_1\geq -1 + \frac{1}{2}+\frac{2}{3}=  \frac{1}{6}$.
    \item $c_1=1$. In this case,  and $v= c_0^* \geq \min\sC_0^* = \frac{1}{6}$.
    \item $0<c_1<1$. In this case, $c_1=1-\frac{1}{m}$ for some  $m\in \ZZ_{\geq 2}$. Thus, we have
    \[
    v=-1+\left(1-\frac{1}{m}\right) + c_0^* = -\frac{1}{m} + c_0^* \geq 
    \begin{cases}
       -\frac{1}{2}+\frac{1}{3}+\frac{1}{3} = \frac{1}{6} & \text{if $m=2$,}\\
       -\frac{1}{3}+\frac{1}{2} =\frac{1}{6} & \text{if $m=3$,}\\
        \frac{1}{3}-\frac{1}{m} \geq\frac{1}{12} & \text{if $m\geq 4$.}
    \end{cases}
    \] 
\end{enumerate} 

\medskip

In the following, we may assume that $h$ has at most one $*$-type singular fiber. Again, by Proposition~\ref{prop: Ivol chi>0 lc}, $\Ivol_\lc(S/Z)$ consists of positive rational numbers of the form
\[
v=-1+c_0+c_1 + c_2 + c_3 + c_4 + c^*
\]
where $c_i\in \sum \sC_i$ for $0\leq i\leq 4$, and $c^*\in \sC_k^*$ for some $0\leq k\leq 4$. Since $v>0$ and $c^*\leq \frac{1}{2}$ in any case, we have $$c_0+c_1 + c_2 + c_3 + c_4>\frac{1}{2}.$$ One of the following cases occurs:
\begin{enumerate}
    \item $c_0+c_1+c_2+c_3+c_4> 1$. In this case,
\[
v\geq -1+ c_0+c_1+c_2+c_3+c_4 \geq -1+\frac{1}{3} + \frac{9}{13}  =\frac{1}{39}.
\]
\item $c_0+c_1+c_2+c_3+c_4=1$. In this case,
\[
v = c^* \geq \frac{1}{11}.
\]
\item $\frac{1}{2}<c_0+c_1+c_2+c_3+c_4<1$. In this case, since $(\sC_0)_{>0}=\{1\}$,  $\min\left(\sC_i\right)_{>0}\geq \frac{1}{2}$ for $1\leq j\leq 3$, and $c_4=0,\frac{1}{3}$, or $\geq\frac{1}{2}$, we have $c_0=0$, and one of the following cases holds:
\begin{enumerate}
 \item $c_4=0$. In this case, $c_1+c_2+c_3\in (\sC_1\cup\sC_2\cup \sC_3)_{>0}$, and one checks that
    \[
    v = -1+c_1+c_2+c_3 + c^*\geq -1 + \frac{50}{61} + \frac{2}{11} = \frac{1}{671}.
    \]
    \item $c_4=\frac{1}{3}$. In this case, exactly one of the $c_1, c_2, c_3$ is positive and $c_1+c_2+c_3=\frac{1}{2}$ or $\frac{3}{5}$. It is straightforward to check that
    \[
     v = -1+ c_1+c_2+c_3+c_4+c^*\geq  -1 +\frac{1}{3}+\frac{3}{5}+ \frac{1}{11} = \frac{4}{165}.
     \]
    \item $c_4\geq\frac{1}{2}$. In this case, $c_1=c_2=c_3=0$, and one can check that
    \[
    v = -1+c_4+c^*\geq -1+\frac{12}{19} + \frac{3}{8} = \frac{1}{152}.
    \]
\end{enumerate}
\end{enumerate}
Therefore, we have $v\geq \frac{1}{671}$ in all cases, and the equality is realized by the pair $(S, \frac{50}{61}F_1+\frac{2}{11}F_2)$, where $h\colon S\rightarrow Z$ is a rational elliptic surface, $F_1$ is a fiber of type $\II$, and $F_2$ is of type $\III^*$ (\cite[page 7]{Per90}).
\end{proof}

Now we turn to the minimal accumulation point of $\Ivol_\lc(2,1)=\cup_{(\chi,g)}V_\lc(\chi, g)$.

First, we have
\[
\min \left(\bigcup_{(\chi,g)\neq (1,0)} V_\lc(\chi,g)\right)' \geq \min \left(\bigcup_{(\chi,g)\neq (1,0)} V_\lc(\chi, g) \right) \geq\frac{1}{11}.
\]
where the second inequality follows by by the analysis for the computation of $\min \Ivol_\lc(2,1)$ above.

Thus, from now on, we can focus on the case $(\chi,g)= (1,0)$. By the proof of Theorem~\ref{thm: closed}, $V_\lc(1,0)'\subset V_\lc(1,0)$, and hence for any $v\in V_\lc(1,0)$, there is a relatively minimal elliptic fibration $h\colon S\rightarrow Z=\PP^1$ with $\chi(\sO_S)=1$ such that $v = \vol_1(K_S+\sum_z c(z)h^*z)\in \Ivol_\lc(S/Z)$, where for at least one $z_0$, $c(z_0)$ is the log canonical threshold of $h^*z_0$ with respect to $S$. Therefore,
\[
v= \vol_1(K_S+\sum_{z} c(z)h^*z) = -2 + \chi(\sO_S)+c(z_0)+\sum_{i=1}^4(c_i+c_i^*)=-1+c(z_0) +\sum_{i=1}^4(c_i+c_i^*)
\]

As before, since $e(S)=12\chi(\sO_S)=12$, $h$ has at most two singular fibers of $*$-type.


Suppose that $h$ has two singular fibers of $*$-type. Then both of them are of type $\I_0^*$, and all the other possible singular fibers of $h$ are of type $m\I_0$ with $m\geq 2$. Then $v\geq \frac{1}{12}$, as we have seen in the proof of Claim~\ref{claim: min V_lc(1,0)}.

In the following, we may assume that $h$ has at most one $*$-type singular fiber, and hence $\sum_i c_i^*\in \sC_k$ for some $0\leq k\leq 4$. We proceed according to the type of the fiber $h^*z_0$.


\begin{enumerate}
    \item $h^*z_0$ is of type $m\I_k$, $k\geq 0$. In this case, $c(z_0)=1$, and hence
    \[
\vol_1\left(K_S+\sum_{z} c(z)h^*z\right) =\sum_{i=1}^4(c_i+c_i^*)\geq \min(\sC_0\cup\dots\cup \sC_4\cup\sC_0^*\cup\dots \cup\sC_4^*)_{>0}= \frac{1}{11}.
    \]
    \item $h^*z_0$ is of type $\II$. In this case, $c(z_0)=\frac{5}{6}$, and hence
     \[
\vol_1\left(K_S+\sum_{z} c(z)h^*z\right) =-\frac{1}{6}+\sum_{i=1}^4(c_i+c_i^*)\geq -\frac{1}{6} + \frac{2}{11} = \frac{1}{66}.
    \]
    \item $h^*z_0$ is of type $\III$. In this case, $c(z_0)=\frac{3}{4}$, and hence
     \[
\vol_1\left(K_S+\sum_{z} c(z)h^*z\right) =-\frac{1}{4}+\sum_{i=1}^4(c_i+c_i^*)\geq -\frac{1}{4} + \frac{3}{11} = \frac{1}{44}.
    \]
    \item $h^*z_0$ is of type $\IV$. In this case, $c(z_0)=\frac{2}{3}$, and hence
     \[
\vol_1\left(K_S+\sum_{z} c(z)h^*z\right) =-\frac{1}{3}+\sum_{i=1}^4(c_i+c_i^*)\geq -\frac{1}{3} + \frac{2}{5} = \frac{1}{15}.
    \]
    \item $h^*z_0$ is of type $\I_k^*$, $k\geq 0$. In this case, $c(z_0)=\frac{1}{2}$, and $c_i^*=0$ for $1\leq i\leq 4$. Hence
     \[
\vol_1\left(K_S+\sum_{z} c(z)h^*z\right) =-\frac{1}{2}+\sum_{i=1}^4 c_i\geq -\frac{1}{2} + \frac{4}{7} = \frac{1}{14}.
    \]
     \item $h^*z_0$ is of type $\II^*$. In this case, $c(z_0)=\frac{1}{6}$, and $c_i^*=0$ for $1\leq i\leq 4$. Hence
     \[
\vol_1\left(K_S+\sum_{z} c(z)h^*z\right) =-\frac{5}{6}+\sum_{i=1}^4 c_i\geq -\frac{5}{6} + \frac{1}{3} + \frac{4}{7} = \frac{1}{14}.
    \]
     \item $h^*z_0$ is of type $\III^*$. In this case, $c(z_0)=\frac{1}{4}$, and $c_i^*=0$ for $1\leq i\leq 4$. Hence
     \[
\vol_1\left(K_S+\sum_{z} c(z)h^*z\right) =-\frac{3}{4}+\sum_{i=1}^4 c_i\geq -\frac{3}{4} + \frac{4}{5} = \frac{1}{20}.
    \]
      \item $h^*z_0$ is of type $\IV^*$. In this case, $c(z_0)=\frac{1}{3}$, and $c_i^*=0$ for $1\leq i\leq 4$. Hence
     \[
\vol_1\left(K_S+\sum_{z} c(z)h^*z\right) =-\frac{2}{3}+\sum_{i=1}^4 c_i\geq -\frac{2}{3} + \frac{9}{13} = \frac{1}{39}.
    \]
\end{enumerate}
Summarizing all the cases, we obtain $\min \Ivol_\lc(2,1)'\geq \frac{1}{66}$, and the equality is realized by $\left(S, \frac{5}{6}h^*z_0 + \frac{2}{11}h^*z_1\right)$, where $h\colon S\rightarrow Z$ is a rational elliptic surface, $h^*z_0$ is of type $\II$, and $h^*z_1$ is of type $\III^*$ (\cite[page 7]{Per90}).
\end{proof}

\begin{rmk}
   The elliptic fibrations realizing the minimum and minimal accumulation point of $\Ivol_\lc(2,1)$ respectively, admit the same configuration of singular fibers, namely one $\III^*$, one $\II$, and one $\I_1$ (see \cite[page 7]{Per90}).
\end{rmk}

\subsection{The set $\Ivol_\lc^{\{0,1\}}(2,1)$}
There are two possibilities: 
\begin{enumerate}
    \item The horizontal part of the boundary $B^h= 0$. In this case, the fibration $h\colon S\rightarrow Z$ in Theorem~\ref{thm: lc surf Ivol} is relatively minimal fibration of genus 1, and $B_S=B_S^v$ is vertical with respect to $h$.
    \item The horizontal part of the boundary $B^h\neq 0$. In this case, the fibration $h\colon S\rightarrow Z$ in Theorem~\ref{thm: lc surf Ivol} is a $\PP^1$-bundle, and $B_S^h$ is a reduced divisor such that $B_S^h\cdot F=2$ for a fiber $F$ of $h$.
\end{enumerate} 
Due to this division of cases, we introduce the following notation.
\begin{nota}\label{nota: V0 V1}
We set
    \begin{itemize}
    \item $ V_0^{\{0,1\}}:=\{\vol_1(K_X+B) \mid (X, B)\in \gP_{\lc}^{\{0,1\}}(2,1),\, \, B^h\neq 0\}$,
    \item $V_1^{\{0,1\}}:=\{\vol_1(K_X+B) \mid (X, B)\in \gP_{\lc}^{\{0,1\}}(2,1),\, B^h=0\}$.
\end{itemize}
Then we have 
\[
\Ivol_\lc^{\{0,1\}}(2, 1) = V_0^{\{0,1\}}\cup V_1^{\{0,1\}}.
\]
\end{nota}
\begin{lem}\label{lem: lc Ivol Gamma1 1}
Let $V_1^{\{0,1\}}$ be as in Notation~\ref{nota: V0 V1}. Then $V_1^{\{0,1\}}=\Ivol_\lc(2, 1)$, which is the set of Iitaka volumes of projective log canonical surfaces without boundary.
\end{lem}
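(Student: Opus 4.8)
The plan is to reduce both sides to the description of Iitaka volumes of log canonical surfaces with an lc-trivial elliptic fibration provided by Theorem~\ref{thm: lc surf Ivol}, and then to observe that, for vertical boundaries, the coefficient conditions imposed by the sets $\{0\}$ and $\{0,1\}$ are literally the same.

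The inclusion $\Ivol_\lc(2,1)\subseteq V_1^{\{0,1\}}$ is immediate: a projective log canonical surface $X$ with $\kappa(K_X)=1$ gives the pair $(X,0)\in\gP_\lc^{\{0,1\}}(2,1)$, whose boundary is zero and in particular has zero horizontal part over the base of the Iitaka fibration, so $\vol_1(K_X)\in V_1^{\{0,1\}}$.

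For the reverse inclusion, take $(X,B)\in\gP_\lc^{\{0,1\}}(2,1)$ with $B^h=0$ and set $v=\vol_1(K_X+B)$; let $f\colon X\to Z$ be the Iitaka fibration, so that $B=B^v$ and $K_X+B\sim_{\RR,Z}0$. Applying the construction of Lemma~\ref{lem: rel min model} (as in the only-if direction of Theorem~\ref{thm: lc surf Ivol}), I would pass to a crepant relatively minimal smooth model $h\colon S\to Z$ with boundary $B_S$, so that $(S,B_S)$ is lc, $\kappa(K_S+B_S)=1$, $K_S+B_S\sim_{\RR,Z}0$, and $\vol_1(K_S+B_S)=v$. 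Because all the morphisms appearing in \eqref{diag: sm model} are over $Z$, the horizontal part $B_S^h$ is the strict transform of $B^h=0$, hence $B_S^h=0$; consequently $h$ is a relatively minimal elliptic fibration and $B_S=B_S^v=\sum_i c(z_i)h^*z_i$, and condition~(2) of Theorem~\ref{thm: lc surf Ivol} supplies, for each $i$, a prime divisor $E$ over $S$ with centre in $h^*z_i$ and $1-a(E,S,B_S)\in\{0,1\}\cup\{1\}=\{0,1\}$. Now $(S,B_S)$ satisfies verbatim the conditions of Theorem~\ref{thm: lc surf Ivol} for the coefficient set $\Gamma=\{0\}$: condition~(1) is vacuous since $B_S^h=0$, while condition~(2) holds because $\{0\}\cup\{1\}=\{0,1\}$ is exactly the membership already at hand. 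Hence the if-direction of Theorem~\ref{thm: lc surf Ivol} yields $v\in\Ivol_\lc^{\{0\}}(2,1)=\Ivol_\lc(2,1)$, and therefore $V_1^{\{0,1\}}\subseteq\Ivol_\lc(2,1)$.

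The argument is essentially bookkeeping; the one point that needs care, and which I regard as the only real obstacle, is to verify that the passage between a pair and its crepant relatively minimal smooth model (in both directions of Theorem~\ref{thm: lc surf Ivol}, the latter via Lemma~\ref{lem: model right coefficient}) preserves the vanishing of the horizontal boundary, so that the ``$V_1$'' slices on the two sides correspond to exactly the same family of smooth models $(S,B_S)$. Once this is in place, the equality follows from the identity $\{0\}\cup\{1\}=\{0,1\}=\{0,1\}\cup\{1\}$, which makes the coefficient condition~(2) of Theorem~\ref{thm: lc surf Ivol} insensitive to whether $\Gamma=\{0\}$ or $\Gamma=\{0,1\}$ as soon as the boundary is vertical.
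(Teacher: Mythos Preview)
Your proof is correct and follows essentially the same approach as the paper's: both sides are characterised via Theorem~\ref{thm: lc surf Ivol}, and for vertical boundaries condition~(1) is vacuous while condition~(2) depends only on $\Gamma\cup\{1\}$, which equals $\{0,1\}$ for both $\Gamma=\{0\}$ and $\Gamma=\{0,1\}$. The paper compresses this into a single sentence, whereas you spell out the two inclusions and the preservation of $B^h=0$ under the crepant passage, but the content is the same.
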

\begin{proof}
We observe that, both $V_1^{\{0,1\}}$ and $\Ivol_\lc(2, 1)$ consist of positive numbers of the form $v=\vol_1(K_S+B_S)$, where $h\colon S\rightarrow Z$ is a relatively minimal elliptic fibration satisfying the conditions specified in Theorem~\ref{thm: lc surf Ivol} with $\Gamma=\{0\}$ or $\Gamma=\{0,1\}$.
\end{proof}

\begin{lem}\label{lem: lc Ivol Gamma1 0}
Let $V_0^{\{0,1\}}$ be as in Notation~\ref{nota: V0 V1}. Then 
\begin{equation}\label{eq: g0 vol}
V_0^{\{0,1\}} =\left\{v\,\bigg|\, v= -2 + k + \lambda + \lambda^*, k\in\ZZ_{\geq 0},  \lambda\in \sum\sC_1, \lambda^*\in \sum_{\leq 2k}\sC_1^*\right\}
\end{equation}
where the involved sets $\sC_1$ and $\sC_1^*$ are recalled as follows:
\begin{itemize}
\item $\sC_1=\{1, 1-\frac{1}{n}\mid n\in \Zz_{>0}\}=\{1,0,\frac{1}{2},\frac{2}{3},\frac{3}{4},\frac{4}{5},\frac{5}{6},\frac{6}{7},\frac{7}{8},\frac{8}{9},\frac{9}{10},\frac{10}{11},\frac{11}{12},\frac{12}{13},\frac{13}{14},\dots\}$,
\item $\sC_1^*=\left\{\frac{1}{2}, \frac{\lfloor (n-1)/2\rfloor}{n}\,\,\bigg|\,\, n\in \Zz_{>0}\right\}=\left\{\frac{1}{2},0,\frac{1}{4},\frac{1}{3},\frac{3}{8},\frac{2}{5},\frac{5}{12}, \frac{3}{7},\frac{7}{16},\frac{4}{9},\frac{5}{11},\frac{6}{13},\frac{7}{15},\frac{8}{17},\frac{9}{19},\dots\right\}$.
\end{itemize}
\end{lem}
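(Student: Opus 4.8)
The plan is to feed an arbitrary pair $(X,B)\in\gP_\lc^{\{0,1\}}(2,1)$ with $B^h\neq 0$ into the recipe of Theorem~\ref{thm: lc surf Ivol}, obtaining a crepant smooth model $(S,B_S)$ with $h\colon S\to Z$ a relatively minimal $\PP^1$-bundle, $B_S^h$ a \emph{reduced} bisection ($B_S^h\cdot F=2$), $K_S+B_S\sim_{\RR,Z}0$, and $B_S^v=\sum_i c(z_i)h^*z_i$, where for each $i$ some divisorial valuation centered in $h^*z_i$ has log discrepancy in $\{0,1\}$ for $(S,B_S)$. Since the general fibre of $h$ is $(\PP^1,\text{two points})$, which is rigid, the moduli part of a canonical bundle formula (Theorem~\ref{thm: cbf}) for $(S,B_S)$ over $Z$ is numerically trivial; hence
\[
\vol_1(K_S+B_S)=2g(Z)-2+\deg B_Z,
\]
where $B_Z$ is the discriminant part of $(S,B_S)$ over $Z$, supported on the finitely many $z\in Z$ at which $B_S^h$ fails to be transverse to $h^{-1}(z)$ or at which $c(z)>0$. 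So the problem becomes entirely local over $Z$.

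Because $(S,B_S^h)$ is log canonical and $B_S^h$ is reduced, $B_S^h$ has at worst nodes as singularities (a tacnode or cusp would violate log canonicity with coefficient one), and with respect to a fibre $F=h^{-1}(z)$ — using $B_S^h\cdot F=2$ — exactly one of the following occurs: $B_S^h$ meets $F$ at two distinct points transverse to $F$; $B_S^h$ has a node on $F$; or $B_S^h$ is smooth and simply tangent to $F$. A direct computation with monomial (weighted) valuations, in the spirit of the blow-up bookkeeping in the proof of Proposition~\ref{prop: Ivol chi>0 lc}, determines in each case both the set of $c(z)$ compatible with log canonicity and with the existence of a centered valuation of log discrepancy in $\{0,1\}$, and the resulting coefficient $\mathrm{coeff}_z B_Z$: in the transverse case $c(z)\in\sC_1$ and $\mathrm{coeff}_z B_Z=c(z)$; in the nodal case $c(z)=0$ and $\mathrm{coeff}_z B_Z=1$; in the tangent case $c(z)\in\sC_1^*$ and $\mathrm{coeff}_z B_Z=\tfrac12+c(z)$. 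Finally, the tangent fibres are precisely the branch points of the degree-two map $(B_S^h)^{\nu}\to Z$ from the normalization of the bisection, so Riemann--Hurwitz forces their number $t$ to be even; thus $\tfrac t2\in\ZZ_{\geq 0}$. Setting $k:=2g(Z)+\tfrac t2+(\#\text{nodal fibres})\in\ZZ_{\geq 0}$, $\lambda:=\sum(\text{transverse contributions})\in\sum\sC_1$, and $\lambda^*:=\sum(\text{the }c(z)\text{ over tangent fibres})$, we obtain $\vol_1(K_S+B_S)=-2+k+\lambda+\lambda^*>0$, and since $t\le 2k$ by construction, $\lambda^*\in\sum_{\le t}\sC_1^*\subseteq\sum_{\le 2k}\sC_1^*$. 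This gives the inclusion of $V_0^{\{0,1\}}$ in the right-hand side of \eqref{eq: g0 vol}.

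For the reverse inclusion, given a positive number $-2+k+\lambda+\lambda^*$ of the stated form, write $\lambda=\sum_i c_i$ with $c_i\in\sC_1$ and $\lambda^*=\sum_{j=1}^N c_j^*$ with $c_j^*\in\sC_1^*$ and $N\le 2k$, pick an even integer $t$ with $N\le t$ and $\tfrac t2\le k$, and set $g(Z)=0$, $\#\text{nodal fibres}=k-\tfrac t2$. One then constructs a $\PP^1$-bundle $S$ over $\PP^1$ (a suitable Hirzebruch surface, modified by elementary transformations) carrying a reduced bisection $B_S^h$ with exactly $t$ simple tangent fibres and $k-\tfrac t2$ nodal fibres, decorates $N$ of the tangent fibres with the coefficients $c_j^*$ (the remaining $t-N$ with coefficient $0$) and suitable other fibres with the $c_i$, and checks that the resulting pair $(S,B_S)$ is log canonical with $\kappa(K_S+B_S)=1$, $\vol_1(K_S+B_S)=-2+k+\lambda+\lambda^*$, and satisfies the hypotheses of Theorem~\ref{thm: lc surf Ivol}; Lemma~\ref{lem: model right coefficient}, applied as in the proof of that theorem, then yields an honest pair in $\gP_\lc^{\{0,1\}}(2,1)$ with this Iitaka volume. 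The hardest part is the local classification in the middle paragraph — proving that the admissible vertical coefficients are \emph{exactly} $\sC_1$ at transverse or nodal fibres and \emph{exactly} $\sC_1^*$ at tangent fibres, which demands a careful case analysis of the germs $(S,B_S^h+cF)$ by weighted blow-ups; once this is in hand, the parity bound $t\le 2k$ and the explicit constructions are routine.
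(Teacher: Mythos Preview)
Your argument is correct, and it reaches the same formula as the paper, but the route is genuinely different. The paper works numerically on the ruled surface: writing $K_S\equiv -2C_0+(2g(Z)-2-e)F$ and $B_S^h\equiv 2C_0+dF$, it reads off $\vol_1(K_S+B_S)=2g(Z)-2+(d-e)+\sum_j c(z_j)$ directly, sets $k:=d-e$, and bounds the number of tangent fibres by $2p_a(B_S^h)-4g(Z)+2=2k$ via adjunction plus Riemann--Hurwitz. You instead use the canonical bundle formula, computing the discriminant coefficient at each fibre type (transverse: $c(z)$; nodal: $1$; tangent: $\tfrac12+c(z)$), and then package the global contribution as $-2+k+\lambda+\lambda^*$ with $k=2g(Z)+\tfrac{t}{2}+\#\{\text{nodal fibres}\}$. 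One checks that your $k$ equals the paper's $k$ plus $2g(Z)$; since $\tfrac12\in\sC_1$, both decompositions land in the same set. Your parity statement ($t$ even, via Riemann--Hurwitz on the normalization) is a clean way to ensure $k\in\ZZ$, and it handles the reducible case ($B_S^h$ two sections, so $t=0$) automatically.

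For the reverse inclusion, note that you do not actually need nodal fibres: since $0\in\sC_1^*$, you can always take $t=2k$ (a smooth bisection in $|2C_0+kF|$ on $\FF_0$, which is generic for $k\ge1$, or two disjoint sections for $k=0$) and pad $\lambda^*$ with zeros. This avoids the Severi-type existence question implicit in prescribing both the number of nodes and the number of tangencies. The paper likewise just takes a suitable $B^h\in|2C_0+dF|$ on $\FF_e$ without insisting on nodes.
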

\begin{proof}
The set $V_0^{\{0,1\}}$ consists of positive rational numbers $v$ of the following form
\[
v= 2g(Z)-2 - e + d + \sum_{1\leq j\leq s} c_j
\]
where the notation stems from the following situation:
\begin{enumerate}
    \item $h\colon S\rightarrow Z$ is a $\PP^1$-bundle, so $K_{S}\equiv -2C_0 + (2g(Z)-2-e)F$, where $C_0$ is a section of $f$ with the minimal self-intersection number $-e$, and $F$ is a fiber of $f$;
    \item $B$ is a divisor on $S$ such that $(S, B)$ is lc and $\kappa(K_S+B)=1$;
    \item $B=B^h + B^v$, where $B^h\equiv 2C_0 + dF$, and $B^v = \sum_{1\leq i\leq s} c(z_i) h^*z_i$ for $z_i\in Z$ such that there is a curve $E_i$ over $h^*z_i$ with $1-a(E_i, S, B)\in \{0,1\}$. 
\end{enumerate}

If $B^h$ is reducible, then it is a union of two sections, and by \cite[V, Propositions~2.20 and 2.21]{Har77}, we have in this case
    \[
    d \geq 
    \begin{cases}
        e & \text{if $e\geq 0$} \\
        0 & \text{if $e<0$}.
    \end{cases}
    \]
 If $B^h$ is irreducible, then $B^h$ is a $2$-seciton, and by \cite[V, Propositions~2.20 and 2.21]{Har77}, we have in this case
    \[
    d \geq 
    \begin{cases}
        2e & \text{if $e\geq 0$} \\
        e & \text{if $e<0$}.
    \end{cases}
    \]

Since $(S, B^h)$ is log canonical and $B^h$ is reduced, $B^h$ has at most nodes as singularities. If $x\in B^h$ is a node, then it is a nonklt center of $(S, B^h)$; since $(S, B)$ is lc, $x\notin \Supp(B^v)$. It follows that $\Supp B^v $ and $\Supp B^h$ only intersect at the smooth locus of $\Supp B^h$.

Locally around a fiber $h^*z\subset \Supp(B^v)$, one of the following patterns occurs for the intersection behaviour of $h^*z$ and $B^h$:
\begin{center}
    \begin{tikzpicture}
\begin{scope}
\draw (1.5,1)..controls (1,0.75) and (0,.5).. (0, 0);
\draw (1.5,-1)..controls (1,-.75) and (0,-.5).. (0, 0);
\draw (.5,-1) -- (.5,1);
\node[label=right: $B^h$] (Bh) at (1.5, 1){}; 
\node[label=below: $h^*z$] (Bv) at (.5, -1){}; 
\node[label=left: $x_1$] () at (.8,.7) {};
\node[label=left: $x_2$] () at (.8,-.7) {};
\end{scope}
\begin{scope}[xshift=6cm]
\draw (1.5,1)..controls (1,0.75) and (0,.5).. (0, 0);
\draw (1.5,-1)..controls (1,-.75) and (0,-.5).. (0, 0);
\draw (0,-1) -- (0,1);
\node[label=right: $B^h$] (Bh) at (1.5, 1){}; 
\node[label=below: $h^*z$] (Bv) at (0, -1){}; 
\node[label=right: $x$] (x) at (-.6,0) {};
\end{scope}
    \end{tikzpicture}
\end{center}
In the first picture, the fiber $h^*z$ and $B^h$ intersect transversely at two points $x_1, \,x_2$; in the second picture, the fiber $h^*z$ and $B^h$ are tangent at one point $x$ with local intersection number $(h^*z\cdot B^h)_{x}=2$. In order that $(S, B)$ is lc, and that there is an exceptional divisor $E$ over $h^*z$ such that $1-a(E, S, B)\in\{0,1\}$, the coefficient $c(z)$ of $h^*z$ in $B^v$ should come from $\sC_1=\{1, 1-\frac{1}{n}\mid n\in \Zz_{>0}\}$ in the first case, and from $\sC_1^*=\left\{\frac{1}{2}, \frac{\lfloor (n-1)/2\rfloor}{n}\,\,\bigg|\,\, n\in \Zz_{>0}\right\}$ in the second case, as specified in statement of the lemma. 

We relabel the fibers $F_i$ contained in $\Supp(B^v)$ so that $B^h$ and $F_i$ intersect transversely for $1\leq i\leq r$, and are tangent at a point for $r+1\leq i\leq s$. Note that fibers intersecting $B^h$ transversely are the generic case, and $r$ can be any non-negative number of choice. On the other hand, the fibers $h^*z$ that are tangent to $B^h$ are subject to the condition that $z$ is a branch point of $B^h\rightarrow Z$, and by the Riemann--Hurwitz formula, there are at most $2p_a(B^h)-4g(Z)+2$ of them. Denoting $g=g(Z)$, we have by the adjunction formula,
\[
2p_a(B^h) - 4g+2 = (K_X + B^h)\cdot B^h -4g+4 =(2g-2+d-e)F\cdot B^h - 4g+4 = 2(d-e).
\]

Setting $k=d-e$, we have
\[
\vol_1(K_X+B) =2g -2 + k + \lambda + \lambda^*= -2+k + \left(\lambda+\frac{1}{2}\cdot 4g\right) +\lambda^*, 
\]
where $k\in\ZZ_{\geq 0},  \lambda\in \sum\sC_1, \lambda^*\in \sum_{\leq 2k}\sC_1^*$, as described by the right hand side of \eqref{eq: g0 vol}.

On the other hand, each number in the set on the right hand side of \eqref{eq: g0 vol} can be realized by taking $S=\FF_e$, $Z=\PP^1$, and suitably chosen $B^h\in |2C_0+dF|$ with $d=k+e\geq e$ and $B^v$.
\end{proof}

\begin{prop}\label{prop: min lc Ivol 1 V0}
Let $V_0^{\{0,1\}}$ be as in Notation~\ref{nota: V0 V1}. Then the minimum and the minimal accumulation point of $V_0^{\{0,1\}}$ are as follows:
\[
\min V_0^{\{0,1\}} = \frac{1}{42}, \quad \min \left(V_0^{\{0,1\}}\right)' =\frac{1}{6}.
\]
\end{prop}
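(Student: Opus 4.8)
The plan is to minimize the expression
\[
v = -2 + k + \lambda + \lambda^*
\]
over the parameter set described in Lemma~\ref{lem: lc Ivol Gamma1 0}, namely $k\in\ZZ_{\geq 0}$, $\lambda\in \sum\sC_1$, and $\lambda^*\in\sum_{\leq 2k}\sC_1^*$. The key feature to exploit is the interlocking constraint: the number of $\sC_1^*$-summands allowed in $\lambda^*$ is at most $2k$, so increasing the "tangent contribution" $\lambda^*$ forces $k$ up, and conversely small $k$ forces $\lambda^*$ to be a short sum. Since $v>0$, we will organize the argument by the value of $k$.

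First I would record the elementary bounds: $\min(\sC_1)_{>0} = \frac12$, and for $\sC_1^*$ the relevant small values are $\min(\sC_1^*)_{>0} = \frac14$ and the next few values $\frac13, \frac38, \frac25,\dots$; also $\sup\sC_1 = 1$ and $\sup\sC_1^* = \frac12$. For $k=0$: then $\lambda^*=0$, so $v = -2 + \lambda$ with $\lambda\in\sum\sC_1$; this is exactly the situation governed by $\sC_1$-sums, and to get $v>0$ one needs $\lambda>2$, which quickly gives $v\geq -2 + (1+1+\tfrac12) = \tfrac12$ in the cheapest irrational-denominator configuration, or more carefully $v\geq \tfrac16$ as in earlier computations; in any case $v$ is bounded well below in this regime but the minimum is not here. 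For $k=1$: $v = -1 + \lambda + \lambda^*$ with $\lambda^*$ a sum of at most two elements of $\sC_1^*$, so $\lambda^*\leq 1$; to get $v>0$ with $\lambda$ small one takes $\lambda = 1-\frac1n$ and $\lambda^* $ near its supremum, leading to configurations like $v = -1 + \frac{n-1}{n} + \lambda^*$. Here one needs a careful case check over $\lambda^*\in\sum_{\leq 2}\sC_1^*$: writing $\lambda^* = \frac{\lfloor(a-1)/2\rfloor}{a} + \frac{\lfloor(b-1)/2\rfloor}{b}$ and $\lambda = 1-\frac1n$, minimize $-\frac1n + \lambda^*$. The candidate giving $\frac1{42}$ should come from $n$ small (say $n=2$, contributing $-\frac12$) together with $\lambda^*$ close to but exceeding $\frac12$, e.g. $\lambda^* = \frac13 + \frac3{16}$ or similar; I expect the true extremal datum to be $k=1$, $\lambda = \frac12$, $\lambda^* = $ two tangent contributions summing to something like $\frac{22}{42}$, realized geometrically by the elliptic-type configuration on $\FF_e$ with one $\II$-like and one $\III^*$-like behavior — matching the $\frac1{42}$ that also appears as the minimal Iitaka volume of a (possibly non-algebraic) properly elliptic surface. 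For $k\geq 2$: then $v = -2 + k + \lambda + \lambda^* \geq k-2 \geq 0$, and more precisely $v\geq k - 2 + \min(\text{positive part})$; for $k=2$ one gets $v \geq \min(\sC_1\cup\sC_1^*)_{>0}$-type bounds which are $\geq\frac14$, hence strictly larger than $\frac1{42}$, and for $k\geq 3$ clearly $v\geq 1$. So the minimum is attained in the $k\leq 1$ analysis, and I would assemble the case bounds to conclude $\min V_0^{\{0,1\}} = \frac1{42}$, exhibiting the explicit $(S,B)=(\FF_e, B^h+B^v)$ realizing it.

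For the minimal accumulation point, the plan is similar but now one wants $v$ to be a limit, which forces at least one summand to have unbounded denominator. Accumulation points of $\sum\sC_1$ are integers (since $1-\frac1n\to 1$), and accumulation points of $\sum_{\leq 2k}\sC_1^*$ are sums of at most $2k$ terms each in $\{$accumulation points of $\sC_1^*\} = \{\frac12\}$ (since $\frac{\lfloor(n-1)/2\rfloor}{n}\to\frac12$) together with finitely many exact $\sC_1^*$-values; so $(\sum_{\leq 2k}\sC_1^*)'$ consists of numbers of the form $(\text{sum of }\leq 2k-1\text{ exact }\sC_1^*\text{-values}) + \frac12$. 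Taking derived sets of the whole expression, $v\in (V_0^{\{0,1\}})'$ forces either $\lambda$ to approach an integer $\geq 1$ (costing at least $1$, after subtracting, bounded below by $-2+1 = -1$ plus positive stuff) or $\lambda^*$ to approach $\lambda_0^* + \frac12$ for some exact partial sum $\lambda_0^*$. The cheapest scenario: $k=1$, $\lambda = \frac12$ (or $\to 1$), and $\lambda^* \to \frac12$ from one tangent fiber degenerating; this gives $v \to -2 + 1 + \frac12 + \frac12 = 0$ in the wrong direction, so one must be more careful — the relevant limit configurations giving a positive accumulation point should be $k=1$, $\lambda \to 1$, $\lambda^*$ a single exact value plus nothing, or $k=0$ with $\lambda\to $ something; I expect after the case analysis the infimum among genuine accumulation points works out to $\frac16$, realized as a limit of configurations $-2 + 1 + (1-\frac1n) + (\frac13 + \text{small})$ or analogous, converging to $-2+1+1-0+\dots$; one then checks $\frac16\in(V_0^{\{0,1\}})'$ by exhibiting an explicit increasing sequence and checks nothing smaller is an accumulation point by the above structural description of derived sets of $\sum\sC_1$ and $\sum_{\leq 2k}\sC_1^*$.

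\textbf{Main obstacle.} The delicate part is the $k=1$ case in both halves: one must enumerate sums of at most two elements of $\sC_1^*$ — whose small values $0,\frac14,\frac13,\frac38,\frac25,\frac5{12},\dots$ interleave in a subtle way — combined with a single small element $1-\frac1n$ of $\sC_1$, and verify that the combined minimum of $-1 + (1-\frac1n) + \lambda^*$ subject to positivity is exactly $\frac1{42}$, with no smaller combination slipping through; the arithmetic of these rational sums (especially ruling out near-misses just below $\frac1{42}$) is where the care is needed, and similarly pinning down which limiting datum produces the exact accumulation point $\frac16$ rather than something slightly different requires a precise description of $(\sum_{\leq 2}\sC_1^*)'$.
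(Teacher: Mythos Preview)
Your overall scaffolding --- split by $k$, use the constraint $\lambda^*\in\sum_{\leq 2k}\sC_1^*$, identify accumulation via the limit points $1\in\overline{\sC_1}$ and $\tfrac12\in\overline{\sC_1^*}$ --- matches the paper's approach. But you have located the minimum in the wrong case, and this is a genuine error, not just a gap in the write-up.

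\textbf{The minimum lives at $k=0$, not $k=1$.} When $k=0$ you have $\lambda^*=0$ and $v=-2+\lambda$ with $\lambda\in\sum\sC_1$. Here $\sC_1=\{1,\,1-\tfrac1n\}$, and the smallest sum exceeding $2$ is the classical $(2,3,7)$ datum
\[
v \;=\; -2+\tfrac12+\tfrac23+\tfrac67 \;=\; \tfrac{1}{42},
\]
exactly the number you recognized from the non-algebraic elliptic surface remark. Your claim that in this regime ``$v\geq\tfrac16$'' and ``the minimum is not here'' is simply wrong; the minimum \emph{is} here. By contrast, the paper shows that for $k=1$ one has $\lambda^*\le 1$, hence $\lambda>0$, and the minimum there is
\[
v=-1+\lambda+\lambda^*\ \geq\ -1+\tfrac23+\tfrac38 \;=\; \tfrac{1}{24},
\]
strictly larger than $\tfrac1{42}$. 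So the ``delicate enumeration of sums of at most two elements of $\sC_1^*$'' you flag as the main obstacle is in fact not where the action is, and no near-misses below $\tfrac1{42}$ need to be ruled out in the $k=1$ case.

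\textbf{For the accumulation point.} The paper's argument is that an accumulation point forces either one $\sC_1$-summand to be the limit value $1$, or one $\sC_1^*$-summand to be the limit value $\tfrac12$. In the first situation with $k=0$ one gets $v=-1+\lambda_2\geq -1+\tfrac12+\tfrac23=\tfrac16$; in the second situation (necessarily $k\ge 1$) with $k=1$ one gets $v=-\tfrac12+\lambda+\lambda_2^*$, and since $\lambda>0$ forces $\lambda\ge\tfrac12$, a short check again gives $v\ge\tfrac16$. Your sketch correctly describes the derived-set structure but wanders (e.g.\ the ``$v\to 0$ in the wrong direction'' aside) because you have not yet fixed the $k=0$ analysis; once that is corrected, both halves fall out cleanly and the realizing sequence for $\tfrac16$ is simply $-2+(1-\tfrac1n)+\tfrac12+\tfrac23$.
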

\begin{proof}
Given the explicit description $v=-2 + k + \lambda + \lambda^*$ for the elements of $V_0^{\{0,1\}}$ in Lemma~\ref{lem: lc Ivol Gamma1 0}, it is then straightforward to find the minimum:
\begin{itemize}
    \item If $k\geq 2$, then 
\[
v\geq \min(\sC_1\cup\sC_1^*)_{>0} \geq \frac{1}{4}.
\] 
\item If $k=1$, then $\lambda^*\in\sum_{\leq 2} \sC_1^*$. Therefore, $\lambda^*\leq 2\max \sC_1^*=1$, and hence $\lambda>0$. Now it is straightforward to check that
\[
v=-1+\lambda + \lambda^* \geq -1 + \frac{2}{3} + \frac{3}{8} = \frac{1}{24}.
\]
\item If $k=0$, then $\lambda^*=0$, and it is well-known that
\[
v\geq -2 + \frac{1}{2} + \frac{2}{3}+\frac{6}{7}=\frac{1}{42}.
\]
\end{itemize}

In conclusion, we have $\min V_0^{\{0,1\}} =\frac{1}{42}$.

If $v$ is an accumulation point of $V_0^{\{0,1\}}$, then one of the following cases occur:
\begin{enumerate}
    \item $\lambda=\lambda_1+\lambda_2$ with $\lambda_1 = \max\,\sC_1=1$ and $\lambda_2\in \sum\sC_1$. In this case, if $k\geq 1$, then 
\[
v =-1+k+\lambda_2+\lambda^*\geq \min(\sC_1\cup\sC_1^*)_{>0} \geq \frac{1}{4}.
\]
If $k=0$, then $\lambda^*=0$, and hence
\[
v = -2 + \lambda = -1 + \lambda_2\geq -1+\frac{1}{2} + \frac{2}{3}=\frac{1}{6}.
\]
\item $\lambda^*=\lambda_1^*+ \lambda_2^*$ with $\lambda_1^*= \max\,\sC_1^*=\frac{1}{2}$ and $\lambda_2^*\in\sum\sC_1^*$. In this case, we have $k\geq 1$. If $k\geq 2$, then 
\[
v=\lambda+\lambda^*\geq \min(\sC_1\cup \sC_1^*)_{>0}=\frac{1}{4}.
\]
If $k=1$, then $\lambda^*\leq 2\max\,\sC_1^*=1$. Since $v=-1+\lambda+\lambda^*>0$, we must have $\lambda>0$, and it follows that
\[
v=-2+k+\lambda +\lambda^*=-1+\lambda + \frac{1}{2}+\lambda_2^*\geq \frac{1}{6}.
\]
\end{enumerate}
\end{proof}


\begin{thm}\label{thm: min lc Ivol 1}
The minimum and the minimal accumulation point of $\Ivol_\lc^{\{0,1\}}(2,1)$ are the same as those of $\Ivol_\lc(2,1)$, which are
\[
\min \Ivol_\lc^{\{0,1\}}(2,1) = \frac{1}{671},\quad \min \Ivol_\lc^{\{0,1\}}(2,1)' = \frac{1}{66}.
\]
\end{thm}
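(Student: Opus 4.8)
The statement is almost entirely a bookkeeping consequence of the structural results already established. By Notation~\ref{nota: V0 V1} we have the decomposition
\[
\Ivol_\lc^{\{0,1\}}(2,1) = V_0^{\{0,1\}}\cup V_1^{\{0,1\}},
\]
and Lemma~\ref{lem: lc Ivol Gamma1 1} identifies $V_1^{\{0,1\}}$ with $\Ivol_\lc(2,1)$, whose minimum $\tfrac{1}{671}$ and minimal accumulation point $\tfrac{1}{66}$ are computed in Theorem~\ref{thm: min lc Ivol 0}. Meanwhile Proposition~\ref{prop: min lc Ivol 1 V0} gives $\min V_0^{\{0,1\}}=\tfrac{1}{42}$ and $\min\big(V_0^{\{0,1\}}\big)'=\tfrac{1}{6}$. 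So the only thing to do is combine these.

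First I would handle the minimum: since the minimum of a finite union of sets bounded below is the smaller of the two minima, $\min\Ivol_\lc^{\{0,1\}}(2,1)=\min\{\min V_0^{\{0,1\}},\min V_1^{\{0,1\}}\}=\min\{\tfrac{1}{42},\tfrac{1}{671}\}=\tfrac{1}{671}$, and this value is actually attained because it is attained in $V_1^{\{0,1\}}=\Ivol_\lc(2,1)\subseteq\Ivol_\lc^{\{0,1\}}(2,1)$ (by the example in the proof of Theorem~\ref{thm: min lc Ivol 0}: a rational elliptic surface with fibers of type $\II$, $\III^*$, $\I_1$).

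Next I would handle the minimal accumulation point, using that the derived set of a union is the union of the derived sets: $\big(\Ivol_\lc^{\{0,1\}}(2,1)\big)' = \big(V_0^{\{0,1\}}\big)'\cup\big(V_1^{\{0,1\}}\big)'$. Hence its minimum is $\min\{\min(V_0^{\{0,1\}})',\ \min(V_1^{\{0,1\}})'\}=\min\{\tfrac{1}{6},\ \tfrac{1}{66}\}=\tfrac{1}{66}$, again realized inside $V_1^{\{0,1\}}$. One small point worth recording explicitly: $\tfrac{1}{66}<\tfrac{1}{42}=\min V_0^{\{0,1\}}$, so no element of $V_0^{\{0,1\}}$ — accumulation point or not — can undercut $\tfrac{1}{66}$, which makes the $V_0$ contribution irrelevant to both conclusions.

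There is essentially no obstacle here; the substantive work is entirely contained in Lemmas~\ref{lem: lc Ivol Gamma1 1}, \ref{lem: lc Ivol Gamma1 0}, Proposition~\ref{prop: min lc Ivol 1 V0}, and Theorem~\ref{thm: min lc Ivol 0}. The ``hard part'', if any, was already dispatched there (the explicit coefficient sets $\sC_i,\sC_i^*$ and the case analysis in the proof of Claim~\ref{claim: min V_lc(1,0)}); what remains is to assemble the two pieces and note the numerical comparisons $\tfrac{1}{671}<\tfrac{1}{42}$ and $\tfrac{1}{66}<\tfrac{1}{42}$.
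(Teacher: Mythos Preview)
Your proposal is correct and follows essentially the same approach as the paper: decompose $\Ivol_\lc^{\{0,1\}}(2,1)$ as $V_0^{\{0,1\}}\cup V_1^{\{0,1\}}$, identify $V_1^{\{0,1\}}=\Ivol_\lc(2,1)$ via Lemma~\ref{lem: lc Ivol Gamma1 1}, and compare the values $\tfrac{1}{671},\tfrac{1}{66}$ from Theorem~\ref{thm: min lc Ivol 0} against $\tfrac{1}{42},\tfrac{1}{6}$ from Proposition~\ref{prop: min lc Ivol 1 V0}. Your write-up is slightly more explicit than the paper's (spelling out that the derived set of a union is the union of derived sets, and noting $\tfrac{1}{66}<\tfrac{1}{42}$), but the argument is the same.
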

\begin{proof}
    By Lemma~\ref{lem: lc Ivol Gamma1 1}, we have 
    \[
    \Ivol_\lc^{\{0,1\}}(2,1) =\Ivol_\lc(2,1) \cup V_0^{\{0,1\}},
    \]
    and $V_0^{\{0,1\}}$ is described by Lemma~\ref{lem: lc Ivol Gamma1 0}. Now, by Theorem~\ref{thm: min lc Ivol 0}, the minimum and the minimal accumulation point of $\Ivol_\lc(2,1)$ are $\frac{1}{671}$ and $\frac{1}{66}$ respectively, which are smaller than the respective values $\frac{1}{42}$ and $\frac{1}{6}$ for $V_0^{\{0,1\}}$. The theorem follows.
\end{proof}

\begin{rmk}
For a fixed $v\in \QQ_{>0}$, the set of lc surface pairs $(X, B)$ with $\kappa(K_X+B)=1$, $\vol_1(K_X+B)=v$, $f\colon X\rightarrow Z$ a $\PP^1$-bundle is usually not bounded (compare Remark~\ref{rmk: not bdd}). For example, we may take a Hirzebruch surface $f\colon X=\FF_e\rightarrow \PP^1$ with a section $C_0$ such that $C_0^2=-e\leq 0$, and take $B=C_0+C_1$, where $C_1\in |C_0+(e+3)F|$ is general and $F$ is a fiber of $f$. Then $(X, B)\in \gP_\lc^{\{0, 1\}}(2,1)$ with Iitaka volume $\vol_1(K_X+B) = 1$. However, as $e$ varies, $X$ and hence also $(X, B)$ do not form a bounded family. This phenomenon deviates from that in the klt case, which was treated in \cite[Theorem~6.1]{Fil23}.
\end{rmk}

\section{Remarks on Iitaka volumes for $-K_X$}\label{sec: anticanonical}

\begin{lem}\label{lem:gfiso}
Let $(X,B)$ be an lc pair, and $f\colon X\rightarrow Z$ a projective morphism with $\dim Z>0$ such that $K_X+B\sim_\Rr D$ for some vertical$/Z$ $\Rr$-Cartier $\Rr$-divisor $D$ on $X$. Suppose that $\psi:X\dashrightarrow X'$ is a part of $(K_X+B)$-MMP over $Z.$ Then there exists a nonempty open subset $U\subset Z$ such that $X_z$ is isomorphic to $X'_z$ for any $z\in U$, where $X_z$ (resp., $X'_z$) is the fiber of $X\to Z$ (resp., $X'\to Z$) over $z$.
\end{lem}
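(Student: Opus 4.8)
The plan is to exploit the fact that $K_X+B$ is $\Rr$-linearly equivalent to a \emph{vertical} divisor $D$ over $Z$, so that over the generic point $\eta_Z$ of $Z$ the divisor $K_X+B$ is $\Rr$-linearly trivial; hence the restriction of the MMP $\psi\colon X\dashrightarrow X'$ to the generic fiber is an MMP for a divisor $\Rr$-linearly equivalent to $0$, which can neither contract a divisor nor perform a flip over $\eta_Z$. More precisely, I would first note that each elementary step of the $(K_X+B)$-MMP over $Z$ is either a divisorial contraction or a flip associated to a $(K_X+B)$-negative extremal ray; every such extremal ray is spanned by a curve $C$ with $(K_X+B)\cdot C<0$, and since $(K_X+B)\sim_\Rr D$ with $D$ vertical, such a curve must be vertical over $Z$ (a curve dominating $Z$ has nonnegative, in fact zero up to the $f^*$-twist, intersection with a vertical divisor). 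Thus every extremal ray contracted in the MMP lies over a proper closed subset of $Z$.

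The key step is then to track the locus in $Z$ over which something actually happens. Since the MMP $X\dashrightarrow X'$ consists of finitely many steps $X=X_0\dashrightarrow X_1\dashrightarrow\cdots\dashrightarrow X_n=X'$, and each step $X_i\dashrightarrow X_{i+1}$ is an isomorphism away from the image in $Z$ of its (finitely many) contracted curves, I would let $T\subset Z$ be the union over all steps of the closed images of the exceptional loci; this is a proper closed subset of $Z$ because each such image is a proper closed subset (the contracted curves being vertical and the exceptional loci being proper closed in the respective total spaces, with images of dimension $<\dim Z$). Set $U:=Z\setminus T$, a nonempty open subset. Over $U$ each step $X_i|_U\dashrightarrow X_{i+1}|_U$ is an isomorphism, hence $X|_U\cong X'|_U$ over $U$, and in particular $X_z\cong X'_z$ for every $z\in U$.

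A minor technical point to be careful about is that a priori a flip could be ``invisible'' on $Z$ if the flipping locus maps onto $Z$; this is precisely what the verticality of $D$ rules out, since a flipping or flipped curve is $(K_X+B)$-negative resp.\ positive, hence cannot be horizontal over $Z$ (a horizontal curve $C$ satisfies $D\cdot C = f^*(\text{something})\cdot C$, which would force $(K_X+B)\cdot C$ to have a sign governed by that pullback, not by the birational modification). I would spell this out using the projection formula: if $D\sim_{\Rr} f^*H$ over a neighborhood of $\eta_Z$ would be the cleanest phrasing, but since $D$ need only be vertical, it suffices that $D\cdot C\ge 0$ for any curve $C$ not contained in a fiber — or more simply, restrict everything to the generic fiber $X_{\eta_Z}$, on which $K_X+B$ restricts to something $\Rr$-linearly equivalent to $0$, so no step of the MMP can touch $X_{\eta_Z}$; by spreading out, no step touches the fibers over a dense open $U\subset Z$. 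The main obstacle, and the only place requiring a little care, is making the ``spreading out'' / constructibility argument precise: one must argue that the locus where consecutive models differ is constructible in $Z$ and, being disjoint from the generic point, is contained in a proper closed subset — this is standard but should be stated cleanly, perhaps invoking that the indeterminacy locus of a birational map and the images of exceptional divisors are closed.
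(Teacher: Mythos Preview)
Your overall plan matches the paper's --- show that the exceptional locus of each MMP step does not dominate $Z$, then take $U$ to be the complement of the images --- but the justification in your second paragraph has a gap. From ``all contracted curves are vertical'' alone you cannot conclude that the image of the exceptional locus has dimension $<\dim Z$: a divisorial exceptional divisor or a flipping locus can in principle dominate $Z$ while being swept out by vertical curves. (Also, contracted curves are vertical simply because the MMP is \emph{over} $Z$; your proposed reason, that horizontal curves meet vertical divisors nonnegatively, is false when $D$ is not assumed effective.) What actually forces the exceptional locus over a proper closed subset is the inequality $D\cdot C=(K_X+B)\cdot C<0$ for each contracted curve $C$: since $D$ is vertical, for any $z\notin f(\Supp D)$ the fiber $X_z$ is disjoint from $\Supp D$, hence $D\cdot C=0$ for every curve $C\subset X_z$; so every contracted curve lies over $f(\Supp D)$, and therefore so does the entire exceptional locus. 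This is precisely your generic-fiber alternative, and it is the correct argument --- it should replace, not merely supplement, the reasoning in your second paragraph.

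The paper implements the same idea with a short trick that avoids any spreading-out language: choose an ample divisor $A_Z$ on $Z$ with $\Supp A_Z\supset f(\Supp D)$ and large enough that $A+D\geq 0$, where $A:=f^*A_Z$. Since a contracted curve $C$ is vertical, $A\cdot C=0$, so $(A+D)\cdot C=(K_X+B)\cdot C<0$; as $A+D$ is \emph{effective}, this forces $C\subset\Supp(A+D)$, whose image in $Z$ lies in $\Supp A_Z$ and hence has codimension $\geq 1$.
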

\begin{proof}
It is enough to show the lemma assuming that $\psi$ is either a flip over $Z$ or a divisorial contraction over $Z$. Let $A_Z$ be an ample divisor on $Z$ such that $\Supp A_Z\supset f(\Supp D)$ and $A+D\ge0$, where $A:=f^*A_Z$. If $\psi$ is a flip, then we let $C$ be a flipping curve, and if $\psi$ is a divisorial contraction, then let $C$ be a contracted curve. In both cases, we have that
$$0>(K_X+B)\cdot C=(K_X+B+A)\cdot C=(A+D)\cdot C$$ 
which implies that the flipping locus (resp., $\Exc(\psi)$) is contained in $\Supp D$. Therefore the image of flipping locus (resp., $\Exc(\psi)$) on $Z$ has codimension $\ge1$. Then we can find a proper subset of $Z$ satisfying the property. 
\end{proof}

\begin{lem}\label{lem:cbfindex2}
Let $d$ be a positive integer, $u$ a positive rational numbers, and $\Gamma\subset[0,1]$ a DCC subset. Then there exists a positive integer $p$ depending only on $d,u,$ and $\Gamma$ satisfying the following. Assume that
\begin{enumerate}
    \item  $(X,B)$ is a projective klt pair of dimension $d$ with $B\in\Gamma$,
    \item $f\colon X\to Z$ is a contraction with $\dim Z>0$,
    \item $K_X+B\sim_{\Rr,Z}0$ and $K_X+B\sim_\Qq0$ over the generic point $\eta_Z$ of $Z$, and
    \item there is a $\Qq$-Cartier $\ZZ$-divisor $N\ge0$ on $X$ such that $\vol(N|_F)=u$ for the general fibers $F$ of $f$.
\end{enumerate}
Then we can choose a moduli part $\bM$ of the canonical bundle formula for $(X,B)$ over $Z$ such that $p\bM$ is b-Cartier, and 
$$p(K_X+B)\sim pf^*(K_Z+B_Z+\bM_Z),$$
where $B_Z$ is the discriminant part of the canonical bundle formula for $(X,B)$ over $Z$.
\end{lem}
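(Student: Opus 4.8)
The plan is to run the argument of the proof of Lemma~\ref{lem: cbfindex} almost verbatim, the one new ingredient being that the hypothesis $\dim X-\dim Z\le 2$ used there to control the index of the log canonical divisor on a general fibre gets replaced by a boundedness statement supplied by the polarisation $N$. Throughout, let $F$ be a general fibre of $f$ and put $K_F+B_F:=(K_X+B)|_F$; since $F$ is general, $B_F=B|_F$, so $(F,B_F)$ is klt with $B_F\in\Gamma$, and by hypothesis $K_F+B_F\sim_{\QQ}0$.

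First I would bound the coefficients of $B_F$: exactly as in the proof of Lemma~\ref{lem: cbfindex}, combining the proof of \cite[Lemma~5.1]{CHL24} with the global ACC for numerically trivial pairs \cite[Theorem~D]{HMX14}, the coefficients of $B_F$ lie in a finite set $\Gamma_0\subset\Gamma\cap\QQ$ depending only on $\Gamma$; in particular they are bounded away from $1$. Next I would observe that $N_F:=N|_F$ is an effective $\QQ$-Cartier $\ZZ$-divisor with $\vol(N_F)=u>0$, hence big, so that $(F,B_F)$ is a klt Calabi--Yau pair of dimension $\le d$ carrying a big polarisation of fixed volume.

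The heart of the argument, and the step I expect to be the main obstacle, is to produce a positive integer $p_0$, depending only on $d$, $u$ and $\Gamma$, with $p_0(K_F+B_F)$ Cartier and $p_0(K_F+B_F)\sim 0$; equivalently, $p_0(K_X+B)\sim 0$ over the generic point $\eta_Z$ of $Z$. For this I would first replace $N_F$ by a semiample (nef and big) polarisation, for instance by running a $(K_F+B_F+\delta N_F)$-MMP with $0<\delta\ll 1$: this is a klt MMP of log general type, it is automatically an $N_F$-MMP because $K_F+B_F\equiv 0$, the Calabi--Yau condition and the coefficients of $B_F$ are preserved under pushforward, and the volume of the polarisation is unchanged. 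Then I would invoke the boundedness of polarised Calabi--Yau pairs (\cite{Bir21}; cf.\ the techniques behind \cite[Theorem~1.7]{Bir21}) to conclude that all the resulting pairs $(F,B_F)$, together with their polarisations, form a single bounded family, and hence admit a uniform $p_0$. The delicate points to check carefully will be that this MMP does not destroy the Calabi--Yau condition nor move the coefficients out of the fixed finite set $\Gamma_0$, that $(F,B_F)$ may be taken $\epsilon$-lc for some $\epsilon=\epsilon(d,u,\Gamma)>0$ so that the boundedness input applies (here one uses that the coefficients of $B_F$ are bounded away from $1$ together with the Calabi--Yau condition and the bounded polarisation), and that an effective $\QQ$-Cartier integral $N_F$, rather than an ample one, suffices after the modification.

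Granting $p_0$, the remainder is the template of Lemma~\ref{lem: cbfindex}. Since $p_0(K_X+B)\sim 0$ over $\eta_Z$, choose a nonzero $\varphi\in K(X)$ with $p_0L:=p_0(K_X+B)+(\varphi)_X$ vanishing near $\eta_Z$; then $L$ is vertical over $Z$ and $p_0L\sim p_0(K_X+B)\sim_{\RR,Z}0$, so by \cite[Lemma~2.11]{Li20} we may write $L=f^{*}L_Z$ for an $\RR$-Cartier $\RR$-divisor $L_Z$ on $Z$. Letting $B_Z$ be the discriminant part of the canonical bundle formula for $(X,B)$ over $Z$ and setting $\bM_Z:=L_Z-K_Z-B_Z$, we get $p_0(K_X+B)\sim p_0 f^{*}(K_Z+B_Z+\bM_Z)$. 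Finally, running the argument of the proof of \cite[Proposition~3.1]{CHL24} — in which the boundedness of the fibres takes over the role played there by complements in dimension $\le 3$, bounding the order of the monodromy of the moduli $\rb$-divisor — produces an integer $q=q(d)$ such that $p:=qp_0$ makes $p\bM$ b-Cartier; this $p$ depends only on $d$, $u$ and $\Gamma$, as required.
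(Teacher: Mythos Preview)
Your reduction to the general fibre is sound through step 5, and the use of boundedness of polarised Calabi--Yau pairs to produce a uniform torsion index $p_0$ on $F$ is correct in spirit. But step 6 is a genuine gap. Bounding the torsion index of $K_F+B_F$ on the \emph{general} fibre does not by itself bound the b-Cartier index of the moduli b-divisor $\bM$: the latter encodes monodromy around special fibres and depends on the whole family, not only on the generic member. The argument of \cite[Proposition~3.1]{CHL24} that you invoke does not take ``bounded general fibre'' as its input; it uses the existence of $n$-complements for pairs of dimension $\dim X$, applied locally over codimension-one points of (a model of) the base, and it is precisely this that controls the degenerations. Replacing that input by boundedness of the general fibre alone is, in effect, assuming a case of the Effective Adjunction (b-semiampleness) Conjecture of Prokhorov--Shokurov, which is open in this generality. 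Running the MMP on the single fibre $F$, as you do in step 3, does nothing to the total space and so cannot supply the family-level control required.

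The paper's proof bypasses this by working relatively over $Z$ throughout. After a small $\QQ$-factorialisation, one runs a $(K_Y+B_Y^h)$-MMP over $Z$ (Lemma~\ref{lem:gfiso} ensures general fibres are unchanged), passes to the ample model $Z'$ of $K_{Y'}+B_{Y'}^h$ over $Z$ (which is birational to $Z$), and then takes the ample model of $N$ over $Z'$. This produces a new klt-trivial fibration $f'\colon X'\to Z'$ with $N'$ genuinely ample over $Z'$ and $\vol(N'|_{F'})=u$, to which \cite[Lemma~7.4]{Bir21} applies directly and yields a moduli part $\bM$ with $p\bM$ b-Cartier. One then checks that the same $\bM$ serves as a moduli part for $(X',B')$ over $Z'$ (the difference $B'-(B^h)'$ is vertical and pulled back from $Z'$, so only the discriminant shifts, by \cite[Lemma~7.4]{PS09}), and hence for $(X,B)$ over $Z$ since the two fibrations are crepant birational. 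The moral is that the polarisation must be made ample \emph{over the base}, not just on a single fibre, before Birkar's result can be invoked.
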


\begin{proof}
Let $(Y,B_Y)$ be a small $\Qq$-factorialization of $(X,B)$, and $N_Y$ the strict transform of $N$ on $Y$. Note that $N_{Y}$ is exactly the pullback of $N$. In particular, $\vol(N_Y|_{F_Y})=u$ for the general fibers $F_Y$ of $Y\to Z$. Since $K_X+B\sim_\Qq0$ over $\eta_Z$, $K_Y+B_Y\sim_\Qq0$ over $\eta_Z$. This implies that $B^h_Y\in\Qq$, where $B^h_Y$ denotes the horizontal$/Z$ part of $B_Y$.

We may run an MMP on $K_Y+B_Y^h$ over $Z$ which terminates with a good minimal model $Y'$ over $Z$ by \cite{Bir12,HX13}, i.e., $K_{Y'}+B_{Y'}^h$ is semi-ample over $Z$, where $B_{Y'}^h$ is the strict transform of $B_Y$ on $Y'$. Let $g'\colon Y'\to Z'$ be the ample model of $K_{Y'}+B_{Y'}$ over $Z$, then it is clear that $Z'\to Z$ is birational. Note that by Lemma \ref{lem:gfiso}, this MMP does not modify the general fibers $F_Y$ of $Y\to Z$. Denote by $N_{Y'}$ the strict transform of $N_Y$ on $Y'$, then $\vol(N_{Y'}|_{F_{Y'}})=u$ for the general fibers $F_{Y'}$ of $g'$. According to \cite{BCHM10}, there exists a ample model $X'$ of $N_{Y'}$ over $Z'$, such that the strict transform $N'$ of $N_{Y'}$ on $X'$ is ample over $Z'$. Let $(B^h)'$ be the strict transform of $B^h_Y$ on $X'$. From our construction, we see that $(X',(B^h)')$ is a klt pair, $K_{X'}+(B^h)'\sim_{\Qq,Z'}0$, $(B^h)'\in\Gamma\cap\Qq$, $N'$ is ample over $Z'$, and $\vol(N'|_{F'})=u$ for the general fibers $F'$ of the morphism $f'\colon X'\to Z'$. By Lemma \cite[Lemma 7.4]{Bir21}, there is a positive integer $p$ which only depends on $d,u$, and $\Gamma$ such that we can choose a moduli part $\bM$ of the canonical bundle formula for $(X',(B^h)')$ over $Z'$ such that $p\bM$ is b-Cartier, and
$$p\left(K_{X'}+(B^h)'\right)\sim pf'^*\left(K_{Z'}+(B^h)'_{Z'}+\bM_{Z'}\right),$$
where $(B^h)'_{Z'}$ is the discriminant part of the canonical bundle formula for $(X',(B^h)')$ over $Z'$.

Let $B'$ be the strict transform of $B_{Y}$ on $X'$, then $(X',B')$ is crepant to $(X,B)$ as $K_X+B\sim_{\Rr,Z}0$; see Lemma~\ref{lem: R-trivial}. In particular, $K_{X'}+B'\sim_{\Rr,Z}0$ and $K_{X'}+B'\sim_{\Rr,Z'}0$. Denote by $B'_{Z'}$ (resp., $B_Z$) the discriminant part of the canonical bundle formula for $(X',B')$ over $Z'$ (resp., for $(X,B)$ over $Z$). Since $B'-(B^h)'$ is vertical over $Z'$ and $B'-(B^h)'\sim_\Rr0$, $B'-(B^h)'=f'^*L'$ for some $\Rr$-Cartier $\Rr$-divisor $L'$ (cf. \cite[Lemma 2.11]{Li20}). Then one can see that $B'_{Z'}=(B^h)_{Z'}+L'$ by \cite[Lemma 7.4]{PS09}, and
$$p(K_{X'}+B')=p\left(K_{X'}+(B^h)'\right)+p\left(B'-(B^h)'\right)\sim pf'^*\left(K_{Z'}+B'_{Z'}+\bM_{Z'}\right).$$ 
Therefore we can also take $\bM$ to be the moduli part of the canonical bundle formula for $(X',B')$ over $Z'$ which implies that $\bM$ is also the moduli part of the canonical bundle formula for $(X,B)$ over $Z$, and $p(K_{X}+B)\sim pf^*(K_Z+B_Z+\bM_Z).$ This finishes the proof.
\end{proof}

\begin{proof}[Proof of Theorem \ref{thm: anti Iitaka ACC}]
Since $N$ is big over $Z$, according to \cite{BCHM10}, there exists a ample model $X'$ of $N$ over $Z$, such that $N'$ is ample over $Z$, where $N'$ is the strict transform of $N$ on $X'$. Note that $-(K_X+B)$ has the same Iitaka volume as $-(K_{X'}+B')$, and $\vol(N|_F)=\vol(N'|_{F'})$, where $B'$ is the strict transform of $B$ on $X'$, and $F'$ is the general fibers of $X'\to Z$. By \cite[Theorem 1.1]{Bir20}, there exists a positive integer $m$ which only depends on $\dim F$ and $\epsilon$, such that $|mN'|_{F}|$ define a birational map. In particular, by upper-semicontinuity of cohomology and \cite[Lemma 3.2.1]{BCHM10}, there exists $0\le N_0'\sim mN'$. Replacing $(X,B),N,u$ by $(X',B'),N_0',m^{\dim F'}u$ respectively, we may assume that $N\ge 0$, and $N$ is ample over $Z$.

Let $B_Z$ be the discriminant part of the canonical bundle formula for $(X,B)$ over $Z$. According to \cite{HMX14}, $B_Z\in\tGamma$ for some DCC set $\tGamma\subset[0,1]$ which only depends on $d$ and $\Gamma$. Since 
$\kappa(-(K_X+B))\ge0$, we can see that $K_X+B\sim_\Qq0$ over the generic point of $Z$. Then by Lemma \ref{lem:cbfindex2}, one can find a positive integer $p$ which only depends on $d,u,$ and $\Gamma$ such that we can choose a moduli part $\bM$ of the canonical bundle formula for $(X,B)$ over $Z$ such that $p\bM$ is b-Cartier, and
$$p(K_X+B)\sim pf^*(K_Z+B_Z+\bM_Z).$$ 
Furthermore, by \cite[Theorem 1.8]{Bir23}, $(Z,B_Z+\bM)$ is a $\delta$-lc generalized pair for some $\delta>0$ which only depends on $d,\epsilon$, and $u$. By \cite[II Lemma 2.11]{Nak04}, the Iitaka volume of $-(K_X+B)$ equals to the Iitaka volume of $-(K_Z+B_Z+\bM_Z)$, we can conclude our result by \cite[Theorem 1.4]{HL23b}.
\end{proof}

The following example shows that the condition (2) of Theorem~\ref{thm: anti Iitaka ACC} is necessary.

\begin{ex}
Take $Y=\mathbb{P}(p,q,r)$, where $p,\,q,\,r\in\ZZ_{>0}$ are prime to one another, and let $Z$ be the minimal resolution of $Y$. Take $X=Z\times C$, where $C$ is an elliptic curve, and $N$ a section of $X$ over $Z$. Then $X$ is smooth, $\vol(N|_{C\times\{z\}})=1$ for any $z\in Z$, but $\vol_2(-K_X)=\vol(-K_Z)=\vol(-K_Y)=\frac{(p+q+r)^2}{pqr}$ forms a dense subset of $\mathbb{R}_{>0}$ as $p,q,r$ vary.
\end{ex}

We conclude this section with a precise description of $\vol_1(-K_X)$ for smooth rational elliptic surfaces. 
\begin{prop}
Let $f\colon X\rightarrow Z$ be a smooth rational elliptic surface. Then $\vol_1(-K_X)\in\{\frac{1}{m}\mid m\in \Zz_{>0}\}$. Conversely, every $\frac{1}{m}$ can be realized as $\vol_1(-K_X)$ for some rational elliptic surface $X$.
\end{prop}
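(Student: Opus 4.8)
The plan is to reduce everything to Kodaira's canonical bundle formula for the (relatively minimal) elliptic fibration $f\colon X\to Z$; here I read ``rational elliptic surface'' in the standard sense, so that $X$ is relatively minimal over $Z$, and I note that $\vol_1(-K_X)$ is understood to be defined only when $\kappa(-K_X)=1$. Since $X$ is rational we have $Z\cong\Pp^1$, $q(X)=p_g(X)=0$, hence $\chi(\sO_X)=1$. Writing $m_1F_1,\dots,m_rF_r$ for the multiple fibres of $f$, Kodaira's formula reads
\[
K_X\sim f^*(K_Z+L)+\sum_{i=1}^r(m_i-1)F_i,\qquad \deg L=\chi(\sO_X)=1 ,
\]
and since $m_iF_i\sim F$ (a general fibre) while $\deg(K_Z+L)=-1$, this gives $-K_X\sim_{\Qq}\lambda F$ with $\lambda=1-\sum_{i=1}^r\bigl(1-\tfrac1{m_i}\bigr)$; more precisely $-K_X\sim F$ when $r=0$ and $-K_X\sim F_1$ when $r=1$.

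Next I would note that $\kappa(-K_X)=1$ holds exactly when $\lambda>0$: if $\lambda<0$ then $-K_X$ is not pseudo-effective, and if $\lambda=0$ then $\kappa(-K_X)=0$. Because each $1-\tfrac1{m_i}\ge\tfrac12$, the inequality $\lambda>0$ forces $r\le1$, so either $r=0$ with $\lambda=1$, or $r=1$ with $\lambda=\tfrac1{m_1}$, $m_1\ge2$; in both cases $\lambda=\tfrac1m$ for a positive integer $m$, and $-K_X\sim F_1$ with $mF_1\sim F$ (with $F_1=F$, $m=1$ in the first case). It then remains to compute the Iitaka volume. Writing $k=qm+s$ with $0\le s<m$, one has $kF_1\sim qF+sF_1$; since $\sO_{F_1}(F_1)$ has order exactly $m$ in $\Pic(F_1)$, the divisor $sF_1$ is rigid, so by the projection formula $f_*\sO_X(kF_1)=\sO_{\Pp^1}(q)$ and
\[
h^0(X,-kK_X)=h^0(X,\sO_X(kF_1))=q+1=\Big\lfloor \tfrac{k}{m}\Big\rfloor+1 .
\]
Hence $\vol_1(-K_X)=\limsup_{k\to\infty}\frac{\lfloor k/m\rfloor+1}{k}=\tfrac1m$, which proves the first assertion.

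For the converse, the case $m=1$ is immediate from any rational elliptic surface with a section. For $m\ge2$ I would start from such a surface $f_0\colon X_0\to\Pp^1$ and perform a logarithmic transformation of order $m$ along one smooth fibre, obtaining a relatively minimal elliptic fibration $f\colon X\to\Pp^1$ carrying a single multiple fibre of multiplicity $m$ (equivalently, one resolves a Halphen pencil of index $m$ on $\Pp^2$). The invariants $\chi(\sO_X)=1$ and $q(X)=0$ are unchanged, while $-K_X\sim_{\Qq}\tfrac1mF$ shows $\kappa(X)=-\infty$, so $X$ is again rational by Castelnuovo's criterion; moreover $\kappa(-K_X)=1$ since $-K_X\sim F_1$ is effective and moves after scaling by $m$. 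By the computation above $\vol_1(-K_X)=\tfrac1m$.

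The main obstacle is the converse: a priori a logarithmic transformation may leave the projective (indeed rational) category, so one must guarantee that a rational elliptic surface with a prescribed multiple fibre of multiplicity $m$ actually exists as a projective surface. I would settle this by appealing to the classical theory of Halphen pencils / Halphen surfaces (e.g. \cite{FriMor94}), where such surfaces are produced explicitly as blow-ups of $\Pp^2$. A minor point to record is that relative minimality is genuinely used: blowing up a point of high multiplicity on a fibre of type, say, $\II^*$ produces a smooth rational surface with an elliptic fibration and $\kappa(-K_X)=1$ whose anticanonical Iitaka volume is $\tfrac56$, so the hypothesis must be taken in the relatively minimal sense.
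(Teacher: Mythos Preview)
Your argument is correct and follows the same overall strategy as the paper: apply Kodaira's canonical bundle formula, deduce that there is at most one multiple fibre, and then invoke existence of rational elliptic surfaces with a prescribed multiple fibre for the converse.

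There are two mild differences worth noting. First, the paper simply records that a rational elliptic surface has at most one multiple fibre and then reads off $\vol_1(-K_X)=-\deg\bigl(K_Z+L+\tfrac{m-1}{m}Q\bigr)=\tfrac1m$ directly from the degree on the base; you instead derive $r\le 1$ from the hypothesis $\kappa(-K_X)=1$ and compute $h^0(X,-kK_X)=\lfloor k/m\rfloor+1$ by hand via the projection formula and the torsion order of $\sO_{F_1}(F_1)$. Your route is a bit more self-contained, while the paper's is shorter once one accepts that the Iitaka volume equals the degree of the induced divisor on $Z$. Second, for the converse the paper cites \cite{Fuj90} for the existence of rational elliptic surfaces with a single multiple fibre of any multiplicity $m$, whereas you go through Halphen pencils; either reference settles the algebraicity concern you raise about logarithmic transformations, and in fact \cite{Fuj90} is the more targeted citation here.

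Your closing remark about relative minimality is a reasonable caveat, though the specific numerical value $\tfrac56$ you quote for the blown-up $\II^*$ example is not obviously correct and is in any case tangential to the proof.
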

\begin{proof}
We have $Z\cong \mathbb{P}^1$, and there is at most one multiple fiber. If $f$ does not have any multiple fiber, then $K_X=f^*(K_Z + L)$, where $\deg L =\chi(\mathcal{O}_X) =1$. Therefore, 
\[
\vol_1(-K_X) = -\deg(K_Z+L)=1.
\]
If $f$ has one multiple fiber with multiplicity $m$, then $K_X=f^*(K_Z + L + \frac{m-1}{m}Q)$, where $\deg L =\chi(\mathcal{O}_X) =1$, and hence 
\[
\vol_1(-K_X) = -\deg\left(K_Z+L+\frac{m-1}{m}Q\right)=\frac{1}{m}.
\]

For each $m\geq 2$, there exists a rational elliptic surface $X$ with one multiple fiber of multiplicity $m$ by \cite{Fuj90}. Therefore, each $\frac{1}{m}$ can be realized as $\vol_1(-K_X)$.
\end{proof}


\begin{thebibliography}{99}

\bibitem[Ale94]{Ale94}
V. Alexeev, \textit{Boundedness and K$^2$ for log surfaces}, Internat. J. Math. \textbf{5} (1994), no. 6, 779–810.

\bibitem[AL19a]{AL19a}
V. Alexeev and W. Liu, \textit{Open surfaces of small volume}, Algebr. Geom. \textbf{6} (2019), no. 3, 312–327.

\bibitem[AL19b]{AL19b}
V. Alexeev and W. Liu, \textit{On accumulation points of volumes of log surfaces}, (Russian) Izv. Ross. Akad. Nauk Ser. Mat. \textbf{83} (2019), no. 4, 5–25; translation in Izv. Math. \textbf{83} (2019), no. 4, 657–675.


\bibitem[Amb05]{Amb05} 
F. Ambro, \textit{The moduli b-divisor of an lc-trivial fibration}, Compos. Math. \textbf{141} (2005), no. 2, 385--403.


\bibitem[BHPV04]{BHPV04}
W. P. Barth, K. Hulek, C. A. M. Peters, and A. Van de Ven,
\newblock {\em Compact complex surfaces}, volume~4 of {\em Ergebnisse der
  Mathematik und ihrer Grenzgebiete. 3. Folge}.
\newblock Springer-Verlag, Berlin, second edition, 2004.

\bibitem[Bir12]{Bir12} C. Birkar, 
\textit{Existence of log canonical flips and a special LMMP}, Publ. Math. Inst. Hautes Études Sci. \textbf{115} (2012), 325--368.


\bibitem[Bir19]{Bir19} 
C. Birkar, \textit{Anti-pluricanonical systems on Fano varieties}, Ann. of Math. (2) \textbf{190} (2019), no. 2, 345–463.

\bibitem[Bir20]{Bir20} C.~Birkar, \textit{Geometry of polarised varieties}, Publ. Math. Inst. Hautes Études Sci. \textbf{137} (2023), 47–105.


\bibitem[Bir21]{Bir21}
C.~Birkar, \textit{Boundedness and volume of generalised pairs}, arXiv:2103.14935v2.


\bibitem[Bir23]{Bir23}
C.~Birkar, \textit{Singularities on Fano fibrations and beyond}, arXiv:2305.18770.
 
\bibitem[BCHM10]{BCHM10}
C. Birkar, P. Cascini, C. D. Hacon and J. M\textsuperscript{c}Kernan, \textit{Existence of minimal models for varieties of log general type}, J. Amer. Math. Soc. \textbf{23} (2010), no. 2, 405--468.


\bibitem[BZ16]{BZ16} C. Birkar and D.-Q. Zhang, \textit{Effectivity of Iitaka fibrations and pluricanonical systems of polarized pairs}, Publ. Math. Inst. Hautes Études Sci. \textbf{123} (2016), 283--331.




\bibitem[CHL24]{CHL24} G. Chen, J. Han, and J. Liu, \textit{On effective log Iitaka fibrations and existence of complements}, Int. Math. Res. Not. IMRN(2024), no. 10, 8329-8349.


\bibitem[Cho08]{Cho08} R. Choi, \textit{The geography of log models and its applications}, PhD Thesis, Johns Hopkins University (2008).

\bibitem[Fil20]{Fil20}
S. Filipazzi, \textit{Some remarks on the volume of log varieties}, Proc. Edinb. Math. Soc. (2) \textbf{63} (2020), no. 2, 314–322.

\bibitem[Fil24]{Fil23}
S. Filipazzi, \textit{On the boundedness of n-folds with $\kappa(X)=n-1$}, Algebr. Geom. \textbf{11} (2024), no. 3, 318–345.


\bibitem[Fuj90]{Fuj90}
Y. Fujimoto, \textit{On Rational Elliptic Surfaces with Multiple Fibers}, Publ. Res. Inst. Math. Sci. \textbf{26} (1990), no. 1, 1–13.

\bibitem[FriMor94]{FriMor94}
R. Friedman and J. Morgan, \textit{Smooth four-manifolds and complex surfaces}, Ergeb. Math. Grenzgeb. (3), 27, Springer-Verlag, Berlin, 1994, x+520 pp.


\bibitem[GW22]{GW22}
A. Grassi and D. Wen, \textit{Higher dimensional elliptic fibrations and Zariski decompositions}, Commun. Contemp. Math. \textbf{24} (2022), no. 4, Paper No. 2150024, 25 pp.


\bibitem[HL23a]{HL23a} C. D. Hacon and J. Liu, \textit{Existence of flips for generalized lc pairs}, Camb. J. Math. \textbf{11} (2023), no. 4, 795--828. 


\bibitem[HMX14]{HMX14} C. D. Hacon, J. M\textsuperscript{c}Kernan, and C. Xu, \textit{ACC for log canonical thresholds}, Ann. of Math. (2) \textbf{180} (2014), no. 2, 523--571.

\bibitem[HMX18]{HMX18} C. D. Hacon, J. M\textsuperscript{c}Kernan, and C. Xu, \textit{Boundedness of moduli of varieties of general type}, J. Eur. Math. Soc. \textbf{20} (2018), no. 4, 865--901.

\bibitem[HX13]{HX13} C. D. Hacon and C. Xu,
\textit{Existence of log canonical closures},
Invent. Math. \textbf{192} (2013), no. 1, 161--195.

\bibitem[HL22]{HL22a} J. Han and Z. Li, \textit{Weak Zariski decompositions and log terminal models for generalized pairs}, Math. Z. \textbf{302} (2022), 707--741.

\bibitem[HL23b]{HL23b} J. Han and J. Liu, \textit{Effective birationality for sub-pairs with real coefficients },  Int. J. Math \textbf{34} (2023).

\bibitem[HLS19]{HLS19} J. Han, J. Liu, and V. V. Shokurov, \textit{ACC for minimal log discrepancies of exceptional singularities}, arXiv:1903.04338v2,  to appear in Peking Math. J.

\bibitem[HLX23]{HLX23} J. Han, J. Liu, and Q. Xue, \textit{On the equivalence between the effective adjunction conjectures of Prokhorov-Shokurov and of Li}, arXiv:2312.15397.

\bibitem[HL21]{HL21b} J. Han and W. Liu, \textit{On a generalized canonical bundle formula for generically finite morphisms}, Ann. Inst. Fourier \textbf{71} (2021), no. 5, 2047--2077.

\bibitem[HS21]{HS21}
F. Hao and S. Schreieder, \textit{Equality in the Bogomolov-Miyaoka-Yau inequality in the non-general type case}, J. Reine Angew. Math. \textbf{775} (2021), 87–115.

\bibitem[Har77]{Har77}
R. Hartshorne, \textit{Algebraic Geometry}, Graduate Texts in Mathematics, 52, Springer Science+Business Media, Inc.

\bibitem[Has19]{Has19}
K. Hashizume, \textit{Remarks on special kinds of the relative log minimal model program}, Manuscripta Math. \textbf{160} (2019), no. 3-4, 285–314.

\bibitem[HH20]{HH20}
K. Hashizume and Z. Hu, \textit{On minimal model theory for log abundant lc pairs}, J. Reine Angew. Math. \textbf{767} (2020), 109–159.



\bibitem[KMM87]{KMM87} Y. Kawamata, K. Matsuda, and K. Matsuki, \textit{Introduction to the minimal model problem}, Algebraic geometry, Sendai, 1985, 283--360, Adv. Stud. Pure Math., \textbf{10}, North-Holland, Amsterdam, 1987.

\bibitem[KMM94]{KMM94} S. Keel, K. Matsuki, and J. M\textsuperscript{c}Kernan, \textit{Log abundance theorem for threefolds}, Duke Math. J. \textbf{75} (1994), 99--119.




\bibitem[Kol13]{Kol13} J. Koll\'{a}r, \textit{Singularities of the minimal model program}, Cambridge Tracts in Math., \textbf{200}, Cambridge University Press, Cambridge, 2013, x+370 pp.

\bibitem[Kol23]{Kol23}J. Koll\'{a}r,  \textit{Families of varieties of general type}, Cambridge Tracts in Math., \textbf{231}, Cambridge University Press, Cambridge, 2023, xviii+471 pp.

\bibitem[KM98]{KM98} 
J. Koll\'{a}r and S. Mori, \textit{Birational geometry of algebraic varieties}, Cambridge Tracts in Math., \textbf{134}, Cambridge University Press, Cambridge, 1998, viii+254 pp.


\bibitem[Li22]{Li22} Z. Li, \textit{On finiteness of log canonical models}, Internat. J. Math. \textbf{33} (2022), no. 2, No. 2250012, 27 pp.

\bibitem[Li24]{Li20} Z.~Li, \textit{Boundedness of the base varieties of certain fibrations}, J. Lond. Math. Soc. (2) \textbf{109} (2024), no. 2, Paper No. e12871, 25 pp.

\bibitem[LL23]{LL23} J. Liu and W. Liu,
The minimal volume of surfaces of log general type with non-empty non-klt locus, arXiv:2308.14268.


\bibitem[Mir89]{Mir89}
R. Miranda, \textit{The Basic Theory of Elliptic Surfaces}. Dottorato di Ricerca in Matematica, Dipartimento di Matematica dell' Universita di Pisa, ETS Editrice Pisa (1989).

\bibitem[Mir90]{Mir90}
R. Miranda, \textit{Persson's list of singular fibers for a rational elliptic surface}, Math. Z. \textbf{205} (1990) 191--211.

\bibitem[Nak04]{Nak04} N. Nakayama, \textit{Zariski-decomposition and abundance}, MSJ Memoirs, vol. 14, Mathematical Society of Japan, Tokyo, 2004.

\bibitem[Per90]{Per90}
U. Persson, \textit{Configurations of Kodaira fibers on rational elliptic surfaces}, Math. Z. \textbf{205} (1990) 1--47.

\bibitem[PS09]{PS09} Y. G. Prokhorov and V. V. Shokurov, \textit{Towards the second main theorem on complements}, J. Algebraic Geom. \textbf{18} (2009), no. 1, 151--199.

\bibitem[Ser91]{Ser91}
F. Serrano, The Picard group of a quasi-bundle, Manuscripta Math. \textbf{73} (1991), 63--82.


\end{thebibliography}
\end{document}